\titleformat{\section}{\normalfont\large\bfseries}{\thesection}{1em}{}
\titleformat{\subsection}{\normalfont\bfseries}{\thesubsection}{1em}{}
\definecolor{LinkColor}{rgb}{0,0,1}
\definecolor{LinkColor2}{rgb}{0,0.5,0}
\definecolor{lbcolor}{rgb}{0.85,0.85,0.85}
\definecolor{FrameColor}{rgb}{0.85,0.85,0.85}
\definecolor{rosso}{rgb}{0.8,0,0}
\definecolor{lightgray}{rgb}{0.75,0.75,0.75}
\definecolor{violet}{rgb}{0.65,0,0.65}
\definecolor{darkgreen}{rgb}{0,0.5,0}
\newtheorem{theorem}{Theorem}[section]
\newtheorem{lemma}[theorem]{Lemma}
\newtheorem{corollary}[theorem]{Corollary}
\newtheorem{definition}[theorem]{Definition}
\theoremstyle{definition}
\newtheorem{remark}[theorem]{Remark}
\renewenvironment{proof}[1][\proofname]{%
	\par\pushQED{\qed}\normalfont%
	\topsep6\p@\@plus6\p@\relax
	\trivlist\item[\hskip\labelsep\bfseries#1\@addpunct{.}]%
	\ignorespaces
}{%
	\popQED\endtrivlist\@endpefalse
}
\renewcommand\paragraph{\@startsection{paragraph}{4}{\z@}%
	{1ex \@plus1ex \@minus.2ex}%
	{-1em}%
	{\normalfont\normalsize\bfseries}}
\renewcommand\subparagraph{\@startsection{paragraph}{4}{\z@}%
	{1ex \@plus1ex \@minus.2ex}%
	{-1em}%
	{\normalfont\normalsize\itshape}}
\DeclareMathAlphabet\mathbfcal{OMS}{cmsy}{b}{n}
\newcommand{\abs}[1]{\left| #1 \right|}
\newcommand{\norm}[1]{\| #1 \|}
\newcommand{\ang}[2]{ \langle #1 , #2  \rangle}
\newcommand{\bigang}[2]{ \big< #1 , #2  \big>}
\newcommand{\scp}[2]{ \left( #1 , #2  \right)}
\newcommand{\bigscp}[2]{\big( #1 , #2 \big)}
\newcommand{\meano}[1]{{\langle #1 \rangle}_{\Omega}}
\newcommand{\meang}[1]{{\langle #1 \rangle}_{\Gamma}}
\newcommand{\mean}[2]{\textnormal{mean}\scp{#1}{#2}}
\newcommand{\R}{\mathbb R}
\newcommand{\N}{\mathbb N}
\newcommand{\n}{\mathbf{n}}
\newcommand{\bu}{\mathbf{u}}
\newcommand{\bv}{\mathbf{v}}
\newcommand{\bw}{\mathbf{w}}
\newcommand{\ww}{\widehat{\bw}}
\newcommand{\wv}{\widehat{\bv}}
\newcommand{\tv}{\widetilde{\bv}}
\newcommand{\tw}{\widetilde{\bw}}
\newcommand{\wphi}{\widehat{\phi}}
\newcommand{\wpsi}{\widehat{\psi}}
\newcommand{\wmu}{\widehat{\mu}}
\newcommand{\wtheta}{\widehat{\theta}}
\newcommand{\hJ}{\widehat{\mathbf{J}}}
\newcommand{\hK}{\widehat{\mathbf{K}}}
\newcommand{\J}{\mathbf{J}}
\newcommand{\K}{\mathbf{K}}
\newcommand{\T}{\mathbf{T}}
\newcommand{\I}{\mathbf{I}}
\newcommand{\D}{\mathbf{D}}
\newcommand{\f}{\mathbf{f}}
\newcommand{\A}{\mathbf{A}}
\newcommand{\M}{\mathbf{M}}
\renewcommand{\L}{\mathbf{L}}
\newcommand{\G}{\mathbf{G}}
\renewcommand{\S}{\mathbf{S}}
\newcommand{\Y}{\mathbf{Y}}
\newcommand{\Dg}{\mathbf{D}_\Gamma}
\newcommand{\Z}{\mathbf{Z}}
\newcommand{\intO}{\int_\Omega}
\newcommand{\intG}{\int_\Gamma}
\newcommand{\ep}{\varepsilon}
\newcommand{\dtau}{\;\mathrm d\tau}
\newcommand{\dx}{\;\mathrm{d}x}
\newcommand{\ds}{\;\mathrm ds}
\newcommand{\dxt}{\;\mathrm{d}x\;\mathrm{d}t}
\newcommand{\dGt}{\;\mathrm{d}\Ga\;\mathrm{d}t}
\newcommand{\dxs}{\;\mathrm{d}x\;\mathrm{d}s}
\newcommand{\dGs}{\;\mathrm{d}\Ga\;\mathrm{d}s}
\newcommand{\dG}{\;\mathrm d\Ga}
\newcommand{\ddt}{\frac{\mathrm d}{\mathrm dt}}
\newcommand{\del}{\partial}
\newcommand{\delt}{\partial_{t}}
\newcommand{\deln}{\partial_\n}
\newcommand{\deltb}{\delt^\bullet}
\newcommand{\deltc}{\delt^\circ}
\newcommand{\delphi}{\del_\phi}
\newcommand{\delpsi}{\del_\psi}
\newcommand{\delgphi}{\del_{\Grad\phi}}
\newcommand{\delgpsi}{\del_{\Gradg\psi}}
\newcommand{\Grad}{\nabla}
\newcommand{\Lap}{\Delta}
\newcommand{\Div}{\textnormal{div}}
\newcommand{\Gradg}{\nabla_\Ga}
\newcommand{\Lapg}{\Delta_\Ga}
\newcommand{\Divg}{\textnormal{div}_\Ga}
\newcommand{\Divgt}{{\textnormal{div}_{\Ga}}}
\newcommand{\emb}{\hookrightarrow}
\newcommand{\ov}{\overline}
\newcommand{\suchthat}{\;\ifnum\currentgrouptype=16 \middle\fi|\;}
\newcommand{\e}{\mathbf{e}}
\newcommand{\Om}{\Omega}
\newcommand{\Ga}{\Gamma}
\newcommand{\extended}[1]{{\color{lightgray}#1}}
\renewcommand{\extended}[1]{}
\begin{document}

%
%

\title{\bfseries 
    A thermodynamically consistent model for bulk-surface viscous fluid mixtures:\\
    Model derivation and mathematical analysis
\\[-1.5ex]$\;$}

\author{
    Patrik Knopf \footnotemark[1]
    \and Jonas Stange \footnotemark[1]}

\date{ }

\maketitle

\renewcommand{\thefootnote}{\fnsymbol{footnote}}

\footnotetext[1]{
    Universität Regensburg,
    Fakultät für Mathematik,
    93053 Regensburg, 
    Germany \newline
	\tt(%
        \href{mailto:patrik.knopf@ur.de}{patrik.knopf@ur.de},
        \href{mailto:jonas.stange@ur.de}{jonas.stange@ur.de}%
        ).
}

\begin{center}
    $\phantom{x}$\\[-5ex]
	\scriptsize
	\color{white}
	{
		\textit{This is a preprint version of the paper. Please cite as:} \\  
		P.~Knopf, J.~Stange, 
        \textit{[Journal]} \textbf{xx}:xx 000-000 (2024), \\
		\texttt{https://doi.org/...}
	}
    \normalsize
\end{center}

\medskip

%
%

\begin{small}
\begin{center}
    \textbf{Abstract}
\end{center}
We derive and analyze a new diffuse interface model for incompressible, viscous fluid mixtures with bulk-surface interaction.  
Our system consists of a Navier--Stokes--Cahn--Hilliard model in the bulk that is coupled to a surface Navier--Stokes--Cahn--Hilliard model on the boundary.
Compared with previous models, the inclusion of an additional surface Navier--Stokes equation is motivated, for example, by biological applications such as the seminal \textit{fluid mosaic model} (Singer \& Nicolson, \textit{Science}, 1972) in which the surface of biological cells is interpreted as a thin layer of viscous fluids.
We derive our new model by means of local mass balance laws, local energy dissipation laws, and the Lagrange multiplier approach. 
Moreover, we prove the existence of global weak solutions via a semi-Galerkin discretization. 
The core part of the mathematical analysis is the study of a novel bulk-surface Stokes system and its corresponding bulk-surface Stokes operator. Its eigenfunctions are used as the Galerkin basis to discretize the bulk-surface Navier--Stokes subsystem. 
\\[1ex]
\textbf{Keywords:} Multi-phase flows, Navier--Stokes--Cahn--Hilliard system, bulk-surface interaction, dynamic boundary conditions, bulk-surface Stokes system, bulk-surface Stokes operator.
\\[1ex]	
\textbf{Mathematics Subject Classification:} 
Primary: 76T06 
; Secondary:
 35D30, 
 35Q35, 
 76D03, 
 76D05, 
 76D07, 
 76T99. 
\end{small}

\begin{small}
\setcounter{tocdepth}{2}
\hypersetup{linkcolor=black}
\tableofcontents
\end{small}

\setlength\parskip{1ex}
\allowdisplaybreaks
\numberwithin{equation}{section}
\renewcommand{\thefootnote}{\arabic{footnote}}

\section{Introduction} 
\label{Section:Introduction}

The mathematical description of two immiscible materials is an important topic in materials science and fluid dynamics with vast applications in biology, chemistry, and engineering. To describe the motion of such mixtures, one has to capture the moving interface separating the different components. This has led to two fundamental approaches, namely the \textit{sharp interface methods} and the \textit{diffuse interface methods}. For a comparison of these two approaches, we refer, e.g., to \cite{Abels2018, Du2020, Giga2018, Pruess2016}.

In sharp interface models, the interface is described as an evolving hypersurface in the surrounding domain, which leads to the formulation of a free boundary problem. In contrast, in diffuse interface models, the interface between the two materials is represented by a thin layer whose width is proportional to a small parameter $\varepsilon > 0$. The location of the two components is represented by an \textit{order parameter}, also called \textit{phase-field}. This phase-field describes the difference of the local concentrations of the materials, and is therefore expected to attain values close to $-1$ and $1$ in the regions where the respective fluids are present. On the other hand, at the diffuse interface, it should have a continuous transition between these values. The advantage of the diffuse interface method is that the description of the phase-field is purely Eulerian and does not require any direct tracking of the interface separating the involved materials. Moreover, it allows for topology changes of the material components and has a firm physical basis for multiphase flows, see, e.g., \cite{Liu2010, Chen2014}.

\subsection{Diffuse interface models for multi-phase flows}

To describe the motion of two viscous, incompressible fluids with constant densities, one of the most fundamental models is the \textit{Model H}, which was originally proposed by Hohenberg and Halperin in \cite{Hohenberg1977} in the context of dynamic critical phenomena. It consists of an incompressible Navier--Stokes equation coupled to a convective Cahn--Hilliard equation and was later rigorously derived in \cite{Gurtin1996}. The fundamental assumption in its derivation is that the density of the mixture is constant, which means that both fluids have the same constant density.
In the past two decades, various generalizations of the Model H have been proposed in the literature to describe fluids with different densities, compressible fluids, or phase-transitions. We refer to \cite{Abels2025a, Aki2014, Boyer2002, Ding2007, Freistuhler2017, Giga2018, Heida2012, Lowengrub1998, Shen2013, Shokrpour2018, tenEikelder2024} for a selection of representative contributions.

A significant advance was achieved by the work of Abels, Garcke, and Grün \cite{Abels2009}, who derived a thermodynamically consistent diffuse-interface model for two-phase flows with different densities. The so-called \textit{AGG model} generalizes the Model H by incorporating non-matched densities, and it ensures energy dissipation, frame-indifference, and a consistent sharp-interface limit. It has since become the basis for many different works on diffuse-interface models of incompressible two-phase flows, see, e.g., \cite{Abels2013, Abels2013a, Abels2024, Giorgini2021, Giorgini2022}. 

In this framework, the treatment of boundary conditions plays a crucial role. Classical choices are no-slip boundary conditions for the velocity field, while the Cahn--Hilliard variables are usually equipped with homogeneous Neumann boundary conditions. However, numerous applications demonstrate that these static boundary conditions are insufficient when an accurate description of interfacial behaviour near the boundary is required. For instance, in the modelling of moving contact lines, the use of the generalized Navier boundary condition, relating tangential stresses to slip velocities and uncompensated Young stresses, has proven indispensable, see, e.g., \cite{Dussan1979, Qian2006, Seppecher1996}. Similarly, imposing homogeneous Neumann boundary conditions on the phase-field forces the diffuse interface to intersect the boundary at a right angle. This is often unrealistic, as in most applications, the contact angle deviates from ninety degrees and may even evolve dynamically over time. Furthermore, the homogeneous Neumann boundary condition prohibits any transfer of material between the bulk and the boundary. While such conservation of bulk mass is appropriate in some situations, other applications, such as absorption phenomena or chemical reaction at the boundary, require an exchange of material between bulk and surface. These limitations have also been recognized in the physics literature (see, e.g., \cite{Binder1991,Fischer1997,Fischer1998,Kenzler2001}), where the introduction of a surface free energy has been proposed to better capture short-range interactions between the bulk and the boundary.

To overcome these limitations, Navier--Stokes--Cahn--Hilliard models with dynamic boundary conditions on the Cahn--Hilliard subsystem and a generalized Navier-slip boundary condition on the velocity field have been proposed in the literature.
To motivate our new bulk-surface Navier--Stokes--Cahn--Hilliard model, which will be derived and analyzed in the present paper, we first recall the model from \cite{Giorgini2023}, which reads as follows:
\begin{subequations}\label{eqs:GK}
    \begin{alignat}{3}
        \label{eqs:GK:1}
        &\delt(\rho(\phi)\bv) + \Div(\bv\otimes(\rho(\phi)\bv + \J)) = \Div\;\T , &&\quad\Div\;\bv = 0 &&\qquad\text{in~}Q, 
        \\
        \label{eqs:GK:2}
        &[\T\n + \gamma(\psi)\bv]_\tau = \big[-\psi\Gradg\theta + \tfrac12(\J\cdot\n)\bv\big]_\tau &&{} &&\qquad\text{on~}\Sigma, 
        \\
        \label{eqs:GK:3}
        &\bv\cdot\n = 0 &&{} &&\qquad\text{on~}\Sigma, 
        \\[1ex]
        \label{eqs:GK:4}
        &\delt\phi  + \Div(\phi\bv) = \Div(m_\Om(\phi)\Grad\mu) &&{} &&\qquad\text{in~}Q,
        \\
        \label{eqs:GK:5}
        &\mu = -\ep\Lap\phi + \tfrac1\ep F^\prime(\phi) &&{} &&\qquad\text{in~}Q, 
        \\
        \label{eqs:GK:6}
        &\delt\psi + \Divg(\psi\bv) = \Divg(m_\Ga(\psi)\Gradg\theta) - \beta m_\Om(\phi)\deln\mu &&{} &&\qquad\text{on~}\Sigma, 
        \\
        \label{eqs:GK:7}
        &\theta = -\delta\kappa\Lapg\psi + \tfrac1\delta G^\prime(\psi) + \ep\deln\phi &&{}  &&\qquad\text{on~}\Sigma, 
        \\
        \label{eqs:GK:8}
        & \phi = \psi, \qquad Lm_\Om(\phi)\deln\mu = \beta\theta - \mu, &&\quad L\in[0,\infty] &&\qquad\text{on~}\Sigma.
        \end{alignat}
\end{subequations}
We also refer to \cite{Colli2023} for a related Cahn--Hilliard--Brinkman model with dynamic boundary conditions, and to \cite{Knopf2025a} for a free boundary variant of \eqref{eqs:GK}, where the fluids are considered in an evolving domain, whose evolution is driven by the velocity field. A model similar to \eqref{eqs:GK} was previously investigated in \cite{Gal2016,Gal2019}. It involves a second-order Allen--Cahn type dynamic boundary condition instead of \eqref{eqs:GK:6}-\eqref{eqs:GK:7} to account for the contact angle dynamics.

In system \eqref{eqs:GK}, $\Om\subset\R^d$ with $d\in\{2,3\}$ is a bounded domain with boundary $\Ga \coloneqq \partial\Om$, $T > 0$ is a prescribed final time, and we set $Q = \Om\times(0,T)$ and $\Sigma = \Ga\times(0,T)$ for brevity. The outward unit normal vector field on $\Ga$ is denoted by $\n$, while $\Gradg, \Divg$ and $\Lapg$ stand for the surface gradient, the surface divergence and the Laplace--Beltrami operator on $\Ga$, respectively.
Furthermore, $\bv:Q\rightarrow\R^d$ denotes the bulk velocity field, which describes the motion of the mixture in $\Om$. The location of the two materials in the bulk is represented by the bulk phase-field $\phi:Q\rightarrow\R$. Its trace on the boundary $\Ga$, the surface phase-field, is denoted by $\psi = \phi\vert_\Ga :\Sigma\rightarrow\R$. The functions $\mu:Q\rightarrow\R$ and $\theta:\Sigma\rightarrow\R$ stand for the bulk chemical potential and the surface chemical potential, respectively.

The time evolution of $\bv$ is described by the bulk Navier--Stokes equation \eqref{eqs:GK:1}. The flow in the bulk is assumed to be incompressible, that is, the velocity field is divergence-free. The bulk Navier--Stokes equation is complemented with the boundary condition \eqref{eqs:GK:2}, which can be regarded as an inhomogeneous Navier-slip boundary condition. It enhances the Navier-slip boundary condition previously proposed in \cite{Qian2006}. 
Here, $\gamma(\phi)$ represents the slip parameter, which accounts for tangential friction at the boundary and may depend on the phase-field.
In \eqref{eqs:GK:1}, $\rho(\phi)$ stands for the phase-field dependent density in the bulk, and is given by
\begin{align}\label{System:9}
    \rho(\phi) \coloneqq \frac{\tilde\rho_2 - \tilde\rho_1}{2}\phi + \frac{\tilde\rho_2 + \tilde\rho_1}{2} \qquad\text{in~}Q,
\end{align}
where $\tilde\rho_1$ and $\tilde\rho_2$ denote the individual constant densities of the two fluids, respectively. As the derivative of $\rho$ is constant, we often write
\begin{align*}
    \rho^\prime \equiv \rho^\prime(\phi) = \frac{\tilde\rho_2 - \tilde\rho_1}{2}
\end{align*}
for simplicity. Furthermore, $\J$ represents the \textit{mass flux} related to interfacial motion, and $\T$ is the \textit{stress tensor}. They are given by the formulas
\begin{alignat}{2}
    \J &= - \rho^\prime \,
    m_\Om(\phi)\Grad\mu &&\qquad\text{in~}Q, \label{System:14} \\
    \T &= \S - p\I - \varepsilon\Grad\phi\otimes\Grad\phi \quad\text{with}\quad \S \coloneqq 2\nu_\Om(\phi)\D\bv &&\qquad\text{in~}Q. \label{System:11}
\end{alignat}
The coefficient $\nu_\Om:\R\rightarrow\R$ is the viscosity of the mixtures in the bulk, which usually depends on the phase-field. 
Provided that both fluids have a constant viscosity that is denoted by $\nu_1$ and $\nu_2$, respectively, a typical choice is the affine interpolation
\begin{align*}
    \nu_\Om(\phi) = \frac{\nu_2 - \nu_1}{2}\phi + \frac{\nu_2 + \nu_1}{2} \qquad\text{in~}Q.
\end{align*}
The matrix $\S$ is referred to as the \textit{viscous stress tensor}. It depends on the \textit{symmetric gradient} of $\bv$, which is defined as
\begin{align*}
    \D\bv \coloneqq \frac12\big((\Grad\bv) + (\Grad\bv)^\top\big).
\end{align*}

The time evolution of $\phi, \psi, \mu$ and $\theta$ is given by the \textit{convective bulk-surface Cahn--Hilliard subsystem} \eqref{eqs:GK:4}-\eqref{eqs:GK:8}, which consists of a \textit{bulk Cahn--Hilliard equation} \eqref{eqs:GK:4}-\eqref{eqs:GK:5}, that is coupled to a \textit{surface Cahn--Hilliard equation} \eqref{eqs:GK:6}-\eqref{eqs:GK:7} through the normal derivatives $\deln\phi$ and $\deln\mu$, as well as through the boundary conditions \eqref{eqs:GK:8}, which has to be understood in the following sense:
\begin{align}
    \begin{cases}
        \mu = \beta\theta &\text{if~} L = 0, \\
        L m_\Om(\phi)\deln\mu = \beta\theta - \mu &\text{if~} L\in(0,\infty), \\
        m_\Om(\phi)\deln\mu = 0 &\text{if~} L = \infty,
    \end{cases}
    \qquad\text{on~}\Sigma.
\end{align}

The functions $m_\Om, m_\Ga:\R\rightarrow\R$ are the so-called \textit{Onsager mobilities}. In general, they may depend on the phase-field variables $\phi$ and $\psi$, respectively, and describe both the spatial distribution and the magnitude of the underlying diffusion mechanisms.

The functions $F^\prime$ and $G^\prime$ are the derivatives of double-well potentials $F$ and $G$, respectively. A physically motivated example of such a double-well potential, especially in applications related to materials science, is the \textit{logarithmic potential}, which is also referred to as the \textit{Flory--Huggins potential}. It is defined as
\begin{align}\label{LogPot}
    W_{\mathrm{log}}(s) = \frac{\Theta}{2}\big[(1+s)\ln(1+s) + (1-s)\ln(1-s)\big] - \frac{\Theta_c}{2}s^2 \qquad\text{for~}s\in[-1,1].
\end{align}
Here, we assume that $0 < \Theta < \Theta_c$, where $\Theta$ denotes the temperature of the mixture, and $\Theta_c$ represents the critical temperature below which phase separation processes are to be expected. We point out that, since $W_{\mathrm{log}}^\prime(s) \rightarrow\pm\infty$ as $s\rightarrow\pm 1$, the potential $W_{\mathrm{log}}$ is classified as a singular potential.

Furthermore, $\varepsilon$ is a parameter related to the thickness of the diffuse interface in the bulk, whereas $\delta > 0$ corresponds to the width of the diffuse interface on the boundary. The parameter $\kappa \ge 0$ serves as a weight for the surface Dirichlet energy $\psi\mapsto\intG\abs{\Gradg\psi}^2\dG$. If $\kappa>0$, this term has a regularizing effect on the phase separation at the boundary.

Although the model \eqref{eqs:GK} allows for unmatched densities, variable contact angles, an improved description of contact line movement via the generalized Navier boundary condition \eqref{eqs:GK:2}, and a transfer of material between bulk and surface (if $L<\infty$), it still exhibits some important limitations. 

\subsection{Limitations concerning the description of surface materials}

In many applications from materials science, chemistry, engineering, or life sciences, the materials on the boundary might actually be different from those in the bulk. In such situations, the rheological properties of the surface materials often need to be taken into account, cf.~\cite{Sagis2011}.

An important example is the classical \textit{fluid-mosaic model} for the structure of cell membranes from the seminal work \cite{Singer1972}. There, biological membranes are conceptualized as laterally incompressible, two-dimensional viscous fluid layers with embedded proteins.
This justifies the treatment of the boundary as an active viscous layer supporting both lateral momentum transport and compositional evolution.
These concepts were formalized mathematically in \cite{Arroyo2009} as a continuum model, where lipid bilayers are described by a \textit{surface Navier--Stokes equation} with positive, but finite viscosity, which is expected to dominate relaxation dynamics. These findings and observations motivate the inclusion of a \textit{surface Navier--Stokes equation} in system \eqref{eqs:GK} to account for viscous and convective effects at the boundary. 

\FloatBarrier
\begin{figure}[h!]
    \centering
    \includegraphics[width=0.8\textwidth]{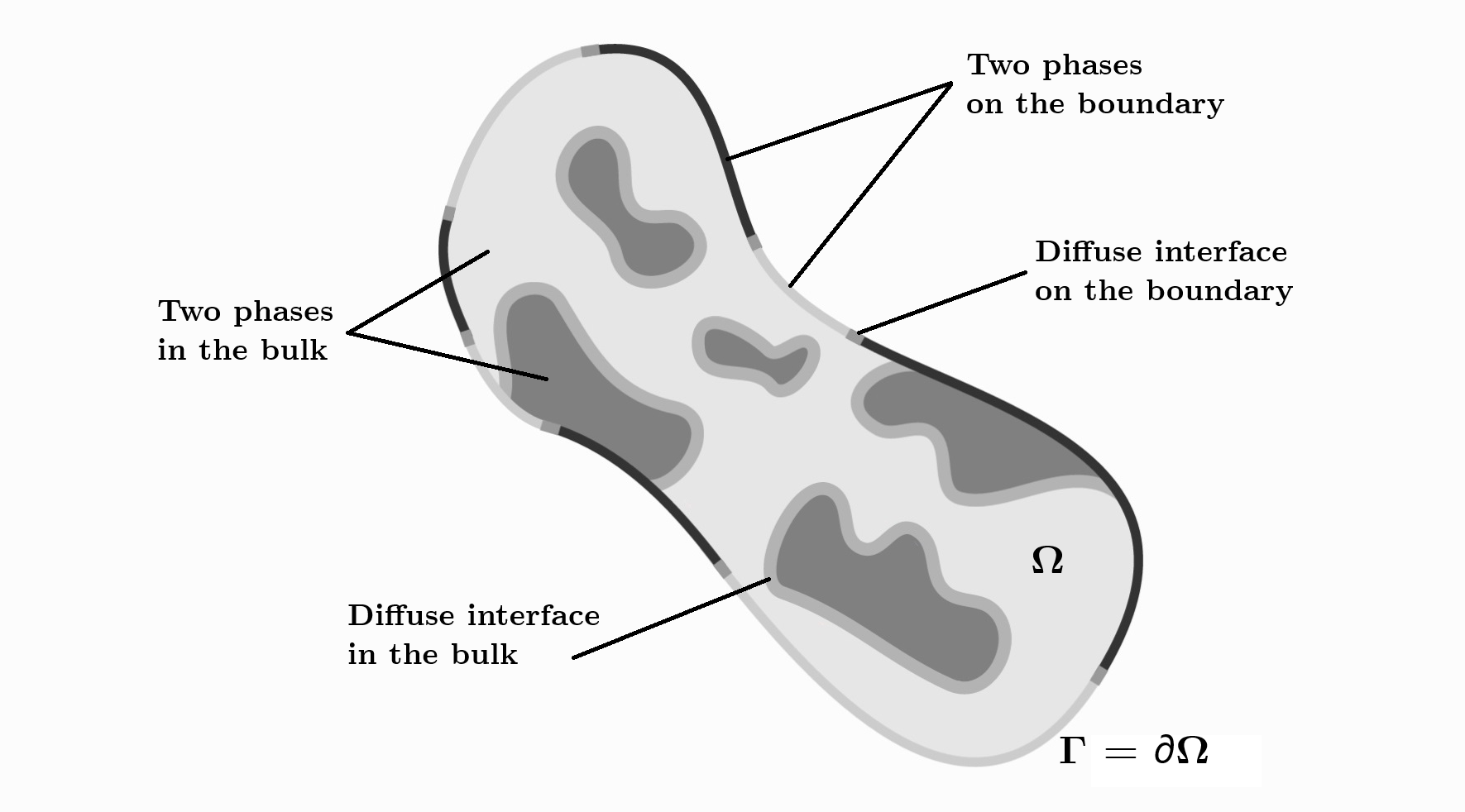}
    \caption{Sketch of a domain (representing, e.g., a biological cell) with two phases (fluids) in the bulk and two phases (fluids) on the boundary, each separated by a diffuse interface.}
\end{figure}
\FloatBarrier

However, if the materials on the boundary actually differ from those in the bulk, 
there is no particular reason why a trace relation between $\phi$ and $\psi$, as imposed by $\eqref{eqs:GK:8}_1$, should be fulfilled.
To account for the interaction of different materials, the more general transmission condition
\begin{align}
    \label{BoundaryCondition:KL*}
    \varepsilon K\deln\phi = H(\psi) - \phi \qquad\text{on~}\Sigma
\end{align}
was proposed in \cite{Colli2019a,Knopf2020}. It involves a parameter $K\in[0,\infty]$ and a transmission function $H$. 
Similar to the boundary condition on the chemical potentials, \eqref{BoundaryCondition:KL*} is to be understood as
\begin{align}\label{BoundaryCondition:KL}
    \begin{cases}
        \phi = H(\psi) &\text{if~}K = 0, \\
        \varepsilon K\deln\phi = H(\psi) - \phi &\text{if~}K\in(0,\infty), \\
        \deln\phi = 0 &\text{if~} K = \infty,
    \end{cases}
    \qquad\text{on~}\Sigma.
\end{align}
The most popular choice is a linear transmission condition, which means that $H$ is given by $H(s) = \alpha s$ for all $s\in\R$ and some constant $\alpha\in\R$.

A further mathematical limitation of the model \eqref{eqs:GK} is that, at least up to now, the existence of a global weak solution in the situation of unmatched densities is ensured only if $L\in (0,\infty]$ (see \cite{Gal2023a}). In the case $L=0$, the existence of a global weak solution was established in \cite{Giorgini2023,Gal2023a} only in the case of matched densities.
Moreover, in all cases $L\in [0,\infty]$, the regularity of weak solutions to system \eqref{eqs:GK} is rather limited. Regularity results for the bulk-surface convective Cahn--Hilliard subsystem (with prescribed velocity fields) have been obtained in \cite{Knopf2024, Knopf2025, Giorgini2025}. However, these results are not applicable for weak solutions of \eqref{eqs:GK} as the trace of the velocity field on the boundary is not regular enough. In the present paper, we will see that the additional inclusion of a surface Navier--Stokes equation helps to overcome this issue, as the inclusion of internal friction has a regularizing effect on the velocity field on the boundary. As discussed in Remark~\ref{REM:REG:PP}, the regularity results established in \cite{Giorgini2025} are actually applicable for weak solutions of our new model, and in this way, the regularity of the phase-fields $\phi$ and $\psi$ can be increased significantly.

\subsection{A new bulk-surface Navier--Stokes--Cahn--Hilliard model}

In the present paper, we derive and analyze a new bulk-surface Navier--Stokes--Cahn--Hilliard model, which reads as follows:
\begin{subequations}\label{System}
\begin{alignat}{3}
    \label{System:1}
    &\delt(\rho(\phi)\bv) + \Div(\bv\otimes(\rho(\phi)\bv + \J)) 
    = \Div\;\T , 
    &&\quad\Div\;\bv = 0 
    &&\qquad\text{in~}Q,
    \\
    \label{System:2}
    &\delt(\sigma(\psi)\bw) + \Divgt(\bw\otimes(\sigma(\psi)\bw + \K)) = \Divgt\T_\Ga + \Z, 
    &&\quad \Divg\;\bw = 0 
    &&\qquad\text{on~}\Sigma,
    \\
    \label{System:3}
    &\bv\vert_\Ga = \bw,\qquad \bv\cdot\n = 0 
    &&{}
    &&\qquad\text{on~}\Sigma,
    \\[0.5em]
    \label{System:4}
    &\delt\phi  + \Div(\phi\bv) = \Div(m_\Om(\phi)\Grad\mu) 
    &&{}
    &&\qquad\text{in~}Q, 
    \\
    \label{System:5}
    &\mu = -\ep\Lap\phi + \tfrac1\ep F^\prime(\phi) 
    &&{}
    &&\qquad\text{in~}Q, 
    \\
    \label{System:6}
    &\delt\psi + \Divg(\psi\bw) = \Divg(m_\Ga(\psi)\Gradg\theta) - \beta m_\Om(\phi)\deln\mu 
    &&{}
    &&\qquad\text{on~}\Sigma, 
    \\
    \label{System:7}
    &\theta = -\delta\kappa\Lapg\psi + \tfrac1\delta G^\prime(\psi) + \alpha\ep\deln\phi 
    &&{}
    &&\qquad\text{on~}\Sigma, 
    \\
    \label{System:8}
    &\ep K\deln\phi = \alpha\psi - \phi, \qquad Lm_\Om(\phi)\deln\mu = \beta\theta - \mu, &&\quad K,L\in[0,\infty], &&\qquad\text{on~}\Sigma.
\end{alignat}
\end{subequations}
Here, viscous dynamics on the boundary are represented by an additional surface velocity field $\bw:\Sigma\to\R^d$. In this model, we assume that the bulk and surface velocity fields are coupled through the trace relation \eqref{System:3}$_1$. This means that the motions of the bulk materials and the surface materials are directly related. Alternatively, stress-based coupling conditions between $\bv$ and $\bw$ might also be conceivable and could be discussed in future contributions.
As we demand the \textit{non-penetration boundary condition} \eqref{System:3}$_2$, the trace relation \eqref{System:3}$_1$ further entails that the surface velocity field $\bw$ is an evolving \textit{tangential} vector field on $\Gamma$.
The location of the two materials in the bulk is represented by the bulk phase-field $\phi:Q\to\R$, whereas the two materials on the boundary are represented by the surface phase-field $\psi:\Sigma\to \R$.

The time evolution of $\bv$ and $\bw$ is described by the \textit{bulk Navier--Stokes equation} \eqref{System:1} and the \textit{tangential surface Navier--Stokes equation} \eqref{System:2}, respectively. Both the flow in the bulk and the flow on the surface are assumed to be incompressible, that is, the velocity fields are divergence-free. Here, $\sigma(\psi)$ denotes the phase-field dependent density on the surface, and is given by
\begin{alignat}{2}
    \label{System:10}
    \sigma(\psi) &\coloneqq \frac{\tilde{\sigma}_2 - \tilde{\sigma}_1}{2}\psi + \frac{\tilde{\sigma}_2 + \tilde{\sigma}_1}{2} &&\qquad\text{on~} \Sigma.
\end{alignat}
Similarly to the density in the bulk, the derivative of $\sigma$ is constant. Therefore, we often write
\begin{align*}
    \sigma^\prime \equiv \sigma^\prime(\psi)
    = \frac{\tilde{\sigma}_2 - \tilde{\sigma}_1}{2}\,.
\end{align*}
Furthermore, $\T_\Ga$ is a stress tensor, whereas $\Z$ represents a surface force density that accounts for mechanical interactions between the bulk materials and the surface materials. They are given by
\begin{alignat}{2}
    \label{System:12}
    \T_\Ga &\coloneqq \S_\Ga - q\I - \delta\kappa\Gradg\psi\otimes\Gradg\psi \quad\text{with}\quad \S_\Ga\coloneqq 2\nu_\Ga(\psi)\Dg\bw &&\qquad\text{on~}\Sigma, \\
    \label{System:13}
    \Z &\coloneqq - \big[\S\,\n\big]_\tau + \tfrac12(\J\cdot\n)\bw + \tfrac12(\J_\Ga\cdot\n)\bw + \alpha\ep\deln\phi\, \Gradg\psi - \gamma(\phi,\psi)\bw &&\qquad\text{on~}\Sigma, 
\end{alignat}
The coefficient $\nu_\Ga:\R\rightarrow\R$ denotes the viscosity of the mixtures on the surface. As above, $\gamma(\phi,\psi)$ denotes a slip coefficient associated with tangential friction. It may now depend on both phase-fields, as they are not necessarily coupled via the trace relation as in \eqref{eqs:GK:8}. Moreover,
\begin{align*}
    \Dg\bw \coloneqq \frac12\big((\Gradg\bw) + (\Gradg\bw)^\top\big)
\end{align*}
denotes the \textit{surface symmetric gradient} of $\bw$. The quantity $\K$ denotes a mass flux related to the interfacial motion on the boundary, while $\J_\Ga$ represents the mass flux corresponding to the transfer of material between bulk and surface. They are given by
\begin{alignat}{2}
    \label{System:15}
    \K &\coloneqq - \sigma^\prime\,
    \qquad
    \J_\Ga\coloneqq - \beta \rho^\prime\,
    &&\qquad\text{on~}\Sigma.
\end{alignat}

Employing the usual decompositions
\begin{alignat*}{2}
    -\ep\Div(\Grad\phi\otimes\Grad\phi) &= -\Grad\left(\frac\ep2\abs{\Grad\phi}^2 + \frac1\ep F(\phi)\right) + \mu\Grad\phi &&\qquad\text{in~}Q, \\
    -\delta\kappa\Divg(\Gradg\psi\otimes\Gradg\psi) &= -\Gradg\left(\frac{\delta\kappa}{2}\abs{\Gradg\psi}^2 + \frac1\delta G(\psi)\right) + \theta\Gradg\psi - \alpha\ep\deln\phi\Gradg\psi &&\qquad\text{on~}\Sigma,
\end{alignat*}
and replacing the bulk and surface pressure by
\begin{alignat*}{2}
    \overline{p} &\coloneqq p + \frac\ep2\abs{\Grad\phi}^2 + \frac1\ep F(\phi) &&\qquad\text{in~}Q, \\
    \overline{q} &\coloneqq q + \frac{\delta\kappa}{2}\abs{\Gradg\psi}^2 + \frac1\delta G(\psi) &&\qquad\text{on~}\Sigma,
\end{alignat*}
respectively, we can rewrite \eqref{System:1}-\eqref{System:2} as
\begin{subequations}\label{eqs:NSCH:pressure}
    \begin{alignat}{2}
        &\delt(\rho(\phi)\bv) + \Div(\bv\otimes(\rho(\phi)\bv + \J)) - \Div(2\nu_\Om(\phi)\D\bv) + \Grad\overline{p} = \mu\Grad\phi &&\qquad\text{in~}Q, \\
        &\delt(\sigma(\psi)\bw) + \Divgt(\bw\otimes(\sigma(\psi)\bw + \K)) - \Divgt(2\nu_\Ga(\psi)\Dg\bw) + \Gradg\overline{q}\nonumber \\
        &\qquad = \theta\Gradg\psi - 2\nu_\Om(\phi)\big[ \D\bv\,\n \big]_\tau + \tfrac12(\J\cdot\n)\bw + \tfrac12(\J_\Ga\cdot\n)\bw - \gamma(\phi,\psi)\bw &&\qquad\text{on~}\Sigma.
    \end{alignat}
\end{subequations}
Moreover, for the subsequent analysis, it will prove useful to rewrite \eqref{eqs:NSCH:pressure} in the so-called \textit{non-conservative form}
\begin{subequations}\label{eqs:NSCH:NC}
    \begin{alignat}{2}
        &\rho(\phi)\delt\bv + \rho(\phi)(\bv\cdot\Grad)\bv + (\J\cdot\Grad)\bv - \Div(2\nu_\Om(\phi)\D\bv) + \Grad\overline{p} = \mu\Grad\phi &&\qquad\text{in~}Q, \label{ConsForm:v} \\
        &\sigma(\psi)\delt\bw + \sigma(\psi)(\bw\cdot\Gradg)\bw + (\K\cdot\Gradg)\bw - \Divgt(2\nu_\Ga(\psi)\Dg\bw) + \Gradg\overline{q}\nonumber \\
        &\qquad = \theta\Gradg\psi - 2\nu_\Om(\phi)\big[ \D\bv\,\n \big]_\tau + \tfrac12(\J\cdot\n)\bw - \tfrac12(\J_\Ga\cdot\n)\bw - \gamma(\phi,\psi)\bw &&\qquad\text{on~}\Sigma. \label{ConsForm:w}
    \end{alignat}
\end{subequations}

The energy functional associated with system \eqref{System} reads as
\begin{align}
    \begin{split}
        E(\bv,\bw,\phi,\psi) &= \intO \frac{\rho(\phi)}{2}\abs{\bv}^2\dx + \intG \frac{\sigma(\psi)}{2}\abs{\bw}^2\dG + \intO \frac\ep2\abs{\Grad\phi}^2 + \frac1\ep F(\phi)\dx 
        \\
        &\quad + \intG \frac{\delta\kappa}{2}\abs{\Gradg\psi}^2 + \frac1\delta G(\psi) + \frac12\chi(K)(\alpha\psi - \phi)^2 \dG.
    \end{split}
\end{align}
Here, the function
\begin{align}\label{DEF:H}
    \chi:[0,\infty]\rightarrow[0,\infty), \quad \chi(r) \coloneqq\begin{cases}
        r^{-1}, &\text{if~}r\in(0,\infty), \\
        0, &\text{if~} r\in\{0,\infty\},
    \end{cases}
\end{align}
is used to distinguish the different cases of the parameters $K$ and $L$. Sufficiently regular solutions to system \eqref{System} satisfy the \emph{energy dissipation law}
\begin{align}\label{diss}
    &\ddt E(\bv,\bw,\phi,\psi) + \intO 2\nu_\Om(\phi)\abs{\D\bv}^2\dx + \intG 2\nu_\Ga(\psi)\abs{\Dg\bw}^2\dG + \intG \gamma(\phi,\psi)\abs{\bw}^2 \dG \\
    &\qquad + \intO m_\Om(\phi)\abs{\Grad\mu}^2\dx + \intG m_\Ga(\psi)\abs{\Gradg\theta}^2\dG + \chi(L)\intG (\beta\theta - \mu)^2\dG = 0 \nonumber
\end{align}
on $[0,T]$
as well as the \emph{mass conservation law}
\begin{align}\label{masscons}
    \begin{dcases}
        \beta\intO \phi(t)\dx + \intG \psi(t)\dG = \beta\intO \phi(0) \dx + \intG \psi(0)\dG, &\textnormal{if~} L\in[0,\infty), \\
        \intO\phi(t)\dx = \intO\phi(0)\dx \quad\textnormal{and}\quad \intG\psi(t)\dG = \intG\psi(0)\dG, &\textnormal{if~} L = \infty,
    \end{dcases}
\end{align}
for all $t\in[0,T]$. This means that the mass is conserved separately in $\Om$ and $\Ga$ if $L = \infty$, while in the case $L\in[0,\infty)$, a transfer of material between bulk and surface is expected to occur (cf.~\cite{Knopf2021a}).

For prescribed velocity fields $\bv$ and $\bw$, the convective bulk-surface Cahn--Hilliard subsystem \eqref{System:4}-\eqref{System:7} subject to the coupling conditions \eqref{System:8} with $K,L\in[0,\infty]$ was first analyzed in \cite{Knopf2024} in the case of regular potentials, and then in \cite{Knopf2025} and \cite{Giorgini2025} for singular potentials.
The \textit{non-convective} version of \eqref{System:4}-\eqref{System:7} (i.e., $\bv\equiv 0$ and $\bw\equiv 0$) has been the subject of extensive investigation in the literature. We refer to \cite{Colli2014,Fukao2021,Garcke2020,Garcke2022,Goldstein2011,Miranville2020,Liu2019,Lv2024,Lv2024a,Lv2024b,Stange2025}
for a selection of representative contributions. For comprehensive overviews on the Cahn--Hilliard equation with classical or dynamic boundary conditions, we recommend the recent review paper \cite{Wu2022} as well as the book \cite{Miranville-Book}.

\subsection{Strategy of the mathematical analysis and plan of the paper}

Our main result (Theorem~\ref{Theorem:Main}) establishes the existence of a global-in-time weak solution to \eqref{System} for arbitrary coupling parameters $K,L\in[0,\infty]$ in both two and three dimensions. Since the convective bulk-surface Cahn--Hilliard subsystem has already been investigated in detail (see \cite{Knopf2025, Giorgini2025}), the central idea of our proof is to employ a semi-Galerkin scheme, in which the velocity fields are approximated on suitable finite-dimensional subspaces. For standard boundary conditions, such as no-slip or Navier-slip boundary conditions, this is typically achieved through the use of eigenfunctions of the Stokes operator subject to the corresponding boundary condition. However, in the present setting, the novel setting of dynamic boundary conditions \eqref{System:2}-\eqref{System:3} (especially the surface divergence condition \eqref{System:2}$_2$) imposed on the velocity field renders these eigenfunctions unsuitable. To overcome this difficulty, we introduce and analyze a related bulk-surface Stokes system along with the corresponding bulk-surface Stokes operator. By means of spectral theory, we show that this operator admits a countable family of eigenvalues with associated eigenfunctions, which in turn serve as a natural basis for approximating the velocity fields $\bv$ and $\bw$. 

This argument, however, can only be carried out in the case $L\in(0,\infty]$, since if $L = 0$, admissible test functions are required to satisfy a specific trace relation. To address this issue, we establish the existence of weak solutions for $L = 0$ by considering the asymptotic limit $L \rightarrow 0$ of weak solutions associated with $L > 0$. However, in the case $L = 0$, we require the densities of the bulk and surface materials to satisfy the compatibility condition
\begin{align}\label{CompCond:Denisities}
    \beta(\tilde\sigma_2 - \tilde\sigma_1) = -(\tilde\rho_2 - \tilde\rho_1).
\end{align}
It arises because our boundary condition on $\bv$ involves not only the flux term $\J\cdot\n$, but also the surface flux term $\J_\Ga\cdot\n$ (see \eqref{System:13}). Note that while for the system \eqref{eqs:GK} from \cite{Giorgini2023} the term $\J\cdot\n$ led to the requirement of matched densities (i.e., $\tilde\rho_1=\tilde\rho_2$) if $L = 0$ (see \cite{Gal2023a}), our compatibility condition \eqref{CompCond:Denisities} is more general and even allows for unmatched densities in the case $L = 0$.

The remainder of this paper is structured as follows. In Section~\ref{Section:Preliminaries}, we present all the necessary notation and preliminaries. Then, Section~\ref{Section:Model} is devoted to the derivation of the bulk-surface Navier--Stokes--Cahn--Hilliard model \eqref{System}, which is carried out by means of local mass balance laws and local energy dissipation laws (both in the bulk and on the surface) as well as the Lagrange multiplier approach. Afterwards, in Section~\ref{Section:MainResults}, we introduce the notion of a weak solution to \eqref{System} and state our main result regarding the existence of a global-in-time weak solution. In Section~\ref{Section:BulkSurfaceStokes}, we study a new bulk-surface Stokes system together with its associated bulk-surface Stokes operator. Section~\ref{Section:Proof:L>0} is dedicated to the construction of a weak solution in the case $L\in(0,\infty]$. Lastly, in Section~\ref{Section:Proof:L=0}, we investigate the asymptotic limit $L\rightarrow 0$, which proves the existence of a weak solution to \eqref{System} in the limit case $L=0$.

\section{Notation and preliminaries}
\label{Section:Preliminaries}

\subsection{General notation and function spaces}

We write $\N$ to denote the set of natural numbers (excluding zero). For any real Banach space $X$ with $\norm{\cdot}_X$, its dual space is denoted by $X^\prime$. The corresponding duality pairing of elements $\phi\in X^\prime$ and $\zeta\in X$ is denoted by $\ang{\phi}{\zeta}_X$. If $X$ is a Hilbert space, we write $\scp{\cdot}{\cdot}_X$ to denote its inner product. Given $p\in[1,\infty]$ and an interval $I\subset[0,\infty)$, the Bochner space $L^p(I,X)$ consists of all Bochner measurable, $p$-integrable functions defined on $I$ with values in $X$. We further define $W^{1,p}(I;X)$ as the space of all functions $f\in L^p(I;X)$ with vector-valued distributional time derivative $\delt f\in L^p(I;X)$. In particular, we set $H^1(I;X) = W^{1,2}(I;X)$. The set of continuous functions $f:I\rightarrow X$ is denoted by $C(I;X)$. Moreover, $C_w(I;X)$ denotes the space of functions $f:I\to X$ which are continuous on $I$ with respect to the weak topology on $X$, i.e., the map $I\ni t\mapsto \ang{\phi}{f(t)}_X$ is continuous for all $\phi\in X^\prime$. 
At this point, we also recall the following lemma, which can be found in \cite[Lemma~4.1]{Abels2009}.
\begin{lemma}\label{Prelim:Lemma:BC_w}
    Let $I\subset\R$ be a compact interval, and let $X,Y$ be Banach spaces with $Y\emb X$ and $X^\prime\emb Y^\prime$ densely. Then the continuous embedding $L^\infty(I;Y)\cap C(I;X)\emb C_w(I;Y)$ holds.
\end{lemma}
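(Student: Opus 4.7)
The plan is to split the argument into two parts: first, to promote the almost-everywhere $Y$-bound supplied by $L^\infty(I;Y)$ into a pointwise $Y$-bound, and second, to transfer weak continuity from $X$ to $Y$ by exploiting the density of $X'$ in $Y'$.

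To carry out the first step, I would fix $f\in L^\infty(I;Y)\cap C(I;X)$, set $M\coloneqq\|f\|_{L^\infty(I;Y)}$, and select a null set $N\subset I$ such that $\|f(t)\|_Y\le M$ for every $t\in I\setminus N$. For an arbitrary $t_0\in I$, I would pick a sequence $t_n\in I\setminus N$ with $t_n\to t_0$ and note that $\{f(t_n)\}$ is bounded in $Y$. Using the weak (or weak-$*$) precompactness of bounded sets in $Y$---the tacit reflexivity-type assumption that is satisfied by every Banach space $Y$ relevant for this paper---I would pass to a subsequence converging weakly in $Y$ to some $y_0\in Y$ with $\|y_0\|_Y\le M$. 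Because $Y\emb X$, this subsequence also converges weakly to $y_0$ in $X$, while continuity of $f:I\to X$ gives $f(t_n)\to f(t_0)$ strongly in $X$. Identifying these two limits yields $f(t_0)=y_0\in Y$ with $\|f(t_0)\|_Y\le M$.

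For the second step, observe that every $\phi\in X'$ can be regarded as an element of $Y'$ via the canonical embedding, and that the map $t\mapsto\langle\phi,f(t)\rangle_Y=\langle\phi,f(t)\rangle_X$ is continuous on $I$ because $f\in C(I;X)$. Given an arbitrary $\phi\in Y'$, the density hypothesis produces a sequence $\{\phi_n\}\subset X'$ with $\phi_n\to\phi$ in $Y'$, and the pointwise $Y$-bound from the first step yields
\begin{align*}
\sup_{t\in I}\big|\langle\phi-\phi_n,f(t)\rangle_Y\big|
\le \|\phi-\phi_n\|_{Y'}\,\sup_{t\in I}\|f(t)\|_Y
\le M\,\|\phi-\phi_n\|_{Y'}\longrightarrow 0.
\end{align*}
Hence $t\mapsto\langle\phi,f(t)\rangle_Y$ is a uniform limit on the compact interval $I$ of continuous functions, and therefore continuous itself. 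As $\phi\in Y'$ was arbitrary, $f\in C_w(I;Y)$; continuity of the embedding then follows from the bound $\sup_{t\in I}\|f(t)\|_Y\le\|f\|_{L^\infty(I;Y)}$.

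The main obstacle is the pointwise bound in the first step: without some form of weak precompactness of bounded sets in $Y$, the candidate limit $y_0$ lies a priori only in the bidual $Y''$, and the identification $f(t_0)\in Y$ would fail. This is exactly why, in its standard formulation (cf.\ Abels 2009, Lemma~4.1), the lemma is invoked under a reflexivity-type assumption on $Y$, which is met for all function spaces appearing in the subsequent analysis.
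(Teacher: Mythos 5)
Your proof is correct and is essentially the standard Strauss--Lions--Temam argument that underlies the cited Abels (2009) lemma: first promote the a.e.\ $Y$-bound to a pointwise one by extracting a weak $Y$-limit along times where the bound holds and identifying it with the $X$-limit $f(t_0)$, then transfer weak continuity from $X'$-pairings to $Y'$-pairings using the density of $X'$ in $Y'$ together with the resulting uniform sup-approximation over the compact interval $I$. Your observation that the first step silently requires weak (or weak-$\ast$) sequential compactness of bounded sets in $Y$ is on point: Abels' original formulation assumes $Y$ reflexive (from which the density of $X'$ in $Y'$ is a consequence), a hypothesis omitted in this paper's restatement but satisfied by every space to which the lemma is applied here.
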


In the remainder of this section, let $\Om\subset\R^d$, $d=2,3$, be a bounded domain with sufficiently smooth boundary $\Ga\coloneqq\partial\Om$, and let $\n$ denote the exterior unit normal vector field on $\Gamma$. For any $s\geq 0$ and $p\in[1,\infty]$, the Lebesgue and Sobolev–Slobodeckij spaces for functions mapping from $\Om$ to $\R$ are denoted as $L^p(\Om)$ and $W^{s,p}(\Om)$, respectively. We write $\norm{\cdot}_{L^p(\Om)}$ and $\norm{\cdot}_{W^{s,p}(\Om)}$ to denote the standard norms on these spaces. If $p = 2$, we use the notation $H^s(\Om) = W^{s,2}(\Om)$, with the convention that $H^0(\Om)$ is identified with $L^2(\Om)$. For the Lebesgue and Sobolev–Slobodeckij spaces on the boundary $\Ga$, we use an analogous notation. The corresponding spaces of vector-valued mapping into $\R^d$ or matrix-valued functions mapping into $\R^{d\times d}$ are denoted by boldface letters, namely $\mathbf L^p$, $\mathbf W^{s,p}$ and $\mathbf H^s$. In particular, we write $\mathbf L^p_\tau(\Gamma)$, $\mathbf W^{s,p}_\tau(\Gamma)$ and $\mathbf H^s_\tau(\Gamma)$ to denote the corresponding spaces of tangential vector fields on $\Gamma$.

Moreover, for $p\in[1,\infty]$, we further introduce the first-order homogeneous Sobolev spaces
\begin{align*}
    \dot W^{1,p}(\Omega) 
    &\coloneqq
    \big\{ u \in L^p_\mathrm{loc}(\Omega) 
    \,:\, \Grad u \in \mathbf L^p(\Omega) \big\}\big/_{\sim} \,,
    \\
    \dot W^{1,p}(\Gamma) 
    &\coloneqq
    \big\{ v \in L^p(\Gamma) 
    \,:\, \Gradg v \in \mathbf L^p(\Gamma) \big\}\big/_{\sim} \,.
\end{align*}
Here, $L^p_\mathrm{loc}(\Omega)$ is the collection of all functions, which belong to $L^p(K)$ for every compact subset $K\subset \Omega$. On the boundary, we simply consider $L^p(\Gamma)$, since $\Gamma$ is itself a compact submanifold.
Moreover, the notation $/_{\sim}\,$ indicates that functions differing only by an additive constant are considered equivalent and thus identified with each other. Consequently, the functionals ${\norm{\Grad\,\cdot}_{\mathbf L^p(\Omega)}}$ and ${\norm{\Gradg\,\cdot}_{\mathbf L^p(\Gamma)}}$ define norms on the spaces $\dot W^{1,p}(\Omega)$ and $\dot W^{1,p}(\Gamma)$, respectively.

Furthermore, we write $\mathbf{P}^\Gamma$ to denote the orthogonal projection of $\R^d$ into the tangent space of the submanifold $\Gamma$.
This means that $\mathbf{P}^\Gamma$ is a linear map, which can be represented as
\begin{align*}
    \mathbf{P}^\Gamma = \mathbf{I} - \n\otimes\n,
\end{align*}
where $\mathbf{I}\in\R^{d\times d}$ is the unity matrix.
For any vector field $\bw$ on $\Gamma$, we also use the notation
\begin{align*}
    [\bw]_\tau = \mathbf{P}^\Gamma(\bw).
\end{align*}

\subsection{Differential operators in the bulk and on the surface} \label{NOT:DIFFOP}
Let $f:\Omega\to\R$ and $g:\Gamma\to\R$ be scalar functions, let $\bv:\Omega\to \R^d$ and $\bw:\Gamma\to\R^d$ be vector fields, and let $\mathbf{A}:\Omega\to\R^{d\times d}$ and $\mathbf{B}:\Gamma\to\R^{d\times d}$ be matrix-valued functions. 
For now, we assume that the boundary $\Gamma$ and all these functions are sufficiently regular.
As usual, $\Grad f$, $\Div\,\bv$, and $\Lap f = \Div(\Grad f)$ denote the gradient of $f$, the divergence of $\bv$ and the Laplacian of $f$. 
Analogously, $\Gradg g$, $\Divg\,\bw$, and $\Lapg g = \Divg(\Gradg g)$ are the tangential gradient of $g$, the surface divergence of $\bw$ and the Laplace--Beltrami operator applied to $g$. 
We point out that $\Grad f$ and $\Gradg g$ are to be understood as column vectors in $\R^d$.
For bulk vector fields, the gradient is defined as
\begin{align*}
    \Grad \bv = 
    \begin{pmatrix}
        (\Grad \bv_1)^T \\ \vdots \\ (\Grad \bv_d)^T
    \end{pmatrix}
    \in \R^{d\times d},
\end{align*}
where $\bv_1,...,\bv_d$ denote the components of $\bv$. In other words, $\Grad\bv$ is the Jacobian of $\bv$. On the surface $\Gamma$, the projective (covariant) gradient of $\bw$ is defined as 
\begin{align*}
    \Gradg\bw = \mathbf{P}^\Gamma \Grad\tw \mathbf{P}^\Gamma,
\end{align*}
where $\tw$ is an extension of $\bw$ to an open neighborhood of $\Gamma$ in $\R^d$. However, the expression $\Gradg\bw$ does not depend on the concrete choice of this extension. If $\bw$ is a tangential vector field, its gradient can also be expressed as
\begin{align*}
    \Gradg \bw = \mathbf{P}^\Gamma
    \begin{pmatrix}
        (\Gradg \bw_1)^T \\ \vdots \\ (\Gradg \bw_d)^T
    \end{pmatrix}
    \in \R^{d\times d},
\end{align*}
where $\bw_1,...,\bw_d$ denote the components of $\bw$.  
For the matrix-valued functions $\mathbf{A}$ and $\mathbf{B}$, we further define the divergences
\begin{align*}
    \Div\,\mathbf{A} = 
    \begin{pmatrix}
        \Div\, \mathbf{A}_{1\ast} \\ \vdots \\ \Div\, \mathbf{A}_{d\ast}
    \end{pmatrix},
    \qquad
    \Divg\,\mathbf{B} = \mathbf{P}^\Gamma
    \begin{pmatrix}
        \Divg\, \mathbf{B}_{1\ast} 
        \\ 
        \vdots 
        \\ \Divg\, \mathbf{B}_{d\ast}  
    \end{pmatrix}.
\end{align*}
Here, $\mathbf{A}_{1\ast},...,\mathbf{A}_{d\ast}$ and $\mathbf{B}_{1\ast},...,\mathbf{B}_{d\ast}$ denote the rows of $\mathbf{A}$ and $\mathbf{B}$, respectively.

\subsection{Bulk-surface product spaces}
For any $s\geq 0$ and $p\in[1,\infty]$, we define
\begin{align*}
	\mathcal{L}^p \coloneqq L^p(\Om)\times L^p(\Ga), \qquad \mathcal{W}^{s,p} \coloneqq W^{s,p}(\Om)\times W^{s,p}(\Ga).
\end{align*}
We write $\mathcal{H}^s = \mathcal{W}^{s,2}$ and identify $\mathcal{L}^2$ with $\mathcal{H}^0$. We point out that $\mathcal{H}^s$ is a Hilbert space with respect to the inner product
\begin{align*}
	\bigscp{\scp{\phi}{\psi}}{\scp{\zeta}{\xi}}_{\mathcal{H}^s} \coloneqq \scp{\phi}{\zeta}_{H^s(\Om)} + \scp{\psi}{\xi}_{H^s(\Ga)} \qquad\text{for all~}(\phi,\psi), (\zeta,\xi)\in\mathcal{H}^s,
\end{align*}
and its induced norm $\norm{\cdot}_{\mathcal{H}^s} \coloneqq \scp{\cdot}{\cdot}_{\mathcal{H}^s}^{1/2}$. We further recall that the duality pairing on $\mathcal{H}^1$ satisfies
\begin{align*}
	\bigang{\scp{\phi}{\psi}}{\scp{\zeta}{\xi}}_{\mathcal{H}^1} = \scp{\phi}{\zeta}_{L^2(\Om)} + \scp{\psi}{\xi}_{L^2(\Ga)}
\end{align*}
for all $(\zeta,\xi)\in\mathcal{H}^1$ if $(\phi,\psi)\in\mathcal{L}^2$. 

For every $\phi\in H^1(\Om)^\prime$, we denote by $\meano{\phi} = \abs{\Om}^{-1}\ang{\phi}{1}_{H^1(\Om)}$ its generalized mean value over $\Om$. If $\phi\in L^1(\Om)$, its spatial mean can simply be expressed as $\meano{\phi} = \abs{\Om}^{-1}\intO\phi\dx$. The spatial mean of any $\psi\in H^1(\Ga)^\prime$, denoted by $\meang{\psi}$, is defined similarly. We further define for any $p\in[2,\infty]$ the space
\begin{align*}
    \mathcal{L}^p_{(0)} \coloneqq L^p_{(0)}(\Om)\times L^p_{(0)}(\Ga),
\end{align*}
where
\begin{align*}
	L^p_{(0)}(\Om) 
    \coloneqq \big\{\phi\in L^p(\Om): \meano{\phi} = 0\big\}, 
    \qquad 
    L^p_{(0)}(\Ga) 
    \coloneqq \big\{\psi\in L^p(\Ga): \meang{\psi} = 0\big\}.
\end{align*}

Now, let $L\in[0,\infty]$ and $\beta\in\R$. We introduce the linear subspace
\begin{align*}
	\mathcal{H}^1_{L,\beta} \coloneqq \begin{cases}
	\mathcal{H}^1, &\text{if~} L\in(0,\infty], \\
	\big\{(\phi,\psi)\in\mathcal{H}^1: \phi = \beta\psi \text{~a.e.~on~}\Ga\big\}, &\text{if~} L = 0.
	\end{cases}
\end{align*}
Subject to the inner product $\scp{\cdot}{\cdot}_{\mathcal{H}^1_{L,\beta}} \coloneqq \scp{\cdot}{\cdot}_{\mathcal{H}^1}$ and its induced norm, $\mathcal{H}^1_{L,\beta}$ is a Hilbert space. Furthermore, we define the product
\begin{align*}
	\bigang{\scp{\phi}{\psi}}{\scp{\zeta}{\xi}}_{\mathcal{H}^1_{L,\beta}} = \scp{\phi}{\zeta}_{L^2(\Om)} + \scp{\psi}{\xi}_{L^2(\Ga)}
\end{align*}
for all $(\phi,\psi), (\zeta,\xi)\in\mathcal{L}^2$. Using the Riesz representation theorem, this product can be extended to a duality pairing on $(\mathcal{H}^1_{L,\beta})^\prime\times\mathcal{H}^1_{L,\beta}$, which will also be denoted as $\ang{\cdot}{\cdot}_{\mathcal{H}^1_{L,\beta}}$.

Next, for $(\phi,\psi)\in(\mathcal{H}^1_{L,\beta})^\prime$, we define the generalized bulk-surface mean
\begin{align*}
	\mean{\phi}{\psi} \coloneqq \frac{\bigang{\scp{\phi}{\psi}}{\scp{\beta}{1}}_{\mathcal{H}^1_{L,\beta}}}{\beta^2\abs{\Om} + \abs{\Ga}},
\end{align*}
which reduces to
\begin{align*}
	\mean{\phi}{\psi} = \frac{\beta\abs{\Om}\meano{\phi} + \abs{\Ga}\meang{\psi}}{\beta^2\abs{\Om} + \abs{\Ga}}
\end{align*}
if $(\phi,\psi)\in\mathcal{L}^2$. We then define the closed linear subspace
\begin{align*}
	\mathcal{V}^1_{L,\beta} = \begin{cases}
	\{(\phi,\psi)\in\mathcal{H}^1_{L,\beta}:  \mean{\phi}{\psi} = 0\}, &\text{if~}L\in[0,\infty), \\
	\{(\phi,\psi)\in\mathcal{H}^1: \meano{\phi} = \meang{\psi} = 0\}, &\text{if~} L = \infty.
	\end{cases}
\end{align*}
Note that $\mathcal{V}^1_{L,\beta}$ is a Hilbert spaces with respect to the inner product $\scp{\cdot}{\cdot}_{\mathcal{H}^1}$. We further introduce the bilinear form
\begin{align*}
	\bigscp{\scp{\phi}{\psi}}{\scp{\zeta}{\xi}}_{L,\beta} \coloneqq &\intO \Grad\phi\cdot\Grad\zeta\dx + \intG \Gradg\psi\cdot\Gradg\xi\dG \\
	&\qquad + \chi(L) \intG (\beta\psi - \phi)(\beta\xi - \zeta)\dG
\end{align*}
for all $(\phi,\psi), (\zeta,\xi)\in\mathcal{H}^1$, where
\begin{align*}
	\chi:[0,\infty]\rightarrow [0,\infty), \qquad 
    \chi(r) \coloneqq 
    \begin{cases} 
    \frac{1}{r}, &\text{if~}r\in(0,\infty), \\
	0, &\text{if~} r\in\{0,\infty\}.
	\end{cases}
\end{align*}
Moreover, we define
\begin{align*}
	\norm{(\phi,\psi)}_{L,\beta} \coloneqq \bigscp{\scp{\phi}{\psi}}{\scp{\phi}{\psi}}_{L,\beta}^{1/2}
\end{align*}
for all $\scp{\phi}{\psi}\in\mathcal{H}^1$. The bilinear form defines an inner product on $\mathcal{V}^1_{L,\beta}$, and $\norm{\cdot}_{L,\beta}$ defines a norm on $\mathcal{V}^1_{L,\beta}$, that is equivalent to the norm $\norm{\cdot}_{\mathcal{H}^1}$, see, e.g., \cite[Corollary A.2]{Knopf2021}. In particular, $(\mathcal{V}^1_{L,\beta},\scp{\cdot}{\cdot}_{L,\beta},\norm{\cdot}_{L,\beta})$ is a Hilbert space. Next, we define the spaces
\begin{align*}
	\mathcal{V}^{-1}_{L,\beta} \coloneqq \begin{cases} \{(\phi,\psi)\in(\mathcal{H}^1_{L,\beta})^\prime: \mean{\phi}{\psi} = 0\}, &\text{if~} L\in[0,\infty), \\
	\{(\phi,\psi)\in(\mathcal{H}^1)^\prime: \meano{\phi} = \meang{\psi} = 0\}, &\text{if~} L = \infty.
	\end{cases}
\end{align*}
Now, using the Lax--Milgram theorem, we infer that for any $(\phi,\psi)\in\mathcal{V}^{-1}_{L,\beta}$, there exists a unique weak solution $\mathcal{S}_{L,\beta}(\phi,\psi) = (\mathcal{S}_{L,\beta}^\Om(\phi,\psi), \mathcal{S}_{L,\beta}^\Ga(\phi,\psi))\in\mathcal{V}_{L,\beta}^1$ to the elliptic boundary value problem
\begin{alignat*}{2}
	-\Lapg\mathcal{S}_{L,\beta}^\Om &= - \phi, &\qquad\text{in~}\Om, \\
	-\Lapg\mathcal{S}_{L,\beta}^\Ga + \beta\deln\mathcal{S}_{L,\beta}^\Om &= - \psi, &\qquad\text{on~}\Ga, \\
	L\deln\mathcal{S}_{L,\beta}^\Om &= \beta\mathcal{S}_{L,\beta}^\Ga - \mathcal{S}_{L,\beta}^\Om, &\qquad\text{on~}\Ga.
\end{alignat*}
To be precise, $\mathcal{S}_{L,\beta}(\phi,\psi)$ satisfies the weak formulation
\begin{align*}
	\bigscp{\mathcal{S}_{L,\beta}(\phi,\psi)}{\scp{\zeta}{\xi}}_{L,\beta} = - \bigang{\scp{\phi}{\psi}}{\scp{\zeta}{\xi}}_{\mathcal{H}^1_{L,\beta}}
\end{align*}
for all $(\zeta,\xi)\in\mathcal{H}^1_{L,\beta}$. We directly infer that
\begin{align*}
	\norm{\mathcal{S}_{L,\beta}(\phi,\psi)}_{L,\beta} \leq C\norm{(\phi,\psi)}_{(\mathcal{H}^1_{L,\beta})^\prime}
\end{align*}
for a constant $C>0$ that is independent of $(\phi,\psi)$.
In this way, we can define the solution operator
\begin{align*}
	\mathcal{S}_{L,\beta}:\mathcal{V}_{L,\beta}^{-1}
    \rightarrow\mathcal{V}_{L,\beta}^1, \qquad (\phi,\psi)\mapsto\mathcal{S}_{L,\beta}(\phi,\psi) = (\mathcal{S}_{L,\beta}^\Om(\phi,\psi), \mathcal{S}_{L,\beta}^\Ga(\phi,\psi)).
\end{align*}
Furthermore, an inner product and its induced norm on $\mathcal{V}_{L,\beta}^{-1}$ are given by
\begin{align*}
	\bigscp{\scp{\phi}{\psi}}{\scp{\zeta}{\xi}}_{L,\beta,\ast} &\coloneqq \bigscp{\mathcal{S}_{L,\beta}(\phi,\psi)}{\mathcal{S}_{L,\beta}(\zeta,\xi)}_{L,\beta}, \\
	\norm{(\phi,\psi)}_{L,\beta,\ast} &\coloneqq \bigscp{\scp{\phi}{\psi}}{\scp{\phi}{\psi}}_{L,\beta,\ast}^{1/2},
\end{align*}
for $(\phi,\psi), (\zeta,\xi)\in\mathcal{V}_{L,\beta}^{-1}$. This norm is equivalent to the standard norm $\norm{\cdot}_{(\mathcal{H}^1_{L,\beta})^\prime}$ on $\mathcal{V}_{L,\beta}^{-1}$. For the case $L\in(0,\infty)$, we refer to \cite[Theorem 3.3 and Corollary 3.5]{Knopf2021} for a proof of these statements. In the other cases, these results can be proven analogously. We further recall the following bulk-surface Poincar\'{e} inequality, which was established in \cite[Lemma A.1]{Knopf2021}.

\medskip

\begin{lemma}\label{Lemma:Poincare}
	Let $K\in[0,\infty)$ and $\alpha,\beta\in\R$ with $\alpha\beta\abs{\Om} + \abs{\Ga} \neq 0$. Then there exists a constant $C_P > 0$ depending only on $K,\alpha,\beta$ and $\Om$ such that
	\begin{align*}
		\norm{(\phi,\psi)}_{\mathcal{L}^2} \leq C_P \norm{(\phi,\psi)}_{K,\alpha}
	\end{align*}
	for all pairs $(\phi,\psi)\in\mathcal{H}^1_{K,\alpha}$ satisfying $\mean{\phi}{\psi} = 0$.
\end{lemma}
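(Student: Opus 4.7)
The plan is to argue by contradiction via a standard compactness argument. Suppose the claimed inequality fails. Then there exists a sequence $(\phi_n,\psi_n)\in\mathcal{H}^1_{K,\alpha}$ satisfying $\mean{\phi_n}{\psi_n}=0$, $\norm{(\phi_n,\psi_n)}_{\mathcal{L}^2}=1$ and $\norm{(\phi_n,\psi_n)}_{K,\alpha}\to 0$ as $n\to\infty$. Combining both bounds, I would observe that the sequence is uniformly bounded in $\mathcal{H}^1$. Invoking the Rellich--Kondrachov theorem component-wise (in $\Omega$ and on the smooth compact submanifold $\Gamma$), I can extract a subsequence, not relabeled, such that $(\phi_n,\psi_n)\wto(\phi,\psi)$ weakly in $\mathcal{H}^1$ and strongly in $\mathcal{L}^2$. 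In particular, the strong convergence preserves the normalization $\norm{(\phi,\psi)}_{\mathcal{L}^2}=1$.

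From the vanishing of $\norm{(\phi_n,\psi_n)}_{K,\alpha}$, weak lower semicontinuity forces $\Grad\phi=0$ in $\Omega$ and $\Gradg\psi=0$ on $\Gamma$, so under the standing assumption that $\Omega$ and $\Gamma$ are connected, $\phi$ and $\psi$ are constants. Next, I would verify the trace relation $\phi=\alpha\psi$ in both cases of $K$. If $K>0$, then $\chi(K)=K^{-1}>0$, hence $\alpha\psi_n-\phi_n\vert_\Gamma\to 0$ in $L^2(\Gamma)$; using the strong convergence $\psi_n\to\psi$ in $L^2(\Gamma)$ together with the continuity of the trace under weak $\mathcal{H}^1$-convergence, this passes to the limit to give $\alpha\psi-\phi=0$ on $\Gamma$. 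If $K=0$, the constraint $\phi_n=\alpha\psi_n$ on $\Gamma$ already holds pointwise for every $n$ by definition of $\mathcal{H}^1_{0,\alpha}$, and the same trace argument transports it to the limit. In either case, the pair of constants $(\phi,\psi)$ satisfies $\phi=\alpha\psi$.

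To conclude, I would pass to the limit in the mean constraint: since the generalized mean is continuous on $\mathcal{L}^2$, the strong convergence yields $\mean{\phi}{\psi}=0$, i.e.\ $\alpha\abs{\Omega}\,\phi+\abs{\Gamma}\,\psi=0$. Substituting $\phi=\alpha\psi$ gives $(\alpha^2\abs{\Omega}+\abs{\Gamma})\,\psi=0$, and the non-degeneracy hypothesis imposed on the parameters (which guarantees that the denominator of $\mean{\cdot}{\cdot}$ does not vanish) forces $\psi=0$ and consequently $\phi=0$. This contradicts $\norm{(\phi,\psi)}_{\mathcal{L}^2}=1$ and completes the argument. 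The main subtlety I anticipate is the uniform handling of the case split in $K$, since the boundary penalty $\chi(K)\intG(\alpha\psi-\phi)^2\dG$ is present only for $K>0$ while for $K=0$ the same information must be extracted from membership in $\mathcal{H}^1_{0,\alpha}$; otherwise the proof reduces to standard trace theory and the Rellich compactness in the bulk and on the surface.
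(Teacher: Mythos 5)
The paper does not prove this statement itself --- it cites \cite[Lemma~A.1]{Knopf2021} --- and your contradiction/compactness strategy is the standard one for Poincar\'{e}-type inequalities. However, the final step of your argument contains a concrete error. In this paper the generalized bulk--surface mean $\mean{\cdot}{\cdot}$ is defined using the parameter $\beta$, not $\alpha$: for $(\phi,\psi)\in\mathcal{L}^2$ one has $\mean{\phi}{\psi}=0$ precisely when $\beta\abs{\Om}\meano{\phi}+\abs{\Ga}\meang{\psi}=0$. Passing to the limit you should therefore obtain $\beta\abs{\Om}\,\phi+\abs{\Ga}\,\psi=0$ for the limiting constants, and substituting the trace relation $\phi=\alpha\psi$ yields $(\alpha\beta\abs{\Om}+\abs{\Ga})\,\psi=0$; the stated hypothesis $\alpha\beta\abs{\Om}+\abs{\Ga}\neq 0$ is then exactly what forces $\psi=0$. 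You wrote $\alpha$ in place of $\beta$ in the mean constraint and concluded with $(\alpha^{2}\abs{\Om}+\abs{\Ga})\,\psi=0$, which would give $\psi=0$ without any hypothesis at all, since $\alpha^{2}\abs{\Om}+\abs{\Ga}>0$ automatically. That the stated hypothesis never enters your derivation should itself have been a warning sign. Relatedly, your parenthetical attributes the hypothesis to the non-vanishing of the denominator $\beta^{2}\abs{\Om}+\abs{\Ga}$ of $\mean{\cdot}{\cdot}$; that quantity is strictly positive for every real $\beta$, so it cannot be what the hypothesis is about.

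The remainder of your argument is in order: the uniform $\mathcal{H}^1$ bound follows from the normalization combined with the vanishing seminorm; the case split between $K>0$ (where the penalty term $\chi(K)\intG(\alpha\psi_n-\phi_n)^2\dG$ tends to zero) and $K=0$ (where the linear trace constraint defines a weakly closed subspace of $\mathcal{H}^1$) is handled correctly; and the compactness of the trace operator from $H^1(\Om)$ into $L^2(\Ga)$ lets you pass the trace relation to the limit. One does also need $\Ga$ to be connected so that $\Gradg\psi=0$ forces $\psi$ to be a single constant rather than merely locally constant; you flag this in passing, and it is indeed an implicit assumption in the present setting.
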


\subsection{Spaces of tangential and divergence-free vector fields}

For $p\in[2,\infty]$, we introduce the spaces 
\begin{align*}
	&\mathbf{L}^p_\Div(\Om) \coloneqq \{ \bv\in\mathbf{L}^p(\Om): \Div\,\bv = 0 \ \text{in~}\Om, \ \bv\cdot\n = 0 \ \text{on~}\Ga\} \,, \\
	&\mathbf{L}^p_\Div(\Ga) \coloneqq \{ \bw\in\mathbf{L}^p(\Ga): \Divg\,\bw = 0, \ \bw\cdot\n = 0 \ \text{on~}\Ga\},
\end{align*}
and we set
\begin{align*}
    \mathbfcal{L}^p_\Div \coloneqq \mathbf{L}^p_\Div(\Om)\times\mathbf{L}^p_\Div(\Ga).
\end{align*}
Then, for $s\geq 0$ and $p\in[2,\infty]$, we define $\mathbf{W}^{s,p}_\Div(\Om) = \mathbf{W}^{s,p}(\Om)\cap \mathbf{L}^2_\Div(\Om)$. Analogously, we define $\mathbf{W}^{s,p}_\Div(\Ga) = \mathbf{W}^{s,p}(\Ga)\cap \mathbf{L}^2_\Div(\Ga)$, and then set $\mathbfcal{W}^{s,p}_\Div = \mathbf{W}^{s,p}_\Div(\Om)\times\mathbf{W}^{s,p}_\Div(\Ga)$. Furthermore, for $s>\frac 12$, we set 
\begin{align*}
    \mathbfcal{W}^{s,p}_0 &\coloneqq \{(\bv,\bw)\in\mathbfcal{W}^{s,p}: \bv\cdot\n = 0, \ \bv\vert_\Ga = \bw \ \text{on~}\Ga\}\,,
    \\
    \mathbfcal{W}^{s,p}_{0,\Div} &\coloneqq \mathbfcal{W}^{s,p}_0 \cap \mathbfcal{W}^{s,p}_\Div \,.
\end{align*}
As before, we use the notation $\mathbfcal{H}^s_0 = \mathbfcal{W}^{s,2}_0$ as well as $\mathbfcal{H}^s_{0,\Div} = \mathbfcal{W}^{s,2}_{0,\Div}$. 

\section{Model derivation}
\label{Section:Model}
In this section, we will derive the bulk-surface Navier--Stokes--Cahn--Hilliard system \eqref{System}.
\subsection{Local mass balance laws}
We want to describe the time evolution of two fluids, indexed with $j=1,2$, in a (sufficiently smooth) domain $\Om\subset\R^d$ with $d\in\{2,3\}$ on a time interval $[0,T]$ with $T > 0$.
We write $Q = \Om\times(0,T)$ and $\Sigma = \Ga\times(0,T)$ with $\Ga = \partial\Om$.
Each bulk material is assumed to have a constant individual density $\tilde{\rho}_j$, $j=1,2$, whereas each surface material has a constant individual density $\tilde{\sigma}_j$, $j=1,2$. 
We further assume that the mass densities $\rho_j:Q\rightarrow\R$, $j=1,2$, of the two fluids in the bulk satisfy the following mass balance law:
\begin{align}\label{BL:MASS:B}
    \delt\rho_j + \Div\;\hJ_j = 0, \quad j=1,2,\quad\text{in~}Q.
\end{align}
Here, $\hJ_j$ are the mass fluxes that correspond to the motion of the two materials in the bulk. In our model, we also want to allow for a transfer of material between bulk and surface, which is, for instance, needed to describe absorption processes. To this end, we additionally consider functions $\sigma_j:\Sigma\rightarrow\R$, $j=1,2$, which represent the mass densities of the two fluids on the surface. In general, the materials on the surface might differ from those in the bulk, which is, for example, the case if the materials are transformed by chemical reactions occurring at the boundary. Therefore, we interpret the densities $\sigma_j$ as independent functions that are not necessarily the traces of the densities $\rho_j$, respectively.
They are supposed to satisfy the following mass balance law on the boundary:
\begin{align}\label{BL:MASS:S}
    \delt\sigma_j + \Divg\;\hK_j = \hJ_{\Ga,j}\cdot\n, \quad j=1,2,\quad\text{on~}\Sigma.
\end{align}
Here, $\hK_j$ stands for the mass flux on the boundary, and the right-hand side $\hJ_{\Ga,j}$ describes the transfer of mass between bulk and surface.
This exchange of material is assumed to be balanced by the relation
\begin{align}\label{CONS:VOL}
    \frac{\hJ_{\Ga,1}}{\tilde{\sigma}_1}\cdot\n + \frac{\hJ_{\Ga,2}}{\tilde{\sigma_2}}\cdot\n = 0 \quad\text{on~}\Sigma.
\end{align}
We next assume that the motion of our materials is described by individual velocity fields $\bv_j:Q\rightarrow\R^d$ and $\bw_j:\Sigma\rightarrow\R^d$, $j=1,2$ in the bulk and on the surface, respectively. These can be expressed as
\begin{align}
    \bv_j = \frac{\hJ_j}{\rho_j}\quad\text{in~}Q,
    \qquad
    \bw_j = \frac{\hK_j}{\sigma_j}\quad\text{on~}\Sigma,
    \qquad j=1,2.
\end{align}
Consequently, the mass balance laws \eqref{BL:MASS:B} and \eqref{BL:MASS:S} can be rewritten as
\begin{alignat}{2}
    \delt\rho_j + \Div(\rho_j\bv_j) &= 0, \quad\quad&&j=1,2,\quad\text{in~} Q, \label{BL:MASS:B'}\\
    \delt\sigma_j + \Divg(\sigma_j\bw_j) &= \hJ_{\Gamma,j} \cdot \n, \quad&&j=1,2,\quad\text{on~}\Sigma. \label{BL:MASS:S'}
\end{alignat}
We now write $\phi_j:Q\rightarrow\R$ and $\psi_j:\Sigma\rightarrow\R$, $j=1,2$, to denote the volume fractions of the two fluids in the bulk and on the surface, respectively. Provided that each component has a constant density $\tilde{\rho}_j$ and $\tilde{\sigma}_j$, $j=1,2$, respectively, the volume fractions can be identified as
\begin{align}\label{DEF:PHIPSI*}
    \phi_j = \frac{\rho_j}{\tilde{\rho}_j}\quad\text{in~}Q,
    \qquad
    \psi_j = \frac{\sigma_j}{\tilde{\sigma}_j}\quad\text{on~}\Sigma,
    \qquad j=1,2.
\end{align}
Assuming that the excess volume is zero, we have
\begin{align}\label{CONS:PHIPSI}
    \phi_1 + \phi_2 = 1 \quad\text{in~}Q,
    \qquad
    \psi_1 + \psi_2 = 1 \quad\text{on~}\Sigma.
\end{align}
We further define the order parameters
\begin{align}\label{DEF:PHIPSI}
    \phi = \phi_2 - \phi_1\quad\text{in~}Q,
    \qquad
    \psi = \psi_2 - \psi_1\quad\text{on~}\Sigma.
\end{align}
Furthermore, let $\bv:Q\rightarrow\R^d$ and $\bw:\Sigma\rightarrow\R^d$ denote the volume-averaged velocity fields associated with the two fluids in the bulk and on the surface, respectively. By means of the order parameters, they can be expressed as
\begin{alignat}{2}
    \bv &= \phi_1\bv_1 + \phi_2\bv_2\quad&&\text{in~}Q,\label{DEF:VAVG:B}\\
    \bw &= \psi_1\bw_1 + \psi_2\bw_2\quad&&\text{on~}\Sigma.\label{DEF:VAVG:S}
\end{alignat}
We assume that neither fluid can penetrate the boundary $\Ga$, and that the velocity field on the boundary is an extension of the one in the bulk. This leads to the conditions
\begin{align}\label{BC:V}
    \bv\vert_\Ga = \bw, \qquad \bv\cdot\n = 0 \qquad\text{on~}\Sigma.
\end{align}
As the second relation entails that $\bv\vert_\Gamma$ is a tangential vector field, the same holds for $\bw$ because of the first identity.
We now introduce
\begin{alignat*}{2}
    \J_j &= \hJ_j - \rho_j\bv,\quad&&j=1,2,\quad\text{in~}Q, \\
    \J_{\Ga,j} &= \hJ_{\Ga,j} - \sigma_j\bw,\qquad&&j=1,2,\quad\text{on~}\Sigma, \\
    \K_j &= \hK_j - \sigma_j\bw\quad&&j=1,2,\quad\text{on~}\Sigma,
\end{alignat*}
to denote the mass fluxes relative to the volume-averaged velocities. In view of \eqref{BC:V}, the mass balance equations \eqref{BL:MASS:B'} and \eqref{BL:MASS:S'} can thus be rewritten as
\begin{alignat}{2}
    \delt\rho_j + \Div(\rho_j\bv) + \Div\;\J_j &= 0,\quad&&j=1,2,\quad\text{in~}Q, \label{BL:MASS:B*}\\
    \delt\sigma_j + \Divg(\sigma_j\bw) + \Divg\;\K_j &= \J_{\Gamma,j}\cdot\n, \quad&&j=1,2,\quad\text{on~}\Sigma.\label{BL:MASS:S*}
\end{alignat}
We now define the total mass densities
\begin{alignat}{2}
    \rho &= \rho_1 + \rho_2
    &&\quad\text{in~}Q,\label{DEF:RHO}\\
    \sigma &= \sigma_1 + \sigma_2
    &&\quad\text{on~}\Sigma.\label{DEF:SIGMA}
\end{alignat}
By means of \eqref{BL:MASS:B*} and \eqref{BL:MASS:S*}, this leads to the relations
\begin{alignat}{2}
    \delt\rho + \Div(\rho\bv) + \Div(\J_1 + \J_2) &= 0\quad\quad&&\text{in~}Q, \label{BL:MASS:B**}\\
    \delt\sigma + \Divg(\sigma\bw) + \Divg(\K_1 + \K_2) &= (\J_{\Gamma,1} + \J_{\Gamma,2})\cdot\n \quad\quad&&\text{on~}\Sigma. \label{BL:MASS:S**}
\end{alignat}

\subsection{Local balance laws for the linear momentum}
The next step is to consider the local balance laws for the linear momentum:
\begin{alignat}{2}
    \delt(\rho\bv) + \Div(\rho\bv\otimes\bv) &= \Div\;\tilde{\T}\quad&&\text{in~}Q, \label{BL:CLM:B}\\
    \delt(\sigma\bw) + \Divgt(\sigma\bw\otimes\bw) &= \Divgt\tilde{\T}_\Ga + \Z\quad&&\text{on~}\Sigma.\label{BL:CLM:S}
\end{alignat}
Here, $\tilde\T$ and $\tilde\T_\Ga$ are stress tensors that need to be specified later by means of constitutive assumptions, and $\Z$ is the surface force density that accounts for mechanical interactions between bulk and surface materials. We point out that $\Z$ is a tangential vector field.
In the following, we write
\begin{align}\label{DEF:JKJG}
    \J = \J_1 + \J_2\quad\text{in~}Q,
    \qquad\quad
    \K = \K_1 + \K_2,
    \quad 
    \J_\Ga = \J_{\Ga,1} + \J_{\Ga,2}\quad\text{on~}\Sigma.
\end{align}
By means of \eqref{BL:MASS:B**}, the balance law \eqref{BL:CLM:B} can be reformulated as
\begin{align}\label{BL:CLM:B*}
    \begin{split}
        \rho(\delt\bv + \bv\cdot\Grad\bv) &= \Div\;\tilde{\T} + (\Div\;\J)\bv \\
        &= \Div(\tilde{\T} + \bv\otimes\J) - \Grad\bv \; \J \quad\text{in~}Q.
    \end{split}
\end{align}
Analogously, using \eqref{BL:MASS:S**} to reformulate \eqref{BL:CLM:S}, we get
\begin{align}\label{BL:CLM:S*}
    \begin{split}
        \sigma(\delt\bw + \bw\cdot\Gradg\bw) = \Divgt(\tilde{\T}_\Ga + \bw\otimes\K) - \big[\Gradg\bw\; \K\big]_\tau - (\J_\Ga\cdot\n)\bw + \Z\quad\text{on~}\Sigma.
    \end{split}
\end{align}
Thus, defining the frame indifferent objective tensors
\begin{align}\label{DEF:TTG}
    \T = \tilde{\T} + \bv\otimes\J \quad\text{in~}Q,
    \qquad
    \T_\Ga = \tilde{\T}_\Ga + \bw\otimes\K\quad\text{on~}\Sigma,
\end{align}
we can rewrite \eqref{BL:CLM:B*} and \eqref{BL:CLM:S*} as
\begin{alignat}{2}
    \rho\delt\bv + \Grad\bv\, (\rho\bv + \J) &= \Div\;\T \quad&&\text{in~}Q, \label{BL:CLM:B**} \\
    \sigma\delt\bw + \big[\Gradg\bw\,(\sigma\bw + \K)\big]_\tau 
    &= \Divgt\T_\Ga - (\J_\Ga \cdot\n)\bw + \Z\quad&&\text{on~}\Sigma\label{BL:CLM:S**}.
\end{alignat}
Next, we multiply the equations \eqref{BL:MASS:B'} each by $1/\tilde{\rho}_j$, $j=1,2$, and add the resulting equations. Recalling \eqref{DEF:PHIPSI*}, \eqref{DEF:PHIPSI} and \eqref{DEF:VAVG:B}, we obtain
\begin{align}\label{PDE:DIV:B}
    \Div\;\bv = \Div\left(\frac{\rho_1}{\tilde{\rho}_1}\bv_1 + \frac{\rho_2}{\tilde{\rho}_2}\bv_2\right) = - \delt\left(\frac{\rho_1}{\tilde{\rho}_1} + \frac{\rho_2}{\tilde{\rho}_2}\right) = - \delt 1 = 0 \quad\text{in~}Q,
\end{align}
which means that the volume-averaged velocity field is divergence-free. Similarly, multiplying \eqref{BL:MASS:S'} by $1/\tilde{\sigma}_j$, $j=1,2$, adding the resulting equations, and recalling \eqref{CONS:VOL}, \eqref{DEF:PHIPSI*}, \eqref{DEF:PHIPSI} and \eqref{DEF:VAVG:S}, we deduce
\begin{align}\label{PDE:DIV:S}
    \begin{split}
        \Divg\bw &= \Divg\left(\frac{\sigma_1}{\tilde{\sigma}_1}\bw_1 + \frac{\sigma_2}{\tilde{\sigma}_2}\bw_2\right) \\
        &= - \Divg\left(\frac{\sigma_1}{\tilde{\sigma}_1} + \frac{\sigma_2}{\tilde{\sigma}_2}\right) + \left(\frac{\hJ_{\Ga,1}}{\tilde{\sigma}_1} + \frac{\hJ_{\Ga,2}}{\tilde{\sigma}_2}\right)\cdot\n = 0 \quad\text{on~}\Sigma.
    \end{split}
\end{align}
We now define the quantities
\begin{alignat}{4}
    \tilde{\J}_j &= \frac{\J_j}{\tilde{\rho}_j}, \quad&&j=1,2, \qquad \J_\phi &&= \tilde{\J}_2 - \tilde{\J}_1 \qquad&&\text{in~}Q, \label{DEF:TJ}\\
    \tilde{\J}_{\Ga,j} &= \frac{\J_{\Ga,j}}{\tilde{\sigma}_j}, \quad&&j=1,2, \qquad \J_\psi &&= \tilde{\J}_{\Ga,2} - \tilde{\J}_{\Ga,1} \qquad&&\text{on~}\Sigma, \label{DEF:TJG}\\
    \tilde{\K}_j &= \frac{\K_j}{\tilde{\sigma}_j}, \quad&&j=1,2, \qquad \K_\psi &&= \tilde{\K}_2 - \tilde{\K}_1 \qquad&&\text{on~}\Sigma.\label{DEF:TK}
\end{alignat}
Then, multiplying the equations \eqref{BL:MASS:B*} each by $1/\tilde{\rho}_j$, $j=1,2$, and subtracting the resulting equations, we deduce that
\begin{align}\label{EQ:PHI}
    \delt\phi + \Div(\phi\bv) + \Div\;\J_\phi = 0 \quad\text{in~}Q.
\end{align}
Furthermore, multiplying the equations \eqref{BL:MASS:B*} each by $1/\tilde{\rho}_j$, $j=1,2$, adding the resulting equations, and recalling \eqref{CONS:PHIPSI} and \eqref{PDE:DIV:B}, we conclude that
\begin{align}\label{ID:DIVJ}
    \Div(\tilde{\J}_1 + \tilde{\J}_2) = -\delt(\phi_1 + \phi_2) - \Div((\phi_1 + \phi_2)\bv) = 0 \quad\text{in~}Q.
\end{align}
Arguing similarly for \eqref{BL:MASS:S*}, and recalling \eqref{CONS:PHIPSI} and \eqref{PDE:DIV:S}, we obtain
\begin{align}\label{EQ:PSI}
    \delt\psi + \Divg(\psi\bw) + \Divg\;\K_\psi &= \J_\psi\cdot\n\quad\text{on~}\Sigma,
\end{align}
and
\begin{align}\label{ID:DIVK}
    \Divg(\tilde{\K}_1 + \tilde{\K}_2) = - \delt(\psi_1 + \psi_2) - \Divg((\psi_1 + \psi_2)\bw) + (\hJ_{\Ga,1} + \hJ_{\Ga,2})\cdot\n = 0
\end{align}
on $\Sigma$. Using \eqref{DEF:TJ} and \eqref{ID:DIVJ}, we infer that
\begin{align}\label{ID:DIVJ*}
    \begin{split}
        \Div\;\J &= \frac12\left[\tilde{\rho}_2\Div\;\tilde{\J}_2 + \tilde{\rho}_1\Div\;\tilde{\J}_1\right] + \frac12\left[\tilde{\rho}_2\Div\;\tilde{\J}_2 + \tilde{\rho}_1\Div\;\tilde{\J}_1\right] \\
        &= \frac{\tilde{\rho}_2 - \tilde{\rho}_1}{2}\Div\;\tilde{\J}_2 - \frac{\tilde{\rho}_2 - \tilde{\rho}_1}{2}\Div\;\tilde{\J}_2 = \frac{\tilde{\rho}_2 - \tilde{\rho}_1}{2}\Div\;\J_\phi \qquad\text{in~}Q.
    \end{split}
\end{align}
Therefore, the fluxes $\J$ and $\tfrac12(\tilde{\rho}_2 - \tilde{\rho}_1)\J_\phi$ can merely differ by an additive divergence-free function. As in \cite{Abels2012, Giorgini2023}, we assume that
\begin{align}\label{ID:J}
    \J = \frac{\tilde{\rho}_2 - \tilde{\rho}_1}{2}\J_\phi\quad\text{in~}Q.
\end{align}
Proceeding analogously, we derive the identity
\begin{align*}
    \Divg\;\K = \frac{\tilde{\sigma}_2 - \tilde{\sigma}_1}{2}\Divg\;\K_\psi\quad\text{on~}\Sigma,
\end{align*}
and assume that
\begin{align}\label{ID:K}
    \K = \frac{\tilde{\sigma}_2 - \tilde{\sigma}_1}{2}\K_\psi\quad\text{on~}\Sigma.
\end{align}
Moreover, assuming that the phase-fields $\phi$ and $\psi$ only attain values in the interval $[-1,1]$, we use \eqref{DEF:PHIPSI*}, \eqref{CONS:PHIPSI}, \eqref{DEF:PHIPSI}, \eqref{DEF:RHO} and \eqref{DEF:SIGMA} to conclude that the densities $\rho = \rho(\phi)$ and $\sigma = \sigma(\psi)$ are given by
\begin{alignat}{2}
    \rho(\phi) &= \frac{\tilde{\rho}_2 - \tilde{\rho}_1}{2}\phi + \frac{\tilde{\rho}_2 + \tilde{\rho}_1}{2}\quad&&\text{in~}Q,\label{DEF:RHO*} \\
    \sigma(\psi) &= \frac{\tilde{\sigma}_2 - \tilde{\sigma}_1}{2}\psi + \frac{\tilde{\sigma}_2 + \tilde{\sigma}_1}{2}\quad&&\text{on~}\Sigma.\label{DEF:SIGMA*}
\end{alignat}
Finally, differentiating formula \eqref{DEF:SIGMA} with respect to time and employing \eqref{EQ:PSI} and \eqref{ID:K}, we deduce that $\sigma = \sigma(\psi)$ fulfills the identity
\begin{align*}
    \delt\sigma + \Divg(\sigma\bw) + \Divg\;\K 
    &= \frac{\tilde{\sigma}_2 - \tilde{\sigma}_1}{2} \J_\psi \cdot\n \quad \text{on~}\Sigma.
\end{align*}
Comparing this equation with \eqref{BL:MASS:S**} (see also \eqref{DEF:JKJG}), we finally conclude that
\begin{align}
    \label{ID:JPSIJGA}
    \J_\Gamma \cdot \n
    = \frac{\tilde{\sigma}_2 - \tilde{\sigma}_1}{2} \J_\psi \cdot\n \quad \text{on~}\Sigma.
\end{align}

\subsection{Local energy dissipation laws}
Proceeding as in \cite{Abels2012} and \cite{Giorgini2023}, we consider the following energy density in the bulk:
\begin{align}\label{DEF:END:B}
    e_\Om(\bv,\phi,\Grad\phi) = \frac{\rho(\phi)}{2}\abs{\bv}^2 + f(\phi,\Grad\phi)\quad\text{in~}Q.
\end{align}
Here, the first summand on the right-hand side represents the kinetic energy density, while the second summand denotes the free energy density in the bulk. It is assumed to be of Ginzburg--Landau type, that is, 
\begin{align}\label{DEF:GL:B}
    f(\phi,\Grad\phi) = \frac\ep2\abs{\Grad\phi}^2 + \frac1\ep F(\phi) \qquad\text{in~}Q.
\end{align}
Moreover, we additionally introduce the following surface energy density
\begin{align}\label{DEF:END:S}
    e_\Ga(\bw,\phi,\psi,\Gradg\psi) = \frac{\sigma(\psi)}{2}\abs{\bw}^2 + g(\phi,\psi,\Gradg\psi)\quad\text{on~}\Sigma.
\end{align}
As in the bulk, the first summand on the right-hand side represents the kinetic energy density on the surface, whereas the second summand stands for the free energy density on the surface.  It is also assumed to be of Ginzburg--Landau type (cf.~\cite{Knopf2025a,Giorgini2023}) with an additional term accounting for the relation between $\phi$ and $\psi$, namely,
\begin{align}\label{DEF:GL:S}
    g(\phi,\psi,\Gradg\psi) = \frac{\delta\kappa}{2}\abs{\Gradg\psi}^2 + \frac1\delta G(\psi) + \frac12\chi(K)(\alpha\psi - \phi)^2 \qquad\text{on~}\Sigma,
\end{align}
for $K\in[0,\infty]$. Here, $\kappa\ge 0$ acts as a weight for the surface Dirichlet energy density and $\chi$ is the function defined in \eqref{DEF:H}.

We now consider an arbitrary test volume $V(t) \subset\Om$, $t\in[0,T]$, which is transported by the flow associated with $\bv$. As we assume our physical system to be isothermal, the second law of thermodynamics leads to the dissipation inequality
\begin{align}\label{IEQ:DISS}
    \begin{split}
        0 &\geq \left[\int_{V(t)}e_\Om(\bv,\psi,\Grad\phi)\dx + \int_{\del V(t)\cap\Ga}e_\Ga(\bw,\phi,\psi,\Gradg\psi)\dG\right] \\
        &\quad + \int_{\del V(t)\cap\Om}\J_e\cdot\n_{\del V(t)}\dG + \int_{\del(\del V(t)\cap\Ga)}\K_e\cdot\n_{\del(\del V(t)\cap\Ga)}\dG_\Ga.
    \end{split}
\end{align}
In this inequality, $\J_e$ and $\K_e$ are energy fluxes that will be specified later, $\n_{\del V(t)}$ denotes the outer unit normal vector field on $\del V(t)$, whereas $\n_{\del(\del V(t)\cap\Om)}$ is the unit conormal vector field on $\del(\del V(t)\cap\Om)$. Since we consider a thermodynamically closed system, there is no exchange of energy over the boundary $\Ga$, and thus, the domain of the first integral in the second line is just $\del V(t)\cap\Om$ instead of $\del V(t)$. We further point out that $\del(\del V(t)\cap\Om)\subset\Ga$ is to be understood as the relative boundary of the set $\del V(t)\cap\Ga$ within the submanifold $\Ga$. Applying Gauß's divergence theorem on both integrals in the second line, and recalling that $\n_{\del V(t)} = \n$ on $\del V(t)\cap\Ga$, we reformulate \eqref{IEQ:DISS} as
\begin{align}\label{IEQ:DISS*}
    \begin{split}
        0 &\geq \left[\int_{V(t)}e_\Om(\bv,\psi,\Grad\phi)\dx + \int_{\del V(t)\cap\Ga}e_\Ga(\bw,\phi,\psi,\Gradg\psi)\dG\right] \\
        &\quad + \int_{V(t)}\Div\;\J_e\dx + \int_{\del V(t)\cap\Ga
        }\Divg\;\K_e - \J_e\cdot\n\dG.
    \end{split}
\end{align}
Using the Reynolds transport theorem (see, e.g., \cite[Theorem~2.11.1]{Baensch2023}), we obtain the relation
\begin{align}\label{TRANSP:B}
    \ddt\int_{V(t)}e_\Om(\bv,\phi,\Grad\phi)\dx = \int_{V(t)}\delt e_\Om(\bv,\phi,\Grad\phi) + \Div(e_\Om(\bv,\phi,\Grad\phi)\bv)\dx.
\end{align}
Similarly, applying the Reynolds transport theorem for evolving hypersurfaces (see, e.g., \cite[Theorem~2.10.1]{Baensch2023}), we obtain
\begin{align}\label{TRANSP:S}
    \begin{split}
        \ddt\int_{\del V(t)\cap\Ga}e_\Ga(\bw,\phi,\psi,\Gradg\psi)\dG &= \int_{V(t)\cap\Ga} \delt e_\Ga(\bw,\phi,\psi,\Gradg\psi) \\
        &\qquad\qquad + \Divg(e_\Ga(\bw,\phi,\psi,\Gradg\psi)\bv)\dG.
    \end{split}
\end{align}
Combining \eqref{IEQ:DISS*}, \eqref{TRANSP:B} and \eqref{TRANSP:S}, we thus get
\begin{align}
    0 &\geq \int_{V(t)}\delt e_\Om(\bv,\phi,\Grad\phi) + \Div(e_\Om(\bv,\phi,\Grad\phi)\bv) + \Div\;\J_e\dx \label{IEQ:DISS**}\\
    &\qquad + \int_{\del V(t)\cap\Ga}\delt e_\Ga(\bw,\phi,\psi,\Gradg\psi) + \Divg(e_\Ga(\bw,\phi,\psi,\Gradg\psi)\bv) + \Divg\;\K_e - \J_e\cdot\n\dG \nonumber.
\end{align}
In particular, this inequality holds for all test volumes $V(t)\subset\Om$ with $\del V(t)\cap\Ga = \emptyset$. Therefore, we infer the local dissipation law in the bulk, which reads as
\begin{align}\label{IEQ:DISS:LOC:B}
    0 \geq -\mathcal{D}_\Om\coloneqq \delt e_\Om(\bv,\phi,\Grad\phi) + \Div(e_\Om(\bv,\phi,\Grad\phi)\bv) + \Div\;\J_e \qquad\text{in~}Q.
\end{align}
Let now $c > 0$ be arbitrary, and consider a test volume $V(t)\subset\Om$ with $\abs{V(t)}$ being sufficiently small such that
\begin{align*}
    \int_{V(t)} \mathcal{D}_\Om\dx < c.
\end{align*}
Then, invoking \eqref{IEQ:DISS**}, we infer that
\begin{align*}
    \int_{\del V(t)\cap\Ga}\delt e_\Ga(\bw,\phi,\psi,\Gradg\psi) + \Divg(e_\Ga(\bw,\phi,\psi,\Gradg\psi)\bv) + \Divg\;\K_e - \J_e\cdot\n\dG < c.
\end{align*}
As $c > 0$ and the test volume $V(t)$ were arbitrary (except for the above smallness condition on $V(t)$), we conclude the following local dissipation law on the boundary:
\begin{align}\label{IEQ:DISS:LOC:S}
    0 \geq -\mathcal{D}_\Ga \coloneqq \delt e_\Ga(\bw,\phi,\psi,\Gradg\psi) + \Divg(e_\Ga(\bw,\phi,\psi,\Gradg\psi)\bv) + \Divg\;\K_e - \J_e\cdot\n \qquad\text{on~}\Sigma.
\end{align}

\subsection{Completion of the model derivation via the Lagrange multiplier approach}
We now want to identify the still undetermined flux terms using the \emph{Lagrange multiplier approach}. Therefore, we introduce Lagrange multipliers $\mu$ and $\theta$, which need to be determined in the course of this subsection. In the final model, $\mu$ will represent the \emph{bulk chemical potential}, while $\theta$ will represent the \emph{surface chemical potential}. Invoking the identities \eqref{EQ:PHI} and \eqref{EQ:PSI}, the local energy dissipation laws \eqref{IEQ:DISS:LOC:B} and \eqref{IEQ:DISS:LOC:S} can be expressed as
\begin{alignat}{2}
    0 \geq -\mathcal{D}_\Om &= \delt e_\Om(\bv,\phi,\Grad\phi) + \Div(e_\Om(\bv,\phi,\Grad\phi)\bv) + \Div\;\J_e \nonumber \\
    \qquad& - \mu(\delt\phi + \Div(\phi\bv) + \Div\;\J_\phi) \qquad&&\text{in~}Q, \label{IEQ:DISS:B:O}\\[0.5em]
    0 \geq -\mathcal{D}_\Ga &= \delt e_\Ga(\bw,\phi,\psi,\Gradg\psi) + \Divg(e_\Ga(\bw,\phi,\psi,\Gradg\psi)\bv) + \Divg\;\K_e  \nonumber \\
    \qquad& - \J_e\cdot\n - \theta(\delt\psi + \Divg(\psi\bw) + \Divg\;\K_\psi - \J_\psi\cdot\n) \qquad&&\text{on~}\Sigma. \label{IEQ:DISS:S:O}
\end{alignat}
In the following, to provide a cleaner presentation, we will simply write $\rho$, $\sigma$, $f$ and $g$ instead of $\rho(\phi)$, $\sigma(\psi)$, $f(\phi,\Grad\phi)$ and $g(\phi,\psi,\Gradg\psi)$, respectively. Invoking the definition of the energy densities $e_\Om$ and $e_\Ga$ (see \eqref{DEF:END:B} and \eqref{DEF:END:S}), we reformulate \eqref{IEQ:DISS:B:O} and \eqref{IEQ:DISS:S:O} as
\begin{alignat}{2}
    0 \geq -\mathcal{D}_\Om &=  \delt(\tfrac12\rho\abs{\bv}^2) + \Div(\tfrac12\rho\abs{\bv}^2\bv) + \delt f + \Div(f\bv) + \Div\;\J_e
    \label{IEQ:DISS:B:1} \nonumber\\
    &\quad  - \mu(\delt\phi + \Div(\phi\bv) + \Div\;\J_\phi) 
    &&\quad\text{in~}Q, 
    \\[0.5em]
    0 \geq -\mathcal{D}_\Ga &= \delt(\tfrac12\sigma\abs{\bw}^2) + \Divg(\tfrac12\sigma\abs{\bw}^2\bw) + \delt g + \Divg(g\bw) + \Divg\;\K_e
    \label{IEQ:DISS:S:1} \nonumber\\
    &\quad   - \J_e\cdot\n - \theta(\delt\psi + \Divg(\psi\bw) + \Divg\;\K_\psi - \J_\psi\cdot\n) 
    &&\quad\text{on~}\Sigma. 
\end{alignat}
In the following, for functions $h:Q\rightarrow\R$ and $k:\Sigma\rightarrow\R$, their material derivatives are given by
\begin{alignat*}{2}
    \deltb h &= \delt h + \bv\cdot\Grad h \qquad&&\text{in~}Q, \\
    \deltc k &= \delt k + \bw\cdot\Gradg k \qquad&&\text{on~}\Sigma.
\end{alignat*}
We point out that in the material derivative $\deltc$ on the boundary, only the tangential gradient, instead of the full gradient, is relevant. This is because we already know from \eqref{BC:V} that $\bw$ is a tangential vector field.
With these definitions at hand, we can express the derivatives $\deltb f$ and $\deltc g$ through the chain rule as
\begin{alignat}{2}
    \deltb f &= \delphi f \deltb\phi + \delgphi f\deltb\Grad\phi \qquad&&\text{in~}Q, \label{ID:DELTCF} \\
    \deltc g &= \delphi g \deltc\phi + \delpsi g\deltc\psi + \delgpsi g \deltc\Gradg\psi \qquad&&\text{on~}\Sigma. \label{ID:DELTPG}
\end{alignat}
\subsubsection{Computations in the bulk}
In the bulk we proceed exactly as in \cite{Abels2012} and \cite{Giorgini2023} to reformulate the inequality \eqref{IEQ:DISS:B:1} as
\begin{align}\label{IEQ:DISS:B:2}
    \begin{split}
        0 &\geq \Div[\J_e - \tfrac12\abs{\bv}^2\J + \T^\top\bv - \mu\J_\phi + \delgphi f\deltb\phi] \\
        &\quad + [\delphi f - \Div(\delgphi f) - \mu]\deltb\phi \\
        &\quad - [\T + \Grad\phi\otimes\delgphi f]:\Grad\bv + \Grad\mu\cdot\J_\phi \qquad\qquad\text{in~}Q.
    \end{split}
\end{align}
To ensure that \eqref{IEQ:DISS:B:2} holds, we now choose the chemical potential $\mu$ and the energy flux $\J_e$ as
\begin{alignat}{2}
    \mu &= \delphi f - \Div(\delgphi f) &&\qquad\text{in~}Q, \label{DEF:MU}\\
    \J_e &= \tfrac12\abs{\bv}^2\J - \T^\top\bv + \mu\J_\phi - \delgphi f\deltb\phi &&\qquad\text{in~}Q. \label{DEF:JE}
\end{alignat}
In view of these choices, the first two lines of the right-hand side of \eqref{IEQ:DISS:B:2} vanish. Furthermore, we assume the mass flux $\J_\phi$ to be of Fick's type, which means that
\begin{align}\label{DEF:JPHI}
    \J_\phi = -m_\Om(\phi)\Grad\mu\qquad\text{in~}Q.
\end{align}
Here, $m_\Om = m_\Om(\phi)$ is a non-negative function, which represents the mobility in the bulk. Therefore, \eqref{IEQ:DISS:B:2} reduces to
\begin{align}\label{IEQ:DISS:B:3}
    0 \geq -[\T + \Grad\phi\otimes\delgphi f]:\Grad\bv - m_\Om(\phi)\abs{\Grad\mu}^2 \qquad\text{in~}Q.
\end{align}
We next define the tensor
\begin{align*}
    \S = \T + p\I + \Grad\phi\otimes\delgphi f \qquad\text{in~}Q.
\end{align*}
Here, the scalar variable $p$ denotes the pressure in the bulk, and $\I$ stands for the identity. The tensor $\S$ is the \emph{viscous stress tensor} that corresponds to the irreversible changes of the energy caused by internal friction. For Newtonian fluids, $\S$ is usually assumed to be given by
\begin{align*}
    \S = 2\nu_\Om(\phi)\D\bv,
\end{align*}
where the non-negative function $\nu_\Om = \nu_\Om(\phi)$ represents the viscosity of the fluids in the bulk, and $\D\bv$ is the symmetric gradient of $\bv$. This choice, together with \eqref{IEQ:DISS:B:3} and the identity $p\I:\Grad\bv = p\,\Div\;\bv = 0$ in $Q$, implies that \eqref{IEQ:DISS:B:2} is fulfilled as it reduces to
\begin{align}\label{IEQ:DISS:B:4}
    0 \geq -\nu_\Om(\phi)\abs{\D\bv}^2 - m_\Om(\phi)\abs{\Grad\mu}^2 \qquad\text{in~}Q.
\end{align}
In view of \eqref{DEF:GL:B}, the chemical potential $\mu$ given by \eqref{DEF:MU} can be expressed as
\begin{align}\label{PDE:MU}
    \mu = - \ep\Lap\phi + \frac1\ep F^\prime(\phi)\qquad\text{in~}Q.
\end{align}
Moreover, the total stress tensor $\T$ is given by
\begin{align}\label{DEF:T}
    \T = \S - p\I - \Grad\phi\otimes\delgphi f = 2\nu_\Om(\phi)\D\bv - p\I - \ep\Grad\phi\otimes\Grad\phi\qquad\text{in~}Q.
\end{align}
Furthermore, plugging \eqref{DEF:JPHI} into \eqref{EQ:PHI}, we obtain the equation
\begin{align}\label{PDE:PHI}
    \delt\phi + \Div(\phi\bv) = \Div(m_\Om(\phi)\Grad\mu) \qquad\text{in~}Q.
\end{align}
Moreover, \eqref{BL:CLM:B} can be reformulated as
\begin{align}\label{PDE:NS:B}
    \delt(\rho(\phi)\bv) + \Div(\bv\otimes(\rho(\phi)\bv + \J)) = \Div\,\T
\end{align}
in $Q$, where $\T$ is given by \eqref{DEF:T} and, due to \eqref{ID:J} and \eqref{DEF:JPHI}, the flux term $\J$ is given by
\begin{align}\label{PDE:J}
    \J = - \frac{\tilde{\rho}_2 - \tilde{\rho}_1}{2}m_\Om(\phi)\Grad\mu \qquad\text{in~}Q.
\end{align}
\subsubsection{Computations on the surface}
Next, we consider the local energy dissipation law \eqref{IEQ:DISS:S:1} on the surface. Recalling the formulas for $f$ (see \eqref{DEF:GL:B}) and $\J_e$ (see \eqref{DEF:JE}), we deduce from \eqref{IEQ:DISS:S:1} that
\begin{align}\label{IEQ:DISS:S:2}
    \begin{split}
        0 &\geq \delt(\tfrac12\sigma\abs{\bw}^2) + \Divg(\tfrac12\sigma\abs{\bw}^2\bw) + \delphi g \deltc\phi + \delpsi g\deltc\psi + \delgpsi g \deltc\Gradg\psi \\
        &\quad + \Divg\K_e - \tfrac12(\J\cdot\n)\bv\cdot\bv - \T\n\cdot\bv - \mu\J_\phi\cdot\n + \ep(\Grad\phi\cdot\n)\deltc\phi \\
        &\quad - \theta\deltc\psi - \Divg(\K_\psi\theta) + \Gradg\theta\cdot\K_\psi + \theta\J_\psi\cdot\n
    \end{split}
\end{align}
on $\Sigma$. Here, we have used that
\begin{align*}
    \bv\cdot\Grad\phi = \bv\cdot[\Gradg\phi + \n(\Grad\phi\cdot\n)] = \bw\cdot\Gradg\phi\qquad\text{on~}\Sigma,
\end{align*}
which follows from the definition of the surface gradient and the boundary condition \eqref{BC:V}.
Next, a straightforward computation yields
\begin{align}
    \Divg(\delgpsi g \deltc\psi) 
    = \Divg(\delgpsi g)\deltc\psi 
    + \delgpsi g \cdot \deltc\Gradg\psi + [\Gradg\psi\otimes\delgpsi g]:\Gradg\bw
\end{align}
on $\Sigma$. 
We now use this identity along with \eqref{BL:CLM:S**}, \eqref{EQ:PSI}, \eqref{ID:K}, \eqref{DEF:SIGMA} and \eqref{ID:JPSIJGA} to reformulate \eqref{IEQ:DISS:S:2}. After a technical but straightforward computation, we arrive at
\begin{align}\label{IEQ:DISS:S:3}
    \begin{split}
        0 &\geq \Divg[\K_e - \tfrac12\abs{\bw}^2\K + \T_\Ga^\top \bw  - \theta\K_\psi + \delgpsi g\deltc\psi + \tfrac12\chi(K)(\alpha\psi-\phi)^2\bw] \\
        &\quad + [\delphi g + \ep\Grad\phi\cdot\n]\deltc\phi + [\delpsi g - \Divg(\delgpsi g) - \theta]\deltc\psi \\
        &\quad + [[\T\n]_\tau - \tfrac12(\J\cdot\n)\bv - \tfrac12(\J_\Ga\cdot\n)\bw - \tfrac12\chi(K)\Gradg(\alpha\psi-\phi)^2 + \Z]\cdot\bw \\
        &\quad - [\T_\Ga + \Gradg\psi\otimes\delgpsi g]:\Gradg\bw + \Gradg\theta\cdot\K_\psi + \theta\J_\psi\cdot\n - \mu\J_\phi\cdot\n
    \end{split}
\end{align}
on $\Sigma$. 
In order to ensure that \eqref{IEQ:DISS:S:3} is fulfilled, we choose the mass flux $\K_\psi$ and the energy flux $\K_e$ as follows:
\begin{alignat}{2}
    \K_\psi &= - m_\Ga(\psi)\Gradg\theta &&\qquad\text{on~}\Sigma, \label{DEF:KPSI}\\
    \K_e &= \frac12\abs{\bw}^2\K - \T_\Ga^\top\bw + \theta\K_\psi - \delgpsi g \deltc\psi 
    - \tfrac12\chi(K)(\alpha\psi-\phi)^2\bw
    &&\qquad\text{on~}\Sigma. \label{DEF:KE}
\end{alignat}
In \eqref{DEF:KPSI}, $m_\Ga = m_\Ga(\psi)$ is a non-negative function representing the mobility on the surface. This choice of $\K_e$ ensures that the first line of the right-hand side of \eqref{IEQ:DISS:S:3} vanishes. Due to \eqref{DEF:KPSI} and \eqref{DEF:KE}, inequality \eqref{IEQ:DISS:S:3} reduces to
\begin{align}\label{IEQ:DISS:S:4}
    \begin{split}
        0 &\geq [\delphi g + \ep\Grad\phi\cdot\n]\deltc\phi + [\delpsi g - \Divg(\delgpsi g) - \theta]\deltc\psi 
        \\
        &\quad + [[\T\n]_\tau - \tfrac12(\J\cdot\n)\bv - \tfrac12(\J_\Ga\cdot\n)\bw 
        - \tfrac12\chi(K)\Gradg(\alpha\psi-\phi)^2 + \Z]\cdot\bw 
        \\
        &\quad - [\T_\Ga + \Gradg\psi\otimes\delgpsi g]:\Gradg\bw - m_\Ga(\psi)\abs{\Gradg\theta}^2 + \theta\J_\psi\cdot\n - \mu\J_\phi\cdot\n
    \end{split}
\end{align}
on $\Sigma$. We further assume that the flux terms $\J_\phi$ and $\J_\psi$ are directly proportional. To be precise, we suppose that there exists a real number $\beta$ such that
\begin{align}\label{DEF:JPSI}
    \J_\psi\cdot\n = \beta\J_\phi\cdot\n = - \beta m_\Om(\phi)\Grad\mu\cdot\n \qquad\text{on~}\Sigma.
\end{align}
The parameter $\beta$ is related to the transfer of material between bulk and surface and indicates the proportion of bulk material that becomes surface material (or vice versa).
Due to \eqref{ID:JPSIJGA}, this further implies that
\begin{align}
    \label{PDE:JG}
    \J_\Gamma \cdot \n
    =
    -\beta\frac{\tilde{\sigma}_2 - \tilde{\sigma}_1}{2}m_\Om(\phi)\Grad\mu\cdot\n 
    \qquad\text{on~}\Sigma.
\end{align}
The second equality in \eqref{DEF:JPSI} follows from \eqref{DEF:JPHI}. We further make the constitutive assumption
\begin{align}\label{DEF:Z}
    \begin{split}
    \Z 
    &= - [\S\n]_\tau 
    + \tfrac12(\J\cdot\n)\bw + \tfrac12(\J_\Ga\cdot\n)\bw 
    - \gamma(\phi,\psi)\bw 
    + \alpha\ep (\Grad\phi\cdot\n) \Gradg\psi
    \\
    &= - [\T\n]_\tau 
    + \tfrac12(\J\cdot\n)\bw + \tfrac12(\J_\Ga\cdot\n)\bw 
    - \gamma(\phi,\psi)\bw  
    + \ep (\Grad\phi\cdot\n) \Gradg(\alpha\psi - \phi)
    \end{split}
\end{align}
on $\Sigma$. 
Here, $\gamma = \gamma(\phi,\psi)$ is a non-negative slip parameter that is related to tangential friction at the boundary. 
By this choice of $\Z$, inequality \eqref{IEQ:DISS:S:4} further reduces to
\begin{align}\label{IEQ:DISS:S:4*}
    \begin{split}
        0 &\geq [\delphi g + \ep\Grad\phi\cdot\n]\deltc\phi + [\delpsi g - \Divg(\delgpsi g) - \theta]\deltc\psi 
        \\
        &\quad + [\ep (\Grad\phi\cdot\n) 
            - \chi(K)(\alpha\psi-\phi)]\, \Gradg(\alpha\psi-\phi) \cdot\bw 
        \\
        &\quad - [\T_\Ga + \Gradg\psi\otimes\delgpsi g]:\Gradg\bw - m_\Ga(\psi)\abs{\Gradg\theta}^2 + \theta\J_\psi\cdot\n - \mu\J_\phi\cdot\n
    \end{split}
\end{align}
Analogously to the bulk, we introduce a tensor
\begin{align*}
    \S_\Ga = \T_\Ga + q\I + \Gradg\psi\otimes\delgpsi g \qquad\text{on~}\Sigma.
\end{align*}
Here, the scalar variable $q$ denotes the pressure on the surface, and the tensor $\S_\Ga$ is the \emph{viscous stress tensor} on the surface, which is assumed to be given by
\begin{align}
    \S_\Ga = 2\nu_\Ga(\psi)\Dg\bw.
\end{align}
Here, $\nu_\Ga = \nu_\Ga(\psi)$ is a non-negative function representing the viscosity of the fluids on the surface and $\Dg\bw$ is the symmetric surface gradient of $\bw$. Therefore, the total stress tensor on the surface reads as
\begin{align}\label{DEF:TG}
    \T_\Ga = \S_\Ga - q\I - \delta\kappa\Gradg\psi\otimes\Gradg\psi \qquad\text{on~}\Sigma.
\end{align}
This means that \eqref{BL:CLM:S} can be reformulated as
\begin{align}\label{PDE:NS:S}
    \delt(\sigma(\psi)\bw) + \Divgt(\bw\otimes(\sigma(\psi)\bw + \K)) = \Divgt\T_\Ga + \Z
    \quad\text{on $\Sigma$},
\end{align}
where, according to \eqref{ID:K} and \eqref{DEF:KPSI}, we have
\begin{align}
    \label{PDE:K}
    \K = -\frac{\tilde{\sigma}_2 - \tilde{\sigma}_1}{2}m_\Ga(\psi)\Gradg\theta,
\end{align}
$\T_\Gamma$ is given by \eqref{DEF:TG}, and $\Z$ is given by \eqref{DEF:Z}.
Moreover, by means of \eqref{DEF:TG} and the identity $q\I:\Gradg\bw = q\Divg\;\bw = 0$ on $\Sigma$, \eqref{IEQ:DISS:S:4} reduces to
\begin{align}\label{IEQ:DISS:S:5}
    \begin{split}
        0 &\geq [\delphi g + \ep\Grad\phi\cdot\n]\deltc\phi 
        + [\delpsi g - \Divg(\delgpsi g) - \theta]\deltc\psi 
        \\
        &\quad + [\ep (\Grad\phi\cdot\n) 
            - \chi(K)(\alpha\psi-\phi)]\, \Gradg(\alpha\psi-\phi) \cdot\bw
        \\
        &\quad 
        - \nu_\Gamma(\psi) \abs{\D_\Gamma \bw}^2
        - \gamma(\phi,\psi)\abs{\bw}^2 - m_\Ga(\psi)\abs{\Gradg\theta}^2 - (\beta\theta - \mu)m_\Om(\phi)\Grad\mu\cdot\n 
    \end{split}
\end{align}
on $\Sigma$. The first three terms in the third line of the right-hand side of \eqref{IEQ:DISS:S:5} are clearly non-positive. The fourth term is non-positive if, depending on a parameter $L\in [0,\infty]$, one of the following boundary conditions holds:%
\begin{subequations}
    \label{BC:MUTH}
    \begin{alignat}{2}
        \beta\theta - \mu &= 0 
        &&\qquad\text{on $\Sigma$ if $L=0$}, 
        \\
        m_\Om(\phi)\Grad\mu\cdot\n &= \tfrac1L(\beta\theta - \mu) &&\qquad\text{on $\Sigma$ if $L \in (0,\infty)$},  
        \\
        m_\Om(\phi)\Grad\mu\cdot\n &= 0 
        &&\qquad\text{on $\Sigma$ if $L=\infty$}.
    \end{alignat}
\end{subequations}
Similarly, to deal with the first and the second line of \eqref{IEQ:DISS:S:5}, we assume $\phi$ and $\psi$ to fulfill one of the boundary conditions
\begin{subequations}
    \label{BC:PHIPSI}
    \begin{alignat}{2}
        \label{BC:PHIPSI:1}
        \alpha\psi - \phi &= 0 
        &&\qquad\text{on $\Sigma$ if $K=0$}, 
        \\
        \label{BC:PHIPSI:2}
        \ep\Grad\phi\cdot\n &= \tfrac1K(\alpha\psi - \phi) &&\qquad\text{on $\Sigma$ if $K\in (0,\infty)$}, 
        \\
        \label{BC:PHIPSI:3}
        \Grad\phi\cdot\n &= 0 
        &&\qquad\text{on $\Sigma$ if $K=\infty$},
    \end{alignat}
\end{subequations}
where $K\in[0,\infty]$ is the number that was introduced in \eqref{DEF:GL:S}.
To show that \eqref{IEQ:DISS:S:5} is actually fulfilled, we need to distinguish the cases $K = 0$, $K\in(0,\infty)$ and $K=\infty$. 
\begin{itemize}[leftmargin=*]
\item \emph{The case} $K = 0$. Without loss of generality, we assume that $\alpha\neq 0$. The case $\alpha=0$ can be handled similarly, but the computations are even easier.
Then, we immediately see that the second line in \eqref{IEQ:DISS:S:5} vanishes. In the definition of $g$, we have $\chi(K) = \chi(0) = 0$ and thus $\delphi g = 0$ on $\Sigma$. Moreover, due to \eqref{BC:PHIPSI:1}, $\alpha^{-1}\phi$ can be interpreted as an extension of $\psi$ in some neighborhood of $\Sigma$, which entails that $\deltc\phi = \alpha\deltc\psi$ on $\Sigma$. Consequently, the first line of the right-hand side of \eqref{IEQ:DISS:S:5} can be reformulated as
\begin{align*}
    [\delpsi g - \Divg(\delgpsi g) + \alpha\ep\Grad\phi\cdot\n - \theta]\deltc\psi \qquad\text{on~}\Sigma.
\end{align*}
Choosing
\begin{align*}
    \theta = \delpsi g - \Divg(\delgpsi g) + \alpha\ep\Grad\phi\cdot\n \qquad\text{on~}\Sigma,
\end{align*}
we thus ensure that the first line of \eqref{IEQ:DISS:S:5} also vanishes. This shows that inequality \eqref{IEQ:DISS:S:5} is fulfilled. 

\item\emph{The case} $K\in(0,\infty)$. Boundary condition \eqref{BC:PHIPSI:2} entails that the second line of \eqref{IEQ:DISS:S:5} vanishes.
In view of the definition of $g$, we further have
\begin{alignat*}{2}
    \delphi g &= -\chi(K)(\alpha\psi - \phi) = -\ep\Grad\phi\cdot\n &&\qquad\text{on~}\Sigma, \\
    \delpsi g &=\frac1\delta G^\prime(\psi) + \alpha\chi(K)(\alpha\psi - \phi) = \frac1\delta G^\prime(\psi) + \alpha\ep\Grad\phi\cdot\n &&\qquad\text{on~}\Sigma.
\end{alignat*}
This implies the first line of \eqref{IEQ:DISS:S:5} can be rewritten as
\begin{align*}
    [\delpsi g - \Divg(\delgpsi g) - \theta]\deltc\psi\qquad\text{on~}\Sigma.
\end{align*}
Hence, the choice
\begin{align*}
    \theta = \delpsi g - \Divg(\delgpsi g) \qquad\text{on~}\Sigma,
\end{align*}
ensures that the first line also vanishes. Consequently, inequality \eqref{IEQ:DISS:S:5} is fulfilled.  

\item\emph{The case} $K = \infty$.
Since $\chi(K) = \chi(\infty) = 0$, it directly follows from \eqref{BC:PHIPSI:3} that the second line in \eqref{IEQ:DISS:S:5} vanishes. 
Moreover, since $\delphi g = 0$ on $\Sigma$, the first line of \eqref{IEQ:DISS:S:5} can be reformulated as
\begin{align*}
    [\delpsi g - \Divg(\delgpsi g) - \theta]\deltc\psi\qquad\text{on~}\Sigma.
\end{align*}
Thus, choosing
\begin{align*}
    \theta = \delpsi g - \Divg(\delgpsi g) \qquad\text{on~}\Sigma,
\end{align*}
ensures that the first line also vanishes. 
This shows that inequality \eqref{IEQ:DISS:S:5} is fulfilled.  
\end{itemize}
\noindent By the definition of $g$ (see \eqref{DEF:GL:S}), we conclude in all cases $K\in[0,\infty]$ that 
\begin{align}\label{PDE:THETA}
    \theta = - \delta\kappa\Lapg\psi + \frac1\delta G^\prime(\psi) + \alpha\ep\Grad\phi\cdot\n\qquad\text{on~}\Sigma.
\end{align}
Eventually, substituting \eqref{DEF:KPSI} and \eqref{DEF:JPSI} into \eqref{EQ:PSI}, we obtain
\begin{align}
    \label{PDE:PSI}
    \delt\psi + \Divg(\psi\bw) = \Divg(m_\Ga(\psi)\Gradg\theta) - \beta m_\Om(\phi)\Grad\mu\cdot\n\qquad\text{on~}\Sigma.
\end{align}

In summary, we have thus derived system \eqref{System} along with formulas \eqref{System:9} and \eqref{System:10}-\eqref{System:15} with the following correspondences:
\begin{center}
\makeatletter \setlength{\tabcolsep}{2pt} \makeatother
\begin{tabular}{rclcrclcrcl}
    \eqref{System:1} 
    &$\widehat{=}$ 
    &\eqref{PDE:NS:B}, \eqref{PDE:DIV:B}, 
    &$\phantom{xx}$ 
    &\eqref{System:2} 
    &$\widehat{=}$ 
    &\eqref{PDE:NS:S}, \eqref{PDE:DIV:S}, 
    &$\phantom{xx}$ 
    &\eqref{System:3} 
    &$\widehat{=}$ 
    &\eqref{BC:V},
    \\
    \eqref{System:4} 
    &$\widehat{=}$ 
    &\eqref{PDE:PHI}, 
    &$\phantom{xx}$ 
    &\eqref{System:5} 
    &$\widehat{=}$ 
    &\eqref{PDE:MU},
    &$\phantom{xx}$ 
    &\eqref{System:6} 
    &$\widehat{=}$ 
    &\eqref{PDE:PSI},
    \\
    \eqref{System:7} 
    &$\widehat{=}$ 
    &\eqref{PDE:THETA},
    &$\phantom{xx}$ 
    &\eqref{System:8} 
    &$\widehat{=}$ 
    &\eqref{BC:PHIPSI}, \eqref{BC:MUTH},
    &$\phantom{xx}$ 
    &\eqref{System:9} 
    &$\widehat{=}$ 
    &\eqref{DEF:RHO*},
    \\
    \eqref{System:10} 
    &$\widehat{=}$ 
    &\eqref{DEF:SIGMA*},
    &$\phantom{xx}$ 
    &\eqref{System:11} 
    &$\widehat{=}$ 
    &\eqref{DEF:T},
    &$\phantom{xx}$ 
    &\eqref{System:12} 
    &$\widehat{=}$ 
    &\eqref{DEF:TG},
    \\
    \eqref{System:13} 
    &$\widehat{=}$ 
    &\eqref{DEF:Z},
    &$\phantom{xx}$ 
    &\eqref{System:14} 
    &$\widehat{=}$ 
    &\eqref{PDE:J},
    &$\phantom{xx}$ 
    &\eqref{System:15} 
    &$\widehat{=}$ 
    &\eqref{PDE:K}, \eqref{PDE:JG}.
\end{tabular}
\end{center}

\section{Main results of the mathematical analysis}
\label{Section:MainResults}

\subsection{Assumptions}

Before stating the main results, we first fix some assumptions that are supposed to hold throughout the remainder of this paper.

\begin{enumerate}[label=\textnormal{\bfseries(A\arabic*)}]
    \item  \label{Assumption:Domain} We consider a bounded domain $\emptyset\neq \Om\subset\R^d$ with $d\in\{2,3\}$ with $C^2$-boundary $\Ga\coloneqq\del\Om$ and a final time $T>0$. We further use the notation
    \begin{align*}
        Q\coloneqq \Om\times(0,T), \quad\Sigma\coloneqq\Ga\times(0,T).
    \end{align*}
    
    \item \label{Assumption:Constants} The constants occurring in the system \eqref{System} satisfy $K,L\in[0,\infty]$, $\ep, \delta, \kappa > 0$, $\alpha\in[-1,1]$ and $\beta\in \R$ with $\alpha\beta\abs{\Omega} + \abs{\Gamma} \neq 0$. Since the choice of $\ep, \delta$ and $\kappa$ has no impact on the mathematical analysis, we will simply set $\ep = \delta = \kappa = 1$ without loss of generality.
    
    \item \label{Assumption:Density} The density functions $\rho$ and $\sigma$ are given by
    \begin{align*}
    	\rho(s) &= \frac{\tilde{\rho}_2 - \tilde{\rho}_1}{2}s + \frac{\tilde{\rho}_1 + \tilde{\rho}_2}{2}, \\
    	\sigma(s) &= \frac{\tilde{\sigma}_2 - \tilde{\sigma}_1}{2}s + \frac{\tilde{\sigma}_1 + \tilde{\sigma}_2}{2}
    \end{align*}
    for all $s\in[-1,1]$, respectively. Here, $\tilde{\rho}_1, \tilde{\rho}_2>0$ and $\tilde{\sigma}_1, \tilde{\sigma}_2>0$ are the specific densities of the two fluid components in the bulk and on the surface, respectively.
    As the derivatives of $\rho$ and $\sigma$ are constant, we will usually write
    \begin{align*}
        \rho^\prime \equiv \rho^\prime(s) 
        =\frac{\tilde{\rho}_2 - \tilde{\rho}_1}{2}
        \quad\text{and}\quad
        \sigma^\prime \equiv \sigma^\prime(s)
        = \frac{\tilde{\sigma}_2 - \tilde{\sigma}_1}{2}\,.
    \end{align*}
    This means that $\rho^\prime$ and $\sigma^\prime$ are interpreted as constants. Moreover, we use the notation
    \begin{alignat*}{2}
        &\rho_\ast \coloneqq \min\{\tilde\rho_1,\tilde\rho_2\}, \qquad 
        &&\rho^\ast \coloneqq \max\{\tilde\rho_1,\tilde\rho_2\},
        \\
        &\sigma_\ast \coloneqq \min\{\tilde\sigma_1,\tilde\sigma_2\}, \qquad 
        &&\sigma^\ast \coloneqq \max\{\tilde\sigma_1,\tilde\sigma_2\},
    \end{alignat*}
    By this definition, we clearly have
    \begin{align*}
        0 < \rho_\ast \leq \rho(s) \leq \rho^\ast \quad\text{and}\quad 0 < \sigma_\ast \leq \sigma(s) \leq \sigma^\ast \qquad\text{for all~}s\in[-1,1].
    \end{align*}
    
    \item \label{Assumption:Coefficients} The mobility functions $m_\Om:\R\rightarrow\R$ and $m_\Ga:\R\rightarrow\R$ are assumed to be constant. Thus, without loss of generality, we assume that $m_\Om = m_\Ga = 1$. Furthermore, the coefficients $\nu_\Om,\nu_\Ga:\R\rightarrow\R$ and $\gamma:\R^2\rightarrow\R$ are assumed to be continuous, bounded, and uniformly positive. This means that there exist positive constants $\nu_\ast,\nu^\ast,\gamma_\ast,\gamma^\ast$ such that for all $s\in\R$,
    \begin{align*}
        0 < \nu_\ast \leq \nu_\Om(s), \nu_\Ga(s) \leq \nu^\ast,
    \end{align*}
    and, for all $(s,r)\in\R^2$,
    \begin{align*}
    	0 < \gamma_\ast \leq \gamma(s,r) \leq \gamma^\ast.
    \end{align*}
    
    \item \label{Assumption:Potential} We assume that the potentials $F,G:\R\rightarrow\R$ are of the form
    \begin{align*}
        F(s) = F_1(s) + F_2(s), \qquad G(s) = G_1(s) + G_2(s) \qquad\text{for~}s\in\R,
    \end{align*}
    where $F_1,G_1\in C([-1,1])\cap C^2(-1,1)$, $F_1(s) = G_1(s) = + \infty$ for all $s\in\R\setminus[-1,1]$,
    \begin{align*}
        \lim_{s\searrow -1} F_1^\prime(s) = \lim_{s\searrow -1} G_1^\prime(s) = - \infty \quad\text{and}\quad \lim_{s\nearrow 1} F_1^\prime(s) = \lim_{s\nearrow 1} G_1^\prime(s) = + \infty,
    \end{align*}
    and there exists a constant $\Theta > 0$ such that
    \begin{align}
        F_1^{\prime\prime}(s) \geq \Theta \quad\text{and}\quad G_1^{\prime\prime}(s) \geq \Theta \qquad\text{for all~}s\in(-1,1).
    \end{align}
    Without loss of generality, we assume that $F_1(0) = G_1(0) = 0$ and $F_1^\prime(0) = G_1^\prime(0) = 0$. For $F_2$ and $G_2$, we assume that $F_2, G_2\in C^1(\R)$ with globally Lipschitz derivatives, respectively. Lastly, we require the singular part of the boundary potential to dominate the singular part of the bulk potential in the sense that there exist constants $\kappa_1, \kappa_2 > 0$ such that
    \begin{align}\label{DominationProperty}
        \abs{F_1^\prime(\alpha s)} \leq \kappa_1\abs{G_1^\prime(s)} + \kappa_2 \qquad\text{for all~}s\in(-1,1).
    \end{align}
\end{enumerate}

\subsection{Main results}

As pointed out in \ref{Assumption:Constants}, we set $\ep = \kappa = \delta = 1$, as their particular choice has no impact on the mathematical analysis. Furthermore, as stated in \ref{Assumption:Coefficients}, we will only consider the case of constant mobility functions $m_\Om$ and $m_\Ga$, and therefore, we assume without loss of generality that $m_\Om = m_\Ga = 1$. With these choices, and under consideration of \eqref{eqs:NSCH:pressure}, our system \eqref{System} reduces to
\begin{subequations}\label{eqs:NSCH}
    \begin{alignat}{2}
        \label{eqs:NSCH:1}
        &\delt(\rho(\phi)\bv) + \Div(\bv\otimes(\rho(\phi)\bv + \J)) - \Div(2\nu_\Om(\phi)\D\bv) + \Grad\overline{p} = \mu\Grad\phi &&\qquad\text{in~}Q\\
        \label{eqs:NSCH:2}
        &\Div\;\bv = 0 &&\qquad\text{in~}Q 
        \\
        \label{eqs:NSCH:3}
        &\delt(\sigma(\psi)\bw) + \Divgt(\bw\otimes(\sigma(\psi)\bw + \K)) - \Divgt(2\nu_\Ga(\psi)\Dg\bw) + \Gradg\overline{q} \nonumber 
        \\
        &\qquad = \theta\Gradg\psi - 2\nu_\Om(\phi)\big[ \D\bv\,\n \big]_\tau + \tfrac12(\J\cdot\n)\bw + \tfrac12(\J_\Ga\cdot\n)\bw - \gamma(\phi,\psi)\bw &&\qquad\text{on~}\Sigma, 
        \\
        \label{eqs:NSCH:4}
        &\Divg\;\bw = 0 &&\qquad\text{on~}\Sigma 
        \\
        \label{eqs:NSCH:5}
        &\bv\vert_\Ga = \bw,\qquad \bv\cdot\n = 0 &&\qquad\text{on~}\Sigma,
        \\[0.5em]
        \label{eqs:NSCH:6}
        &\delt\phi  + \Div(\phi\bv) = \Lap\mu &&\qquad\text{in~}Q, 
        \\
        \label{eqs:NSCH:7}
        &\mu = -\Lap\phi + F^\prime(\phi) &&\qquad\text{in~}Q, 
        \\
        \label{eqs:NSCH:8}
        &\delt\psi + \Divg(\psi\bw) = \Lapg\theta - \beta\deln\mu &&\qquad\text{on~}\Sigma, 
        \\
        \label{eqs:NSCH:9}
        &\theta = -\Lapg\psi + G^\prime(\psi) + \alpha\deln\phi &&\qquad\text{on~}\Sigma, 
        \\
        \label{eqs:NSCH:10}
        &\begin{cases}
        	K\deln\phi = \alpha\psi - \phi, &\text{if~} K\in[0,\infty), 
            \\
        	\deln\phi = 0, &\text{if~} K = \infty,
        \end{cases}
        \quad
        \begin{cases}
        	L\deln\mu = \beta\theta - \mu, &\text{if~} L\in[0,\infty), 
            \\
        	\deln\mu = 0, &\text{if~} L = \infty,
        \end{cases}
        &&\qquad\text{on~}\Sigma, 
        \\[0.5em]
        \label{eqs:NSCH:11}
        &\bv\vert_{t=0} = \bv_0, \qquad \phi\vert_{t=0} = \phi_0 &&\qquad\text{in~}\Om, 
        \\
        \label{eqs:NSCH:12}
        &\bw\vert_{t=0} = \bw_0, \qquad \psi\vert_{t=0} = \psi_0 &&\qquad\text{on~}\Ga.
    \end{alignat}
\end{subequations}

The total energy associated with this system reads as
\begin{align*}
	E_{\mathrm{tot}}(\bv,\bw,\phi,\psi) &= \intO \frac12 \rho(\phi)\abs{\bv}^2\dx + \intG \frac12 \sigma(\psi)\abs{\bw}^2\dG + \intO \frac12 \abs{\Grad\phi}^2 + F(\phi) \dx \\
	&\quad + \intG \frac12 \abs{\Gradg\psi}^2 + G(\psi)\dG + \chi(K) \intG \frac12 (\alpha\psi - \phi)^2\dG.
\end{align*}

The notion of weak solution to \eqref{eqs:NSCH} is defined as follows:

\begin{definition}\label{Definition:WeakSolution}
    Suppose the assumptions \ref{Assumption:Domain}-\ref{Assumption:Potential} hold. Let $T > 0$, $K,L\in[0,\infty]$, $(\bv_0,\bw_0)\in\mathbfcal{L}^2_\Div$, and $(\phi_0,\psi_0)\in\mathcal{H}^1_{K,\alpha}$ such that
    \begin{subequations}\label{Assumption:InitialCondition}
    	\begin{align}\label{Assumption:InitalCondition:Int}
        	\norm{\phi_0}_{L^\infty(\Om)} \leq 1, \qquad \norm{\psi_0}_{L^\infty(\Ga)} \leq 1.
    	\end{align}
    	Furthermore, we assume that
    	\begin{align}\label{Assumption:InitalCondition:L}
        	\beta\mean{\phi_0}{\psi_0}\in(-1,1), \qquad \mean{\phi_0}{\psi_0}\in(-1,1), \qquad\text{if~}L\in[0,\infty),
    	\end{align}
    	and
    	\begin{align}\label{Assumption:InitalCondition:Inf}
        	\meano{\phi_0}\in(-1,1), \qquad \meang{\psi_0}\in(-1,1), \qquad\text{if~} L = \infty.
    	\end{align}
    \end{subequations}
    The sextuplet $(\bv,\bw,\phi,\psi,\mu,\theta)$ is called a weak solution of the system \eqref{eqs:NSCH} on $[0,T]$ if the following properties hold:
    \begin{enumerate}[label=\textnormal{(\roman*)},leftmargin=*]
        \item\label{DEF:WF:1} The functions $\bv, \bw, \phi, \psi, \mu$ and $\theta$ have the regularities
        \begin{subequations}\label{REG}
            \begin{align}
                \scp{\bv}{\bw} &\in C_w([0,T];\mathbfcal{L}^2_\Div)\cap L^2(0,T;\mathbfcal{H}^1_{0,\Div}), \label{REG:VW}
                \\
                \scp{\phi}{\psi} &\in C([0,T];\mathcal{L}^2)\cap H^1(0,T;(\mathcal{H}^1_{L,\beta})^\prime)\cap L^\infty(0,T;\mathcal{H}^1_{K,\alpha}) 
                , \label{REG:PP}
                \\
                \scp{\mu}{\theta}&\in L^2(0,T;\mathcal{H}_{L,\beta}^1) \label{REG:MT}, 
                \\
                \scp{F^\prime(\phi)}{G^\prime(\psi)}&\in L^2(0,T;\mathcal{L}^2),\label{REG:POT}
            \end{align}
            and it holds $\abs{\phi} < 1$ a.e.~in $Q$ and $\abs{\psi} < 1$ a.e.~on $\Sigma$.
    \end{subequations}
    
    \item\label{DEF:WF:2} The initial conditions are satisfied in the following sense:
    \begin{align}
        \label{WF:INI}
        (\bv,\bw)\vert_{t=0} = (\bv_0,\bw_0) \quad\text{a.e.~in~}\Om\times\Ga,
        \qquad
        (\phi,\psi)\vert_{t=0} = (\phi_0,\psi_0) \quad\text{a.e.~in~}\Om\times\Ga.
    \end{align}
    
    \item\label{DEF:WF:3} The variational formulations
    \begin{subequations}
        \begin{align}\label{WF:VW}
            &-\bigscp{\rho(\phi)\bv}{\delt\wv}_{L^2(Q)} 
                - \bigscp{\sigma(\psi)\bw}{\delt\ww}_{L^2(\Sigma)} 
                + \bigscp{(\J\cdot\Grad)\bv}{\wv}_{L^2(Q)} 
            \nonumber \\
            &\qquad
                + \bigscp{(\K\cdot\Gradg)\bw}{\ww}_{L^2(\Sigma)} 
                + \bigscp{\rho(\phi)(\bv\cdot\Grad)\bv}{\wv}_{L^2(Q)} 
                + \bigscp{\sigma(\psi)(\bw\cdot\Gradg)\bw}{\ww}_{L^2(\Sigma)} 
            \nonumber \\
            &\qquad 
                + \bigscp{2\nu_\Om(\phi)\D\bv}{\D\wv}_{L^2(Q)} 
                + \bigscp{2\nu_\Ga(\psi)\Dg\bw}{\Dg\ww}_{L^2(\Sigma)}  
                + \bigscp{\gamma(\phi,\psi)\bw}{\ww}_{L^2(\Sigma)}
            \\
            &\quad= \bigscp{\mu\Grad\phi}{\wv}_{L^2(Q)} 
                + \bigscp{\theta\Gradg\psi}{\ww}_{L^2(\Sigma)} 
                + \tfrac{1}{2}\chi(L)(\beta\sigma^\prime + \rho^\prime)\bigscp{(\beta\theta - \mu)\bw}{\ww}_{L^2(\Sigma)}, 
                \nonumber
        \\[2ex]
            \label{WF:PP}
            &\bigscp{\phi}{\delt\zeta}_{L^2(Q)} 
                + \bigscp{\psi}{\delt\xi}_{L^2(\Sigma)} 
                + \bigscp{\phi\bv}{\Grad\zeta}_{L^2(Q)} 
                + \bigscp{\psi\bw}{\Gradg\xi}_{L^2(\Sigma)} 
            \\
            &\quad= 
                \bigscp{\Grad\mu}{\Grad\zeta}_{L^2(Q)} 
                + \bigscp{\Gradg\theta}{\Gradg\xi}_{L^2(\Sigma)} 
                + \chi(L)\bigscp{\beta\theta-\mu}
                    {\beta\xi-\zeta}_{L^2(\Sigma)} \nonumber                
        \\[2ex]
            \label{WF:MT}
            &\bigscp{\mu}{\eta}_{L^2(Q)} 
            + \bigscp{\theta}{\vartheta}_{L^2(\Sigma)} \nonumber 
            \\
            &\quad= \bigscp{\Grad\phi}{\Grad\eta}_{L^2(Q)}  
            + \bigscp{F^\prime(\phi)}{\eta}_{L^2(Q)}  
            + \bigscp{\Gradg\psi}{\Gradg\vartheta}_{L^2(\Sigma)} 
            + \bigscp{G^\prime(\psi)}{\vartheta}_{L^2(\Sigma)}  
            \\
            &\qquad + \chi(K) \bigscp{\alpha\psi - \phi}
                {\alpha\vartheta - \eta}_{L^2(\Sigma)} \nonumber
        \end{align}          
        where $\J = -\rho^\prime\,\Grad\mu$ and $\K = -\sigma^\prime\,\Gradg\theta$,
        hold for all test functions $(\wv,\ww)\in C_c^\infty(0,T;\mathbfcal{H}_{0,\Div}^2)$, $(\zeta,\xi)\in C_c^\infty(0,T;\mathcal{H}^1_{L,\beta})$ and $(\eta,\vartheta)\in L^2(0,T;\mathcal{H}^1_{K,\alpha})$.
    \end{subequations}
    
    \item\label{DEF:WF:4} The functions $\phi$ and $\psi$ satisfy the mass conservation law
    \begin{align}\label{WF:MCL}
        \begin{dcases}
            \beta\intO \phi(t)\dx + \intG \psi(t)\dG = \beta\intO \phi_0 \dx + \intG \psi_0\dG, &\textnormal{if~} L\in[0,\infty), \\
            \intO\phi(t)\dx = \intO\phi_0\dx \quad\textnormal{and}\quad \intG\psi(t)\dG = \intG\psi_0\dG, &\textnormal{if~} L = \infty,
        \end{dcases}
    \end{align}
    for all $t\in[0,T]$.
    
    \item\label{DEF:WF:5} The energy inequality
    \begin{align}\label{WF:DISS}
        &E(\bv(t),\bw(t),\phi(t),\psi(t)) + \int_0^t\intO 2\nu_\Om(\phi)\abs{\D\bv}^2\dxs + \int_0^t\intG 2\nu_\Ga(\psi)\abs{\Dg\bw}^2\dGs \nonumber \\
        &\qquad + \int_0^t\intG \gamma(\phi,\psi)\abs{\bw}^2 \dGs + \int_0^t\intO \abs{\Grad\mu}^2\dxs \\
        &\qquad + \int_0^t\intG \abs{\Gradg\theta}^2\dGs + \chi(L)\int_0^t\intG (\beta\theta - \mu)^2\dGs \leq E(\bv_0,\bw_0,\phi_0,\psi_0) \nonumber
    \end{align}
    holds for all $t\in[0,T)$.
    \end{enumerate}
\end{definition}

We are now in a position to state the main theorem of this paper.

\begin{theorem}\label{Theorem:Main}
    Suppose the assumptions \ref{Assumption:Domain}-\ref{Assumption:Potential} hold. Let $K,L\in[0,\infty]$, $(\bv_0,\bw_0)\in\mathbfcal{L}^2_\Div$ and let $(\phi_0,\psi_0)\in\mathcal{H}^1_{K,\alpha}$ satisfy \eqref{Assumption:InitialCondition}. If $L = 0$, additionally assume that
    \begin{align}\label{Assumption:Densities:0}
    	\beta(\tilde{\sigma}_2 - \tilde{\sigma}_1) = -(\tilde{\rho}_2 - \tilde{\rho}_1).
	\end{align}     
	Then, there exists at least one weak solution $(\bv,\bw,\phi,\psi,\mu,\theta)$ to system \eqref{eqs:NSCH} in the sense of Definition~\ref{Definition:WeakSolution}.
\end{theorem}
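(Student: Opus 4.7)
The plan is to split the argument according to the value of $L$: first construct weak solutions for $L \in (0,\infty]$ via a semi-Galerkin scheme based on the bulk-surface Stokes eigenfunctions constructed in Section~\ref{Section:BulkSurfaceStokes}, and then recover the case $L = 0$ by passing to the asymptotic limit $L \searrow 0$ of solutions with $L > 0$.

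For $L \in (0,\infty]$, let $\{(\bv^k,\bw^k)\}_{k\in\N}$ denote the orthonormal eigenbasis of the bulk-surface Stokes operator in $\mathbfcal{H}^1_{0,\Div}$ and set $\mathbfcal{V}_m = \mathrm{span}\{(\bv^k,\bw^k):k=1,\dots,m\}$. I look for approximate velocities $(\bv_m,\bw_m)(t) = \sum_{k=1}^m a_k^m(t)(\bv^k,\bw^k)$ satisfying the projection of the momentum equation \eqref{WF:VW} onto $\mathbfcal{V}_m$, coupled to the \emph{full} convective bulk-surface Cahn--Hilliard subsystem \eqref{eqs:NSCH:6}--\eqref{eqs:NSCH:10} driven by the advection $(\bv_m,\bw_m)$. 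The Cahn--Hilliard problem with prescribed velocity is well-posed by \cite{Knopf2025,Giorgini2025}, which defines a nonlinear solution operator $(\bv_m,\bw_m) \mapsto (\phi_m,\psi_m,\mu_m,\theta_m)$; substituting this back into the finite-dimensional momentum equation yields an ODE system for the coefficients $a_k^m$, which is locally solvable via Cauchy--Peano (or a Schauder fixed-point argument to accommodate the implicit dependence on the phase-fields). The Galerkin energy identity produces uniform-in-$m$ bounds that extend the approximate solutions to $[0,T]$: $(\bv_m,\bw_m)$ in $L^\infty(0,T;\mathbfcal{L}^2_\Div) \cap L^2(0,T;\mathbfcal{H}^1_{0,\Div})$, $(\phi_m,\psi_m)$ in $L^\infty(0,T;\mathcal{H}^1_{K,\alpha})$, and $(\mu_m,\theta_m)$ in $L^2(0,T;\mathcal{H}^1_{L,\beta})$, together with $L^2$-bounds on $F^\prime(\phi_m)$ and $G^\prime(\psi_m)$. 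The main technical obstacle is the strong convergence of the velocity fields required to identify the nonlinear inertial and convective terms in the limit: I would estimate $\delt\projm\bigl(\rho(\phi_m)\bv_m,\sigma(\psi_m)\bw_m\bigr)$ in a dual norm associated to $\mathbfcal{V}_m$ and invoke an Aubin--Lions--Simon argument, using crucially that $\bigcup_m \mathbfcal{V}_m$ is dense in $\mathbfcal{H}^1_{0,\Div}$ (from the spectral analysis of Section~\ref{Section:BulkSurfaceStokes}). Strong convergence of $(\phi_m,\psi_m)$ in $C([0,T];\mathcal{L}^2)$ follows from bounds on $\delt(\phi_m,\psi_m)$ in $L^2(0,T;(\mathcal{H}^1_{L,\beta})^\prime)$ obtained directly from \eqref{WF:PP}; combined with the fact that the singular potentials force $\abs{\phi_m}, \abs{\psi_m} < 1$ a.e., continuity of $F^\prime, G^\prime$ on $(-1,1)$ identifies the potential terms in the limit. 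The energy inequality \eqref{WF:DISS} then follows by weak lower semicontinuity, while \eqref{WF:MCL} is preserved by linearity.

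For the remaining case $L = 0$, pick a sequence $L_n \searrow 0$ and let $(\bv_n,\bw_n,\phi_n,\psi_n,\mu_n,\theta_n)$ denote the corresponding weak solutions constructed above. The role of the compatibility condition \eqref{Assumption:Densities:0} is decisive: it is exactly equivalent to $\beta\sigma^\prime + \rho^\prime = 0$, which causes the otherwise singular term $\tfrac{1}{2}\chi(L_n)(\beta\sigma^\prime+\rho^\prime)(\beta\theta_n-\mu_n)\bw_n$ in \eqref{WF:VW} to vanish identically, so the energy inequality \eqref{WF:DISS} yields bounds independent of $L_n$. In particular, $\chi(L_n)\norm{\beta\theta_n-\mu_n}_{L^2(\Sigma)}^2$ is bounded, which gives $\norm{\beta\theta_n-\mu_n}_{L^2(\Sigma\times(0,T))}^2 \leq C L_n \to 0$, so the limiting chemical potentials obey the trace relation $\mu = \beta\theta$ on $\Sigma$ required by $\mathcal{H}^1_{0,\beta}$. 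The hard part of this step is the compatibility of test functions when passing to the limit in \eqref{WF:PP}--\eqref{WF:MT}: any admissible test pair for the limit problem lies in $\mathcal{H}^1_{0,\beta} \subset \mathcal{H}^1 = \mathcal{H}^1_{L_n,\beta}$ and is therefore admissible in every approximating problem, so one may pass to the limit directly; for these constrained test functions $(\zeta,\xi)$ with $\zeta = \beta\xi$, the penalty term $\chi(L_n)\scp{\beta\theta_n-\mu_n}{\beta\xi-\zeta}_{L^2(\Sigma)}$ drops out identically, avoiding any singular behavior. Without \eqref{Assumption:Densities:0}, the residual boundary term in the momentum equation would be $O(\chi(L_n)^{1/2})$ and would prevent the limit passage, which is precisely why this compatibility assumption must be imposed in the case $L = 0$.
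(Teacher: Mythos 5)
Your proposal takes essentially the same route as the paper: a semi-Galerkin scheme on the span of bulk-surface Stokes eigenfunctions coupled to the full convective Cahn--Hilliard subsystem (resolved via a Schauder fixed point with the convective Cahn--Hilliard subsystem solved at prescribed velocities), energy estimates giving uniform bounds, Aubin--Lions for strong velocity convergence via the projection of $(\rho(\phi_m)\bv_m,\sigma(\psi_m)\bw_m)$, and then the $L\to 0$ limit for $L=0$, with the compatibility condition $\beta\sigma'+\rho'=0$ killing the singular boundary term and the $\chi(L)$-bound on $\beta\theta_L-\mu_L$ enforcing the trace constraint in the limit. The only imprecision is that the finite-dimensional momentum problem at fixed phase fields is not a genuine ODE to which Cauchy--Peano applies directly, since the Cahn--Hilliard components depend nonlocally in time on the velocity coefficients; the outer Schauder fixed-point argument you mention parenthetically is in fact the mechanism the paper uses (with Carathéodory's theorem solving the linear ODE once the advecting velocity is frozen), so your parenthetical is carrying the real weight there.
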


\pagebreak[2]

\begin{proof}
    The theorem is proved in two steps.
    
    \textbf{Step~1:} For $L\in(0,\infty]$, we prove the existence of a weak solution via a semi-Galerkin scheme combined with a fixed-point argument. In this approach, the Galerkin basis to discretize the bulk-surface Navier--Stokes subsystem \eqref{eqs:NSCH:1}-\eqref{eqs:NSCH:5} consists of eigenfunctions of a related \textit{bulk-surface Stokes system} (cf.~\eqref{App:BSS:system}). Therefore, the proof crucially relies on the well-posedness results, the regularity theory, and the spectral theory for the bulk-surface Stokes problem, which are developed in Section~\ref{Section:BulkSurfaceStokes}. 
    The statement of Theorem~\ref{Theorem:Main} in the case $L\in(0,\infty]$ is then shown in Theorem~\ref{Theorem:L>0}.
    
    \textbf{Step~2:} For $L = 0$, the procedure described in Step~1 does not work since admissible test functions in \eqref{WF:PP} need to satisfy the corresponding trace relation. Instead, we construct a weak solution via the asymptotic limit $L \to 0$ of weak solutions associated with $L>0$. This result is shown in Theorem~\ref{Theorem:L->0}.

    This means that Theorem~\ref{Theorem:Main} is established by combining Theorem~\ref{Theorem:L>0} and Theorem~\ref{Theorem:L->0}.
\end{proof}

\medskip

\begin{remark} \label{REM:REG:PP}
	We point out that, since $(\bv,\bw)\in L^2(0,T;\mathbfcal{L}^2_\Div)$, any weak solution $(\bv,\bw,\phi,\psi,\mu,\theta)$ automatically satisfies
	\begin{align*}
		(\phi,\psi)\in L^2(0,T;\mathcal{W}^{2,6}), \qquad (F^\prime(\phi),G^\prime(\psi))\in L^2(0,T;\mathcal{L}^6),
	\end{align*}
	according to \cite[Theorem~3.2]{Giorgini2025}. 
    Consequently, the equations
    \begin{subequations}
    \begin{alignat}{2}
        \mu &= -\Lap\phi + F^\prime(\phi) &&\qquad\text{a.e.~in~}Q, \label{EQ:MU:STRG}
        \\
        \theta &= -\Lapg\psi + G^\prime(\psi) + \alpha\deln\phi, &&\qquad\text{a.e.~on~}\Sigma, 
        \label{EQ:THETA:STRG}
        \\
        K \deln \phi &= \alpha \psi - \phi
        &&\qquad\text{a.e.~on~}\Sigma
        \label{BC:PP:STRG}
    \end{alignat}
    \end{subequations}
    are fulfilled in the strong sense.
    Moreover, if $K\in(0,\infty]$, we have
	\begin{align*}
		(\phi,\psi)\in L^4(0,T;\mathcal{H}^2),
	\end{align*}
	whereas in the case $K = 0$, we only have
	\begin{align*}
		(\phi,\psi)\in L^3(0,T;\mathcal{H}^2),
	\end{align*}
	see \cite[Theorem~3.3]{Giorgini2025}.
\end{remark}

\medskip

\begin{remark} \label{REM:COMP}
	The compatibility condition \eqref{Assumption:Densities:0} in the case $L=0$ means that $\beta\sigma^\prime + \rho^\prime = 0$. Therefore, it ensures that the term 
    \begin{align*}
        \tfrac{1}{2}\chi(L)(\beta\sigma^\prime + \rho^\prime)\bigscp{(\beta\theta - \mu)\bw}{\ww}_{L^2(\Sigma)}
    \end{align*}
    in the weak formulation \eqref{WF:VW} vanishes, which is crucial in the proof of Theorem~\ref{Theorem:L->0}. Condition \eqref{Assumption:Densities:0} is related to the compatibility condition $\tilde\rho_1 = \tilde\rho_2$ that was imposed in \cite{Gal2023a} for the analysis of system \eqref{eqs:GK} from \cite{Giorgini2023} with $L=0$. From the viewpoint of mathematical analysis, this is an advantage of the new model \eqref{System} compared to \eqref{eqs:GK} since even in the case $L=0$ we can allow for unmatched densities of the fluids.
\end{remark}

\medskip

Under additional assumptions on the initial data $(\phi_0,\psi_0)$, we immediately infer that the solution component $(\phi,\psi,\mu,\theta)$ satisfies the Cahn--Hilliard subsystem \eqref{eqs:NSCH:6}-\eqref{eqs:NSCH:10} in the strong sense.

\begin{theorem}\label{THM:CHSUB}
	Suppose the assumptions \ref{Assumption:Domain}-\ref{Assumption:Potential} hold. Let $K\in[0,\infty]$, $L\in(0,\infty]$, $(\bv_0,\bw_0)\in\mathbfcal{L}^2_\Div$ and let $(\phi_0,\psi_0)\in\mathcal{H}^1_{K,\alpha}$ satisfy \eqref{Assumption:InitialCondition}. We further assume that the following compatibility condition holds:
    \begin{enumerate}[label=\textnormal{\bfseries(C)},topsep=0ex]
        \item \label{cond:MT:0} There exists $\scp{\mu_0}{\theta_0}\in\mathcal{H}^1$ such that for all $\scp{\eta}{\vartheta}\in\mathcal{H}			^1_{K,\alpha}$, it holds
        \begin{align*}
        \begin{aligned}
            &\intO\mu_0\eta\dx + \intG\theta_0\eta_\Ga\dG 
            \\
            &= \intO\Grad\phi_0\cdot\Grad\eta + F^\prime(\phi_0)\eta\dx + \intG\Gradg\psi_0\cdot\Gradg\vartheta + G^\prime(\psi_0)\vartheta\dG 
            \\
            &\quad + \chi(K)\intG(\alpha\psi_0 - \phi_0)(\alpha\vartheta - \eta)\dG.
        \end{aligned}
        \end{align*}
    \end{enumerate}
    Let $(\bv,\bw,\phi,\psi,\mu,\theta)$ be a corresponding weak solution in the sense of Definition~\ref{Definition:WeakSolution}. Then $(\phi,\psi,\mu,\theta)$ has the following regularity:
    \begin{align*}
        (\phi,\psi) &\in L^\infty(0,T;\mathcal{W}^{2,6})\cap \big(C(\overline{Q})\times C(\overline{\Sigma})\big), \\
        (\delt\phi,\delt\psi)&\in L^\infty(0,T;(\mathcal{H}^1)^\prime)\cap L^2(0,T;\mathcal{H}^1), \\
        (\mu,\theta)&\in L^\infty(0,T;\mathcal{H}^1)\cap L^2(0,T;\mathcal{H}^2), \\
        (F^\prime(\phi), G^\prime(\psi))&\in L^2(0,T;\mathcal{L}^\infty)\cap L^\infty(0,T;\mathcal{L}^6).
    \end{align*}
\end{theorem}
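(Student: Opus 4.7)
The strategy is to decouple the bulk-surface Cahn--Hilliard subsystem \eqref{eqs:NSCH:6}--\eqref{eqs:NSCH:10} from the momentum equations by treating the velocity fields $(\bv,\bw)$ as given data, and then to invoke the regularity theory developed in \cite{Giorgini2025} for the convective bulk-surface Cahn--Hilliard system with prescribed transport. The key point is that, by the definition of a weak solution,
\begin{align*}
    (\bv,\bw) \in L^2(0,T;\mathbfcal{H}^1_{0,\Div}),
\end{align*}
so in particular $\bv \in L^2(0,T;\mathbf{H}^1(\Om))$ \emph{and} $\bw \in L^2(0,T;\mathbf{H}^1_\tau(\Gamma))$. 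This level of surface regularity, made available thanks to the surface Navier--Stokes equation, is precisely what is required by the assumptions on the transport fields in \cite{Giorgini2025}, as noted in Remark~\ref{REM:REG:PP}.

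First, I would verify that the pair $(\phi,\psi,\mu,\theta)$ together with the data $(\bv,\bw,\phi_0,\psi_0)$ fits into the framework of \cite[Theorem~3.2]{Giorgini2025}, which gives the base regularity
\begin{align*}
    (\phi,\psi)\in L^2(0,T;\mathcal{W}^{2,6}), \qquad (F'(\phi),G'(\psi))\in L^2(0,T;\mathcal{L}^6),
\end{align*}
together with the strict bounds $\abs{\phi}<1$ a.e.\ in $Q$ and $\abs{\psi}<1$ a.e.\ on $\Sigma$, and the strong formulations \eqref{EQ:MU:STRG}--\eqref{BC:PP:STRG}. Note that the trace relation $\bv\vert_\Ga = \bw$ and $\bv\cdot\n=0$ ensure that the convective boundary term $\Divg(\psi\bw)$ in \eqref{eqs:NSCH:8} is consistent with the normal flux coming from $\Div(\phi\bv)$ via Gauss's theorem, so no additional compatibility of the transport is required.

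Second, I would invoke the compatibility condition \ref{cond:MT:0} to upgrade the initial regularity. This condition precisely says that $(\mu_0,\theta_0)\in\mathcal{H}^1$ is the pair obtained from $(\phi_0,\psi_0)$ via the chemical potential relations \eqref{eqs:NSCH:7}, \eqref{eqs:NSCH:9}, \eqref{eqs:NSCH:10} in the weak sense. With this initial regularity of the chemical potentials at hand, the higher-order regularity results from \cite{Giorgini2025} (specifically, the analogue of their Theorem 3.3/3.4 for velocity fields with $(\bv,\bw)\in L^2(0,T;\mathbfcal{H}^1_{0,\Div})$) apply and yield
\begin{align*}
    (\mu,\theta) &\in L^\infty(0,T;\mathcal{H}^1)\cap L^2(0,T;\mathcal{H}^2),
    \\
    (\delt\phi,\delt\psi) &\in L^\infty(0,T;(\mathcal{H}^1)^\prime) \cap L^2(0,T;\mathcal{H}^1).
\end{align*}
From $(\mu,\theta)\in L^\infty(0,T;\mathcal{H}^1)\hookrightarrow L^\infty(0,T;\mathcal{L}^6)$, elliptic regularity applied to the coupled bulk-surface elliptic system \eqref{EQ:MU:STRG}--\eqref{BC:PP:STRG}, combined with the domination property \eqref{DominationProperty} on the singular parts of the potentials, promotes $(\phi,\psi)$ to $L^\infty(0,T;\mathcal{W}^{2,6})$ and, consequently, $(F'(\phi),G'(\psi))$ to $L^\infty(0,T;\mathcal{L}^6)$. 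Since $d\in\{2,3\}$, the Sobolev embedding $W^{2,6}(\Om)\hookrightarrow C(\overline\Om)$ and its surface counterpart then give continuity of $(\phi,\psi)$ up to the closure.

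The most technical step will be the separation property $(F'(\phi),G'(\psi))\in L^2(0,T;\mathcal{L}^\infty)$, i.e., a \emph{strict separation} of $\phi$ and $\psi$ from the singular endpoints $\pm 1$ with a quantitative $L^2$-in-time integrability of the derivatives of the potentials. I would obtain this by reproducing the De Giorgi-type iteration argument (or Alikakos-Moser bootstrap) used in \cite{Giorgini2025}: testing the strong elliptic equations \eqref{EQ:MU:STRG}--\eqref{EQ:THETA:STRG} against truncations like $(\phi-1+\delta)_+$ and combining them through the coupling boundary condition \eqref{BC:PP:STRG} and the domination condition \eqref{DominationProperty}. The compatibility assumption \ref{cond:MT:0} is used here as well, since it provides enough regularity on $(\mu,\theta)$ to close the iteration. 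This is the main obstacle of the proof, as the bulk and surface potentials interact through the transmission condition and must be handled simultaneously; however, the argument is by now standard in the bulk-surface Cahn--Hilliard literature, and the key ingredient -- the sufficient regularity of the transport fields $(\bv,\bw)$ on both $\Om$ and $\Gamma$ -- is ensured by the new model structure.
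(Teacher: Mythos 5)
Your proposal follows the same strategy as the paper: treat $(\bv,\bw)\in L^\infty(0,T;\mathbfcal{L}^2)\cap L^2(0,T;\mathbfcal{H}^1_{0,\Div})$ as prescribed transport data for the decoupled bulk-surface Cahn--Hilliard subsystem — the surface $H^1$-regularity of $\bw$ being exactly what makes the convective theory applicable — and then invoke the regularity results of Giorgini et al. The paper's proof is simply a one-line citation of \cite[Theorem~4.1]{Giorgini2025}; the additional detail you spell out (elliptic bootstrap, De~Giorgi-type separation) is the internal content of that cited theorem rather than a new argument required here.
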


\begin{proof}
    The assertion follows immediately from \cite[Theorem 4.1]{Giorgini2025} since $(\bv,\bw)\in L^\infty(0,T;\mathcal{L}^2)\cap L^2(0,T;\mathcal{H}^1_{0,\Div})$.
\end{proof}

\medskip

\begin{remark}
    If $d = 2$, the statement of Theorem~\ref{THM:CHSUB} also holds in the case $L = 0$, see \cite[Remark~3.6]{Giorgini2025}. However, it is not yet clear whether the statement remains true in the case $L = 0$ if $d = 3$.
\end{remark}

\section{Analysis of a bulk-surface Stokes system}
\label{Section:BulkSurfaceStokes}
In this section, we analyze a new bulk-surface Stokes system and prove the existence of a unique strong solution. Afterwards, we study the corresponding bulk-surface Stokes operator and its spectral properties. These results are essential for our proof of Theorem~\ref{Theorem:L>0} as we use the eigenfunctions of the bulk-surface Stokes operator as the basis functions in our semi-Galerkin scheme. 

To this end, we consider the system
\begin{subequations}
    \label{App:BSS:system}
    \begin{alignat}{2}
        \label{App:BSS:system:1}
        -\Div(2\nu_\Om(\phi)\D \bv) + \Grad p &= \f &&\qquad\text{in~}\Om, 
        \\
        \label{App:BSS:system:2}
        \Div\, \bv &= g &&\qquad\text{in~}\Om, 
        \\
        \label{App:BSS:system:3}
        -\Divgt(2\nu_\Ga(\psi)\Dg\bw) + 2\nu_\Om(\phi)\big[ \D\bv\,\n \big]_\tau + \Gradg q + \gamma(\phi,\psi)\bw &= \f_\Ga &&\qquad\text{on~}\Ga, 
        \\
        \label{App:BSS:system:4}
        \Divg\,\bw &= g_\Ga &&\qquad\text{on~}\Ga, 
        \\
        \label{App:BSS:system:5}
        \bv\vert_\Ga = \bw, \quad \bv\cdot \n &= 0 &&\qquad\text{on~}\Ga,
    \end{alignat}
\end{subequations}
for given source terms $(\f,\f_\Gamma)\in(\mathbfcal{H}^1_0)^\prime$, $(g,g_\Ga)\in\mathcal{L}^2_{(0)}$ and measurable functions $\phi:\Om\rightarrow\R$ and $\psi:\Ga\rightarrow\R$. Furthermore, the viscosity coefficients $\nu_\Om,\nu_\Ga:\R\rightarrow\R$ and the friction coefficient $\gamma:\R^2\rightarrow\R$ satisfy the assumptions stated in \ref{Assumption:Coefficients}.

\subsection{Existence and uniqueness of weak solutions}
We start our analysis by considering the case of incompressible fluids, i.e., $g = 0$ in $\Om$ and $g_\Ga = 0$ on $\Ga$. In this case, the weak formulation of \eqref{App:BSS:system} reads as
\begin{align}\label{Stokes:WF}
	\begin{split}
    	&\intO 2\nu_\Om(\phi)\D\bv:\D\wv\dx + \intG 2\nu_\Ga(\psi)\Dg\bw:\Dg\ww\dG + \intG \gamma(\phi,\psi)\bw\cdot\ww\dG \\
    	&\qquad = \bigang{(\f,\f_\Ga)}{(\wv,\ww)}_{\mathbfcal{H}^1_0}
    \end{split}
\end{align}
for all $(\wv,\ww)\in \mathbfcal{H}^1_{0,\Div}$.

To prove the existence of a unique weak solution to \eqref{App:BSS:system}, we start by establishing the following bulk-surface Korn-type inequality, which yields the coercivity of the bilinear form corresponding to the problem.

\begin{lemma}\label{Lemma:Korn}
    There exists a constant $C_K > 0$ such that
    \begin{align}
        \norm{(\bv,\bw)}_{\mathbfcal{H}^1} \leq C_K\big(\norm{\D\bv}_{\mathbf{L}^2(\Om)} + \norm{\Dg\bw}_{\mathbf{L}^2(\Ga)} + \norm{\bw}_{\mathbf{L}^2(\Ga)}\big)
    \end{align}
    for all $(\bv,\bw)\in \mathbfcal{H}^1_{0}$.
\end{lemma}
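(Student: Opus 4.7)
The plan is to argue by contradiction via a standard compactness argument, combining the classical Korn second inequality in the bulk $\Om$ with its tangential counterpart on the closed smooth surface $\Ga$. Assume the claimed estimate fails. Then I can select a sequence $(\bv_n,\bw_n)\in\mathbfcal{H}^1_0$ with $\norm{(\bv_n,\bw_n)}_{\mathbfcal{H}^1}=1$ such that
\begin{align*}
\norm{\D\bv_n}_{\mathbf{L}^2(\Om)}+\norm{\Dg\bw_n}_{\mathbf{L}^2(\Ga)}+\norm{\bw_n}_{\mathbf{L}^2(\Ga)}\xrightarrow{n\to\infty}0.
\end{align*}

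By Rellich--Kondrachov, the bounded sequence $(\bv_n,\bw_n)$ admits a subsequence converging weakly in $\mathbfcal{H}^1$ and strongly in $\mathbfcal{L}^2$ to some $(\bv,\bw)\in\mathbfcal{H}^1$. I then invoke the classical Korn second inequality in $\Om$,
\begin{align*}
\norm{\bv_n-\bv_m}_{\mathbf{H}^1(\Om)}\leq C\,\big(\norm{\D(\bv_n-\bv_m)}_{\mathbf{L}^2(\Om)}+\norm{\bv_n-\bv_m}_{\mathbf{L}^2(\Om)}\big),
\end{align*}
together with the analogous surface Korn inequality for tangential vector fields on $\Ga$,
\begin{align*}
\norm{\bw_n-\bw_m}_{\mathbf{H}^1(\Ga)}\leq C\,\big(\norm{\Dg(\bw_n-\bw_m)}_{\mathbf{L}^2(\Ga)}+\norm{\bw_n-\bw_m}_{\mathbf{L}^2(\Ga)}\big).
\end{align*}
Combined with the strong $\mathbfcal{L}^2$-convergence, these two Cauchy-type estimates show that $(\bv_n,\bw_n)$ is in fact Cauchy in $\mathbfcal{H}^1$, so that the convergence to $(\bv,\bw)$ is strong in $\mathbfcal{H}^1$. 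In particular, the trace coupling $\bv\vert_\Ga=\bw$ and the non-penetration condition $\bv\cdot\n=0$ persist in the limit, so $(\bv,\bw)\in\mathbfcal{H}^1_0$ with $\D\bv=0$ in $\Om$, $\Dg\bw=0$ on $\Ga$, and $\bw=0$ on $\Ga$.

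The condition $\D\bv=0$ on the connected domain $\Om$ forces $\bv$ to be an infinitesimal rigid motion of the form $\bv(x)=a+Bx$ with $a\in\R^d$ and skew-symmetric $B\in\R^{d\times d}$. The coupling $\bv\vert_\Ga=\bw=0$ then requires this affine map to vanish on the smooth hypersurface $\Ga=\del\Om$; since $\Ga$ cannot be contained in any affine hyperplane, I conclude $a=0$ and $B=0$, hence $\bv\equiv 0$. Together with $\bw=0$, this contradicts the normalization $\norm{(\bv,\bw)}_{\mathbfcal{H}^1}=1$ inherited from the strong $\mathbfcal{H}^1$-convergence.

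The main technical obstacle is invoking the surface Korn inequality for tangential vector fields on the closed smooth surface $\Ga$. This estimate is classical in the surface-PDE literature, but its kernel consists of Killing fields, which is nontrivial for symmetric geometries (e.g., rotations on a sphere). This is precisely why the undifferentiated $\norm{\bw}_{\mathbf{L}^2(\Ga)}$-term on the right-hand side of the asserted inequality cannot be dropped: for axisymmetric $\Om$, the corresponding rigid rotations likewise lie in the joint kernel of $\D$ and the non-penetration condition in the bulk, and only the $\bw$-term breaks this degeneracy via the trace coupling built into $\mathbfcal{H}^1_0$.
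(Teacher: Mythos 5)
Your proposal is correct as written, but it takes a genuinely different route from the paper. The paper gives a \emph{direct, constructive} argument: it invokes the inverse trace theorem to extend $\bw$ to some $\ww\in\mathbf H^{3/2}(\Om)$, observes that $\bv-\ww$ has zero trace so that the \emph{first} Korn inequality (without an $\mathbf L^2$ term) applies in the bulk, and then assembles the three estimates. You instead run a compactness/contradiction (Peetre--Tartar) argument: normalize a hypothetical failing sequence, use Korn's \emph{second} inequality in $\Om$ and the surface Korn inequality on $\Ga$ together with Rellich compactness to upgrade the weak $\mathbfcal H^1$ limit to a strong one, and then identify the limiting $\bv$ as an infinitesimal rigid motion that must vanish because it vanishes on the hypersurface $\Ga$. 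Both arguments rest on the same two ingredients (bulk Korn, surface Korn for tangential fields) but apply the bulk Korn inequality in different forms and combine them differently. The paper's route has the usual advantage of directness and, in principle, an explicit constant through the trace and inverse-trace norms; your route is closer to the ``abstract'' mechanism behind Korn-type estimates, avoids the inverse trace theorem, and makes transparent exactly why the lower-order term $\norm{\bw}_{\mathbf L^2(\Ga)}$ cannot be discarded — the point about axisymmetric $\Om$ and Killing fields on $\Ga$ that you raise at the end is a correct and worthwhile observation, although it plays no logical role in the proof itself. One could slightly shorten your kernel identification: since $B$ is skew-symmetric, $\{x:\,a+Bx=0\}$ is an affine subspace of dimension $d-\operatorname{rank}B$, and $\operatorname{rank}B\in\{0,2\}$ for $d\in\{2,3\}$; if $B\neq 0$ this set has dimension at most $1$, while $\Ga$ is a compact $(d-1)$-dimensional hypersurface bounding a nonempty open set, so $\Ga$ cannot be contained in it — hence $B=0$ and then $a=0$.
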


\begin{proof}
    First, we recall the surface Korn inequality: There exists a constant $C_\Ga> 0$ such that
    \begin{align}
        \label{KORN:S}
        \norm{\tw}_{\mathbf{H}^1(\Ga)} \leq C_\Ga\big(\norm{\Dg\tw}_{\mathbf{L}^2(\Ga)} + \norm{\tw}_{\mathbf{L}^2(\Ga)}\big)
    \end{align}
    for all $\tw\in \mathbf{H}^1(\Ga)$, see, e.g. \cite[Formula~(4.7)]{Jankuhn2018}. Furthermore, the standard Korn inequality (see \cite[Remark~IV.7.3.]{Boyer}) states that there exists a constant $C_\Om > 0$ such that
    \begin{align}
        \label{KORN:B}
        \norm{\tv}_{\mathbf{H}^1(\Om)} \leq C_\Om \norm{\D\tv}_{\mathbf{L}^2(\Om)}
    \end{align}
    for all $\tv\in \mathbf{H}^1(\Om)$ with $\tv\vert_\Ga = \mathbf{0}$ a.e.~on $\Gamma$. 
    
    Let now $(\bv,\bw)\in \mathbfcal{H}^1_{0}$ be arbitrary.
    According to the inverse trace theorem (see \cite[Theorem~4.2.3]{Hsiao2008}), there exists a function $\ww \in H^{3/2}(\Omega)$ such that $\ww = \bw$ a.e.~on $\Gamma$ and there is a constant $C_\mathrm{tr}>0$ independent of $\ww$ and $\bw$ such that
    \begin{align}
        \label{EST:IT}
        \norm{\ww}_{H^{3/2}(\Omega)} \le C_\mathrm{tr}\, \norm{\bw}_{H^{1}(\Gamma)}.
    \end{align}
    In particular, we have $(\bv - \ww)\vert_\Ga = 0$ a.e.~on $\Gamma$. Hence, using Korn's inequality \eqref{KORN:B} and the inverse trace estimate \eqref{EST:IT}, we get
    \begin{align*}
        \norm{\bv - \ww}_{\mathbf{H}^1(\Om)}
        &\le C_\Omega \big( \norm{\D\bv}_{\mathbf{L}^2(\Om)}
            + \norm{\D\ww}_{\mathbf{L}^2(\Om)}
            \big)
        \\
        &\le C_\Omega \big( \norm{\D\bv}_{\mathbf{L}^2(\Om)}
            + C_\mathrm{tr} \norm{\ww}_{\mathbf{H}^1(\Ga)}
            \big).
    \end{align*}
    Finally, using the surface Korn inequality \eqref{KORN:S}, we conclude that
    \begin{align*}
        \norm{\bv}_{\mathbf{H}^1(\Om)}
        &\le \norm{\ww}_{\mathbf{H}^1(\Om)}
            + \norm{\bv - \ww}_{\mathbf{H}^1(\Om)}
        \\
        &\le C_\Omega \norm{\D\bv}_{\mathbf{L}^2(\Om)}
            + (1 + C_\Omega C_\mathrm{tr}) \norm{\ww}_{\mathbf{H}^1(\Ga)}
        \\
        &\le C_K \big( \norm{\D\bv}_{\mathbf{L}^2(\Om)}
            + \norm{\Dg\ww}_{\mathbf{L}^2(\Ga)}
            + \norm{\ww}_{\mathbf{L}^2(\Ga)} \big) 
    \end{align*}
    for some constant $C_K>0$ that is independent of $(\bv,\bw)$. This proves the claim.
\end{proof}

Thus, considering the bilinear form
\begin{align*}
    \mathbfcal{B}:\mathbfcal{H}^1_{0,\Div}\times \mathbfcal{H}^1_{0,\Div}&\rightarrow\R, \\
    ((\bv,\bw),(\wv,\ww))&\mapsto \intO 2\nu_\Om(\phi)\D\bv:\D\wv\dx + \intG 2\nu_\Ga(\psi)\Dg\bw:\Dg\ww\dG \\
    &\qquad\qquad + \intG \gamma(\phi,\psi)\bw\cdot\ww\dG,
\end{align*}
we infer from Lemma~\ref{Lemma:Korn} and the assumptions on the coefficients (see \ref{Assumption:Coefficients}) that $\mathbfcal{B}$ is coercive. Consequently, employing the Lax--Milgram theorem, for every $(\f,\f_\Ga)\in (\mathbfcal{H}^1_0)^\prime$, there exists a unique pair $(\bv,\bw)\in \mathbfcal{H}^1_{0,\Div}$, which fulfills \eqref{Stokes:WF} for all $(\wv,\ww)\in \mathbfcal{H}^1_{0,\Div}$. In particular, we have
\begin{align*}
	\norm{(\bv,\bw)}_{\mathbfcal{H}^1} \leq \frac{C_K}{\min\{2\nu_\ast,\gamma_\ast\}} \norm{(\f,\f_\Ga)}_{(\mathbfcal{H}^1_0)^\prime}.
\end{align*}

Next, to reconstruct the pressure pair $(p,q)$, we need the following lemma, which is an adaptation of \cite[Lemma 3.2]{Lengeler2015}.

\begin{lemma}\label{App:Lemma:Div}
    The operator
    \begin{align}
        \underline{\Div}:\mathbfcal{H}^1_0/\mathbfcal{H}^1_{0,\Div}\rightarrow \mathcal{L}^2_{(0)},
        \quad(\bv,\bw)\mapsto (\Div\,\bv,\Divg\,\bw),
    \end{align}
    is an isomorphism. In particular, there exists a constant $C > 0$ such that
    \begin{align}
        \norm{(\bv,\bw)}_{\mathbfcal{H}^1_0/\mathbfcal{H}^1_{0,\Div}} \leq C\norm{(\Div\,\bv,\Divg\,\bw)}_{\mathcal{L}^2_{(0)}}.
    \end{align}
\end{lemma}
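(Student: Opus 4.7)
The plan is to establish continuity and bijectivity of $\underline{\Div}$ and then to invoke the bounded inverse theorem. Continuity follows immediately from the definitions of the spaces, while injectivity on the quotient is tautological: $\underline{\Div}\,[(\bv,\bw)] = (0,0)$ forces $(\bv,\bw)\in\mathbfcal{H}^1_{0,\Div}$ and hence $[(\bv,\bw)]=0$. The core task is thus surjectivity together with the norm bound: for every $(g,g_\Ga)\in\mathcal{L}^2_{(0)}$ I need to construct a representative $(\bv,\bw)\in\mathbfcal{H}^1_0$ of a preimage satisfying $\norm{(\bv,\bw)}_{\mathbfcal{H}^1} \le C\,\norm{(g,g_\Ga)}_{\mathcal{L}^2}$.

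I would first treat the surface side. Since $\int_\Ga g_\Ga\dG=0$ and $\Ga$ is a closed $C^2$-submanifold, Fredholm theory for the Laplace--Beltrami operator yields a unique $\xi\in H^2(\Ga)/\R$ solving $\Lapg\xi=g_\Ga$, with $\norm{\xi}_{H^2(\Ga)/\R}\le C\,\norm{g_\Ga}_{L^2(\Ga)}$. Setting $\bw\coloneqq\Gradg\xi\in\mathbf{H}^1_\tau(\Ga)$ then produces a tangential surface vector field with $\Divg\,\bw=g_\Ga$ and $\norm{\bw}_{\mathbf{H}^1(\Ga)}\le C\,\norm{g_\Ga}_{L^2(\Ga)}$. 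Applying the inverse trace theorem (cf.~\cite[Theorem~4.2.3]{Hsiao2008}) componentwise then gives a lift $\widetilde{\bv}\in\mathbf{H}^{3/2}(\Om)\subset\mathbf{H}^1(\Om)$ with $\widetilde{\bv}\vert_\Ga=\bw$ and $\norm{\widetilde{\bv}}_{\mathbf{H}^1(\Om)}\le C\,\norm{\bw}_{\mathbf{H}^1(\Ga)}$, and tangentiality of $\bw$ implies $\widetilde{\bv}\cdot\n=0$ on $\Ga$.

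For the bulk correction, the residual $h\coloneqq g-\Div\,\widetilde{\bv}\in L^2(\Om)$ has vanishing mean because
\begin{align*}
    \intO h\dx = \intO g\dx - \intG \widetilde{\bv}\cdot\n\dG = 0,
\end{align*}
so the classical Bogovskii operator on the $C^2$-domain $\Om$ provides $\bv^\ast\in \mathbf{H}^1(\Om)$ with $\bv^\ast\vert_\Ga = \zero$, $\Div\,\bv^\ast = h$, and $\norm{\bv^\ast}_{\mathbf{H}^1(\Om)}\le C\,\norm{h}_{L^2(\Om)}$. Setting $\bv\coloneqq\widetilde{\bv}+\bv^\ast$, the pair $(\bv,\bw)$ lies in $\mathbfcal{H}^1_0$, satisfies $(\Div\,\bv,\Divg\,\bw)=(g,g_\Ga)$, and obeys $\norm{(\bv,\bw)}_{\mathbfcal{H}^1}\le C\,\norm{(g,g_\Ga)}_{\mathcal{L}^2}$. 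The quotient estimate in the lemma follows immediately from the definition of the quotient norm, and the bounded inverse theorem upgrades $\underline{\Div}$ to a topological isomorphism. The main delicacy is coordinating both divergence equations with the non-penetration condition $\bv\cdot\n=0$; this is precisely why I choose $\bw=\Gradg\xi$ to be tangential, so that the subsequent bulk correction can be carried out via a zero-trace Bogovskii lift without disturbing the boundary data.
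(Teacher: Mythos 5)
Your proof is correct and follows essentially the same route as the paper: solve the surface divergence equation first via $L^2$-theory for the Laplace--Beltrami operator (you make the choice $\bw=\Gradg\xi$ explicit, ensuring tangentiality for free), then correct in the bulk using the divergence operator with the surface field as Dirichlet trace, and finish via the isomorphism theorem. The only difference is cosmetic: where the paper cites the inhomogeneous $L^2$-theory of the bulk divergence operator as a black box, you unpack it into an inverse-trace lift followed by a zero-trace Bogovskii correction, and you carry the norm bound through explicitly rather than deducing it afterwards from the open mapping theorem.
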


\begin{proof}
    We start by considering the operator
    \begin{align*}
        \widetilde{\underline{\Div}}:\mathbfcal{H}^1_0\rightarrow \mathcal{L}^2_{(0)},
        \quad
        (\bv,\bw) \mapsto (\Div\,\bv,\Divg\,\bw).
    \end{align*}
    This operator is clearly
    well-defined with $\mathrm{ker}\,\widetilde{\underline{\Div}} = \mathbfcal{H}^1_{0,\Div}$. Now, we show that $\widetilde{\underline{\Div}}$ is surjective. To this end, let $(g,g_\Ga)\in \mathcal{L}^2_{(0)}$ be arbitrary. Then, let $\bw\in \mathbf{H}^1(\Ga)$ with $\bw\cdot\n = 0$ be a solution of
    \begin{align*}
        \Divg\,\bw = g_\Ga \quad\text{on~}\Ga.
    \end{align*}
    The existence of such a solution $\bw$ can be shown by
    employing $L^2$-theory for the Laplace--Beltrami operator on $\Ga$, noticing that the right-hand side is mean-value free. 
    Moreover, utilizing $L^2$-theory of the divergence operator in $\Om$, we find $\bv\in \mathbf{H}^1(\Om)$ such that
    \begin{alignat*}{2}
        \Div\,\bv &= g &&\quad\text{in~}\Om, \\
        \bv &= \bw &&\quad\text{on~}\Ga.
    \end{alignat*}
    We have therefore shown the existence of $(\bv,\bw)\in\mathbfcal{H}^1_0$ satisfying
    \begin{align*}
        \widetilde{\underline{\Div}}(\bv,\bw) = (g,g_\Ga),
    \end{align*}
    which proves that $\widetilde{\underline{\Div}}:\mathbfcal{H}^1_0\rightarrow \mathcal{L}^2_{(0)}$ is a surjective homomorphism. Consequently, by the fundamental theorem of homomorphisms, $\underline{\Div}:\mathbfcal{H}^1_0/\mathbfcal{H}^1_{0,\Div}\rightarrow \mathcal{L}^2_{(0)}$ is an isomorphism.
\end{proof}

As a corollary, we obtain the following (see also \cite[Corollary 3.3]{Lengeler2015}).

\begin{corollary}\label{App:Corollary:Grad}
    The operator $\underline{\Grad}:\mathcal{L}^2_{(0)}\mapsto (\mathbfcal{H}^1_{0,\Div})^\perp\subset (\mathbfcal{H}^1_0)^\prime$, defined by
    \begin{align}
        \underline{\Grad}(p,q)(\bv,\bw) = - \intO p\,\Div\,\bv\dx - \intG q\,\Divg\,\bw\dG,
    \end{align}
    for $(\bv,\bw)\in \mathbfcal{H}^1_0$, is an isomorphism. We have
    \begin{align}\label{Est:(p,q):Cor:App}
        \norm{(p,q)}_{\mathcal{L}^2_{(0)}} \leq C \norm{\underline{\Grad}(p,q)}_{(\mathbfcal{H}^1_0)^\prime}
    \end{align}
    with the same constant $C$ as in Lemma~\ref{App:Lemma:Div}.
\end{corollary}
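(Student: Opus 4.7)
The strategy is to recognize $\underline{\Grad}$ as the Banach-space adjoint of the isomorphism $\underline{\Div}$ from Lemma~\ref{App:Lemma:Div} (up to a sign), so that both the isomorphism property and the norm estimate follow by a short duality argument.

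First I would check that $\underline{\Grad}(p,q)$ actually lives in $(\mathbfcal{H}^1_{0,\Div})^\perp \subset (\mathbfcal{H}^1_0)^\prime$. Boundedness in $(\mathbfcal{H}^1_0)^\prime$ is immediate from the Cauchy--Schwarz inequality applied to each integral, and the annihilation property follows because both $\Div\,\bv$ and $\Divg\,\bw$ vanish by definition of $\mathbfcal{H}^1_{0,\Div}$. Next, I would identify $(\mathbfcal{H}^1_{0,\Div})^\perp$ canonically with $\bigl(\mathbfcal{H}^1_0/\mathbfcal{H}^1_{0,\Div}\bigr)^\prime$ and $\bigl(\mathcal{L}^2_{(0)}\bigr)^\prime$ with $\mathcal{L}^2_{(0)}$ via the Riesz isomorphism. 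Under these identifications, the defining formula of $\underline{\Grad}$ reads
\[
-\underline{\Grad}(p,q)(\bv,\bw) = \bigscp{(p,q)}{\underline{\Div}(\bv,\bw)}_{\mathcal{L}^2},
\]
which says exactly that $-\underline{\Grad}$ coincides with the Banach-space adjoint $(\underline{\Div})^*$. Since Lemma~\ref{App:Lemma:Div} asserts that $\underline{\Div}$ is a topological isomorphism, so is its adjoint, with inverse $((\underline{\Div})^{-1})^*$ and identical operator norm. This simultaneously yields the bijectivity of $\underline{\Grad}:\mathcal{L}^2_{(0)}\to(\mathbfcal{H}^1_{0,\Div})^\perp$ and the estimate \eqref{Est:(p,q):Cor:App} with exactly the constant $C$ from Lemma~\ref{App:Lemma:Div}.

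If a hands-on verification of the norm bound is preferred, one can argue directly by a supremum computation: for $(p,q)\in\mathcal{L}^2_{(0)}$ one writes
\[
\norm{(p,q)}_{\mathcal{L}^2_{(0)}} = \sup_{\substack{(g,g_\Ga)\in\mathcal{L}^2_{(0)} \\ \norm{(g,g_\Ga)}_{\mathcal{L}^2}\le 1}}\Bigl(\intO p\,g\dx+\intG q\,g_\Ga\dG\Bigr),
\]
then lifts each admissible $(g,g_\Ga)$ via Lemma~\ref{App:Lemma:Div} to a representative $(\bv,\bw)\in\mathbfcal{H}^1_0$ with $\underline{\Div}(\bv,\bw)=(g,g_\Ga)$ and $\norm{(\bv,\bw)}_{\mathbfcal{H}^1_0/\mathbfcal{H}^1_{0,\Div}}\le C\norm{(g,g_\Ga)}_{\mathcal{L}^2_{(0)}}$, and finally rewrites the integrand as $-\underline{\Grad}(p,q)(\bv,\bw)$ before estimating by $\norm{\underline{\Grad}(p,q)}_{(\mathbfcal{H}^1_0)^\prime}\norm{(\bv,\bw)}_{\mathbfcal{H}^1_0/\mathbfcal{H}^1_{0,\Div}}$.

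The main obstacle is essentially conceptual rather than analytical: one must carefully make the various identifications of duals and quotients precise and check that the operator norm of $(\underline{\Div})^{-1}$ is inherited unchanged by $(\underline{\Grad})^{-1}$. All the genuine analysis is already contained in Lemma~\ref{App:Lemma:Div}, so the present corollary is a routine soft consequence.
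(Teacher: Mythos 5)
Your argument is correct and follows essentially the same duality route as the paper: identify $-\underline{\Grad}$ with the Banach-space adjoint $\underline{\Div}^\prime$, use the isometries $(\mathbfcal{H}^1_0/\mathbfcal{H}^1_{0,\Div})^\prime\simeq(\mathbfcal{H}^1_{0,\Div})^\perp$ and $\mathcal{L}^2_{(0)}\simeq(\mathcal{L}^2_{(0)})^\prime$, and invoke the fact that taking adjoints preserves operator norms, so that Lemma~\ref{App:Lemma:Div} carries over verbatim. The preliminary check that $\underline{\Grad}(p,q)$ indeed lands in the annihilator and the alternative hands-on supremum computation are welcome elaborations but not substantively different from the paper's argument.
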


\begin{proof}
    This follows from Lemma~\ref{App:Lemma:Div}, the fact that
    \begin{align*}
        \underline{\Grad} = - \underline{\Div}^\prime:(\mathcal{L}^2_{(0)})^\prime \rightarrow (\mathbfcal{H}^1_0/\mathbfcal{H}^1_{0,\Div})^\prime
    \end{align*}
    is an isomorphism, from the isometry $(\mathbfcal{H}^1_0/\mathbfcal{H}^1_{0,\Div})^\prime\simeq (\mathbfcal{H}^1_{0,\Div})^\perp$, and by Riesz' isometry $\mathcal{L}^2_{(0)}\simeq (\mathcal{L}^2_{(0)})^\prime$. Recall also that forming the adjoint preserves the operator norm.
\end{proof}

Now, Corollary~\ref{App:Corollary:Grad} allows us to reconstruct the pressure pair $(p,q)$. Indeed, consider the unique weak solution $(\bv,\bw)$ that was constructed above by the Lax--Milgram theorem. Then, by Corollary~\ref{App:Corollary:Grad}, we find a unique pair $(p,q)\in \mathcal{L}^2_{(0)}$ such that
\begin{align*}
	\underline{\Grad}(p,q)(\wv,\ww) &= - \intO 2\nu_\Om(\phi)\D\bv:\D\wv\dx - \intG 2\nu_\Ga(\psi)\Dg\bw:\Dg\ww\dG
	- \intG \gamma(\phi,\psi)\bw\cdot\ww\dG \\
	&\qquad + \bigang{(\f,\f_\Ga)}{(\wv,\ww)}_{\mathbfcal{H}^1_0}
\end{align*}
for all $(\wv,\ww)\in \mathbfcal{H}^1_0$, from which we deduce the weak formulation
\begin{align}
    \label{WF:PQ:0}
    \begin{split}
        &\intO 2\nu_\Om(\phi)\D\bv:\D\wv\dx + \intG 2\nu_\Ga(\psi)\Dg\bw:\Dg\ww\dG + \intG \gamma(\phi,\psi)\bw\cdot\ww\dG \\
        &\qquad - \intO p\,\Div\,\wv\dx - \intG q\,\Divg\,\ww\dG \\
        &\quad = \bigang{(\f,\f_\Ga)}{(\wv,\ww)}_{\mathbfcal{H}^1_0}
    \end{split}
\end{align}
for all $(\wv,\ww)\in \mathbfcal{H}^1_0$.

\medskip \pagebreak[2]

We summarize our considerations above in the following theorem.
\begin{theorem} \label{THM:WWP:BSS}
    Let $(\f,\f_\Gamma)\in(\mathbfcal{H}^1_0)^\prime$ and $(g,g_\Ga) = (0,0)$. Then there exists a unique weak solution $(\bv,\bw)\in \mathbfcal{H}^1_0$ of the bulk-surface stokes system \eqref{App:BSS:system} and an associated pressure pair $(p,q)\in \mathcal{L}^2_{(0)}$ such that \eqref{WF:PQ:0} is fulfilled for all $(\wv,\ww)\in \mathbfcal{H}^1_0$. Additionally, there exists a constant $C > 0$, that depends only on the data of the system, such that
    \begin{align}\label{Est:BulkSurface:H^1+pressure}
        \norm{(\bv,\bw)}_{\mathbfcal{H}^1} + \norm{(p,q)}_{\mathcal{L}^2} \leq C\norm{(\f,\f_\Ga)}_{(\mathbfcal{H}^1_0)^\prime}.
    \end{align}
\end{theorem}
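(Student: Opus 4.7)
The plan is to assemble the preceding lemmas into a standard Lax--Milgram plus De~Rham-type argument. First, I would note that the bilinear form
\[
\mathbfcal{B}\bigl((\bv,\bw),(\wv,\ww)\bigr) \coloneqq \intO 2\nu_\Om(\phi)\D\bv:\D\wv\dx + \intG 2\nu_\Ga(\psi)\Dg\bw:\Dg\ww\dG + \intG \gamma(\phi,\psi)\bw\cdot\ww\dG
\]
on $\mathbfcal{H}^1_{0,\Div}\times\mathbfcal{H}^1_{0,\Div}$ is bounded and, by the uniform positivity of $\nu_\Om,\nu_\Ga,\gamma$ in \ref{Assumption:Coefficients} combined with the bulk-surface Korn inequality of Lemma~\ref{Lemma:Korn}, coercive with coercivity constant $\min\{2\nu_\ast,\gamma_\ast\}/C_K^2$. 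Since $(\f,\f_\Ga)\in(\mathbfcal{H}^1_0)^\prime$ defines a continuous functional on the closed subspace $\mathbfcal{H}^1_{0,\Div}$, the Lax--Milgram theorem yields a unique $(\bv,\bw)\in\mathbfcal{H}^1_{0,\Div}$ solving \eqref{Stokes:WF}, together with the bound $\norm{(\bv,\bw)}_{\mathbfcal{H}^1}\le C_1\norm{(\f,\f_\Ga)}_{(\mathbfcal{H}^1_0)^\prime}$.

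Second, I would reconstruct the pressure. Define the functional $\Lambda\in(\mathbfcal{H}^1_0)^\prime$ by
\[
\Lambda(\wv,\ww) \coloneqq \bigang{(\f,\f_\Ga)}{(\wv,\ww)}_{\mathbfcal{H}^1_0} - \mathbfcal{B}\bigl((\bv,\bw),(\wv,\ww)\bigr),
\]
which by construction vanishes on $\mathbfcal{H}^1_{0,\Div}$. Hence $\Lambda\in(\mathbfcal{H}^1_{0,\Div})^\perp$, and Corollary~\ref{App:Corollary:Grad} provides a unique $(p,q)\in\mathcal{L}^2_{(0)}$ with $\underline{\Grad}(p,q)=\Lambda$, which is exactly \eqref{WF:PQ:0}. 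The norm bound from that corollary then yields $\norm{(p,q)}_{\mathcal{L}^2}\le C_2\norm{\Lambda}_{(\mathbfcal{H}^1_0)^\prime}$, and the right-hand side is controlled by $\norm{(\f,\f_\Ga)}_{(\mathbfcal{H}^1_0)^\prime}+C\norm{(\bv,\bw)}_{\mathbfcal{H}^1}$, producing \eqref{Est:BulkSurface:H^1+pressure} after combining with the energy estimate from step one.

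For uniqueness of the velocity pair, if $(\bv_1,\bw_1)$ and $(\bv_2,\bw_2)$ are two solutions in $\mathbfcal{H}^1_{0,\Div}$, testing the difference of the equations against $(\bv_1-\bv_2,\bw_1-\bw_2)$ and invoking coercivity of $\mathbfcal{B}$ forces the difference to vanish; uniqueness of the pressure then follows from injectivity of $\underline{\Grad}$ on $\mathcal{L}^2_{(0)}$.

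The only real obstacle in the entire argument is the bulk-surface Korn inequality of Lemma~\ref{Lemma:Korn}, which is already established; everything else is a direct Lax--Milgram plus De~Rham/inf-sup argument adapted from \cite{Lengeler2015}. In particular, the novelty lies in combining the bulk symmetric-gradient control with the surface symmetric-gradient control through the trace coupling $\bv\vert_\Ga=\bw$, but since this is handled at the level of the function space $\mathbfcal{H}^1_0$ and the Korn-type lemma, no further technical surprises should arise in the theorem itself.
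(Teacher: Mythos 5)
Your proof is correct and follows essentially the same route as the paper: coercivity of the bilinear form via the bulk-surface Korn inequality (Lemma~\ref{Lemma:Korn}), Lax--Milgram on $\mathbfcal{H}^1_{0,\Div}$ for the velocity pair, and then pressure reconstruction through the isomorphism $\underline{\Grad}:\mathcal{L}^2_{(0)}\to(\mathbfcal{H}^1_{0,\Div})^\perp$ of Corollary~\ref{App:Corollary:Grad}. The only cosmetic discrepancy is your stated coercivity constant $\min\{2\nu_\ast,\gamma_\ast\}/C_K^2$, which is missing a harmless factor arising from squaring the Korn estimate; this does not affect the conclusion.
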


Now, we want to study the system \eqref{App:BSS:system} with general divergences. To this end, we consider $(g,g_\Ga)\in \mathcal{L}^2_{(0)}$. Then, by Lemma~\ref{App:Lemma:Div}, we find a unique pair $(\bv_0,\bw_0)\in\mathbfcal{H}^1_0$ such that
\begin{align}
    \underline{\Div}(\bv_0,\bw_0) = (g,g_\Ga).
\end{align}
Next, we define $(\tilde{\f},\tilde{\f}_\Ga)\in (\mathbfcal{H}^1_0)^\prime$ via
\begin{align}
    \begin{split}
        \bigang{(\tilde{\f},\tilde{\f}_\Ga)}{(\wv,\ww)}_{\mathbfcal{H}^1_0} &\coloneqq \bigang{(\f,\f_\Ga)}{(\wv,\ww)}_{\mathbfcal{H}^1_0} - \intO 2\nu_\Om(\phi)\D\bv_0:\D\wv\dx \\
        &\qquad - \intG 2\nu_\Ga(\psi)\Dg\bw_0:\Dg\ww\dG - \intG \gamma(\phi,\psi)\bw_0\cdot\ww\dG
    \end{split}
\end{align}
for $(\wv,\ww)\in \mathbfcal{H}^1_0$. Employing again the bulk-surface Korn inequality from Lemma~\ref{Lemma:Korn}, the Lax--Milgram theorem provides the existence of a unique pair $(\bv_1,\bw_1)\in \mathbfcal{H}^1_{0,\Div}$, which solves
\begin{align}
	\begin{split}
    	&\intO 2\nu_\Om(\phi)\D\bv_1:\D\wv\dx + \intG 2\nu_\Ga(\psi)\Dg\bw_1:\Dg\ww\dG + \intG \gamma(\phi,\psi)\bw_1\cdot\ww\dG \\
    	&\qquad = \bigang{(\tilde{\f},\tilde{\f}_\Ga)}{(\wv,\ww)}_{\mathbfcal{H}^1_0}
    \end{split}
\end{align}
for all $(\wv,\ww)\in \mathbfcal{H}^1_{0,\Div}$. Then, by Corollary~\ref{App:Corollary:Grad}, there exists a unique pair $(p,q)\in \mathcal{L}^2_{(0)}$ such that
\begin{align*}
    \underline{\Grad}(p,q)(\wv,\ww) &= - \intO 2\nu_\Om(\phi)\D\bv_1:\D\wv\dx - \intG 2\nu_\Ga(\psi)\Dg\bw_1:\Dg\ww\dG \\
    &\quad - \intG \gamma(\phi,\psi)\bw_1\cdot\ww\dG + \bigang{(\tilde{\f},\tilde{\f}_\Ga)}{(\wv,\ww)}_{\mathbfcal{H}^1_0}
\end{align*}
for all $(\wv,\ww)\in \mathbfcal{H}^1_0$. Now, we define $(\bv,\bw) \coloneqq (\bv_0 + \bv_1, \bw_0 + \bw_1)$. Then
\begin{alignat*}{2}
    \Div\,\bv &= \Div\,\bv_0 + \Div\,\bv_1 = g &&\quad\text{in~}\Om, \\
    \Divg\,\bw &= \Divg\,\bw_0 + \Divg\,\bw_1 = g_\Ga &&\quad\text{on~}\Ga,
\end{alignat*}
and, by construction of $(p,q)$ and the definition of $(\tilde{\f},\tilde{\f}_\Ga)$, we readily notice that $(\bv,\bw)\in \mathbfcal{H}^1_0$ solves
\begin{align}
    \label{WF:PQ}
    \begin{split}
    &\intO \D\bv:\D\wv\dx + \intG \Dg\bw:\Dg\ww\dG + \intG \gamma(\phi,\psi)\bw\cdot\ww\dG 
    \\
    &\qquad - \intO p\,\Div\,\wv\dx - \intG q\,\Divg\,\ww\dG 
    \\
    &\quad = \bigang{(\f,\f_\Ga)}{(\wv,\ww)}_{\mathbfcal{H}^1_0}
    \end{split}
\end{align}
for all $(\wv,\ww)\in \mathbfcal{H}^1_0$, together with
\begin{align*}
    \Div\,\bv = g \ \text{in~}\Om, \qquad \Divg\,\bw = g_\Ga \ \text{on~}\Ga.
\end{align*}

\medskip

This means that the following result is established.

\begin{theorem}
    Let $(\f,\f_\Ga)\in (\mathbfcal{H}^1_0)^\prime$ and $(g,g_\Ga)\in\mathcal{L}^2_{(0)}$. Then there exists a unique weak solution $(\bv,\bw)\in \mathbfcal{H}^1_0$ of the bulk-surface stokes system \eqref{App:BSS:system} and an associated pressure pair $(p,q)\in \mathcal{L}^2_{(0)}$ such that \eqref{WF:PQ} is fulfilled for all $(\wv,\ww)\in \mathbfcal{H}^1_0$.
\end{theorem}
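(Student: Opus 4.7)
The plan is to reduce the inhomogeneous divergence problem to the homogeneous one already settled in Theorem~\ref{THM:WWP:BSS}. First, invoke Lemma~\ref{App:Lemma:Div} to obtain a lift $(\bv_0,\bw_0)\in\mathbfcal{H}^1_0$ with $\underline{\Div}(\bv_0,\bw_0)=(g,g_\Ga)$. This pair has the right divergence but is, in general, not a solution of \eqref{App:BSS:system}. I would then absorb the contribution of $(\bv_0,\bw_0)$ into the source by defining $(\tilde\f,\tilde\f_\Ga)\in(\mathbfcal{H}^1_0)^\prime$ via
\begin{align*}
    \bigang{(\tilde\f,\tilde\f_\Ga)}{(\wv,\ww)}_{\mathbfcal{H}^1_0}
    &\coloneqq \bigang{(\f,\f_\Ga)}{(\wv,\ww)}_{\mathbfcal{H}^1_0}
    - \intO 2\nu_\Om(\phi)\D\bv_0:\D\wv\dx\\
    &\quad - \intG 2\nu_\Ga(\psi)\Dg\bw_0:\Dg\ww\dG
    - \intG \gamma(\phi,\psi)\bw_0\cdot\ww\dG,
\end{align*}
and apply Theorem~\ref{THM:WWP:BSS} to obtain a pair $(\bv_1,\bw_1)\in\mathbfcal{H}^1_{0,\Div}$ and a pressure $(p,q)\in\mathcal{L}^2_{(0)}$ solving the homogeneous-divergence system with right-hand side $(\tilde\f,\tilde\f_\Ga)$. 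Setting $(\bv,\bw)\coloneqq(\bv_0+\bv_1,\bw_0+\bw_1)$ then yields existence: by linearity, \eqref{WF:PQ} holds for all $(\wv,\ww)\in\mathbfcal{H}^1_0$, while $\underline{\Div}(\bv,\bw)=(g,g_\Ga)$ by construction.

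For uniqueness, let $(\bv^i,\bw^i,p^i,q^i)$, $i=1,2$, be two weak solutions and write $\delta\bv\coloneqq\bv^1-\bv^2$, $\delta\bw\coloneqq\bw^1-\bw^2$, $\delta p\coloneqq p^1-p^2$, $\delta q\coloneqq q^1-q^2$. Since the divergence data cancel, $(\delta\bv,\delta\bw)\in\mathbfcal{H}^1_{0,\Div}$. Testing \eqref{WF:PQ} with $(\wv,\ww)=(\delta\bv,\delta\bw)$ makes the pressure terms drop, and the coercivity of $\mathbfcal{B}$ (which follows from the bulk-surface Korn inequality of Lemma~\ref{Lemma:Korn} together with the positivity assumptions in \ref{Assumption:Coefficients}) forces $(\delta\bv,\delta\bw)=(\zero,\zero)$. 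Plugging this back into \eqref{WF:PQ} leaves $\underline{\Grad}(\delta p,\delta q)=0$ in $(\mathbfcal{H}^1_0)^\prime$, so the isomorphism property in Corollary~\ref{App:Corollary:Grad} yields $(\delta p,\delta q)=(0,0)$ in $\mathcal{L}^2_{(0)}$.

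The argument is essentially a Helmholtz-style splitting, and there is no serious obstacle beyond what has already been prepared. The only point that needs care is to ensure that the lift $(\bv_0,\bw_0)$ provided by Lemma~\ref{App:Lemma:Div} respects the coupling conditions $\bv\vert_\Ga=\bw$ and $\bv\cdot\n=0$ encoded in $\mathbfcal{H}^1_0$; this was, however, already established in the proof of that lemma, where $\bw$ was solved first on $\Ga$ (with $\bw\cdot\n=0$) and then extended into the bulk matching the given trace. The remaining steps are routine applications of Lax--Milgram, coercivity, and duality.
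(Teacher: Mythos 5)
Your construction is essentially the same as the paper's: lift $(g,g_\Ga)$ via Lemma~\ref{App:Lemma:Div}, absorb the lift into the source, solve the zero-divergence problem via the coercive bilinear form (Korn inequality), and recover the pressure via Corollary~\ref{App:Corollary:Grad} — you simply package the last two steps by citing Theorem~\ref{THM:WWP:BSS} instead of re-running Lax--Milgram. Your explicit uniqueness argument (difference is in $\mathbfcal{H}^1_{0,\Div}$, test with it, coercivity kills the velocity, isomorphism $\underline{\Grad}$ kills the pressure) is correct and a reasonable addition, since the paper's exposition leaves it implicit.
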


\subsection{Existence and uniqueness of strong solutions}
In the next step, assuming that the coefficients $\nu_\Om,\nu_\Ga$ and $\gamma$ are positive constants, we prove that the system \eqref{App:BSS:system} admits a unique strong solution. In the following, we discuss only the case of incompressible fluids, as it is the more relevant case for our subsequent analysis of system~\eqref{System}. The proof of the existence of a unique strong solution relies on a fixed-point argument. To this end, we start by considering the surface Stokes equation and the corresponding surface Stokes operator.

In this subsection, let $r\in[2,\infty)$ be arbitrary. We consider the surface Helmholtz projection 
\begin{align}
    \label{App:Def:Helmholtz:Surface}
    \mathbf{P}_\Div^\Ga: \mathbf{L}^r_{\tau}(\Gamma) \to \mathbf{L}^r_{\Div}(\Gamma),
    \quad
    \mathbf{P}_\Div^\Ga(\bw) = \bw - \Gradg q, 
\end{align}
where $q \in \dot{W}^{1,r}(\Ga)$ is (up to an additive constant) uniquely determined by
\begin{align}
    \label{App:Helmholtz:WF}
    \intG \Gradg q\cdot\Gradg \xi \dG = \intG \bw\cdot\Gradg\xi\dG \qquad\text{for all~}\xi\in \dot{W}^{1,r^\prime}(\Ga),
\end{align}
cf.~\cite[Lemma A.1]{Simonett2022}.
Here, $r'$ denotes the dual Sobolev exponent of $r$, i.e., $r^\prime = \frac{r}{r-1}$.
The existence of this projection is also related to the surface Helmholtz decomposition, which can be found, for example, in \cite[Theorem~4.2]{Reusken2020}.
According to \cite[Sect.~2]{Simonett2022}, we further have
\begin{align}
    \intG \mathbf{P}_\Div^\Ga(\bw) \cdot \bu\dG = \intG \bw\cdot \mathbf{P}_\Div^\Ga(\bu)\dG
\end{align}
for all $\bw\in \mathbf{L}^r_{\tau}(\Ga)$ and $\bu\in \mathbf{L}^{r^\prime}_\tau(\Ga)$. Therefore, we define the surface Stokes operator by
\begin{align}
    \mathbf{A}^\Ga(\bw) = - \mathbf{P}_\Div^\Ga\Divgt(2\nu_\Ga\Dg\bw) \qquad\text{for all~}\bw\in D(\mathbf{A}^\Ga) = \mathbf{W}^{2,r}_\tau(\Ga)\cap \mathbf{L}^r_\Div(\Ga).
\end{align}
We further refer to \cite{Pruess2021, Simonett2022} for the surface Stokes operator on more general surfaces, to \cite{Abels2024a} for the surface Stokes operator on evolving surfaces, and to the survey article \cite{Hieber2018} for the Stokes operator in various other geometric settings.

\pagebreak[3]

The following theorem ensures the existence of a unique strong solution to the surface Stokes problem.

\begin{theorem}
    \label{App:Theorem:SurfaceStokes}
    Let $\f_\Ga\in \mathbf{L}^r_\tau(\Ga)$. Then there exists a unique strong solution $(\bw,q)$ with $\bw\in \mathbf{W}^{2,r}_\tau(\Ga)\cap \mathbf{L}^r_{\Div}(\Ga)$ and $q\in W^{1,r}_{(0)}(\Gamma)$ of the surface Stokes problem
    \begin{alignat}{2}\label{App:SurfaceStokes}
        -\Divgt(2\nu_\Ga\Dg \bw) + \Gradg q + \gamma\bw &= \f_\Ga &&\qquad\text{on~}\Ga, \\
        \Divg\,\bw &= 0 &&\qquad\text{on~}\Ga.
    \end{alignat}
    Moreover, there exists a constant $C>0$, independent of $\f_\Ga$, such that
    \begin{align}\label{App:Est:SurfaceStokes}
        \norm{\bw}_{\mathbf{W}^{2,r}(\Ga)} + \norm{q}_{W^{1,r}(\Ga)} \leq C \norm{\f_\Ga}_{\mathbf{L}^r(\Ga)}.
    \end{align}
\end{theorem}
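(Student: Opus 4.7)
The plan is to proceed in three stages. First, I would establish a weak $L^2$-solution via Lax--Milgram: on $\mathbf{H}^1_\Div(\Ga)$ the bilinear form
$$\mathcal{B}^\Ga(\bw,\tw) \coloneqq \intG 2\nu_\Ga \Dg\bw : \Dg\tw \dG + \intG \gamma\, \bw \cdot \tw \dG$$
is continuous and, by the surface Korn inequality \eqref{KORN:S} combined with the positivity of the constants $\nu_\Ga$ and $\gamma$, also coercive. Hence Lax--Milgram produces a unique $\bw \in \mathbf{H}^1_\Div(\Ga)$ satisfying the weak form for all divergence-free test functions, together with the estimate $\norm{\bw}_{\mathbf{H}^1(\Ga)} \le C \norm{\f_\Ga}_{(\mathbf{H}^1_\tau(\Ga))'}$.

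Next, I would reconstruct the pressure via a surface analog of de Rham's lemma. Specifically, I would show that $\Divg: \mathbf{H}^1_\tau(\Ga) \to L^2_{(0)}(\Ga)$ is surjective, for instance by solving $\Lapg \phi = g$ on $\Ga$ for any $g \in L^2_{(0)}(\Ga)$ and setting $\bu \coloneqq \Gradg \phi$. Arguing exactly as in Lemma~\ref{App:Lemma:Div} and Corollary~\ref{App:Corollary:Grad}, this yields an isomorphism $\Gradg: L^2_{(0)}(\Ga) \to (\mathbf{H}^1_\Div(\Ga))^{\perp} \subset (\mathbf{H}^1_\tau(\Ga))'$, from which a unique $q \in L^2_{(0)}(\Ga)$ can be recovered so that the weak formulation with pressure term holds on all of $\mathbf{H}^1_\tau(\Ga)$.

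For the $\mathbf{W}^{2,r}$-regularity claim, I would appeal to the $L^r$-theory of the surface Stokes operator $\mathbf{A}^\Ga$ developed in \cite{Pruess2021, Simonett2022}. Since $\mathbf{A}^\Ga$, viewed on $\mathbf{L}^r_\Div(\Ga)$ with domain $\mathbf{W}^{2,r}_\tau(\Ga) \cap \mathbf{L}^r_\Div(\Ga)$, generates a bounded analytic semigroup, and since $\gamma > 0$ is a constant, the shifted operator $\mathbf{A}^\Ga + \gamma\,\mathrm{Id}$ is boundedly invertible on $\mathbf{L}^r_\Div(\Ga)$. Applying its inverse to $\mathbf{P}^\Ga_\Div \f_\Ga$ yields a unique $\bw \in \mathbf{W}^{2,r}_\tau(\Ga) \cap \mathbf{L}^r_\Div(\Ga)$, and the pressure is recovered from
$$\Gradg q = (\mathrm{Id} - \mathbf{P}^\Ga_\Div)\big[\f_\Ga + \Divgt(2\nu_\Ga \Dg \bw) - \gamma \bw\big] \in \mathbf{L}^r_\tau(\Ga),$$
which gives $q \in W^{1,r}_{(0)}(\Ga)$ after fixing the mean to zero. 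Collecting the norm bounds of each factor yields \eqref{App:Est:SurfaceStokes}. The $L^2$-solution from the first step coincides with this strong solution by uniqueness (noting that $\mathbf{H}^1(\Ga)\emb \mathbf{L}^r(\Ga)$ for every $r\in[2,\infty)$ since $\dim\Ga\le 2$).

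The main obstacle is the third step: one must verify that the surface Stokes theory in the cited references is actually applicable at the $C^2$-level of regularity assumed for $\Ga$ and that the mean-value normalization of $q$ and the definition \eqref{App:Def:Helmholtz:Surface} of $\mathbf{P}^\Ga_\Div$ match the formulations therein. The friction term $\gamma\bw$ is a bounded, lower-order perturbation that does not affect analyticity of the semigroup; it is in fact helpful, as it removes any possible kernel issues arising from Killing fields on $\Ga$ and ensures outright invertibility of $\mathbf{A}^\Ga + \gamma\,\mathrm{Id}$ rather than merely its Fredholm property.
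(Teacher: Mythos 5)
Your proposal is essentially the same as the paper's proof: both hinge on the surface Helmholtz decomposition together with the invertibility of the shifted surface Stokes operator $\gamma + \mathbf{A}^\Ga$ on $\mathbf{L}^r_\Div(\Ga)$ (the paper invokes maximal $L^p$-regularity from \cite{Pruess2021} to get the resolvent estimate \eqref{Est:Resolvent}; you invoke the analytic semigroup property, which yields the same invertibility since $\gamma > 0$), and both recover the pressure by applying $(\mathrm{Id} - \mathbf{P}^\Ga_\Div)$ and normalizing the mean. The initial $L^2$ Lax--Milgram and de Rham steps in your write-up are not wrong, but they are redundant: the $L^r$-theory already delivers existence and uniqueness of the strong solution directly, and the paper dispenses with them.
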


\begin{proof}
    The definition of the projection $\mathbf{P}_\Div^\Ga$ implies that there exists a function $q_1\in \dot{W}^{1,r}(\Gamma)$ such that 
    \begin{align}
        \label{SurfaceStokes:q1}
        \f_\Ga = \mathbf{P}_\Div^\Ga \f_\Ga + \Gradg q_1
        \quad\text{a.e.~on $\Gamma$.}
    \end{align}    
    We further know from \cite[Corollary 3.4]{Pruess2021} that the operator $\mathbf{A}^\Ga$ has maximal $L^p$-regularity. Hence, $\{z\in\mathbb{C}: \mathrm{Re}\,z < 0\}\subset \rho(\mathbf{A}^\Ga)$,
    and there exists a constant $M\geq 1$ such that for all $z\in\mathbb C$ with $\mathrm{Re}\, z > 0$, $z + \mathbf{A}^\Ga$ is invertible with
    \begin{align}\label{Est:Resolvent}
        \norm{(z + \mathbf{A}^\Ga)^{-1}}_{\mathrm{op}} \leq \frac{M}{1 + \abs{z}}, 
    \end{align}
    see, e.g., \cite[Proposition 3.5.2]{Pruess2016}. Consequently, we find $\bw\in D(\mathbf{A}^\Ga)$ such that
    \begin{align}
        \label{SurfaceStokes:w}
        \gamma\bw + \mathbf{A}^\Ga(\bw) = \mathbf{P}_\Div^\Ga \f_\Ga \qquad\text{a.e.~on $\Ga$}.
    \end{align}
    In view of \eqref{Est:Resolvent}, it holds that
    \begin{align*}
        \norm{\bw}_{\mathbf{W}^{2,r}(\Ga)} \leq \frac{M}{1 + \gamma}\norm{\f_\Ga}_{\mathbf{L}^r(\Ga)}.
    \end{align*}
    As $\bw\in \mathbf{W}^{2,r}_\tau(\Ga)\cap\mathbf{L}^r_\Div(\Ga)$, we further have
    \begin{align*}
        - \Divgt(2\nu_\Ga\Dg\bw), \; \mathbf{A}^\Ga(\bw) \in \mathbf{L}^r(\Ga)
        \qquad\text{and}\qquad
        \Divg\,\bw = 0 \quad\text{a.e.~on $\Ga$}.
    \end{align*}
    Hence, by the definition of the projection $\mathbf{P}_\Div^\Ga$, we find a function $q_2 \in \dot{W}^{1,r}(\Gamma)$ such that 
    \begin{align}
        \label{SurfaceStokes:q2}
        - \Divgt(2\nu_\Ga\Dg\bw) = \mathbf{A}^\Ga(\bw) - \Gradg q_2
        \quad\text{a.e.~on $\Gamma$.}
    \end{align}
    Defining $q\coloneqq q_1+q_2$ and combining \eqref{SurfaceStokes:q1}, \eqref{SurfaceStokes:w} and \eqref{SurfaceStokes:q2}, we conclude that 
    \begin{align}\label{Pressure:q:constr}
         - \Divgt(2\nu_\Ga\Dg\bw) + \gamma\bw + \Gradg q 
        = \mathbf{A}^\Ga(\bw) + \gamma\bw + \Gradg q_1
        = \f_\Ga
        \quad\text{a.e.~on $\Gamma$.}
    \end{align}
    Moreover, the choice of $q$ is unique if we additionally demand $\meang{q}=0$. Then, the Poincar\'{e}--Wirtinger inequality yields $q\in W^{1,r}(\Ga)$. In this way, $(\bw,q)$ is the unique strong solution of \eqref{App:SurfaceStokes} with the desired regularities. 
\end{proof}

\medskip

In order to analyze the bulk-surface Stokes system \eqref{App:BSS:system}, we also need the Helmholtz projection in the bulk. It is defined as 
\begin{align}
    \label{App:Def:Helmholtz:Bulk}
    \mathbf{P}_\Div^\Om:\mathbf{L}^r(\Om)\rightarrow\mathbf{L}^r_\Div(\Om),
    \quad
    \mathbf{P}_\Div^\Om(\bv) = \bv - \Grad p,
\end{align}
where $p\in \dot{W}^{1,r}(\Om)$ is (up to an additive constant) uniquely determined by
\begin{align*}
	\intO \Grad p \cdot \Grad\zeta\dx = \intO \bv\cdot\Grad\zeta\dx
    \quad\text{for all $\zeta\in \dot{W}^{1,r'}(\Om)$},
\end{align*}
cf.~\cite[Theorem~1.4]{Simader1992}. Here, $r'$ denotes again the dual Sobolev exponent of $r$.

\pagebreak[3]

We are now in a position to prove the existence of a strong solution to \eqref{App:BSS:system}.
\begin{theorem}\label{App:Theorem:BSStokes:Reg}
    Let $\f \in\mathbf{L}^r(\Omega)$ and $\f_\Ga \in\mathbf{L}^r_\tau(\Gamma)$. Then there exists a unique solution $(\bv,\bw)\in\mathbfcal{W}^{2,r}_{0,\Div}$ and $(p,q)\in\mathcal{W}^{1,r}\cap\mathcal{L}^r_{(0)}$ to the bulk-surface Stokes system \eqref{App:BSS:system}. Moreover, there exists a constant $C > 0$ such that
    \begin{align}\label{Est:Stokes:Full}
        \norm{(\bv,\bw)}_{\mathbfcal{W}^{2,r}} + \norm{(p,q)}_{\mathcal{W}^{1,r}} \leq C\norm{(\f,\f_\Ga)}_{\mathbfcal{L}^r}.
    \end{align}
\end{theorem}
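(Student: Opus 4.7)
The plan is to combine the existence and uniqueness of the weak solution from Theorem~\ref{THM:WWP:BSS} with a fixed-point bootstrap that exploits the separate $L^r$-regularity theories for the bulk Stokes system and for the surface Stokes system (Theorem~\ref{App:Theorem:SurfaceStokes}).

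By Theorem~\ref{THM:WWP:BSS}, there is already a unique weak solution $(\bv,\bw)\in\mathbfcal{H}^1_{0,\Div}$ with $(p,q)\in\mathcal{L}^2_{(0)}$ and the baseline bound \eqref{Est:BulkSurface:H^1+pressure}. Uniqueness of the strong solution follows immediately, since every strong solution is in particular a weak solution. Hence the task reduces to upgrading regularity and deriving the quantitative estimate \eqref{Est:Stokes:Full}.

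To produce the strong solution, I would set up a fixed-point scheme that decouples the bulk and surface equations through the trace relation $\bv\vert_\Gamma=\bw$ and through the normal-stress coupling term $2\nu_\Omega[\D\bv\,\n]_\tau$. Given $\tw\in\mathbf{W}^{2-1/r,r}_\tau(\Gamma)\cap\mathbf{L}^r_\Div(\Gamma)$, the bulk Stokes problem
\begin{align*}
    -\nu_\Omega\Lap\bv+\Grad p=\f, \quad \Div\,\bv=0 \quad\text{in~}\Omega,
    \qquad \bv\vert_\Gamma=\tw, \quad \bv\cdot\n=0 \quad\text{on~}\Gamma,
\end{align*}
is solvable by the classical inhomogeneous $L^r$-Stokes theory and yields $(\bv,p)\in\mathbf{W}^{2,r}(\Omega)\times (W^{1,r}(\Omega)\cap L^r_{(0)}(\Omega))$. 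The trace theorem then places $[\D\bv\,\n]_\tau\in\mathbf{W}^{1-1/r,r}_\tau(\Gamma)\hookrightarrow\mathbf{L}^r_\tau(\Gamma)$. Feeding $\f_\Gamma-2\nu_\Omega[\D\bv\,\n]_\tau\in\mathbf{L}^r_\tau(\Gamma)$ into the surface Stokes problem and invoking Theorem~\ref{App:Theorem:SurfaceStokes} produces $(\bw,q)\in\mathbf{W}^{2,r}_\tau(\Gamma)\times (W^{1,r}(\Gamma)\cap L^r_{(0)}(\Gamma))$. This defines a self-map $T(\tw)\coloneqq\bw$, and any fixed point of $T$, paired with its bulk lift, is a strong solution of \eqref{App:BSS:system}.

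The main obstacle is the production of this fixed point, since the map $\tw\mapsto[\D\bv[\tw]\,\n]_\tau$ does not gain derivatives and a direct contraction in the natural norm is not available. I would resolve this by bootstrapping from the weak solution itself: starting from $(\bv,\bw)\in\mathbfcal{H}^1_{0,\Div}$, I would first establish $\mathbfcal{H}^2$-regularity via Nirenberg's tangential difference-quotient method applied to the full weak formulation, using Lemma~\ref{Lemma:Korn} for coercivity and Theorem~\ref{App:Theorem:SurfaceStokes} to treat the surface part; once $(\bv,\bw)\in\mathbfcal{H}^2$ is known, the trace $[\D\bv\,\n]_\tau$ lies in $\mathbf{H}^{1/2}(\Gamma)\hookrightarrow\mathbf{L}^s(\Gamma)$ for some $s>2$. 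Iterating the decoupled Stokes estimates along a Sobolev chain $s\nearrow r$ then delivers $(\bv,\bw)\in\mathbfcal{W}^{2,r}_{0,\Div}$ with $(p,q)\in\mathcal{W}^{1,r}$; the quantitative bound \eqref{Est:Stokes:Full} is obtained by summing the bulk and surface Stokes estimates along this chain and absorbing the lower-order trace contributions with the baseline estimate \eqref{Est:BulkSurface:H^1+pressure}.
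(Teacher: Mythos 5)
Your decoupling scheme (solve bulk Stokes with Dirichlet data $\tw$, feed the resulting normal stress into the surface Stokes problem, define $T(\tw)=\bw$) is exactly the one the paper uses, and your uniqueness argument (strong $\Rightarrow$ weak, then invoke Theorem~\ref{THM:WWP:BSS}) and your plan for the quantitative bound (sum the bulk and surface estimates, absorb the trace term via Ehrling and the baseline bound \eqref{Est:BulkSurface:H^1+pressure}) also match the paper. Where you part ways is precisely at the step you flag as the ``main obstacle''. You correctly observe that $T$ is not a contraction, but you overlook that it \emph{does} gain a fractional derivative: $\tw\in\mathbf{W}^{2-1/r,r}_\Div(\Gamma)\mapsto\bv[\tw]\in\mathbf{W}^{2,r}(\Omega)\mapsto[\D\bv\,\n]_\tau\in\mathbf{L}^r(\Gamma)\mapsto T\tw\in\mathbf{W}^{2,r}_\Div(\Gamma)$, so $T$ gains $1/r$ derivatives overall. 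Composed with the compact embedding $\mathbf{W}^{2,r}(\Gamma)\hookrightarrow\hookrightarrow\mathbf{W}^{2-1/r,r}(\Gamma)$, $T$ is continuous and compact, and (with an a priori bound supplied by Theorem~\ref{THM:WWP:BSS} together with the same Ehrling argument you invoke for the estimate) Schauder's fixed-point theorem produces the fixed point directly. This is the paper's resolution, and it is considerably lighter than what you propose in its place.

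Your substitute route---establish $\mathbfcal{H}^2$-regularity by tangential difference quotients on the coupled weak formulation, then ladder up $H^2\to W^{2,\min(r,4)}\to W^{2,r}$ through the decoupled Stokes estimates and Sobolev traces---is plausible; in fact the ladder step is sound and terminates after at most two iterations for any $r<\infty$ in $d\le 3$. The genuine gap, however, is the first rung. A difference-quotient proof of $\mathcal{H}^2$-regularity for this \emph{coupled} bulk--surface system is itself a substantive piece of work: it requires boundary flattening, simultaneous tangential quotients for $\bv$ near $\Gamma$ and for $\bw$ on $\Gamma$ (because of the trace relation $\bv\vert_\Gamma=\bw$), recovery of normal second derivatives of $\bv$ from the PDE, and control of both pressures. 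Note also that you cannot shortcut this by citing $\mathcal{S}(\mathbfcal{L}^2_\Div)=D(\mathbfcal{A})=\mathbfcal{H}^2_{0,\Div}$ from Lemma~\ref{LEM:PROP:BSS}, since that lemma is proved precisely as a consequence of the present theorem. As it stands, the $\mathcal{H}^2$ step is the crux of your proposal and is only gestured at; the paper's Schauder argument sidesteps it entirely by exploiting the $1/r$-derivative gain of $T$ that your obstacle analysis missed.
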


\begin{proof}
Let $\tw \in \mathbf{W}^{2-1/r,r}_\Div(\Ga)$ be arbitrary.
Then, according to \cite[Proposition 2.2 and Proposition 2.3]{Temam1984}, there exists a unique strong solution $\bv \in \mathbf{W}^{2,r}_\Div(\Om)$ and $p\in W^{1,r}(\Om)\cap L^r_{(0)}(\Om)$ of the bulk Stokes problem
\begin{subequations}
\label{App:BulkStokes}
\begin{alignat}{2}
    -\Div(2\nu_\Om\D\bv) + \Grad p &= \f &&\qquad\text{in~}\Om, \\
    \Div\,\bv &= 0 &&\qquad\text{in~}\Om, \\
    \bv\vert_{\Ga} &= \tw &&\qquad\text{on~}\Ga.
\end{alignat}
\end{subequations}
Furthermore, \cite[Proposition 2.2]{Temam1984} provides the estimate
\begin{align}
    \label{App:Est:BulkStokes}
    \norm{\bv}_{\mathbf{W}^{2,r}(\Om)} + \norm{p}_{W^{1,r}(\Om)} 
    \leq C\big( \norm{\f}_{\mathbf{L}^r(\Om)} + \norm{\tw}_{\mathbf{W}^{2-1/r,r}(\Ga)}\big).
\end{align}
By the trace theorem, we have $\D\bv \in L^r(\Ga;\R^{d\times d})$ and thus,
\begin{align*}
    \f_\Ga - 2\nu_\Om\big[ \D\bv\,\n \big]_\tau \in \mathbf{L}^r_\tau(\Ga).
\end{align*}
Hence, according to Theorem~\ref{App:Theorem:SurfaceStokes}, there exists a unique strong solution $\bw \in \mathbf{W}^{2,r}_\Div(\Ga)$ and $q\in W^{1,r}(\Ga)$ of the surface Stokes problem 
\begin{subequations}
\begin{alignat}{2}
    -\Divgt(2\nu_\Ga\Dg\bw) + \Gradg q + \gamma\bw &= \f_\Ga - 2\nu_\Om\big[ \D\bv\,\n \big]_\tau &&\qquad\text{on~}\Ga, \\
    \Divg\,\bw &= 0 &&\qquad\text{on~}\Ga.
\end{alignat}
\end{subequations}
This defines an operator $T:\mathbf{W}^{2-1/r,r}_\Div(\Ga)\rightarrow \mathbf{W}^{2,r}_\Div(\Ga)$, $\tw\mapsto T\tw = \bw$. 
Using the estimates \eqref{App:Est:SurfaceStokes} and \eqref{App:Est:BulkStokes}, we deduce that
\begin{align}
    \begin{split}
        \norm{T\tw_1 - T\tw_2}_{\mathbf{W}^{2,r}(\Ga)} &\leq C\norm{\D\bv_1\,\n - \D\bv_2\,\n}_{\mathbf{L}^r(\Ga)} \leq C\norm{\bv_1 - \bv_2}_{\mathbf{W}^{2,r}(\Om)} \\
        &\leq C\norm{\tw_1 - \tw_2}_{\mathbf{W}^{2-1/r,r}(\Ga)}.
    \end{split}
\end{align}
This means that $T$ is continuous.
Furthermore, since $\mathbf{W}^{2,r}(\Ga)$ is compactly embedded in $\mathbf{W}^{2-1/r,r}(\Ga)$, Schauder's fixed-point theorem implies that $T$ has a fixed-point $\bw_\ast\in \mathbf{W}^{2,r}_\Div(\Ga)$ such that $T\bw_\ast = \bw_\ast$.
Finally, we write $q_*$ to denote the pressure associated with $\bw_\ast$, and we define $(\bv_\ast,p_\ast)$ as the corresponding unique strong solution to the bulk Stokes problem \eqref{App:BulkStokes}. In this way, we have constructed functions
\begin{align*}
    (\bv_\ast,\bw_\ast)\in\mathbfcal{W}^{2,r}_{0,\Div},
    \quad
    (p_\ast,q_\ast)\in\mathcal{W}^{1,r}\cap\mathcal{L}^r_{(0)},
\end{align*}
which are a strong solution to the bulk-surface Stokes system \eqref{App:BSS:system}. As every strong solution is also a weak solution, we conclude from Theorem~\ref{THM:WWP:BSS} that the strong solution is unique. As a last step, we have to verify the inequality \eqref{Est:Stokes:Full}. 
To this end, we recall the trace embedding $W^{3/4,r}(\Omega) \emb L^r(\Gamma)$ (see, e.g., \cite[Part~I, Chapter~2, Theorem~2.24]{Brezzi1987}), which holds since $\tfrac 34 > \tfrac12 \ge \tfrac 1r$. 
Using this embedding along with Sobolev embeddings, Ehrling's lemma and estimate \eqref{Est:BulkSurface:H^1+pressure}, we obtain 
\begin{align}
    \label{EST:TREHR}
    \begin{split}
    \norm{\D\bv_\ast}_{\mathbf{L}^r(\Ga)} 
    & \leq C\, \norm{\D\bv_\ast}_{\mathbf{W}^{3/4,r}(\Om)} 
    \leq C\, \norm{\bv_\ast}_{\mathbf{W}^{7/4,r}(\Om)} 
    \\
    &\leq
    \varepsilon\norm{\bv_\ast}_{\mathbf{W}^{2,r}(\Om)} 
    + C_\varepsilon \norm{\bv_\ast}_{\mathbf{H}^{1}(\Om)}
    \\
    &\leq \varepsilon\norm{\bv_\ast}_{\mathbf{W}^{2,r}(\Om)} 
    + C_\varepsilon\, C\, \norm{(\f,\f_\Ga)}_{\mathbfcal{L}^r}
    \end{split}
\end{align}
for any $\varepsilon>0$ and some constant $C_\varepsilon > 0$ that may depend on $\varepsilon$ and the same quantities as constants denoted by $C$. Fixing a sufficiently small $\varepsilon>0$, 
the desired inequality  \eqref{Est:Stokes:Full} follows by combining \eqref{EST:TREHR} with the estimates \eqref{App:Est:SurfaceStokes} and \eqref{App:Est:BulkStokes}. This completes the proof.
\end{proof}

\subsection{The bulk-surface Stokes operator: Regularity and spectral theory}

We further consider the situation where $\nu_\Om,\nu_\Ga$ and $\gamma$ are positive constants.
Using the bulk and surface Helmholtz projections, we define the bulk-surface Stokes operator as follows:
\begin{align*}
    \mathbfcal{A}:D(\mathbfcal{A})\subset \mathbfcal{L}^2_\Div&\rightarrow\mathbfcal{L}^2_\Div, \\
    (\bv,\bw)&\mapsto \big(-\mathbf{P}_\Div^\Om(\Div(2\nu_\Om\D\bv)), -\mathbf{P}_\Div^\Ga(\Divgt(2\nu_\Ga\Dg\bw) + 2\nu_\Om\big[ \D\bv\,\n \big]_\tau + \gamma\bw)\big),
\end{align*}
where $D(\mathbfcal{A}) = \mathbfcal{H}^2_{0,\Div}$.

\pagebreak[3]

\begin{lemma} \label{LEM:PROP:BSS}
    The operator $\mathbfcal{A}$ is invertible and its inverse 
    \begin{align*}
        \mathbfcal{S}: \mathbfcal{L}^2_\Div \to \mathbfcal{L}^2_\Div,
        \quad
        \mathbfcal{S}(\f,\f_\Ga) = \mathbfcal{A}^{-1}(\f,\f_\Ga), 
    \end{align*}
    which maps $(\f,\f_\Ga)$ onto the corresponding strong solution of the bulk-surface Stokes problem \eqref{App:BSS:system}, is a linear and bounded operator that is injective, compact, and self-adjoint with respect to the inner product of $\mathbfcal{L}^2$.
\end{lemma}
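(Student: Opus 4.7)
My plan is to read off most of the properties directly from the already established well-posedness theorem (Theorem~\ref{App:Theorem:BSStokes:Reg}) and standard abstract arguments, so the proof should be short.

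\textbf{Invertibility, linearity, boundedness, injectivity.} First, I would invoke Theorem~\ref{App:Theorem:BSStokes:Reg} with $r=2$: for every $(\f,\f_\Ga)\in\mathbfcal{L}^2_{\Div}\subset\mathbf{L}^2(\Omega)\times\mathbf{L}^2_\tau(\Gamma)$ there exists a unique strong solution $(\bv,\bw)\in\mathbfcal{H}^2_{0,\Div}=D(\mathbfcal{A})$ together with pressures $(p,q)\in\mathcal{H}^1\cap\mathcal{L}^2_{(0)}$. Since $\bv$ and $\bw$ are divergence-free, applying the Helmholtz projections $\mathbf{P}^\Om_{\Div}$ and $\mathbf{P}^\Ga_{\Div}$ to the momentum equations annihilates the gradient pressure terms, so this solution satisfies $\mathbfcal{A}(\bv,\bw)=(\f,\f_\Ga)$. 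Uniqueness of the strong solution gives that $\mathbfcal{A}$ is a bijection from $D(\mathbfcal{A})$ onto $\mathbfcal{L}^2_{\Div}$; hence $\mathbfcal{S}=\mathbfcal{A}^{-1}$ is well-defined and injective. Linearity is immediate from the linearity of the system, and boundedness of $\mathbfcal{S}$ as an operator on $\mathbfcal{L}^2_{\Div}$ follows from the a~priori estimate \eqref{Est:Stokes:Full} with $r=2$, which in fact gives the stronger bound $\norm{\mathbfcal{S}(\f,\f_\Ga)}_{\mathbfcal{H}^2}\le C\norm{(\f,\f_\Ga)}_{\mathbfcal{L}^2}$.

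\textbf{Compactness.} This stronger estimate is exactly what is needed for compactness. I would factor $\mathbfcal{S}$ as the composition
\begin{equation*}
\mathbfcal{L}^2_{\Div}\ \xrightarrow{\ \mathbfcal{S}\ }\ \mathbfcal{H}^2_{0,\Div}\ \hookrightarrow\ \mathbfcal{L}^2_{\Div},
\end{equation*}
where the first map is bounded by \eqref{Est:Stokes:Full} and the second is compact by the Rellich--Kondrachov theorem applied componentwise in $\Omega$ and on the compact manifold $\Gamma$. The composition of a bounded and a compact operator is compact.

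\textbf{Self-adjointness.} For $(\f,\f_\Ga),(\g,\g_\Ga)\in\mathbfcal{L}^2_{\Div}$, set $(\bv,\bw)=\mathbfcal{S}(\f,\f_\Ga)$ and $(\tv,\tw)=\mathbfcal{S}(\g,\g_\Ga)$. Testing the weak formulation \eqref{WF:PQ:0} for the first pair with $(\tv,\tw)\in\mathbfcal{H}^1_{0,\Div}$ (so the pressure terms drop out by divergence-freeness and the non-penetration condition) yields
\begin{align*}
\bigscp{(\f,\f_\Ga)}{(\tv,\tw)}_{\mathbfcal{L}^2}
&= \intO 2\nu_\Om\,\D\bv:\D\tv\dx + \intG 2\nu_\Ga\,\Dg\bw:\Dg\tw\dG \\
&\quad + \intG \gamma\,\bw\cdot\tw\dG.
\end{align*}
The right-hand side is symmetric in $((\bv,\bw),(\tv,\tw))$, so interchanging the roles of the two solutions gives $\bigscp{(\f,\f_\Ga)}{\mathbfcal{S}(\g,\g_\Ga)}_{\mathbfcal{L}^2}=\bigscp{\mathbfcal{S}(\f,\f_\Ga)}{(\g,\g_\Ga)}_{\mathbfcal{L}^2}$, which is the claimed self-adjointness.

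\textbf{Main obstacle.} The only subtle point is the legitimacy of dropping the pressure terms. This requires that $(\tv,\tw)\in\mathbfcal{H}^1_{0,\Div}$ is an admissible test function in \eqref{WF:PQ:0} and that $\intO p\,\Div\,\tv\dx$ and $\intG q\,\Divg\,\tw\dG$ both vanish; both facts follow from $(\tv,\tw)\in\mathbfcal{H}^2_{0,\Div}\subset\mathbfcal{H}^1_{0,\Div}$, so no real difficulty arises. Everything else is routine once Theorem~\ref{App:Theorem:BSStokes:Reg} is available.
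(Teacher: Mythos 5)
Your proposal is correct and follows essentially the same route as the paper: invertibility, linearity, and injectivity from Theorem~\ref{App:Theorem:BSStokes:Reg} combined with the observation that the Helmholtz projections kill the pressure gradients; boundedness from \eqref{Est:Stokes:Full} with $r=2$; compactness by factoring $\mathbfcal{S}$ through the compact embedding $\mathbfcal{H}^2_{0,\Div}\emb\mathbfcal{L}^2_{\Div}$; and self-adjointness by testing \eqref{WF:PQ:0} with the second solution and using the symmetry of the resulting bilinear form. The one point you flag as subtle, dropping the pressure terms, is indeed justified exactly as you say, so no gap remains.
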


\begin{proof}
    The invertibility of $\mathbfcal{A}$ and the existence of the inverse Stokes operator $\mathbfcal{S}$ are a direct consequence of Theorem~\ref{App:Theorem:BSStokes:Reg}. As the Stokes operator $\mathbfcal{A}$ is linear and invertible, it is clear that $\mathbfcal{S}$ is linear and injective.
    Combining the estimate \eqref{Est:Stokes:Full} with the trace embedding $\mathbf{H}^2(\Omega) \emb \mathbf{L}^2(\Gamma)$, we deduce that $\mathbfcal{S}$ is bounded. Moreover, as $\mathbfcal{S}(\mathbfcal{L}^2_\Div) = D(\mathbfcal{A}) = \mathbfcal{H}^2_{0,\Div}$, we conclude from the compact embedding $\mathbfcal{H}^2 \emb \mathbfcal{L}^2$ that $\mathbfcal{S}$ is compact. It remains to prove that $\mathbfcal{S}$ is self-adjoint. To this end, let $(\f,\f_\Ga), (\widehat\f,\widehat\f_\Ga) \in \mathbfcal{L}^2_\Div$ be arbitrary. Choosing $(\bv,\bw) = \mathbfcal{S}(\f,\f_\Ga) \in \mathbfcal{H}^2_{0,\Div}$ and $(\wv,\ww) = \mathbfcal{S}(\widehat\f,\widehat\f_\Ga) \in \mathbfcal{H}^2_{0,\Div}$ in the weak formulation \eqref{WF:PQ:0}, we finally conclude that
    \begin{align}
    \label{EQ:SAD}
        \begin{split}
        &\bigscp{\mathbfcal{S}(\f,\f_\Ga)}{(\widehat\f,\widehat\f_\Ga)}_{\mathbfcal{L}^2}
        = \bigang{(\widehat\f,\widehat\f_\Ga)}{(\bv,\bw)}_{\mathbfcal{H}^1_0}
        \\
        &\quad = \intO 2\nu_\Om\,\D\bv:\D\wv\dx 
            + \intG 2\nu_\Ga\,\Dg\bw:\Dg\ww\dG 
            + \intG \gamma\,\bw\cdot\ww\dG 
        \\
        &\quad = \bigang{(\f,\f_\Ga)}{(\wv,\ww)}_{\mathbfcal{H}^1_0}
        = \bigscp{(\f,\f_\Ga)}{\mathbfcal{S}(\widehat\f,\widehat\f_\Ga)}_{\mathbfcal{L}^2}.
        \end{split}
    \end{align}
    This proves that $\mathbfcal{S}$ is self-adjoint with respect to the inner product of $\mathbfcal{L}^2$, and thus, the proof is complete.
\end{proof}

\begin{theorem} \label{THM:SPEC:BSS}
    The operator $\mathbfcal{A}$ has countably many eigenvalues, and each of them has a finite-dimensional eigenspace. Repeating each eigenvalue according to its multiplicity, we can interpret them as a sequence $\{\lambda_k\}_{k\in\N} \subset \R$ with
    \begin{align*}
        0 < \lambda_1 \le \lambda_2 \le \lambda_3 \le \ldots
        \qquad\text{and}\qquad
        \lambda_k \to \infty \;\;\text{as $k\to\infty$}.
    \end{align*}
    Moreover, there exists an orthonormal basis $\{(\tv_k,\tw_k) \}_{k\in\N} \subset \mathbfcal{H}^2_{0,\Div}$ of $\mathbfcal{L}^2_\Div$ such that for every $k\in\N$, $(\tv_k,\tw_k)$ is an eigenfunction to the eigenvalue $\lambda_k$. This means that $(\tv_k,\tw_k)$ is non-trivial and satisfies
    \begin{align*}
        \mathbfcal{A}(\tv_k,\tw_k) = \lambda_k (\tv_k,\tw_k)
        \quad\Leftrightarrow\quad
        (\tv_k,\tw_k) = \lambda_k \mathbfcal{S}(\tv_k,\tw_k).
    \end{align*}
    In particular, any pair $(\bv,\bw)\in\mathbfcal{L}^2_\Div$ can be expressed as
	\begin{align*}
		(\bv,\bw) = \sum_{k=1}^\infty \bigscp{(\bv,\bw)}{(\tv_k,\tw_k)}_{\mathbfcal{L}^2}(\tv_k,\tw_k)
	\end{align*}
    in the sense that the series converges in $\mathbfcal{L}^2$.
\end{theorem}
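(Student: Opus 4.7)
The plan is to derive Theorem~\ref{THM:SPEC:BSS} from Lemma~\ref{LEM:PROP:BSS} by applying the classical spectral theorem for compact self-adjoint operators on Hilbert spaces to the inverse Stokes operator $\mathbfcal{S}$, and then transferring the spectral decomposition back to $\mathbfcal{A}$.

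First, I would verify that $\mathbfcal{S}$ is positive on $\mathbfcal{L}^2_\Div$. Setting $(\widehat{\f},\widehat{\f}_\Ga) = (\f,\f_\Ga)$ in the computation \eqref{EQ:SAD} from the proof of Lemma~\ref{LEM:PROP:BSS} and writing $(\bv,\bw) = \mathbfcal{S}(\f,\f_\Ga)$ yields
\begin{align*}
    \bigscp{\mathbfcal{S}(\f,\f_\Ga)}{(\f,\f_\Ga)}_{\mathbfcal{L}^2}
    = \intO 2\nu_\Om |\D\bv|^2 \dx
    + \intG 2\nu_\Ga |\Dg\bw|^2 \dG
    + \intG \gamma |\bw|^2 \dG,
\end{align*}
which is strictly positive whenever $(\f,\f_\Ga)\neq 0$ thanks to the bulk-surface Korn inequality (Lemma~\ref{Lemma:Korn}) combined with the injectivity of $\mathbfcal{S}$.

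Next, the spectral theorem for compact self-adjoint operators on the separable Hilbert space $\mathbfcal{L}^2_\Div$ (see, e.g., Reed-Simon) provides a countable orthonormal basis $\{(\tv_k,\tw_k)\}_{k\in\N} \subset \mathbfcal{L}^2_\Div$ of eigenfunctions of $\mathbfcal{S}$, with a corresponding sequence of real eigenvalues $\{\mu_k\}_{k\in\N}$ whose only possible accumulation point is $0$ and whose eigenspaces are finite-dimensional. By the positivity shown above, each $\mu_k > 0$, and since $\mathbfcal{S}$ is injective, we have $\mu_k \neq 0$. Setting $\lambda_k \coloneqq \mu_k^{-1}$ and reordering so that $\mu_k$ is non-increasing (equivalently $\lambda_k$ is non-decreasing) yields $0 < \lambda_1 \leq \lambda_2 \leq \ldots$ with $\lambda_k \to \infty$.

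To transfer the eigenfunctions from $\mathbfcal{S}$ to $\mathbfcal{A}$, I observe that since $\mathbfcal{S}$ maps $\mathbfcal{L}^2_\Div$ into $D(\mathbfcal{A}) = \mathbfcal{H}^2_{0,\Div}$, every eigenfunction satisfies $(\tv_k,\tw_k) = \mu_k \mathbfcal{S}((\tv_k,\tw_k)/\mu_k) \in \mathbfcal{H}^2_{0,\Div}$; applying $\mathbfcal{A}$ to the identity $\mathbfcal{S}(\tv_k,\tw_k) = \mu_k (\tv_k,\tw_k)$ and using $\mathbfcal{A} \mathbfcal{S} = \mathrm{id}$ gives $\mathbfcal{A}(\tv_k,\tw_k) = \lambda_k (\tv_k,\tw_k)$. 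The series expansion for arbitrary $(\bv,\bw) \in \mathbfcal{L}^2_\Div$ is a direct consequence of $\{(\tv_k,\tw_k)\}_{k\in\N}$ being an orthonormal basis of $\mathbfcal{L}^2_\Div$.

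The only mild subtlety is the positivity of the eigenvalues, which is what rules out $\lambda_1 = 0$ and guarantees the sequence goes to $+\infty$ rather than merely accumulating at $0$; this is handled cleanly by Korn's inequality together with injectivity, so no genuine obstacle remains once Lemma~\ref{LEM:PROP:BSS} is in hand.
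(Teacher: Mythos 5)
Your proposal is correct and follows essentially the same route as the paper, which simply invokes the spectral theorem for compact normal (here, self-adjoint) operators applied to the solution operator $\mathbfcal{S}$ established in Lemma~\ref{LEM:PROP:BSS}, and then transfers the eigenbasis back to $\mathbfcal{A}$. Your explicit verification of positivity of $\mathbfcal{S}$ via the identity in \eqref{EQ:SAD} together with the bulk-surface Korn inequality (Lemma~\ref{Lemma:Korn}) and injectivity fills in a detail that the paper leaves implicit, namely why the eigenvalues $\lambda_k$ are strictly positive rather than merely nonzero.
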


\begin{proof}
    Due to the properties established in Lemma~\ref{LEM:PROP:BSS}, the assertions directly follow by applying the spectral theorem for compact normal operators to the solution operator $\mathbfcal{S}$ (see, e.g., \cite[Sect.~2.12]{Alt2016} and also \cite[Secs.~D.5--D.6]{Evans2010}).
\end{proof}

\medskip

\begin{corollary}
    \label{Cor:Proj:Stokes}
    The bulk-surface Helmholtz projection
    \begin{align*}
        \mathbfcal{P}_\Div: \mathbfcal{L}^2 \to \mathbfcal{L}^2_\Div,
        \quad
        \mathbfcal{P}_\Div(\bv,\bw) 
        = \big( \mathbf{P}^\Omega_\Div(\bv) , \mathbf{P}^\Gamma_\Div(\bw) \big)
    \end{align*}
    can be represented as
    \begin{align*}
        \mathbfcal{P}_\Div(\bv,\bw) = \sum_{k=1}^\infty \bigscp{(\bv,\bw)}{(\tv_k,\tw_k)}_{\mathbfcal{L}^2}(\tv_k,\tw_k),
    \end{align*}
    for all $(\bv,\bw) \in \mathbfcal{L}^2$, in the sense that the series converges in $\mathbfcal{L}^2$.
\end{corollary}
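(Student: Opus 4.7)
The plan is to expand $\mathbfcal{P}_\Div(\bv,\bw)$ in the orthonormal basis of $\mathbfcal{L}^2_\Div$ provided by Theorem~\ref{THM:SPEC:BSS} and to show that the resulting Fourier coefficients coincide with the $\mathbfcal{L}^2$-pairings of $(\bv,\bw)$ against the basis functions. Since $\mathbfcal{P}_\Div(\bv,\bw)\in\mathbfcal{L}^2_\Div$ by construction, standard Hilbert space theory immediately yields
\begin{align*}
    \mathbfcal{P}_\Div(\bv,\bw)
    = \sum_{k=1}^\infty \bigscp{\mathbfcal{P}_\Div(\bv,\bw)}{(\tv_k,\tw_k)}_{\mathbfcal{L}^2}(\tv_k,\tw_k),
\end{align*}
with convergence in $\mathbfcal{L}^2$. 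The claim thus reduces to establishing
$\bigscp{\mathbfcal{P}_\Div(\bv,\bw)}{(\tv_k,\tw_k)}_{\mathbfcal{L}^2} = \bigscp{(\bv,\bw)}{(\tv_k,\tw_k)}_{\mathbfcal{L}^2}$ for every $k\in\N$.

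To verify this, I would exploit the orthogonality structure inherent in the Helmholtz decompositions. By the defining formulas \eqref{App:Def:Helmholtz:Bulk} and \eqref{App:Def:Helmholtz:Surface}, one has $\mathbf{P}^\Om_\Div(\bv) = \bv - \Grad p$ with $p\in\dot W^{1,2}(\Om)$ and $\mathbf{P}^\Ga_\Div(\bw) = \bw - \Gradg q$ with $q\in\dot W^{1,2}(\Ga)$. Hence the coefficient identity is equivalent to the two orthogonality relations
\begin{align*}
    \bigscp{\Grad p}{\tv_k}_{\mathbf{L}^2(\Om)} = 0
    \qquad\text{and}\qquad
    \bigscp{\Gradg q}{\tw_k}_{\mathbf{L}^2(\Ga)} = 0.
\end{align*}
Since the basis functions satisfy $(\tv_k,\tw_k)\in\mathbfcal{H}^2_{0,\Div}$, the bulk relation follows by integration by parts, using $\Div\,\tv_k = 0$ in $\Om$ and $\tv_k\cdot\n = 0$ on $\Ga$; the surface relation follows from the divergence theorem on the closed manifold $\Ga$ (so no boundary contribution appears), using $\Divg\,\tw_k = 0$.

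The argument is conceptually routine and no serious obstacle is anticipated; the main point is simply the orthogonality of (tangentially) divergence-free fields against (tangential) gradients at the $L^2$ level, which is precisely the content of the Helmholtz decomposition. The only mild subtlety is that $\mathbf{P}^\Ga_\Div$ is defined on tangential fields only, so for a general pair $(\bv,\bw)\in\mathbfcal{L}^2$ one implicitly either restricts to tangential surface components or precomposes with the tangential projection $\mathbf{P}^\Gamma$; this does not affect the conclusion since each $\tw_k$ is tangential and therefore annihilates any normal component of $\bw$.
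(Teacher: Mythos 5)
Your proof is correct and takes essentially the same route as the paper: both arguments rest on the fact that $\mathbfcal{P}_\Div(\bv,\bw)\in\mathbfcal{L}^2_\Div$ together with the orthonormality of the eigenfunctions from Theorem~\ref{THM:SPEC:BSS}. The paper's version is a one-line corollary that implicitly invokes the $\mathbfcal{L}^2$-orthogonality of the Helmholtz decomposition (i.e., that $\mathbfcal{P}_\Div$ is the orthogonal projection onto $\mathbfcal{L}^2_\Div$), whereas you spell out this step explicitly via integration by parts against $\Grad p$ and $\Gradg q$; both are valid, and your note on the tangentiality of $\tw_k$ addressing the domain issue of $\mathbf{P}^\Ga_\Div$ is a reasonable clarification.
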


\begin{proof}
    As $\mathbfcal{P}_\Div(\bv,\bw) \in \mathbfcal{L}^2_\Div$, the claim directly follows from the fact that, according to Theorem~\ref{THM:SPEC:BSS}, the eigenfunctions $\{ (\tv_k,\tw_k) \}_{k\in\N}$ form an orthonormal basis of $\mathbfcal{L}^2_\Div$.
\end{proof}

\medskip

\begin{corollary} \label{COR:ONS}
    The eigenfunctions $\{(\tv_k,\tw_k)\}_{k\in\N}\subset\mathbfcal{H}^2_{0,\Div}$ form a complete orthogonal system of $D(\mathbfcal{A}) = \mathbfcal{H}^2_{0,\Div}$.
\end{corollary}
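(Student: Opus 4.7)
The plan is to equip $D(\mathbfcal{A}) = \mathbfcal{H}^2_{0,\Div}$ with the graph inner product of $\mathbfcal{A}$, namely
\begin{align*}
    \bigscp{(\bv_1,\bw_1)}{(\bv_2,\bw_2)}_{\mathbfcal{A}}
    \coloneqq
    \bigscp{\mathbfcal{A}(\bv_1,\bw_1)}{\mathbfcal{A}(\bv_2,\bw_2)}_{\mathbfcal{L}^2}
\end{align*}
for $(\bv_j,\bw_j)\in \mathbfcal{H}^2_{0,\Div}$, $j=1,2$, and then verify orthogonality and completeness of $\{(\tv_k,\tw_k)\}_{k\in\N}$ with respect to $\scp{\cdot}{\cdot}_{\mathbfcal{A}}$. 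The regularity estimate \eqref{Est:Stokes:Full} and the invertibility of $\mathbfcal{A}$ (Lemma~\ref{LEM:PROP:BSS}) imply that the induced norm $\norm{\cdot}_{\mathbfcal{A}}$ is equivalent to the standard norm $\norm{\cdot}_{\mathbfcal{H}^2}$ on $\mathbfcal{H}^2_{0,\Div}$, so any orthogonal-basis statement with respect to $\scp{\cdot}{\cdot}_{\mathbfcal{A}}$ immediately transfers to the $\mathbfcal{H}^2$-setting.

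For orthogonality, I would exploit the eigenvalue relation $\mathbfcal{A}(\tv_k,\tw_k) = \lambda_k(\tv_k,\tw_k)$ together with the fact that $\{(\tv_k,\tw_k)\}_{k\in\N}$ is orthonormal in $\mathbfcal{L}^2_\Div$. This gives
\begin{align*}
    \bigscp{(\tv_j,\tw_j)}{(\tv_k,\tw_k)}_{\mathbfcal{A}}
    = \lambda_j \lambda_k \bigscp{(\tv_j,\tw_j)}{(\tv_k,\tw_k)}_{\mathbfcal{L}^2}
    = \lambda_k^2\, \delta_{jk},
\end{align*}
which, since all eigenvalues are strictly positive by Theorem~\ref{THM:SPEC:BSS}, means that the family is indeed an orthogonal system in $\big(\mathbfcal{H}^2_{0,\Div}, \scp{\cdot}{\cdot}_{\mathbfcal{A}}\big)$.

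For completeness, I would argue in the standard way: let $(\bv,\bw)\in \mathbfcal{H}^2_{0,\Div}$ be arbitrary with $\scp{(\bv,\bw)}{(\tv_k,\tw_k)}_{\mathbfcal{A}} = 0$ for all $k\in\N$. Then, using again the eigenvalue relation,
\begin{align*}
    0
    = \bigscp{\mathbfcal{A}(\bv,\bw)}{\mathbfcal{A}(\tv_k,\tw_k)}_{\mathbfcal{L}^2}
    = \lambda_k \bigscp{\mathbfcal{A}(\bv,\bw)}{(\tv_k,\tw_k)}_{\mathbfcal{L}^2},
\end{align*}
and since $\lambda_k>0$, the vector $\mathbfcal{A}(\bv,\bw)\in \mathbfcal{L}^2_\Div$ is orthogonal (in $\mathbfcal{L}^2$) to every element of the orthonormal basis from Theorem~\ref{THM:SPEC:BSS}. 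Consequently, $\mathbfcal{A}(\bv,\bw) = \mathbf 0$, and invoking the invertibility of $\mathbfcal{A}$ yields $(\bv,\bw) = \mathbf 0$. By a standard Hilbert-space argument, this is equivalent to density of $\mathrm{span}\{(\tv_k,\tw_k) : k\in\N\}$ in $\big(\mathbfcal{H}^2_{0,\Div}, \scp{\cdot}{\cdot}_{\mathbfcal{A}}\big)$ and hence, by the norm equivalence mentioned above, in $\mathbfcal{H}^2_{0,\Div}$ equipped with its standard norm. I expect no real obstacle here; the only point requiring a little care is to set up the correct inner product on $D(\mathbfcal{A})$ to make the eigenfunctions orthogonal, since they are generally not $\mathbfcal{H}^2$-orthogonal in the literal sense but only with respect to this graph-type scalar product.
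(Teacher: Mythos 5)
Your proof is correct and follows essentially the same route as the paper, which after observing that $\mathbfcal{A}$ is self-adjoint simply defers to \cite[Theorem~II.6.6]{Boyer}; your argument makes that reference explicit. You equip $D(\mathbfcal{A})$ with the inner product $\scp{\mathbfcal{A}\,\cdot}{\mathbfcal{A}\,\cdot}_{\mathbfcal{L}^2}$, verify orthogonality directly from the eigenvalue relation together with the $\mathbfcal{L}^2$-orthonormality provided by Theorem~\ref{THM:SPEC:BSS}, establish completeness by pushing the orthogonality hypothesis through $\mathbfcal{A}$ onto the $\mathbfcal{L}^2_\Div$-orthonormal basis and invoking injectivity of $\mathbfcal{A}$, and then transfer the conclusion to the standard $\mathbfcal{H}^2$-topology via the norm equivalence coming from the regularity bound \eqref{Est:Stokes:Full} and the boundedness of $\mathbfcal{A}$ as a second-order operator. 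This is exactly the content of the cited result; no gaps.
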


\begin{proof}
    First note that $\mathbfcal{A}$ is self-adjoint, which can be shown by performing similar computations as for $\mathbfcal{S}$. The claim then follows as in the proof of \cite[Theorem~II.6.6]{Boyer}.
\end{proof}

\medskip

\begin{corollary} \label{COR:DENS}
	The inclusion $\mathbfcal{W}^{2,r}_{0,\Div}\emb\mathbfcal{L}^2_\Div$ is dense for every $r\in(2,\infty)$.
\end{corollary}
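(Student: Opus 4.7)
The plan is to exploit the eigenbasis $\{(\tv_k,\tw_k)\}_{k\in\N}$ of the bulk-surface Stokes operator from Theorem~\ref{THM:SPEC:BSS}. Since this family is an orthonormal basis of $\mathbfcal{L}^2_\Div$, it already furnishes a dense subset of $\mathbfcal{L}^2_\Div$ via finite linear combinations. The only thing that is not immediately obvious is that each eigenfunction actually belongs to $\mathbfcal{W}^{2,r}_{0,\Div}$ rather than merely to $\mathbfcal{H}^2_{0,\Div}$. Once this improved regularity is available, the claim follows at once: every finite linear combination of the eigenfunctions lies in $\mathbfcal{W}^{2,r}_{0,\Div}$, and such combinations are dense in $\mathbfcal{L}^2_\Div$.

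The regularity upgrade is obtained by a short bootstrap via the solution operator $\mathbfcal{S}$. Each eigenfunction satisfies
\begin{align*}
    (\tv_k,\tw_k) = \lambda_k\, \mathbfcal{S}(\tv_k,\tw_k),
\end{align*}
and a priori we only know $(\tv_k,\tw_k) \in \mathbfcal{H}^2_{0,\Div}$. However, since $d \in \{2,3\}$ and $\Gamma$ is a $(d-1)$-dimensional $C^2$ submanifold, the Sobolev embeddings $H^2(\Omega)\hookrightarrow L^r(\Omega)$ and $H^2(\Gamma)\hookrightarrow L^r(\Gamma)$ hold for every $r\in[2,\infty)$. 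Hence $(\tv_k,\tw_k) \in \mathbfcal{L}^r$, and Theorem~\ref{App:Theorem:BSStokes:Reg} applied with right-hand side $(\tv_k,\tw_k)\in\mathbfcal{L}^r$ yields
\begin{align*}
    \mathbfcal{S}(\tv_k,\tw_k) \in \mathbfcal{W}^{2,r}_{0,\Div}.
\end{align*}
Consequently, $(\tv_k,\tw_k)=\lambda_k\,\mathbfcal{S}(\tv_k,\tw_k)\in \mathbfcal{W}^{2,r}_{0,\Div}$ for every $k\in\N$.

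To conclude, let $(\bv,\bw)\in\mathbfcal{L}^2_\Div$ be arbitrary. By Theorem~\ref{THM:SPEC:BSS}, the partial sums
\begin{align*}
    (\bv_N,\bw_N) \coloneqq \sum_{k=1}^N \bigscp{(\bv,\bw)}{(\tv_k,\tw_k)}_{\mathbfcal{L}^2}\,(\tv_k,\tw_k)
\end{align*}
converge to $(\bv,\bw)$ in $\mathbfcal{L}^2$ as $N\to\infty$, and by the previous step each $(\bv_N,\bw_N)$ belongs to $\mathbfcal{W}^{2,r}_{0,\Div}$. This proves the density.

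The only potentially delicate point is the use of $H^2\hookrightarrow L^r$ on the surface and then of the global regularity theorem for the bulk-surface Stokes system with $\mathbfcal{L}^r$ data; both are already available above (the latter is precisely Theorem~\ref{App:Theorem:BSStokes:Reg}), so no further work is required.
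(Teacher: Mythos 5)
Your proof is correct and follows exactly the same route as the paper: the bootstrap $(\tv_k,\tw_k)\in\mathbfcal{H}^2_{0,\Div}\emb\mathbfcal{L}^r_\Div$ combined with the eigenfunction identity $(\tv_k,\tw_k)=\lambda_k\mathbfcal{S}(\tv_k,\tw_k)$ and Theorem~\ref{App:Theorem:BSStokes:Reg} upgrades each eigenfunction to $\mathbfcal{W}^{2,r}_{0,\Div}$, and density then follows from the orthonormal-basis expansion in Theorem~\ref{THM:SPEC:BSS}. No issues.
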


\begin{proof}
    Let $(\bv,\bw) \in \mathbfcal{L}^2_\Div$ be arbitrary.
    Since $\mathbfcal{H}^2_{0,\Div} \emb \mathbfcal{L}^r_\Div$, Theorem~\ref{App:Theorem:BSStokes:Reg} implies that
    \begin{align*}
        (\tv_k,\tw_k) = \lambda_k \mathbfcal{S}(\tv_k,\tw_k)
        \in \mathbfcal{W}^{2,r}_{0,\Div}
    \end{align*}
    for all $k\in\N$. Hence, for $N\in\N$, we have
    \begin{align*}
        \mathbfcal{W}^{2,r}_{0,\Div} \ni
        \sum_{k=1}^{N} \bigscp{(\bv,\bw)}{(\tv_k,\tw_k)}_{\mathbfcal{L}^2}(\tv_k,\tw_k)
        \to (\bv,\bw) 
        \quad\text{in $\mathbfcal{L}^2$}
    \end{align*}
    as $N\to \infty$, which proves the claim.
\end{proof}

\section{Construction of a weak solution for \texorpdfstring{$L > 0$}{L>0}}
\label{Section:Proof:L>0}

In this section, we prove the following theorem, which ensures the existence of weak solutions to \eqref{eqs:NSCH} in the sense of Definition~\ref{Definition:WeakSolution} in the case $L\in(0,\infty]$.

\begin{theorem}\label{Theorem:L>0}
Suppose that the assumptions \ref{Assumption:Domain}-\ref{Assumption:Potential} hold. Let $K\in[0,\infty]$, $L\in (0,\infty]$, 
let $(\bv_0,\bw_0)\in\mathbfcal{L}^2_\Div$, and let $(\phi_0,\psi_0)\in\mathcal{H}^1_{K,\alpha}$ satisfy \eqref{Assumption:InitialCondition}. 
Then there exists at least one weak solution $(\bv,\bw,\phi,\psi,\mu,\theta)$ to system \eqref{eqs:NSCH} in the sense of Definition~\ref{Definition:WeakSolution}.
\end{theorem}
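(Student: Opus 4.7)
The plan is to construct a weak solution via a semi-Galerkin scheme in which only the bulk-surface velocity pair $(\bv,\bw)$ is discretized, while the Cahn--Hilliard subsystem is kept continuous and handled via existing well-posedness theory. The natural basis is provided by Theorem~\ref{THM:SPEC:BSS}: set $\mathbfcal{V}_m \coloneqq \mathrm{span}\{(\tv_1,\tw_1),\dots,(\tv_m,\tw_m)\} \subset \mathbfcal{H}^2_{0,\Div}$, and let $\projm$ denote the $\mathbfcal{L}^2$-orthogonal projection onto $\mathbfcal{V}_m$. For each $m\in\N$, I look for $(\bv_m,\bw_m)(t) = \sum_{k=1}^m a_k^m(t)(\tv_k,\tw_k)$ together with $(\phi_m,\psi_m,\mu_m,\theta_m)$ satisfying the full weak formulations \eqref{WF:PP}--\eqref{WF:MT} of the Cahn--Hilliard subsystem with the prescribed velocity $(\bv_m,\bw_m)$, and a finite-dimensional version of \eqref{WF:VW} obtained by testing with elements of $\mathbfcal{V}_m$. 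Observe that the trace condition $\bv_m|_\Gamma = \bw_m$ and the no-penetration condition are automatic by construction of the basis, which is the essential payoff of Section~\ref{Section:BulkSurfaceStokes}.

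The construction at level $m$ uses a Schauder fixed-point argument. Given a candidate $(\widetilde\bv,\widetilde\bw)\in C([0,T];\mathbfcal{V}_m)$ with $\|(\widetilde\bv,\widetilde\bw)\|_{C([0,T];\mathbfcal{L}^2)}\le R$, I first solve the convective bulk-surface Cahn--Hilliard subsystem with this velocity, using the well-posedness results of \cite{Knopf2025,Giorgini2025} (the velocity is very regular in space at Galerkin level, so these results apply cleanly); this yields a unique $(\phi,\psi,\mu,\theta)$ with $|\phi|<1$, $|\psi|<1$, satisfying the mass conservation \eqref{WF:MCL}. Substituting into the discretized momentum equations, the unknowns $(a_1^m,\dots,a_m^m)$ solve a locally Lipschitz ODE system on $[0,T]$, where the density-dependent mass matrix $M_{kj}(t) = \int_\Omega \rho(\phi)\tv_k\cdot\tv_j\dx + \int_\Gamma\sigma(\psi)\tw_k\cdot\tw_j\dG$ is uniformly positive definite by \ref{Assumption:Density}. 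A short-time solution exists; testing the momentum equation with $(\bv_m,\bw_m)$ and the CH equations with $(\mu_m,\theta_m)$ produces the discrete energy identity
\begin{align*}
    &\ddt E(\bv_m,\bw_m,\phi_m,\psi_m) + \intO 2\nu_\Om(\phi_m)|\D\bv_m|^2\dx + \intG 2\nu_\Ga(\psi_m)|\Dg\bw_m|^2\dG \\
    &\quad + \intG \gamma|\bw_m|^2\dG + \intO|\Grad\mu_m|^2\dx + \intG|\Gradg\theta_m|^2\dG + \chi(L)\intG(\beta\theta_m-\mu_m)^2\dG = 0,
\end{align*}
using crucially that $\Div\tv_k = 0$, $\Divg\tw_k = 0$, and the specific form of the fluxes $\J,\K$ together with the identity $\tfrac12(\J\cdot\n)\bw + \tfrac12(\J_\Ga\cdot\n)\bw$ appearing in \eqref{eqs:NSCH:pressure}, which exactly cancels the boundary contribution produced by the bulk integration by parts (this cancellation is the reason our model yields energy dissipation for every $L \in (0,\infty]$). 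This global-in-time uniform bound yields a self-map into the ball of radius $R$, continuity and compactness for Schauder, hence a solution at level $m$.

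The remaining and main work is the passage $m\to\infty$. The energy identity gives uniform bounds for $(\bv_m,\bw_m)$ in $L^\infty(0,T;\mathbfcal{L}^2_\Div)\cap L^2(0,T;\mathbfcal{H}^1_{0,\Div})$ via the Korn inequality of Lemma~\ref{Lemma:Korn}, and for $(\phi_m,\psi_m)$ in $L^\infty(0,T;\mathcal{H}^1_{K,\alpha})$, and for $(\mu_m,\theta_m)$ in $L^2(0,T;\mathcal{H}^1_{L,\beta})$; the uniform bounds on $(F_1'(\phi_m),G_1'(\psi_m))$ in $L^2(0,T;\mathcal{L}^2)$ follow from testing with $(\phi_m-\meano{\phi_m},\psi_m-\meang{\psi_m})$ after deriving a uniform mean-value bound, exploiting the singular character of the potentials and the domination \eqref{DominationProperty}. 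For time derivatives, a duality argument gives $\delt(\phi_m,\psi_m)\in L^2(0,T;(\mathcal{H}^1_{L,\beta})')$ uniformly, and $\delt(\rho(\phi_m)\bv_m),\delt(\sigma(\psi_m)\bw_m)$ in a dual space obtained from $\mathbfcal{W}^{2,r}_{0,\Div}$ for suitable $r$; Corollary~\ref{COR:DENS} ensures that testing against functions from this space yields compatible weak formulations. Aubin--Lions then gives strong convergence of $(\bv_m,\bw_m)$ in $L^2(0,T;\mathbfcal{L}^2)$ and of $(\phi_m,\psi_m)$ in $C([0,T];\mathcal{L}^2)\cap L^2(0,T;\mathcal{H}^{1-\epsilon})$, which is enough to pass to the limit in every nonlinear product, including $\rho(\phi_m)\bv_m$, $\mu_m\Grad\phi_m$, $\nu_\Omega(\phi_m)\D\bv_m$ and the convective terms on the surface. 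The energy inequality \eqref{WF:DISS} transfers to the limit by weak lower semicontinuity, and the mass conservation law \eqref{WF:MCL} passes trivially. The main obstacle will be ensuring that the projection $\projm$ does not destroy the cancellation structure in the discrete energy identity, and that the boundary trace terms involving $\D\bv_m\,\n$ in \eqref{eqs:NSCH:3} are controlled uniformly; both issues are resolved precisely because the eigenfunctions $(\tv_k,\tw_k)$ lie in $\mathbfcal{H}^2_{0,\Div}$ and the bulk-surface Stokes regularity theory of Theorem~\ref{App:Theorem:BSStokes:Reg} provides the needed trace bounds.
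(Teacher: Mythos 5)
Your proposal is essentially correct and takes the same route as the paper: a semi-Galerkin discretization of the Navier--Stokes subsystem using eigenfunctions of the bulk-surface Stokes operator from Theorem~\ref{THM:SPEC:BSS}, a Schauder fixed-point argument at each Galerkin level where the Cahn--Hilliard subsystem is solved with the candidate velocity via \cite{Knopf2025,Giorgini2025}, followed by uniform estimates from the energy balance and an Aubin--Lions/Minty passage to the limit, with the initial condition recovered via the density of $\mathbfcal{W}^{2,r}_{0,\Div}$ in $\mathbfcal{L}^2_\Div$. One small inaccuracy: since the Cahn--Hilliard subsystem is kept continuous (not discretized), what you obtain is only the energy \emph{inequality} \eqref{EnergyDiss:m} rather than the energy identity you wrote, as the weak solutions of the convective Cahn--Hilliard subsystem furnish only \eqref{CCH:Fixed:EnergyInequality}.
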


\begin{proof}
The proof is based on a semi-Galerkin discretization of the Navier--Stokes equations combined with Schauder's fixed-point theorem. To this end, we start by introducing an approximate system. In its construction, the analysis of the previous chapter on the bulk-surface Stokes equations will play a crucial role.

In this proof, the letter $C$ will denote generic positive constants, which may depend only on $\Omega$, $T$, the initial data, and the constants introduced in \ref{Assumption:Domain}-\ref{Assumption:Potential}. The exact value of $C$ may vary throughout the proof.

\subsection{Construction of an approximate system}
Let us consider the family of eigenfunctions $\{(\tv_j,\tw_j)\}_{j\in\N}$ of the bulk-surface Stokes operator $\mathbfcal{A}$ that was obtained in Theorem~\ref{THM:SPEC:BSS}. For any $m\in\N$, we introduce the finite-dimensional subspace $\mathbfcal{V}_m \coloneqq \textup{span}\{\scp{\tv_1}{\tw_1},\ldots,\scp{\tv_m}{\tw_m}\}$ of $\mathbfcal{L}^2_\Div$. These finite-dimensional spaces $\mathbfcal{V}_m$ are endowed with the norm of $\mathbfcal{L}^2$. The orthogonal projection of $\mathbfcal{L}^2$ onto $\mathbfcal{V}_m$ with respect to the inner product on $\mathbfcal{L}^2_\Div$ is denoted by $\mathbfcal{P}_{\mathbfcal{V}_m} = (\mathbf{P}_{\mathbfcal{V}_m}^\Om,\mathbf{P}_{\mathbfcal{V}_m}^\Ga)$. Recalling that $\Om$ is of class $C^4$, the regularity theory of the bulk-surface Stokes operator $\mathbfcal{A}$ (see Theorem~\ref{App:Theorem:BSStokes:Reg}) yields that $(\tv_j,\tw_j)\in \mathbfcal{W}^{2,r}_{0,\Div}$ for all $r\in[2,\infty)$ and $j\in\N$. Moreover, invoking the estimate \eqref{Est:Stokes:Full}, it is straightforward to check that the following inverse Sobolev embedding inequalities hold for all $m\in\N$, $r\in[2,\infty)$ and $(\wv,\ww)\in \mathbfcal{V}_m$:
\begin{align}
    \label{Est:InverseSobolev}
    \begin{split}
        \norm{(\wv,\ww)}_{\mathbfcal{H}^1} &\leq C_m \norm{(\wv,\ww)}_{\mathbfcal{L}^2}, \quad \norm{(\wv,\ww)}_{\mathbfcal{H}^2} \leq C_m \norm{(\wv,\ww)}_{\mathbfcal{L}^2}, \\
        \norm{(\wv,\ww)}_{\mathbfcal{W}^{2,r}} &\leq C_{m,r}\norm{(\wv,\ww)}_{\mathbfcal{L}^2}.
    \end{split}
\end{align}
Here, $C_m$ and $C_{m,r}$ are positive constants that may depend on the same quantities as $C$ and additionally on $m$ and $(m,r)$, respectively.
Now, we claim that, for any $m\in\N$, there exists an approximate solution $(\bv_m,\bw_m,\phi_m,\psi_m,\mu_m,\theta_m)$ to system \eqref{eqs:NSCH} having the regularities
\begin{align}
    \label{Reg:ApproxSolution}
    \left\{\;
    \begin{aligned}
    (\bv_m,\bw_m)&\in H^1(0,T;\mathbfcal{V}_m), \\
    (\phi_m,\psi_m)&\in H^1(0,T;(\mathcal{H}^1)^\prime)\cap L^\infty(0,T;\mathcal{H}^1_{K,\alpha})\cap L^2(0,T;\mathcal{W}^{2,6}),\\
    (\mu_m,\theta_m)&\in L^2(0,T;\mathcal{H}^1), \\
    (F^\prime(\phi_m), G^\prime(\psi_m))&\in L^2(0,T;\mathcal{L}^6),
    \end{aligned}
    \right.
\end{align}
with $\abs{\phi_m} < 1$ a.e.~in $Q$ and $\abs{\psi_m} < 1$ a.e.~on $\Sigma$, in the sense that
\begin{align}
    &\intO\rho(\phi_m)\delt\bv_m\cdot\wv\dx + \intG\sigma(\psi_m)\delt\bw_m\cdot\ww\dG \nonumber \\
    &\qquad + \intO\rho(\phi_m)(\bv_m\cdot\Grad)\bv_m\cdot\wv\dx + \intG\sigma(\psi_m)(\bw_m\cdot\Gradg)\bw_m\cdot\ww\dG \nonumber \\
    &\qquad + \intO\rho^\prime\,(\Grad\mu_m\cdot\Grad)\bv_m\cdot\wv\dx + \intG\sigma^\prime\,(\Gradg\theta_m\cdot\Gradg)\bw_m\cdot\ww\dG \label{WF:VW:DISCR} \\ 
    &\qquad  + \intO2\nu_\Om(\phi_m)\D\bv_m:\D\wv\dx + \intG2\nu_\Ga(\psi_m)\Dg\bw_m:\Dg\ww\dG + \intG \gamma(\phi_m,\psi_m)\bw_m\cdot\ww\dG \nonumber \\
    &\quad = \intO\mu_m\Grad\phi_m\cdot\wv\dx + \intG\theta_m\Gradg\psi_m\cdot\ww\dG \nonumber \\
    &\qquad + \frac{\chi(L)}{2}\intG(\beta\sigma^\prime -\rho^\prime)(\beta\theta_m - \mu_m)\bw_m\cdot\ww\dG \nonumber
\end{align}
a.e.~on $[0,T]$ for all $(\wv,\ww)\in\mathbfcal{V}_m$, 
\begin{align}\label{WF:PP:DISCR}
    \begin{split}
        &\bigang{(\delt\phi_m,\delt\psi_m)}{(\zeta,\xi)}_{\mathcal{H}^1} - \intO \phi_m\bv_m\cdot\Grad\zeta\dx - \intG 									\psi_m\bw_m\cdot\Gradg\xi\dG \\
        &\quad = - \intO \Grad\mu_m\cdot\Grad\zeta\dx - \intG \Gradg\theta_m\cdot\Gradg\xi \dG - \chi(L)\intG (\beta\theta_m - \mu_m)				(\beta\xi - \zeta)\dG
    \end{split}
\end{align}
a.e.~on $[0,T]$ for all $(\zeta,\xi)\in\mathcal{H}^1$, 
\begin{subequations}\label{WF:MT:DISCR:STR}
    \begin{alignat}{2}
        &\mu_m = -\Lap\phi_m + F^\prime(\phi_m) &&\qquad\text{a.e.~in~}Q, \label{WF:MT:DEF:MU}\\
        &\theta_m = -\Lapg\psi_m + G^\prime(\psi_m) + \alpha\deln\phi_m &&\qquad\text{a.e.~on~}\Sigma, \label{WF:MT:DEF:THETA}\\
        &\begin{cases}
        K\deln\phi_m = \alpha\psi_m - \phi_m, &\text{if~}K\in[0,\infty), \\
        \deln\phi_m = 0, &\text{if~}K = \infty,
        \end{cases} &&\qquad\text{a.e.~on~}\Sigma,
    \end{alignat}
\end{subequations}
and
\begin{align}\label{Approx:Problem:IC}
    (\bv_m,\bw_m)\vert_{t=0} = \mathbfcal{P}_{\mathbfcal{V}_m}(\bv_0,\bw_0), \quad (\phi_m,\psi_m)\vert_{t=0} = (\phi_0, \psi_0)  \qquad \text{in~}\Om\times\Ga. 
\end{align}
In particular, because of \eqref{WF:MT:DISCR:STR}, the pair $(\mu_m,\theta_m)$ satisfies the weak formulation
\begin{align}
    &\intO\mu_m\eta\dx + \intG \theta_m\vartheta\dG \nonumber \\
    &\quad = \intO \Grad\phi_m\cdot\Grad\eta + F^\prime(\phi_m)\eta\dx + \intG \Gradg\psi_m\cdot\Gradg\vartheta + G^\prime(\psi_m)\vartheta\dG \label{WF:MT:DISCR} \\
    &\qquad + \chi(K) \intG (\alpha\psi_m - \phi_m)(\alpha\vartheta - \eta)\dG \nonumber
\end{align}
a.e.~on $[0,T]$ for all $(\eta,\vartheta)\in\mathcal{H}^1_{K,\alpha}$. The existence of such an approximate solution will be established in the next subsection.

\subsection{Existence of approximate solutions} \label{SUBSEC:EXAPP}
Let $m\in\N$ be arbitrary.
We employ a fixed-point argument to prove the existence of an approximate solution $(\bv_m,\bw_m,\phi_m,\psi_m,\mu_m,\theta_m)$ satisfying 
\eqref{Reg:ApproxSolution}-\eqref{Approx:Problem:IC}. To this end, we fix velocity fields $(\bv_\ast,\bw_\ast)\in H^1(0,T;\mathbfcal{V}_m)$ and consider the convective Cahn--Hilliard system
\begin{subequations}\label{CCH:Fixed}
    \begin{align}
        &\delt\phi_m + \Div(\phi_m\bv_\ast) = \Lap\mu_m && \text{in} \ Q, \label{CCH:Fixed:1}\\
        &\mu_m = -\Lap\phi_m + F'(\phi_m)   && \text{in} \ Q, \label{CCH:Fixed:2} \\
        &\delt\psi_m + \Divg(\psi_m\bw_\ast) = \Lapg\theta_m - \beta \deln\mu_m && \text{on} \ \Sigma, \label{CCH:Fixed:3} \\
        &\theta_m = - \Lapg\psi_m + G'(\psi_m) + \alpha\deln\phi_m && \text{on} \ \Sigma, \label{CCH:Fixed:4} \\
        &\begin{cases} K\deln\phi_m = \alpha\psi_m - \phi_m &\text{if} \ K\in [0,\infty),  \\
        \deln\phi_m = 0 &\text{if} \ K = \infty
        \end{cases} && \text{on} \ \Sigma, \label{CCH:Fixed:5} \\
        &\begin{cases} 
        L \deln\mu_m = \beta\theta_m - \mu_m &\text{if} \  L\in(0,\infty), \\
        \deln\mu_m = 0 &\text{if} \ L=\infty
        \end{cases} &&\text{on} \ \Sigma, \label{CCH:Fixed:6} \\
        &\phi_m\vert_{t=0} = \phi_0 &&\text{in~}\Om, \label{CCH:Fixed:7} \\
        &\psi_m\vert_{t=0} = \psi_0 &&\text{on~}\Ga. \label{CCH:Fixed:8}
    \end{align}
\end{subequations}
Thanks to \cite[Theorem~3.4]{Knopf2025} (see also \cite[Theorem~3.2]{Giorgini2025}), there exists a unique solution $(\phi_m,\psi_m,\mu_m,\theta_m)$ to \eqref{CCH:Fixed} with
\begin{align}\label{CCH:Fixed:Regularity}
    \left\{\;
    \begin{aligned}
        (\phi_m,\psi_m)&\in H^1(0,T;(\mathcal{H}^1)^\prime) \cap L^\infty(0,T;\mathcal{H}^1_{K,\alpha})\cap L^2(0,T;\mathcal{W}^{2,6}), \\
        (\mu_m,\theta_m)&\in L^2(0,T;\mathcal{H}^1), \\
        (F^\prime(\phi_m),G^\prime(\psi_m))&\in L^2(0,T;\mathcal{L}^6),
    \end{aligned}
    \right.
\end{align}
in the sense that the variational formulations \eqref{WF:PP:DISCR} and \eqref{WF:MT:DISCR} hold with $(\bv_\ast,\bw_\ast)$ instead of $(\bv_m,\bw_m)$. Furthermore, the phase fields $\phi_m$ and $\psi_m$ satisfy 
\begin{align}\label{CCH:Fixed:<1}
    \abs{\phi_m} < 1 \ \text{a.e.~in~}Q \quad\text{and}\quad \abs{\psi_m} < 1 \ \text{a.e.~on~}\Sigma.
\end{align}
We also know from \cite[Theorem~3.4]{Knopf2025} that $(\phi_m,\psi_m,\mu_m,\theta_m)$ satisfies the energy inequality
\begin{align}
    \label{CCH:Fixed:EnergyInequality}
        \begin{split}
            &E_{\mathrm{free}}(\phi_m(t),\psi_m(t)) + \int_0^t\intO \abs{\Grad\mu_m}^2\dxs + \int_0^t\intG \abs{\Gradg\theta_m}^2\dGs \\
            &\qquad + \chi(L) \int_0^t\intG (\beta\theta_m - \mu_m)^2\dGs \\
            &\quad \leq E_{\mathrm{free}}(\phi_0,\psi_0) + \int_0^t\intO \phi_m\Grad\mu_m\cdot\bv_\ast\dxs + \int_0^t\intG \psi_m\Gradg\theta_m\cdot\bw_\ast\dGs
        \end{split}
\end{align}
for all $t\in[0,T]$, where the bulk-surface free energy is given by
\begin{align*}
    E_{\mathrm{free}}(\zeta,\xi) &= \intO \frac12\abs{\Grad\zeta}^2 + F(\zeta) \dx + \intG \frac12\abs{\Gradg\xi}^2 + G(\xi) \dG \\
    &\quad + \chi(K) \intG \frac12(\alpha\xi - \zeta)^2\dG.
\end{align*}
Using Hölder's and Young's inequality combined with \eqref{CCH:Fixed:<1}, we immediately infer from \eqref{CCH:Fixed:EnergyInequality} that
\begin{align}
    \label{CCH:Fixed:EnergyEstimate}
        \begin{split}
            &E_{\mathrm{free}}(\phi_m(t),\psi_m(t)) + \frac12\int_0^t \norm{(\mu_m,\theta_m)}_{L,\beta}^2\ds 
            \leq E_{\mathrm{free}}(\phi_0,\psi_0) + \frac12\int_0^t\norm{(\bv_\ast,\bw_\ast)}_{\mathbfcal{L}^2}^2\ds
        \end{split}
\end{align}
for all $t\in[0,T]$.

Next, for any $t\in[0,T]$, we make the ansatz
\begin{align*}
    \scp{\bv_m(t)}{\bw_m(t)} = \sum_{j=1}^m a_j^k(t)\scp{\tv_j}{\tw_j} \qquad\text{in~} \Om\times\Ga
\end{align*}
as a solution to the Galerkin approximation of \eqref{WF:VW:DISCR} that reads as
\begin{align}
    &\intO\rho(\phi_m)\delt\bv_m\cdot\tv_l\dx + \intG\sigma(\psi_m)\delt\bw_m\cdot\tw_l\dG \nonumber \\
    &\qquad + \intO\rho(\phi_m)(\bv_\ast\cdot\Grad)\bv_m\cdot\tv_l\dx + \intG\sigma(\psi_m)(\bw_\ast\cdot\Gradg)\bw_m\cdot\tw_l\dG \nonumber \\
    &\qquad + \intO\rho^\prime\,(\Grad\mu_m\cdot\Grad)\bv_m\cdot\tv_l\dx + \intG\sigma^\prime\,(\Gradg\theta_m\cdot\Gradg)\bw_m\cdot\tw_l\dG \nonumber \label{GalerkinApprox} \\
    &\qquad  + \intO2\nu_\Om(\phi_m)\D\bv_m:\D\tv_l\dx + \intG2\nu_\Ga(\psi_m)\Dg\bw_m:\Dg\tw_l\dG \\
    &\qquad + \intG \gamma(\phi_m,\psi_m)\bw_m\cdot\tw_l\dG \nonumber \\
    &\quad = \intO\mu_m\Grad\phi_m\cdot\tv_l\dx + \intG\theta_m\Gradg\psi_m\cdot\tw_l\dG \nonumber \\
    &\qquad + \frac{\chi(L)}{2}\intG(\beta\sigma^\prime - \rho^\prime)(\beta\theta_m - \mu_m)\bw_m\cdot\tw_l\dG, \nonumber
\end{align}
a.e.~on $[0,T]$ for all $l=1,\ldots,m$, supplemented with the initial condition 
\begin{align}
    \label{GalerkinApprox:IC}
    \scp{\bv_m}{\bw_m}\vert_{t=0} = \mathbfcal{P}_{\mathbfcal{V}_m}(\bv_0,\bw_0) \qquad\text{a.e.~in~}\Om\times\Ga.
\end{align}
Setting $\A_m(t) = (a_1^m(t),\ldots,a_m^m(t))^T$, the system \eqref{GalerkinApprox} is equivalent to the system of differential equations
\begin{align}
    \label{ODE:Galerkin}
    \M^m(t)\ddt \A^m(t) + \L^m(t)\A^m(t) = \G^m(t),
\end{align}
where
\begin{align*}
    (\M^m(t))_{l,j} &\coloneqq \intO \rho(\phi_m)\tv_j\cdot\tv_l \dx + \intG \sigma(\psi_m)\tw_j\cdot\tw_l\dG, \\
    (\L^m(t))_{l,j} &\coloneqq \intO \rho(\phi_m)(\bv_\ast\cdot\Grad)\tv_j\cdot\tv_l\dx + \intG \sigma(\psi_m)(\bw_\ast\cdot\Gradg)\tw_j\cdot\tw_l\dG \\
    &\quad + \intO \rho^\prime\,(\Grad\mu_m\cdot\Grad)\tv_j\cdot\tv_l\dx + \intG \sigma^\prime\,(\Gradg\theta_m\cdot\Gradg)\tw_j\cdot\tw_l\dG \\
    &\quad + \intO 2\nu_\Om(\phi_m)\D\tv_j:\D\tv_l\dx + \intG 2\nu_\Ga(\psi_m)\Dg\tw_j:\Dg\tw_l\dG \\
    &\quad + \intG \gamma(\phi_m,\psi_m)\tw_j\cdot\tw_l\dG \\
    &\quad - \frac{\chi(L)}{2}\intG (\beta\sigma^\prime -\rho^\prime)(\beta\theta_m - \mu_m)\tw_j\cdot\tw_l\dG, \\
    (\G^m(t))_l &\coloneqq \intO \mu_m\Grad\phi_m\cdot\tv_l\dx + \intG \theta_m\Gradg\psi_m\cdot\tw_l\dG,
\end{align*}
and the initial condition is
\begin{align*}
    \A^m(0)_j = (\mathbfcal{P}_{\mathbfcal{V}_m}(\bv_0,\bw_0),(\tv_j,\tw_j))_{\mathbfcal{L}^2}, \qquad j=1,\ldots,m.
\end{align*}
Proceeding as in \cite{Giorgini2020}, it is straightforward to check that $\M^m(t)$ is positive definite for all $t\in [0,T]$.
Defining
\begin{align*}
    f_m(t,\Y) \coloneqq (\M^m)^{-1}(t)\L^m(t)\Y + (\M^m)^{-1}(t)\G^m(t)
\end{align*}
for $\Y\in\R^m$ and $t\in[0,T]$, \eqref{ODE:Galerkin} can be equivalently restated as
\begin{align}\label{ODE:f_m}
    \ddt \A^m(t) = f_m\big(t,\A^m(t)\big) \qquad\text{for~}t\in[0,T].
\end{align}
As $(\phi_m,\psi_m)\in C([0,T];\mathcal{L}^2)$, we infer that $\M^m\in C([0,T];\R^{m\times m})$. Since $\M^m(t)$ is positive definite for all $t\in [0,T]$, we deduce that $\det\big(\M^m(\cdot)\big)$ is continuous and uniformly positive on $[0,T]$.
Hence, according to Cramer's rule for the inverse, we also have $(\M^m)^{-1}\in C([0,T];\R^{m\times m})$. Next, we find functions $h,l\in L^2(0,T)$ such that
\begin{align*}
    \abs{f_m(t,\Y)} \leq h(t)
\end{align*}
for $\Y\in\R^m$, and
\begin{align*}
    \abs{f_m(t,\Y) - f_m(t,\overline{\Y})} \leq l(t) \abs{\Y - \overline{\Y}}
\end{align*}
for $\Y,\overline{\Y}\in\R^m$. Thus, we are in a position to apply Carath\'{e}odory's existence theorem (see, e.g.,  \cite[Theorem XVIII]{Walter1998}) to find a unique solution $\A^m\in H^1(0,T;\R^m)$ of \eqref{ODE:f_m}. This, in turn, implies the existence of a unique pair $(\bv_m,\bw_m)\in H^1(0,T;\mathbfcal{V}_m)$ that solves \eqref{GalerkinApprox}-\eqref{GalerkinApprox:IC}.

Now, we multiply \eqref{GalerkinApprox} by $a_l^k$ and sum over $l=1,\ldots,m$ to find that
\begin{align}\label{Galerkin:Test:a}
    &\intO \frac12\rho(\phi_m)\delt\abs{\bv_m}^2\dx + \intG \frac12\sigma(\psi_m)\delt\abs{\bw_m}^2\dG \nonumber \\
    &\qquad + \intO \frac12\rho(\phi_m)\bv_\ast\cdot\Grad\abs{\bv_m}^2\dx + \intG \frac12\sigma(\psi_m)\bw_\ast\cdot\Gradg\abs{\bw_m}^2\dG \nonumber \\
    &\qquad + \intO 2\nu_\Om(\phi_m)\abs{\D\bv_m}^2\dx + \intG 2\nu_\Ga(\psi_m)\abs{\Dg\bw_m}^2\dG + \intG \gamma(\phi_m,\psi_m)\abs{\bw_m}^2\dG \nonumber \\
    &\qquad - \intO \frac12\rho^\prime\,\Grad\mu_m\cdot\Grad\abs{\bv_m}^2 \dx - \intG \frac12\sigma^\prime\,\Gradg\theta_m\cdot\Gradg\abs{\bw_m}^2\dG\\
    &\quad = \intO \mu_m\Grad\phi_m\cdot\bv_m\dx + \intG \theta_m\Gradg\psi_m\cdot\bw_m\dG \nonumber \\
    &\qquad + \frac{\chi(L)}{2} \intG (\beta\sigma^\prime -\rho^\prime)(\beta\theta_m - \mu_m)\abs{\bw_m}^2\dG. \nonumber
\end{align}
First, we notice that
\begin{align*}
    (\delt\rho(\phi_m),\delt\sigma(\psi_m)) = (\rho^\prime\,\delt\phi_m,\sigma^\prime\,\delt\psi_m) \qquad\text{in~} L^2(0,T;(\mathcal{H}^1)^\prime)
\end{align*}
which can be easily seen using difference quotients. 
In particular, we have
\begin{align*}
    &\bigang{(\rho^\prime\,\delt\phi_m,\sigma^\prime\,\delt\psi_m)}{(\abs{\bv_m}^2,\abs{\bw_m}^2)}_{\mathcal{H}^1}
    \\
    &= \rho^\prime\,\bigang{\delt\phi_m}{\abs{\bv_m}^2}_{H^1(\Omega)}
    + \sigma^\prime\,\bigang{\delt\psi_m}{\abs{\bw_m}^2}_{H^1(\Gamma)} 
    \\
    &= \bigang{(\delt\phi_m,\delt\psi_m)}
    {(\rho^\prime\abs{\bv_m}^2,\sigma^\prime\abs{\bw_m}^2)}_{\mathcal{H}^1}.
\end{align*}
Hence, using the weak formulation \eqref{WF:PP:DISCR}, we obtain
\begin{align*}
    &\intO \frac12\rho(\phi_m)\delt\abs{\bv_m}^2\dx + \intG \frac12\sigma(\psi_m)\delt\abs{\bw_m}^2\dG \\
    &\quad = \ddt \intO \frac12 \rho(\phi_m)\abs{\bv_m}^2\dx + \ddt \intG \frac12 \sigma(\psi_m)\abs{\bw_m}^2\dG \\
    &\qquad - \frac12 \bigang{(\delt\phi_m,\delt\psi_m)}
    {(\rho^\prime\abs{\bv_m}^2,\sigma^\prime\abs{\bw_m}^2)}_{\mathcal{H}^1} \\
    &\quad = \ddt \intO \frac12 \rho(\phi_m)\abs{\bv_m}^2\dx + \ddt \intG \frac12 \sigma(\psi_m)\abs{\bw_m}^2\dG \\
    &\qquad - \intO \frac12\rho^\prime\,\phi_m\bv_\ast\cdot\Grad\abs{\bv_m}^2\dx - \intG \frac12\sigma^\prime\,\psi_m\bw_\ast\cdot\Gradg\abs{\bw_m}^2\dG \\
    &\qquad + \intO \frac12 \rho^\prime\,\Grad\mu_m\cdot\Grad\abs{\bv_m}^2\dx + \intG \frac12 \sigma^\prime\,\Gradg\theta_m\cdot\Gradg\abs{\bw_m}^2\dG \\
    &\qquad + \chi(L) \intG \frac12 (\beta\sigma^\prime -\rho^\prime)(\beta\theta_m - \mu_m)\abs{\bw_m}^2\dG.
\end{align*}
Then, since
\begin{align*}
    \intO \bv_\ast\cdot\Grad\abs{\bv_m}^2\dx = \intG \bw_\ast\cdot\Gradg\abs{\bw_m}^2\dG = 0,
\end{align*}
and, as $\rho(\phi_m)$ and $\rho^\prime\,\phi_m$ as well as $\sigma(\psi_m)$ and $\sigma^\prime\,\psi_m$ differ only by an additive constant, respectively, we infer
by means of \eqref{Galerkin:Test:a} that
\begin{align}\label{IntParts:Final}
    &\ddt \Big(\intO \frac12 \rho(\phi_m)\abs{\bv_m}^2\dx + \intG \frac12 \sigma(\psi_m)\abs{\bw_m}^2 \dG \Big)
    + \intO 2\nu_\Om(\phi_m)\abs{\D\bv_m}^2\dx
    \nonumber \\
    &\qquad  + \intG 2\nu_\Ga(\psi_m)\abs{\Dg\bw_m}^2\dG + \intG \gamma(\phi_m,\psi_m)\abs{\bw_m}^2\dG 
    \\ 
    &\quad = - \intO \phi_m\Grad\mu_m\cdot\bv_m\dx - \intG \psi_m\Gradg\theta_m\cdot\bw_m\dG. \nonumber
\end{align}
Using \eqref{CCH:Fixed:<1} and the bulk-surface Korn inequality (see Lemma~\ref{Lemma:Korn}), we can estimate the right-hand side of \eqref{IntParts:Final} as
\begin{align*}
    &\abs{\intO \phi_m\Grad\mu_m\cdot\bv_m\dx + \intG \psi_m\Gradg\theta_m\cdot\bw_m\dG} \\
    &\quad\leq \norm{\Grad\mu_m}_{\mathbf{L}^2(\Om)}\norm{\bv_m}_{\mathbf{L}^2(\Om)} + \norm{\Gradg\theta_m}_{\mathbf{L}^2(\Ga)}\norm{\bw_m}_{\mathbf{L}^2(\Ga)} \\
    &\quad\leq \frac{\min\{\gamma_\ast,2\nu_\ast\}}{2}\big(\norm{\D\bv_m}_{\mathbf{L}^2(\Om)}^2 + \norm{\Dg\bw_m}_{\mathbf{L}^2(\Ga)}^2 + \norm{\bw_m}_{\mathbf{L}^2(\Ga)}^2\big) \\
    &\qquad + \frac{C_K^2}{2\min\{\gamma_\ast,2\nu_\ast\}}\big(\norm{\Grad\mu_m}_{\mathbf{L}^2(\Om)}^2 + \norm{\Gradg\theta_m}_{\mathbf{L}^2(\Ga)}^2\big).
\end{align*}
In this way, we derive the differential inequality
\begin{align*}
    \begin{split}
    &\ddt \Big(\intO \frac12 \rho(\phi_m)\abs{\bv_m}^2\dx + \intG \frac12 \sigma(\psi_m)\abs{\bw_m}^2 \dG \Big) 
    \\
    &\qquad + \frac{\min\{\gamma_\ast,2\nu_\ast\}}{2}\Big(\norm{\D\bv_m}_{\mathbf{L}^2(\Om)}^2 + \norm{\Dg\bw}_{\mathbf{L}^2(\Ga)}^2 + \norm{\bw_m}_{\mathbf{L}^2(\Ga)}^2\Big) 
    \end{split}
    \\ 
    &\quad\leq\frac{C_K^2}{2\min\{\gamma_\ast,2\nu_\ast\}}\big(\norm{\Grad\mu_m}_{\mathbf{L}^2(\Om)}^2 + \norm{\Gradg\theta_m}_{\mathbf{L}^2(\Ga)}^2\big).
\end{align*}
Integrating over $[0,s]$ with $s\in[0,T]$, and using \eqref{CCH:Fixed:EnergyEstimate}, it follows that
\begin{align*}
    &\intO \frac{\rho_\ast}{2}\abs{\bv_m(s)}^2\dx + \intG \frac{\sigma_\ast}{2}\abs{\bw_m(s)}^2\dG \\
    &\quad\leq \intO \frac{\rho^\ast}{2}\abs{\mathbf{P}_{\mathbfcal{V}_m}^\Om(\bv_0)}^2\dx + \intG \frac{\sigma^\ast}{2}\abs{\mathbf{P}_{\mathbfcal{V}_m}^\Ga(\bw_0)}^2\dG + \frac{C_K^2}{\min\{\gamma_\ast,2\nu_\ast\}}E_{\mathrm{free}}(\phi_0,\psi_0) \\
    &\qquad + \frac{C_K^2}{2\min\{\gamma_\ast,2\nu_\ast\}}\int_0^s \norm{(\bv_\ast(\tau),\bw_\ast(\tau))}_{\mathbfcal{L}^2}^2\dtau.
\end{align*}
This, in turn, entails that
\begin{align}\label{Est:vw:m:int:-1}
    \begin{split}
        &\norm{\bv_m(s)}_{\mathbf{L}^2(\Om)}^2 + \norm{\bw_m(s)}_{\mathbf{L}^2(\Ga)}^2 \\
        &\quad\leq \rho^\ast c_*\norm{\bv_0}_{L^2(\Om)}^2 + \sigma^\ast c_*\norm{\bw_0}_{L^2(\Ga)}^2 + \frac{2C_K^2}{\min\{\gamma_\ast,2\nu_\ast\}} c_*E_{\mathrm{free}}(\phi_0,\psi_0) \\
        &\qquad + \frac{C_K^2}{\min\{\gamma_\ast,2\nu_\ast\}} c_*\int_0^s\norm{(\bv_\ast(\tau),\bw_\ast(\tau))}_{\mathbfcal{L}^2}^2\dtau,
    \end{split}
\end{align}
where $c_* \coloneqq \frac{1}{\rho_\ast} + \frac{1}{\sigma_\ast}$.
To simplify the presentation, we now define
\begin{align*}
    C_1 &\coloneqq \rho^\ast c_*\norm{\bv_0}_{\mathbf{L}^2(\Om)}^2 + \sigma^\ast c_*\norm{\bw_0}_{\mathbf{L}^2(\Ga)}^2 + \frac{2C_K^2}{\min\{\gamma_\ast,2\nu_\ast\}} c_*E_{\mathrm{free}}(\phi_0,\psi_0), 
    \\
    C_2 &\coloneqq \frac{C_K^2}{\min\{\gamma_\ast,2\nu_\ast\}} c_*,
    \qquad
    C_3 \coloneqq C_1 T.
\end{align*}
Assuming
\begin{align}\label{Assump:vw:_star}
    \int_0^t \norm{(\bv_\ast(\tau),\bw_\ast(\tau))}_{\mathbfcal{L}^2}^2\dtau \leq C_3 \e^{C_2t}, \qquad t\in[0,T],
\end{align}
we deduce from \eqref{Est:vw:m:int:-1} that
\begin{align}\label{Est:vw:m:int}
    \begin{split}
        \int_0^t \norm{(\bv_m(s),\bw_m(s))}_{\mathbfcal{L}^2}^2\ds \leq C_3 + C_2\int_0^t\int_0^s\norm{(\bv_\ast(\tau),\bw_\ast(\tau))}_{\mathbfcal{L}^2}^2\dtau\ds \leq C_3\e^{C_2t}
    \end{split}
\end{align}
for all $t\in[0,T]$. Furthermore, we also infer from \eqref{Est:vw:m:int:-1} that
\begin{align}\label{Est:vw:m:sup}
    \sup_{t\in[0,T]} \norm{(\bv_m(t),\bw_m(t))}_{\mathbfcal{L}^2} \leq \big(C_1 + C_3C_2\e^{C_2T}\big)^{1/2} \eqqcolon M_0.
\end{align}
Next, we aim to control the time derivative of $(\bv_m,\bw_m)$. To this end, we multiply \eqref{GalerkinApprox} by $\ddt a_l^k$ and sum over $l=1,\ldots,m$. This yields
\begin{align}
    &\intO \rho(\phi_m)\abs{\delt\bv_m}^2\dx + \intG \sigma(\psi_m)\abs{\delt\bw_m}^2\dG 
    \nonumber \\
    &\quad = - \intO \rho(\phi_m)(\bv_\ast\cdot\Grad)\bv_m\cdot\delt\bv_m\dx - \intG \sigma(\psi_m)(\bw_\ast\cdot\Gradg)\bw_m\cdot\delt\bw_m\dG 
    \nonumber \\
    &\qquad - \intO 2\nu_\Om(\phi_m)\D\bv_m:\D\delt\bv_m\dx - \intG 2\nu_\Ga(\psi_m)\Dg\bw_m:\Dg\delt\bw_m\dG 
    \nonumber \\
    &\qquad - \intG \gamma(\phi_m,\psi_m)\bw_m\cdot\delt\bw_m\dG 
    \\
    &\qquad + \intO \rho^\prime\,(\Grad\mu_m\cdot\Grad)\bv_m\cdot\delt\bv_m\dx + \intG \sigma^\prime\,(\Gradg\theta_m\cdot\Gradg)\bw_m\cdot\delt\bw_m\dG 
    \nonumber \\
    &\qquad - \intO \phi_m\Grad\mu_m\cdot\delt\bv_m\dx - \intG \psi_m\Gradg\theta_m\cdot\delt\bw_m\dG 
    \nonumber \\
    &\qquad + \frac{\chi(L)}{2}\intG (\beta\sigma^\prime -\rho^\prime)(\beta\theta_m - \mu_m)\bw_m\cdot\delt\bw_m\dG.
    \nonumber 
\end{align}
Taking advantage of \eqref{Est:InverseSobolev}, we find that
\begin{align}\label{Est:Galerkin:Test:ddta}
    \begin{split}
        &\rho_\ast\norm{\delt\bv_m}_{\mathbf{L}^2(\Om)}^2 
        + \sigma_\ast\norm{\delt\bw_m}_{\mathbf{L}^2(\Ga)}^2 
        \\[1ex]
        &\quad\leq\rho^\ast\norm{\bv_\ast}_{\mathbf{L}^2(\Om)}\norm{\Grad\bv_m}_{\mathbf{L}^\infty(\Om)}\norm{\delt\bv_m}_{\mathbf{L}^2(\Om)} 
        + \sigma^\ast\norm{\bw_\ast}_{\mathbf{L}^2(\Om)}\norm{\Gradg\bw_m}_{\mathbf{L}^\infty(\Ga)}\norm{\delt\bw_m}_{\mathbf{L}^2(\Ga)} 
        \\
        &\qquad  + 2\nu^\ast\norm{\D\bv_m}_{\mathbf{L}^2(\Om)}\norm{\D\delt\bv_m}_{\mathbf{L}^2(\Om)} 
        + 2\nu^\ast\norm{\Dg\bw_m}_{\mathbf{L}^2(\Ga)}\norm{\Dg\delt\bw_m}_{\mathbf{L}^2(\Ga)} 
        \\
        &\qquad 
        + \gamma^\ast\norm{\bw_m}_{\mathbf{L}^2(\Ga)}\norm{\delt\bw_m}_{\mathbf{L}^2(\Ga)}
        + \abs{\rho^\prime\,}\norm{\Grad\mu_m}_{\mathbf{L}^2(\Om)}\norm{\Grad\bv_m}_{\mathbf{L}^\infty(\Om)}\norm{\delt\bv_m}_{\mathbf{L}^2(\Om)}  
        \\
        &\qquad 
        + \abs{\sigma^\prime\,}\norm{\Gradg\theta_m}_{\mathbf{L}^2(\Ga)}\norm{\Gradg\bw_m}_{\mathbf{L}^\infty(\Ga)}\norm{\delt\bw_m}_{\mathbf{L}^2(\Ga)}
        + \norm{\Grad\mu_m}_{\mathbf{L}^2(\Om)}\norm{\delt\bv_m}_{\mathbf{L}^2(\Om)} 
        \\
        &\qquad 
        + \norm{\Gradg\theta_m}_{\mathbf{L}^2(\Ga)}\norm{\delt\bw_m}_{\mathbf{L}^2(\Ga)}  
        \\
        &\qquad 
        + \tfrac12\chi(L)\abs{\beta\sigma^\prime -\rho^\prime\,}\norm{\beta\theta_m - \mu_m}_{L^2(\Ga)}\norm{\bw_m}_{\mathbf{L}^\infty(\Ga)}\norm{\delt\bw_m}_{\mathbf{L}^2(\Ga)}
        \\[1ex]
        &\quad\leq C(\rho^\ast + \sigma^\ast) \norm{(\bv_\ast,\bw_\ast)}_{\mathbfcal{L}^2}\norm{(\bv_m,\bw_m)}_{\mathbfcal{W}^{2,4}}\norm{(\delt\bv_m,\delt\bw_m)}_{\mathbfcal{L}^2} 
        \\
        &\qquad + \nu^\ast C_m^2\norm{(\bv_m,\bw_m)}_{\mathbfcal{L}^2}\norm{(\delt\bv_m,\delt\bw_m)}_{\mathbfcal{L}^2} + \gamma^\ast\norm{\bw_m}_{L^2(\Ga)}\norm{\delt\bw_m}_{L^2(\Ga)} 
        \\
        &\qquad + C\big(\abs{\rho^\prime\,} + \abs{\sigma^\prime\,}\big)\norm{(\Grad\mu_m, \Gradg\theta_m)}_{\mathbfcal{L}^2}\norm{(\bv_m,\bw_m)}_{\mathbfcal{W}^{2,4}}\norm{(\delt\bv_m,\delt\bw_m)}_{\mathbfcal{L}^2} 
        \\
        &\qquad + C\norm{(\Grad\mu_m,\Gradg\theta_m)}_{\mathbfcal{L}^2}\norm{(\delt\bv_m,\delt\bw_m)}_{\mathbfcal{L}^2} 
        \\
        &\qquad + \chi(L)C\abs{\beta\sigma^\prime -\rho^\prime\,}\norm{\beta\theta - \mu_m}_{L^2(\Ga)}\norm{\bw_m}_{\mathbf{H}^2(\Ga)}\norm{\delt\bw_m}_{\mathbf{L}^2(\Ga)} 
        \\[1ex]
        &\quad\leq CC_m(\rho^\ast + \sigma^\ast)\norm{(\bv_\ast,\bw_\ast)}_{\mathbfcal{L}^2}\norm{(\bv_m,\bw_m)}_{\mathbfcal{L}^2}\norm{(\delt\bv_m,\delt\bw_m)}_{\mathbfcal{L}^2} 
        \\
        &\qquad + \nu^\ast C_m^2\norm{(\bv_m,\bw_m)}_{\mathbfcal{L}^2}\norm{(\delt\bv_m,\delt\bw_m)}_{\mathbfcal{L}^2} + \gamma^\ast\norm{\bw_m}_{L^2(\Ga)}\norm{\delt\bw_m}_{L^2(\Ga)} 
        \\
        &\qquad + CC_m\big(\abs{\rho^\prime\,} + \abs{\sigma^\prime\,}\big)\norm{(\Grad\mu_m, \Gradg\theta_m)}_{\mathbfcal{L}^2}\norm{(\bv_m,\bw_m)}_{\mathbfcal{L}^2}\norm{(\delt\bv_m,\delt\bw_m)}_{\mathbfcal{L}^2} 
        \\
        &\qquad + C\norm{(\Grad\mu_m,\Gradg\theta_m)}_{\mathbfcal{L}^2}\norm{(\delt\bv_m,\delt\bw_m)}_{\mathbfcal{L}^2} 
        \\
        &\qquad + \chi(L)CC_m\abs{\beta\sigma^\prime -\rho^\prime\,}\norm{\beta\theta_m - \mu_m}_{L^2(\Ga)}\norm{(\bv_m,\bw_m)}_{\mathbfcal{L}^2}\norm{\delt\bw_m}_{\mathbf{L}^2(\Ga)}. 
        \\
    \end{split}
\end{align}
Eventually, in view of \eqref{CCH:Fixed:EnergyInequality} and \eqref{Assump:vw:_star}-\eqref{Est:vw:m:sup}, we conclude that
\begin{align}\label{Est:delt:vw:m:int}
    &\int_0^T \norm{(\delt\bv_m(\tau),\delt\bw_m(\tau))}_{\mathbfcal{L}^2}^2\dtau \nonumber\\
    &\quad\leq 3\Big(CC_m(\rho^\ast + \sigma^\ast) c_*M_0\Big)^2\int_0^T\norm{(\bv_\ast,\bw_\ast)}_{\mathbfcal{L}^2}^2\dtau  \nonumber\\
    &\qquad + 3\Big(\nu^\ast C_m^2 c_*\Big)^2C_3\e^{C_2T} + 3\Big(\gamma^\ast c_*\Big)^2C_3\e^{C_2T} \nonumber \\
    &\qquad + 3\Big(CC_m\Big(\abs{\rho^\prime\,} + \abs{\sigma^\prime\,}\Big) c_*M_0 + M_0\Big)^2\int_0^T \norm{(\Grad\mu_m(\tau),\Gradg\theta_m(\tau))}_{\mathbfcal{L}^2}^2\dtau \nonumber \\
    &\qquad + \Big(CC_m\abs{\beta\sigma^\prime -\rho^\prime\,} c_*M_0\Big)^2\int_0^T\norm{\beta\theta_m(\tau) - \mu_m(\tau)}_{L^2(\Ga)}^2\dtau \nonumber \\
    &\quad\leq \Bigg(3\Big(CC_m(\rho^\ast + \sigma^\ast) c_*M_0\Big)^2 +  3\Big(\nu^\ast C_m^2 c_*\Big)^2 + 3\Big(\gamma^\ast c_*\Big)\Bigg)C_3\e^{C_2T} \nonumber \\
    &\qquad + 3\Big(CC_m\Big(\abs{\rho^\prime\,} + \abs{\sigma^\prime\,}\Big) c_*M_0 + M_0\Big)^2\Big(2E_{\mathrm{free}}(\phi_0,\psi_0) + C_3\e^{C_2T}\Big) \nonumber \\
    &\qquad + \Big(CC_m\abs{\beta\sigma^\prime -\rho^\prime\,} c_*M_0\Big)^2\Big(2E_{\mathrm{free}}(\phi_0,\psi_0) + C_3\e^{C_2T}\Big) \nonumber \\
    &\quad\eqqcolon M_m^2,
\end{align}
where $M_m$ depends only on $\rho_\ast,\rho^\ast,\sigma_\ast,\sigma^\ast,\gamma^\ast, E_{\mathrm{free}}(\phi_0,\psi_0), L,T,\Om$ and $m$.

Now, we introduce the setting of the fixed-point argument. To this end, we define the set
\begin{align*}
    \mathbfcal{M} = 
    \left\{(\bv,\bw)\in H^1(0,T;\mathbfcal{V}_m) \;\middle|\; 
    \begin{aligned}
        &\int_0^t\norm{(\bv(\tau),\bw(\tau))}_{\mathbfcal{L}^2}^2\dtau \leq C_3\e^{C_2t}, \ t\in[0,T],
        \\ 
        &\norm{(\delt\bv,\delt\bw)}_{L^2(0,T;\mathbfcal{V}_m)}\leq M_m
    \end{aligned}
    \right\},
\end{align*}
which is a subset of $L^2(0,T;\mathbfcal{V}_m)$. Next, we define the map
\begin{align*}
    \Lambda:\mathbfcal{M}\rightarrow L^2(0,T;\mathbfcal{V}_m), \qquad \Lambda(\bv_\ast,\bw_\ast) = (\bv_m,\bw_m),
\end{align*}
where $(\bv_m,\bw_m)$ is the solution to system \eqref{GalerkinApprox} in which the functions $(\phi_m,\psi_m,\mu_m,\theta_m)$ are the solution of \eqref{CCH:Fixed}. In light of \eqref{Est:vw:m:int} and \eqref{Est:delt:vw:m:int}, we infer that $\Lambda(\mathbfcal{M}) \subseteq \mathbfcal{M}$. It is easily seen that $\mathbfcal{M}$ is convex and closed. Furthermore, $\mathbfcal{M}$ is a compact set in $L^2(0,T;\mathbfcal{V}_m)$. 

We are left to prove that the map $\Lambda$ is continuous. To this end, let $(\wv_\ast,\ww_\ast)\in \mathbfcal{M}$ be arbitrary and let $\{(\wv_n,\ww_n)\}_{n\in\N}\subset\mathbfcal{M}$ be a sequence with $(\wv_n,\ww_n)\rightarrow (\wv_\ast,\ww_\ast)$ in $L^2(0,T;\mathbfcal{V}_m)$. Arguing as above, we find a corresponding sequence $\{(\wphi_n,\wpsi_n,\wmu_n,\wtheta_n)\}_{n\in\N}$ and a quadruplet $(\wphi_\ast,\wpsi_\ast,\wmu_\ast,\wtheta_\ast)$, which solve the convective bulk-surface Cahn--Hilliard equation \eqref{CCH:Fixed} with $(\wv_n,\ww_n)$ and $(\wv_\ast,\ww_\ast)$ instead of $(\bv_\ast,\bw_\ast)$, respectively. Then, since $(\wv_\ast,\ww_\ast)$ belongs to $\mathbfcal{M}$, \cite[Theorem~3.5]{Giorgini2025} provides the continuous dependence estimate
\begin{align*}
    &\ddt\norm{((\wphi_n(t)- \wphi(t), \wpsi_n(t) - \wpsi_\ast(t))}_{L,\beta,\ast}^2 + \norm{(\wphi_n - \wphi_\ast, \wpsi_n - \wpsi_\ast)}_{K,\alpha}^2 
    \\
    &\qquad\leq C\norm{(\wv_n - \wv_\ast, \ww_n - \ww_\ast)}_{\mathbfcal{L}^2}^2 + C\norm{(\wphi_n(t)- \wphi(t), \wpsi_n(t) - \wpsi_\ast(t))}_{L,\beta,\ast}^2.
\end{align*}
Also, by this construction, we have $(\wphi_n(0), \wpsi_n(0)) = (\wphi_\ast(0), \wpsi_\ast(0)) = (\phi_0,\psi_0)$.
Hence, if $K\in[0,\infty)$, the bulk-surface Poincar\'{e} inequality and Gronwall's lemma imply that
\begin{align}\label{ContDep:Conv}
    \begin{split}
        &\norm{(\wphi_n - \wphi_\ast, \wpsi_n - \wpsi_\ast)}_{L^\infty(0,T;(\mathcal{H}^1)^\prime)\cap L^2(0,T;\mathcal{H}^1)}^2 \\
        &\qquad \leq \e^{CT}\int_0^T \norm{(\wv_n(\tau) - \wv_\ast(\tau), \ww_n(\tau) - \ww_\ast(\tau))}_{\mathbfcal{L}^2}^2\dtau \rightarrow 0 \qquad\text{as~}n\rightarrow\infty.
    \end{split}
\end{align}
If $K = \infty$, we employ Ehrling's lemma to deduce that
\begin{align*}
    \norm{(\wphi_n - \wphi_\ast,\wpsi_n - \wpsi_\ast)}_{\mathcal{L}^2}^2 \leq \frac 12 \norm{(\wphi_n - \wphi_\ast, \wpsi_n - \wpsi_\ast)}_{\mathcal{H}^1}^2 
    + C \norm{(\wphi_n - \wphi_\ast, \wpsi_n - \wpsi_\ast)}_{L,\beta,\ast}^2.
\end{align*}
Consequently,
\begin{align*}
    \begin{split}
    \norm{(\wphi_n - \wphi_\ast, \wpsi_n - \wpsi_\ast)}_{\mathcal{L}^2}^2 &\leq \norm{(\Grad\wphi_n - \Grad\wphi_\ast,\Gradg\wpsi_n - \Gradg\wpsi_\ast)}_{\mathcal{L}^2}^2 \\
    &\qquad + C \norm{(\wphi_n - \wphi_\ast, \wpsi_n - \wpsi_\ast)}_{L,\beta,\ast}^2 \,.
    \end{split}
\end{align*}
Arguing as in the case $K\in[0,\infty)$, we conclude that \eqref{ContDep:Conv} remains true for $K = \infty$. 
Hence, for any choice of $K\in [0,\infty]$, we conclude that
\begin{align}
    \label{CONV:P*P*}
    (\wphi_n, \wpsi_n) \to (\wphi_\ast,\wpsi_\ast)
    \quad\text{strongly in $L^\infty(0,T;(\mathcal{H}^1)^\prime)\cap L^2(0,T;\mathcal{H}^1)$ and a.e.~in $Q\times\Sigma$,}
\end{align}
as $n\to\infty$, up to extraction of a subsequence.

As $\{(\wv_n,\ww_n)\}_{n\in\N}$ lies in $\mathbfcal{M}$, it is uniformly bounded in $L^2(0,T;\mathbfcal{L}^2)$.
Therefore, proceeding as in the proof of \cite[Theorem~3.4]{Knopf2025} (see also \cite{Giorgini2025}), we deduce the uniform estimate
\begin{align}
    \norm{(F_1^\prime(\wphi_n), G_1^\prime(\wpsi_n))}_{L^2(0,T;\mathcal{L}^2)}
    + \norm{(\wmu_n,\wtheta_n)}_{L^2(0,T;\mathcal{H}^1)} 
    \leq C. \label{Bound:PhiPsi:n}
\end{align}
Consequently, there exists a pair $(F_\ast,G_\ast)\in L^2(0,T;\mathcal{L}^2)$ such that, up to subsequence extraction,
\begin{align}
    \label{CONV:F*G*}
    (F_1^\prime(\wphi_n),G_1^\prime(\wpsi_n)) \rightarrow (F_\ast,G_\ast) \qquad\text{weakly in~} L^2(0,T;\mathcal{L}^2)
\end{align}
as $n\rightarrow\infty$. To identify the limit functions $(F_\ast, G_\ast)$ with $(F_1^\prime(\wphi_\ast), G_1^\prime(\wpsi_\ast))$, we employ Minty's trick. To be precise, from the weak-strong convergence principle, we infer
\begin{align}
    \label{MINTY:1}
    &\lim_{n\rightarrow\infty} \int_0^T\intO F_1^\prime(\wphi_n)\wphi_n\dxt = \int_0^T \intO F_\ast\wphi_\ast\dxt, 
    \\
    \label{MINTY:2}
    &\lim_{n\rightarrow\infty} \int_0^T\intG G_1^\prime(\wpsi_n)\wpsi_n\dGt = \int_0^T \intG G_\ast\wpsi_\ast\dGt,
\end{align}
from which the claim readily follows due to the assumptions on $F_1$ and $G_1$ (see, e.g., \cite[Lemma~1.3, p.~42]{Barbu1976}).
Combing \eqref{CONV:P*P*} and \eqref{CONV:F*G*}, we conclude from the weak formulation \eqref{WF:MT:DISCR} that
\begin{align}
    &\int_0^T\intO (\wmu_n - \wmu_\ast)\eta\dxt + \int_0^T\intG (\wtheta_n - \wtheta_\ast)\vartheta\dGt 
    \nonumber\\
    \begin{split}
        &\quad = \int_0^T\intO (\Grad\wphi_n - \Grad\wphi_\ast)\cdot\Grad\eta + (F^\prime(\wphi_n) - F^\prime(\wphi_\ast))\eta \dxt \\
        &\qquad + \int_0^T\intG (\Gradg\wpsi_n - \Gradg\wpsi_\ast)\cdot\Gradg\vartheta + (G^\prime(\wpsi_n) - G^\prime(\wpsi_\ast))\vartheta \dGt 
    \end{split}
    \\
    &\qquad + \chi(K) \int_0^T\intG (\alpha(\wpsi_n - \wpsi_\ast) - (\wphi_n - \wphi_\ast))(\alpha\vartheta - \eta)\dGt 
    \nonumber\\
    &\quad\longrightarrow 0
    \nonumber
\end{align}
as $n\rightarrow\infty$ for all $(\eta,\vartheta)\in L^2(0,T;\mathcal{H}^1_{K,\alpha})$. This proves that
\begin{align}
    (\wmu_n,\wtheta_n) \rightarrow (\wmu_\ast, \wtheta_\ast) \qquad\text{weakly in~} L^2(0,T;(\mathcal{H}^1_{K,\alpha})^\prime).
\end{align}
In combination with \eqref{Bound:PhiPsi:n} we infer that, up to subsequence extraction, 
\begin{align}
    \label{CONV:M*T*}
    (\wmu_n,\wtheta_n) \rightarrow (\wmu_\ast, \wtheta_\ast) \qquad\text{weakly~} L^2(0,T;\mathcal{H}^1).
\end{align}
As the limit functions in \eqref{CONV:P*P*}, \eqref{CONV:F*G*}, and \eqref{CONV:M*T*} do not depend on the choice of the extracted subsequence, we conclude that these convergences remain true for the whole sequence.

Now, to simplify the notation, we set $(\bu^\Om_n,\bu^\Ga_n) = \Lambda(\wv_n,\ww_n)$. Then, as $(\bu^\Om_n,\bu^\Ga_n)\in\mathbfcal{M}$ for all $n\in\N$, we infer the existence of $(\bu^\Om_\ast,\bu^\Ga_\ast)\in H^1(0,T;\mathbfcal{V}_m)$ such that, up to subsequence extraction,
\begin{alignat}{2}
    (\bu^\Om_n,\bu^\Ga_n) &\rightarrow (\bu^\Om_\ast,\bu^\Ga_\ast) &&\qquad\text{weakly in $H^1(0,T;\mathbfcal{V}_m)$}. \label{Conv:u_n}
\end{alignat}
As $\mathbfcal{V}_m$ is finite-dimensional, the embedding $H^1(0,T;\mathbfcal{V}_m) \emb C([0,T];\mathbfcal{V}_m)$ is compact. Together with the first inverse Sobolev embedding from \eqref{Est:InverseSobolev}, we infer that
\begin{alignat}{2}
    (\bu^\Om_n,\bu^\Ga_n) 
    &\rightarrow (\bu^\Om_\ast,\bu^\Ga_\ast),
    &&\qquad\text{strongly in~} 
    C([0,T];\mathbfcal{V}_m),
    \label{Conv:u_n:strong}
    \\
    (\Grad\bu^\Om_n,\Gradg\bu^\Ga_n) 
    &\rightarrow (\Grad\bu^\Om_\ast,\Gradg\bu^\Ga_\ast) 
    &&\qquad\text{strongly in~} C([0,T];\mathbfcal{L}^2),
    \label{Conv:Du_n:strong}
\end{alignat}
up to subsequence extraction.
Moreover, as the set $\mathbfcal{M} \subset L^2(0,T;\mathbfcal{V}_m)$ is bounded, closed, and convex, it is weakly sequentially compact. This readily yields $(\bu^\Om_\ast,\bu^\Ga_\ast)\in \mathbfcal{M}$.
By the definition of $\Lambda$, we know that
\begin{align}
    &\intO\rho(\wphi_n)\delt\bu^\Om_n\cdot\tv_l\dx + \intG\sigma(\wpsi_n)\delt\bu^\Ga_n\cdot\tw_l\dG \nonumber \\
    &\qquad + \intO\rho(\wphi_n)(\wv_n\cdot\Grad)\bu^\Om_n\cdot\tv_l\dx + \intG\sigma(\wpsi_n)(\ww_n\cdot\Gradg)\bu^\Ga_n\cdot\tw_l\dG \nonumber \\
    &\qquad + \intO\rho^\prime\,(\Grad\wmu_n\cdot\Grad)\bu^\Om_n\cdot\tv_l\dx + \intG\sigma^\prime\,(\Gradg\wtheta_n\cdot\Gradg)\bu^\Ga_n\cdot\tw_l\dG \label{GalerkinApprox:Continuity} \\
    &\qquad + \intO2\nu_\Om(\wphi_n)\D\bu^\Om_n:\D\tv_l\dx + \intG2\nu_\Ga(\wpsi_n)\Dg\bu^\Ga_n:\Dg\tw_l\dG + \intG \gamma(\wphi_n,\wpsi_n)\bu^\Ga_n\cdot\tw_l\dG  \nonumber \\
    &\quad = \intO\wmu_n\Grad\wphi_n\cdot\tv_l\dx + \intG\wtheta_n\Gradg\wpsi_n\cdot\tw_l\dG \nonumber \\
    &\qquad + \frac{\chi(L)}{2}\intG(\beta\sigma^\prime - \rho^\prime)(\beta\wtheta_n - \wmu_n)\bu^\Ga_n\cdot\tw_l\dG, \nonumber
\end{align}
for all $l=1,\ldots,m$. Due to the convergences \eqref{CONV:P*P*}, \eqref{CONV:M*T*}, \eqref{Conv:u_n:strong} and \eqref{Conv:Du_n:strong}, it is straightforward to pass to the limit $n\to\infty$ in all terms.
Furthermore, convergence \eqref{Conv:u_n:strong} implies that the initial condition $(\bu^\Om_\ast,\bu^\Ga_\ast)\vert_{t=0} = \mathbfcal{P}_{\mathbfcal{V}_m}(\bv_0,\bw_0)$ a.e.~in $\Om\times\Ga$ is fulfilled.
In this way, we infer that $(\bu^\Om_\ast,\bu^\Ga_\ast)$ is a solution to system \eqref{GalerkinApprox}-\eqref{GalerkinApprox:IC}. As the solution of system \eqref{GalerkinApprox}-\eqref{GalerkinApprox:IC} is unique, this readily implies $(\bu^\Om_\ast, \bu^\Ga_\ast) = \Lambda(\wv_\ast,\ww_\ast)$. Consequently, using convergence \eqref{Conv:u_n:strong}, we conclude that
\begin{align}
    \Lambda(\wv_n,\ww_n) = (\bu^\Om_n,\bu^\Ga_n)
    \rightarrow 
    (\bu^\Om_\ast, \bu^\Ga_\ast) = \Lambda(\wv_\ast,\ww_\ast)
    \qquad\text{strongly in~} L^2(0,T;\mathbfcal{V}_m).
\end{align}
This proves that the map $\Lambda$ is continuous. 

Finally, we are in a position to apply Schauder's fixed-point theorem, which implies that the map $\Lambda$ has a fixed-point in $\mathbfcal{M}$. Hence, for any $m\in\N$, this ensures the existence of an approximate solution $(\bv_m,\bw_m,\phi_m,\psi_m,\mu_m,\theta_m)$ to the discrete problem \eqref{WF:VW:DISCR}-\eqref{Approx:Problem:IC}.

\subsection{Uniform estimates and passage to the limit} \label{Subsection:Uniform}
Now, we establish bounds on the approximate solutions $(\bv_m,\bw_m,\phi_m,\psi_m,\mu_m,\theta_m)$ which are uniform with respect to $m\in\N$. We start by taking $(\wv,\ww) = (\bv_m,\bw_m)$ in \eqref{WF:VW:DISCR}. Integrating by parts yields 
\begin{align}\label{Test:vw:m}
    &\ddt\intO \Big(\frac12\rho(\phi_m)\abs{\bv_m}^2\dx + \intG \frac12\sigma(\psi_m)\abs{\bw_m}^2 \dG\Big) \nonumber \\
    &\qquad + \intO 2\nu_\Om(\phi_m)\abs{\D\bv_m}^2\dx + \intG 2\nu_\Ga(\psi_m)\abs{\Dg\bw_m}^2\dG + \intG \gamma(\phi_m,\psi_m)\abs{\bw_m}^2\dG \nonumber \\
    &\quad = - \intO \phi_m\Grad\mu_m\cdot\bv_m\dx - \intG \psi_m\Gradg\theta_m\cdot\bw_m\dG
\end{align}
(cf. \eqref{IntParts:Final}).
Next, from \eqref{CCH:Fixed:EnergyInequality}, we already know that
\begin{align}\label{Est:Energy:Free}
    \begin{split}
        &E_{\mathrm{free}}(\phi_m(t),\psi_m(t)) + \int_0^t\intO \abs{\Grad\mu_m}^2\dxs + \int_0^t\intG \abs{\Gradg\theta_m}^2\dGs \\
        &\qquad + \chi(L)\int_0^t\intG (\beta\theta_m - \mu_m)^2\dGs \\
        &\quad \leq E_{\mathrm{free}}(\phi_0,\psi_0) + \int_0^t\intO \phi_m\Grad\mu_m\cdot\bv_m\dxs + \int_0^t\intG \psi_m\Gradg\theta_m\cdot\bw_m\dGs.
    \end{split}
\end{align}
Thus, as $E_{\mathrm{tot}} = E_{\mathrm{kin}} + E_{\mathrm{free}}$, integrating \eqref{Test:vw:m} in time over $[0,t]$ for $0 < t \leq T$, and summing with \eqref{Est:Energy:Free}, we obtain
\begin{align}\label{EnergyDiss:m}
    &E_{\mathrm{tot}}(\bv_m(t),\bw_m(t),\phi_m(t),\psi_m(t)) \nonumber\\
    &\qquad + \int_0^t\intO 2\nu_\Om(\phi_m)\abs{\D\bv_m}^2\dxs + \int_0^t\intG 2\nu_\Ga(\psi_m)\abs{\Dg\bw_m}^2\dGs \nonumber\\
    &\qquad + \int_0^t\intG \gamma(\phi_m,\psi_m)\abs{\bw_m}^2\dGs + \int_0^t\intO \abs{\Grad\mu_m}^2\dxs + \int_0^t\intG \abs{\Gradg\theta_m}^2\dGs \\
    &\qquad + \chi(L)\int_0^t\intG (\beta\theta_m - \mu_m)^2\dGs 
    \nonumber\\[1ex]
    &\quad \leq E_{\mathrm{tot}}(\mathbfcal{P}_{\mathbfcal{V}_m}(\bv_0,\bw_0),\phi_0,\psi_0)
    \nonumber
\end{align}
for all $t\in[0,T]$. By the assumptions on the initial data $(\phi_0,\psi_0)$ and the properties of the projection $\mathbfcal{P}_{\mathbfcal{V}_m}$, we find that
\begin{align}
    \begin{split}
        \label{Est:InitialEnergy:m}
        E_{\mathrm{tot}}(\mathbfcal{P}_{\mathbfcal{V}_m}(\bv_0,\bw_0),\phi_0,\psi_0) &\leq \frac{\rho^\ast + \sigma^\ast}{2}\norm{(\bv_0,\bw_0)}_{\mathbfcal{L}^2}^2 + \frac12\norm{(\phi_0,\psi_0)}_{K,\alpha}^2 \\
        &\qquad + \intO F(\phi_0)\dx + \intG G(\psi_0)\dG \\
        &\leq C.
    \end{split}
\end{align}
Thus, using Lemma~\ref{Lemma:Korn} and the bounds $\abs{\phi_m} < 1$ a.e.~in $Q$ and $\abs{\psi_m} < 1$ a.e.~on $\Sigma$, we readily deduce from \eqref{EnergyDiss:m} that
\begin{align}
    &\norm{(\bv_m,\bw_m)}_{L^\infty(0,T;\mathbfcal{L}^2)} + \norm{(\bv_m,\bw_m)}_{L^2(0,T;\mathbfcal{H}^1)} \leq C, \label{Est:Uniform:vw}\\
    &\norm{(\phi_m,\psi_m)}_{L^\infty(0,T;\mathcal{H}^1)} \leq C, \label{Est:Uniform:pp}\\
    &\norm{(\Grad\mu_m,\Gradg\theta_m)}_{L^2(0,T;\mathbfcal{L}^2)} + \chi(L)^{1/2}\norm{\beta\theta_m - \mu_m}_{L^2(0,T;L^2(\Ga))} \leq C. \label{Est:Uniform:mt}
\end{align}
This allows us to proceed as in the proof of \cite[Theorem~3.4]{Knopf2025} (see also \cite{Giorgini2025}) to derive the uniform estimate
\begin{align}
    \norm{(\phi_m,\psi_m)}_{H^1(0,T;(\mathcal{H}^1)^\prime)}
    + \norm{(F_1^\prime(\phi_m),G_1^\prime(\psi_m))}_{L^2(0,T;\mathcal{L}^2)}
    + \norm{(\mu_m,\theta_m)}_{L^2(0,T;\mathcal{H}^1)}
    \le C.
    \label{Est:Uniform:KS}
\end{align}
Consequently, based on the uniform estimates \eqref{Est:Uniform:vw}-\eqref{Est:Uniform:KS}, the Banach--Alaoglu theorem and the Aubin--Lions--Simon lemma imply the existence of a sextuplet $(\bv,\bw,\phi,\psi,\mu,\theta)$ such that
\begin{alignat}{2}
    (\bv_m,\bw_m) &\rightarrow (\bv,\bw) 
    &&\qquad\text{weakly in~} L^2(0,T;\mathbfcal{H}^1_{0,\Div}), \label{Conv:vw:H^1}
    \nonumber \\
    & 
    &&\qquad\text{weakly-star in~} L^\infty(0,T;\mathbfcal{L}^2), 
    \\
    (\delt\phi_m,\delt\psi_m) 
    &\rightarrow (\delt\phi,\delt\psi) &&\qquad\text{weakly in~} L^2(0,T;(\mathcal{H}^1)^\prime), \label{Conv:delt:pp} 
    \\
    (\phi_m,\psi_m) &\rightarrow (\phi,\psi) 
    &&\qquad\text{weakly-star in~} L^\infty(0,T;\mathcal{H}^1_{K,\alpha}), 
    \nonumber \\
    & &&\qquad\text{strongly in~} C([0,T]; \mathcal{H}^s) \ \text{for any~} s\in[0,1), \label{Conv:PP:Strong:C}
    \\
    (\mu_m,\theta_m) &\rightarrow (\mu,\theta) 
    &&\qquad\text{weakly in~} L^2(0,T;\mathcal{H}^1), \label{Conv:mt:H^1}
    \\
    \beta\theta_m - \mu_m &\rightarrow \beta\theta - \mu
    &&\qquad\text{weakly in~} L^2(0,T;L^2(\Gamma))
    \label{Conv:mt:L^2G}
\end{alignat}
along a subsequence $m\rightarrow\infty$. Additionally, in view of \eqref{Est:Uniform:KS}, we obtain
\begin{alignat}{2}
    (F^\prime(\phi_m), G^\prime(\psi_m)) &\rightarrow (F^\prime(\phi), G^\prime(\psi)) &&\qquad\text{weakly in~} L^2(0,T;\mathcal{L}^2), \label{Conv:Pot:L^2}
\end{alignat}
as $m\rightarrow\infty$. Here, we made use of the fact that $F_1^\prime$ and $G_1^\prime$ are maximal monotone operators to identify the limit functions (cf.~Subsection~\ref{SUBSEC:EXAPP} or \cite{Knopf2025} for more details). From \eqref{Conv:Pot:L^2} we immediately deduce that
\begin{align}
    \abs{\phi} < 1 \quad\text{a.e.~in~}Q \quad\text{and}\quad \abs{\psi} < 1 \quad\text{a.e.~on~}\Sigma.
\end{align}
In view of these convergences, it is straightforward to pass to the limit $m\rightarrow\infty$ in the weak formulations \eqref{WF:PP:DISCR} and \eqref{WF:MT:DISCR}. This shows that the weak formulations \eqref{WF:PP} and \eqref{WF:MT} are fulfilled.

To verify the initial condition $(\phi,\psi)\vert_{t=0} = (\phi_0,\psi_0)$ a.e.~in $\Om\times\Ga$, we simply notice that
\begin{align*}
    (\phi_m(0),\psi_m(0)) = (\phi_0,\psi_0) \qquad\text{a.e.~in~}\Om\times\Ga
\end{align*}
for all $m\in\N$ as well as
\begin{align*}
    (\phi_m(0),\psi_m(0)) \rightarrow (\phi(0),\psi(0)) \qquad\text{strongly in~}\mathcal{L}^2
\end{align*}
as $m\rightarrow\infty$ because of \eqref{Conv:PP:Strong:C}.\pagebreak[1]

Next, to show that \eqref{WF:VW} holds, we rewrite \eqref{WF:VW:DISCR} again in the conservative form (see \eqref{eqs:NSCH:NC}) and integrate with respect to time. We get
\begin{align}\label{NonCons:Time:m}
    \begin{split}
        &\int_0^T\intO \delt(\rho(\phi_m)\bv_m)\cdot\wv\dxt + \int_0^T\intG \delt(\sigma(\psi_m)\bw_m)\cdot\ww\dGt \\
        &\qquad - \int_0^T\intO \rho(\phi_m)\bv_m\otimes\bv_m : \Grad\wv\dxt - \int_0^T\intG \sigma(\psi_m)\bw_m\otimes\bw_m : \Gradg\ww\dGt \\
        &\qquad + \int_0^T\intO 2\nu_\Om(\phi_m)\D\bv_m:\D\wv\dxt + \int_0^T\intG 2\nu_\Ga(\psi_m)\Dg\bw_m:\Dg\ww\dGt \\
        &\qquad + \int_0^T\intG \gamma(\phi_m, \psi_m)\bw_m\cdot\ww\dGt - \int_0^T\intO \bv_m\otimes \J_m:\Grad\wv\dxt \\
        &\qquad - \int_0^T\intG \bw_m\otimes\K_m : \Gradg\ww\dGt \\
        &\quad = \int_0^T\intO \mu_m\Grad\phi_m\cdot\wv\dxt + \int_0^T\intG \theta_m\Gradg\psi_m\cdot\ww\dGt \\
        &\qquad + \frac{\chi(L)}{2}\int_0^T\intG (\beta\sigma^\prime + \rho^\prime)(\beta\theta_m - \mu_m)\bw_m\cdot\ww\dGt
    \end{split}
\end{align}
for all $(\wv,\ww)\in C_c^\infty(0,T;\mathbfcal{V}_m)$, where
$(\J_m,\K_m) = (-\rho^\prime\,\Grad\mu_m, -\sigma^\prime\,\Gradg\theta_m)$.     
Now, proceeding similarly to the line of argument in \cite[Section 5]{Abels2013}), we use the uniform estimates \eqref{Est:Uniform:vw}-\eqref{Est:Uniform:KS} along with standard interpolation results to show that
\begin{align}\label{Est:Vel:AGD}
    \begin{split}
        \{(\rho(\phi_m)\bv_m\otimes\bv_m, \sigma(\psi_m)\bw_m\otimes\bw_m)\}_{m\in\N}
        &\ \text{~is uniformly bounded in~} \ L^2(0,T;\mathbfcal{L}^{3/2}), 
        \\
        \{(\bv_m\otimes\Grad\mu_m, \bw_m\otimes\Gradg\theta_m)\}_{m\in\N} 
        &\ \text{~is uniformly bounded in~} \ L^{8/7}(0,T;\mathbfcal{L}^{4/3}), 
        \\
        \{(\mu_m\Grad\phi_m, \theta_m\Gradg\psi_m)\}_{m\in\N}
        &\ \text{~is uniformly bounded in~} \ L^2(0,T;\mathbfcal{L}^{3/2}), 
        \\
        \{(\beta\theta_m - \mu_m)\bw_m\}_{m\in\N}
        &\ \text{~is uniformly bounded in~} \ L^{8/7}(0,T;\mathbf{L}^{4/3}(\Ga)).
    \end{split}
\end{align}
Moreover, via the trace embedding $H^1(\Omega) \emb L^2(\Gamma)$, we infer from \eqref{Est:Uniform:vw} that
\begin{align}\label{Est:Vel:AGD:2}
    \begin{split}
        \{(\D\bv_m,\Dg\bw_m)\}_{m\in\N}
        &\ \text{~is uniformly bounded in~} L^2(0,T;\mathbfcal{L}^2), 
        \\
        \{\bw_m\}_{m\in\N}
        &\ \text{~is uniformly bounded in~} L^2(0,T;\mathbf{L}^2(\Ga)).
    \end{split}
\end{align}
In particular, this means that all terms appearing in \eqref{Est:Vel:AGD} and \eqref{Est:Vel:AGD:2} are uniformly bounded in $L^{8/7}(0,T;\mathbfcal{L}^{4/3})$ or $L^{8/7}(0,T;\mathbf{L}^{4/3}(\Ga))$, respectively. 

Let now $(\wv,\ww)\in L^8(0,T;\mathbfcal{W}^{1,4}_{0,\Div})$ be arbitrary. Then $\mathbfcal{P}_{\mathbfcal{V}_m}(\wv,\ww)$ is an admissible test function in \eqref{NonCons:Time:m}. Hence, by a comparison argument, we infer that $\{\delt\mathbfcal{P}_{\mathbfcal{V}_m}(\rho(\phi_m)\bv_m,\sigma(\psi_m)\bw_m)\}_{m\in\N}$ is bounded in $L^{8/7}(0,T;(\mathbfcal{W}^{1,4}_{0,\Div})^\prime)$. In addition, we know that $\{\mathbfcal{P}_{\mathbfcal{V}_m}(\rho(\phi_m)\bv_m,\sigma(\psi_m)\bw_m)\}_{m\in\N}$ is bounded in $L^2(0,T;\mathbfcal{H}^1_{0,\Div})$. Therefore, according to the Aubin--Lions lemma, there exists $(\bv^\star,\bw^\star)\in L^\infty(0,T;\mathbfcal{L}^2)$ such that
\begin{align}\label{Conv:PVM:Strong}
    \mathbfcal{P}_{\mathbfcal{V}_m}(\rho(\phi_m)\bv_m,\sigma(\psi_m)\bw_m)\rightarrow (\bv^\star,\bw^\star) \qquad\text{strongly in~} L^2(0,T;\mathbfcal{L}^2),
\end{align}
as $m\rightarrow\infty$, along a non-relabeled subsequence. 
In order to show that $(\bv^\star,\bw^\star) = \mathbfcal{P}_\Div(\rho(\phi)\bv,\sigma(\psi)\bw))$, we first establish the convergence 
\begin{align}\label{Conv:rhovsigmaw:L^2}
(\rho(\phi_m)\bv_m,\sigma(\psi_m)\bw_m) \rightarrow (\rho(\phi)\bv,\sigma(\psi)\bw) \qquad\text{weakly in~} L^2(0,T;\mathbfcal{L}^2)
\end{align}
as $m\rightarrow\infty$. To this end, let 
$(\wv,\ww)\in L^2(0,T;\mathbfcal{L}^2)$ be arbitrary. 
Then
\begin{align}\label{WeakConv:Rho_mv_m}
    &\int_0^T\intO (\rho(\phi_m)\bv_m - \rho(\phi)\bv)\cdot\wv\dxt + \int_0^T\intG (\sigma(\psi_m)\bw_m - \sigma(\psi)\bw)\cdot\ww\dGt 
    \nonumber\\
    \begin{split}
    &\quad = \int_0^T\intO (\rho(\phi_m) - \rho(\phi))\wv\cdot\bv_m\dxt + \int_0^T\intO \rho(\phi)(\bv_m - \bv)\cdot\wv\dxt 
    \\
    &\qquad + \int_0^T\intG (\sigma(\psi_m) - \sigma(\psi))\ww\cdot\bw_m\dGt + \int_0^T \intG \sigma(\psi)(\bw_m - \bw)\cdot\ww\dGt    
    \\[1ex]
    &\quad\longrightarrow 0 
    \end{split}
\end{align}
as $m\rightarrow\infty$. Here, we have used the strong convergence $((\rho(\phi_m) - \rho(\phi))\wv,(\sigma(\psi_m) - \sigma(\psi))\ww) \rightarrow 0$ in $L^2(0,T;\mathbfcal{L}^2)$ as $m\rightarrow\infty$ as well as the weak convergence \eqref{Conv:vw:H^1}. 
This means that \eqref{Conv:rhovsigmaw:L^2} is established.
For any $(\wv,\ww)\in L^2(0,T;\mathbfcal{L}^2)$, we now use the convergence \eqref{Conv:rhovsigmaw:L^2} along with the strong convergence $\mathbfcal{P}_{\mathbfcal{V}_m}(\wv,\ww)\rightarrow \mathbfcal{P}_\Div(\wv,\ww)$ in $L^2(0,T;\mathbfcal{L}^2)$ (see Corollary~\ref{Cor:Proj:Stokes})
to deduce that
\begin{align}
    \begin{split}
        &\int_0^T\intO \bv^\star\cdot\wv\dxt + \int_0^T\intG \bw^\star\cdot\ww\dGt \\
        &\qquad = \lim_{m\rightarrow\infty} \Big( \int_0^T\intO \mathbf{P}_{\mathbfcal{V}_m}^\Om(\rho(\phi_m)\bv_m)\cdot\wv\dxt + \int_0^T \intG \mathbf{P}_{\mathbfcal{V}_m}^\Ga(\sigma(\psi_m)\bw_m)\cdot\ww\dGt\Big) \\
        &\qquad = \lim_{m\rightarrow\infty} \Big( \int_0^T\intO \rho(\phi_m)\bv_m\cdot\mathbf{P}_{\mathbfcal{V}_m}^\Om(\wv)\dxt + \int_0^T\intG \sigma(\psi_m)\bw_m\cdot\mathbf{P}_{\mathbfcal{V}_m}^\Ga(\ww)\dGt\Big) \\
        &\qquad = \int_0^T\intO \rho(\phi)\bv\cdot\mathbf{P}_\Div^\Om(\wv)\dxt + \int_0^T\intG \sigma(\psi)\bw\cdot\mathbf{P}_\Div^\Ga(\ww)\dGt \\
        &\qquad = \int_0^T \intO \mathbf{P}_\Div^\Om(\rho(\phi)\bv)\cdot\wv\dxt + \int_0^T \intG \mathbf{P}_\Div^\Ga(\sigma(\psi)\bw)\cdot\ww\dGt.
    \end{split}
\end{align}
This proves that $(\bv^\star,\bw^\star) = \mathbfcal{P}_\Div(\rho(\phi)\bv,\sigma(\psi)\bw))$. 

Next, due to \eqref{Conv:vw:H^1} and \eqref{Conv:PVM:Strong}, we have
\begin{align*}
    &\int_0^T \intO \rho(\phi_m)\abs{\bv_m}^2\dxt + \int_0^T\intG \sigma(\psi_m)\abs{\bw_m}^2\dGt 
    \nonumber \\
    \begin{split}
    &\quad = \int_0^T \intO \mathbf{P}_{\mathbfcal{V}_m}^\Om(\rho(\phi_m)\bv_m)\cdot\bv_m\dxt + \int_0^T\intG \mathbf{P}_{\mathbfcal{V}_m}^\Ga(\sigma(\psi_m)\bw_m)\cdot\bw_m\dGt 
    \\
    &\quad\longrightarrow \int_0^T \intO \mathbf{P}_\Div^\Om(\rho(\phi)\bv)\cdot\bv\dxt + \int_0^T \intG \mathbf{P}_\Div^\Ga(\sigma(\psi)\bw)\cdot\bw\dGt 
    \end{split}
    \\
    &\quad = \int_0^T\intO \rho(\phi)\abs{\bv}^2\dxt + \int_0^T\intG \sigma(\psi)\abs{\bw}^2\dGt,
    \nonumber
\end{align*}
as $m\rightarrow\infty$. Thus, arguing analogously to \eqref{Conv:rhovsigmaw:L^2}, we conclude that
\begin{align*}
    (\rho(\phi_m)^{1/2}\bv_m,\sigma(\psi_m)^{1/2}\bw_m)\rightarrow (\rho(\phi)^{1/2}\bv,\sigma(\psi)^{1/2}\bw)
    \qquad\text{strongly in~}
    L^2(0,T;\mathbfcal{L}^2).
\end{align*}

Lastly, since $\rho(\phi_m)\rightarrow\rho(\phi)$ almost everywhere in $Q$, and $\rho \geq \rho_\ast > 0$, we use Lebesgue's dominated convergence theorem to deduce that
\begin{align}\label{Conv:v:L^2:strong}
    \bv_m = \rho(\phi_m)^{-1/2}(\rho(\phi_m)^{1/2}\bv_m)\rightarrow\bv \qquad\text{strongly in~} L^2(0,T;\mathbf{L}^2(\Om))
\end{align}
as $m\rightarrow\infty$. In a similar manner, we conclude that
\begin{align}\label{Conv:w:L^2:strong}
    \bw_m = \sigma(\psi_m)^{-1/2}(\sigma(\psi_m)\bw_m)\rightarrow\bw \qquad\text{strongly in~} L^2(0,T;\mathbf{L}^2(\Ga))
\end{align}
as $m\rightarrow\infty$.

Finally, in order to recover the weak formulation \eqref{WF:VW}, 
let $\pi\in C^\infty_0(0,T)$ be arbitrary, and for any $j\in\N$, let $(\wv,\ww) = (\pi\tv_j,\pi\tw_j)$. Then, for $m\geq j$, we infer from \eqref{NonCons:Time:m} that
\begin{align}\label{WF:WV:pi:m}
    &\int_0^T\intO \delt(\rho(\phi_m)\bv_m)\cdot\pi\tv_j\dxt + \int_0^T\intG \delt(\sigma(\psi_m)\bw_m)\cdot\pi\tw_j\dGt \nonumber \\
    &\qquad - \int_0^T\intO \rho(\phi_m)\bv_m\otimes\bv_m:\Grad\tv_j\pi\dxt - \int_0^T\intG \sigma(\psi_m)\bw_m\otimes\bw_m:\Gradg\tv_j\pi\dGt \nonumber \\
    &\qquad + \int_0^T\intO 2\nu_\Om(\phi_m)\D\bv_m:\D\tv_j\pi\dxt + \int_0^T\intG 2\nu_\Ga(\psi_m)\Dg\bw_m:\Dg\tv_j\pi\dGt \nonumber \\
    &\qquad + \int_0^T\intG \gamma(\phi_m,\psi_m)\bw_m\cdot\tw_j\pi\dGt - \int_0^T \intO \bv_m\otimes\J_m:\Grad\tv_j\pi\dxt \\
    &\qquad - \int_0^T\intG \bw_m\otimes\K_m:\Gradg\tw_j\pi\dGt \nonumber \\
    &\quad = \int_0^T\intO \mu_m\Grad\phi_m:\tv_j\pi\dxt + \int_0^T\intG \theta_m\Gradg\psi_m\cdot\tw_j\pi\dGt \nonumber \\
    &\qquad + \frac{\chi(L)}{2}\int_0^T\intG(\beta\sigma^\prime + \rho^\prime)(\beta\theta_m - \mu_m)\bw_m\cdot\tw_j\pi\dGt. \nonumber
\end{align}
For the first two terms, we use integration by parts together with \eqref{Conv:rhovsigmaw:L^2} to deduce
\begin{align*}
&\int_0^T\intO \delt(\rho(\phi_m)\bv_m)\cdot\tv_j\pi\dxt + \int_0^T\intG\delt(\sigma(\psi_m)\bw_m)\cdot\tw_j\pi\dGt \\
&\quad = - \int_0^T\intO \rho(\phi_m)\bv_m\cdot\delt(\pi\tv_j)\dxt - \int_0^T\intG \sigma(\psi_m)\bw_m\cdot\delt(\pi\tw_j)\dGt \\
&\quad\rightarrow - \int_0^T\intO \rho(\phi)\bv\cdot\delt(\pi\tv_j)\dxt - \int_0^T\intG \sigma(\psi)\bw\cdot\delt(\pi\tw_j)\dGt
\end{align*}
as $m\rightarrow\infty$.\pagebreak[1] 

Moreover, recalling the boundedness of $\{(\bv_m,\bw_m)\}_{m\in\N}$ in $L^2(0,T;\mathbfcal{H}^1)\emb L^2(0,T;\mathbfcal{L}^6)$, the continuous embedding $L^2(0,T;\mathbfcal{L}^2)\emb L^2(0,T;\mathbfcal{L}^s)$ for $s\in [1,2]$, and the strong convergences \eqref{Conv:v:L^2:strong} and \eqref{Conv:w:L^2:strong}, we infer via an interpolation argument that
\begin{align*}
    (\bv_m,\bw_m)\rightarrow (\bv,\bw) \qquad\text{strongly in~} L^2(0,T;\mathbfcal{L}^{6-r})
\end{align*}
as $m\rightarrow\infty$ for all $0 < r \leq 5$. 
Furthermore, since $\{(\bv_m,\bw_m)\}_{m\in\N}$ is also bounded in $L^\infty(0,T;\mathbfcal{L}^2)$, an interpolation argument reveals that for any $1 \leq q < \infty$, there exists $\tau = \tau(q,r) > 0$ such that
\begin{align*}
    (\bv_m,\bw_m)\rightarrow (\bv,\bw) \qquad\text{strongly in~} L^q(0,T;\mathbfcal{L}^{2+\tau})
\end{align*}
as $m\rightarrow\infty$. To be precise, it holds that
\begin{align*}
2 + \tau = \frac{2(6-r)}{\frac{2}{q}(r - 4) + 6 - r}.
\end{align*}
In particular, choosing $r = 3$ yields
\begin{align}\label{Conv:vw:Strong:L^2L^3}
    (\bv_m,\bw_m) \rightarrow (\bv,\bw) \qquad\text{strongly in~} L^2(0,T;\mathbfcal{L}^3)
\end{align}
as $m\rightarrow\infty$. Then, for $q = 4$, we obtain $2 + \tau = 12/5$, and hence,
\begin{align}\label{Conv:vw:Strong:L^4L^12/5}
    (\bv_m,\bw_m) \rightarrow (\bv,\bw) \qquad\text{strongly in~} L^4(0,T;\mathbfcal{L}^{12/5})
\end{align}
as $m\rightarrow\infty$. 
Next, we recall that for any $j\in\N$, the eigenfunction $(\tv_j,\tw_j)$ belongs to $\mathbfcal{H}^2 \emb \mathbfcal{L}^{d+1}$ (cf.~Theorem~\ref{THM:SPEC:BSS}). Hence, because of Theorem~\ref{App:Theorem:BSStokes:Reg}, we even have $(\tv_j,\tw_j) \in \mathbfcal{W}^{2,d+1} \emb \mathbfcal{W}^{1,\infty}$.
This allows us to use Lebesgue's dominated convergence theorem to obtain the convergence 
\begin{align*}
    \big(\rho(\phi_m)\Grad\tv_j\pi,\sigma(\psi_m)\Gradg\tw_j\pi\big)\rightarrow \big(\rho(\phi)\Grad\tv_j\pi,\sigma(\psi)\Gradg\tw_j\pi\big) \qquad\text{strongly in ~} L^4(0,T;\mathbfcal{L}^{12})
\end{align*}
as $m\rightarrow\infty$. Consequently, in combination with \eqref{Conv:vw:H^1} and \eqref{Conv:vw:Strong:L^4L^12/5}, we obtain
\begin{align*}
    & -\int_0^T\intO \rho(\phi_m)\bv_m\otimes\bv_m:\Grad\tv_j\pi\dxt - \int_0^T\intG \sigma(\psi_m)\bw_m\otimes\bw_m:\Gradg\tw_j\pi\dGt \\
    &\qquad\rightarrow -\int_0^T\intO \rho(\phi)\bv\otimes\bv:\Grad\tv_j\pi\dxt - \int_0^T \intG \sigma(\psi)\bw\otimes\bw:\Gradg\tw_j\pi\dGt
\end{align*}
as $m\rightarrow\infty$. Next, we employ the continuity and boundedness of $\nu_\Om,\nu_\Ga$ (see \ref{Assumption:Coefficients}) together with Lebesgue's dominated convergence theorem to infer
\begin{align*}
    (2\nu_\Om(\phi_m)\D\tv_j\pi,2\nu_\Ga(\psi_m)\Dg\tw_j\pi) \rightarrow (2\nu_\Om(\phi)\D\tv_j\pi, 2\nu_\Ga(\psi)\Dg\tw_j\pi)
\end{align*}
strongly in $L^2(0,T;\mathbfcal{L}^2)$ as $m\rightarrow\infty$. As $(\D\bv_m,\Dg\bw_m)\rightarrow (\D\bv,\Dg\bw)$ weakly in $L^2(0,T;\mathbfcal{L}^2)$ as $m\rightarrow\infty$, we deduce
\begin{align*}
    &\int_0^T\intO 2\nu_\Om(\phi_m)\D\bv_m:\D\tv_j\pi\dxt + \int_0^T\intG 2\nu_\Ga(\psi_m)\Dg\bw_m:\Dg\tw_j\pi\dGt \\
    &\qquad\rightarrow \int_0^T\intO 2\nu_\Om(\phi)\D\bv:\D\tv_j\pi\dxt + \int_0^T\intG 2\nu_\Ga(\psi)\Dg\bw:\Dg\tw_j\pi\dGt
\end{align*}
as $m\rightarrow\infty$. Furthermore, arguing as above, we find that $\gamma(\phi_m,\psi_m)\tw_j\pi\rightarrow \gamma(\phi,\psi)\tw_j\pi$ strongly in $L^2(0,T;\mathbf{L}^2(\Ga))$ as $m\rightarrow\infty$. Thus, since $\bw_m\rightarrow\bw$ strongly in $L^2(0,T;\mathbf{L}^2(\Ga))$ as $m\rightarrow\infty$, this yields
\begin{align*}
    \int_0^T\intG \gamma(\phi_m,\psi_m)\bw_m\cdot\tw_j\pi\dGt \rightarrow \int_0^T\intG \gamma(\phi,\psi)\bw\cdot\tw_j\dGt
\end{align*}
as $m\rightarrow\infty$. Next, due to \eqref{Conv:mt:H^1}, we find that $(\J_m,\K_m)\rightarrow(\J,\K)$ weakly in $L^2(0,T;\mathbfcal{L}^2)$ as $m\rightarrow\infty$, where $(\J,\K) = (-\rho^\prime\,\Grad\mu, - \sigma^\prime\,\Gradg\theta)$.
Thus, using \eqref{Conv:v:L^2:strong}-\eqref{Conv:w:L^2:strong} along with the regularity $(\tv_j,\tw_j) \in \mathbfcal{W}^{1,\infty}$ that was already shown above, we infer that
\begin{align*}
    &-\int_0^T\intO \bv_m\otimes\J_m:\Grad\tv_j\pi\dxt - \int_0^T\intG \bw_m\otimes\K_m:\Gradg\tv_j\pi\dGt \\
    &\qquad\rightarrow -\int_0^T\intO \bv\otimes\J:\Grad\tv_j\pi\dxt - \int_0^T\intG \bw\otimes\K:\Gradg\tw_j\pi\dGt
\end{align*}
as $m\rightarrow\infty$.
For the first two terms on the right-hand side of \eqref{WF:WV:pi:m}, we deduce that
\begin{align*}
    &-\int_0^T\intO \phi_m\Grad\mu_m\cdot\tv_j\pi\dxt - \int_0^T\intG \psi_m\Gradg\theta_m\cdot\tw_j\pi\dGt \\
    &\qquad\rightarrow -\int_0^T\intO \phi\Grad\mu\cdot\tv_j\pi\dxt - \int_0^T\intG \psi\Gradg\theta\cdot\tw_j\pi\dGt
\end{align*}
as $m\rightarrow\infty$, due to the convergences \eqref{Conv:PP:Strong:C} and \eqref{Conv:mt:H^1}. Finally, for the last term on the right-hand side of \eqref{WF:WV:pi:m}, we obtain
\begin{align*}
    &\frac{\chi(L)}{2}\int_0^T\intG (\beta\sigma^\prime + \rho^\prime)(\beta\theta_m - \mu_m)\bw_m\cdot\tw_j\pi\dGt \\
    &\qquad\rightarrow \frac{\chi(L)}{2}\int_0^T\intG (\beta\sigma^\prime + \rho^\prime)(\beta\theta - \mu)\bw\cdot\tw_j\pi\dGt
\end{align*}
as $m\rightarrow\infty$, due to the convergences \eqref{Conv:mt:L^2G} and \eqref{Conv:w:L^2:strong}. 

Altogether, we can now pass to the limit $m\rightarrow\infty$ in \eqref{WF:WV:pi:m}. In this way, we conclude that
\begin{align*}
    &\int_0^T\intO \rho(\phi)\bv\cdot \delt(\pi\tv_j) \dxt + \int_0^T\intG \sigma(\psi)\bw\cdot \delt(\pi\tw_j)\dGt \\
    &\qquad - \int_0^T\intO \rho(\phi)\bv\otimes\bv:\Grad\wv\pi\dxt - \int_0^T \sigma(\psi)\bw\otimes\bw:\Gradg\tw_j\pi\dGt \\
    &\qquad + \int_0^T\intO 2\nu_\Om(\phi)\D\bv:\D\tv_j\pi\dxt + \int_0^T\intG 2\nu_\Ga(\psi)\Dg\bw:\Dg\tw_j\pi\dGt \\
    &\qquad + \int_0^T\intG \gamma(\phi,\psi)\bw\cdot\tw_j\pi\dGt - \int_0^T\intO \bv\otimes\J:\Grad\tv_j\pi\dxt \\
    &\qquad - \int_0^T\intG \bw\otimes\K:\Gradg\tw_j\pi\dGt \\
    &\quad = \int_0^T\intO \mu\Grad\phi\cdot\tv_j\pi\dxt + \int_0^T\intG \theta\Gradg\psi\cdot\tw_j\pi\dGt \\
    &\qquad + \frac{\chi(L)}{2}\int_0^T\intG (\beta\sigma^\prime + \rho^\prime)(\beta\theta_m - \mu_m)\bw\cdot\tw_j\pi\dGt
\end{align*}
for all $j\in\N$ and $\pi\in C_c^\infty(0,T)$. 
Since $\,\mathrm{span}\,\{(\tv_j,\tw_j):j\in\N\}$ is dense in $\mathbfcal{H}^2_{0,\Div}$ (cf.~Corollary~\ref{COR:ONS}), we know that
\begin{align*}
    \left\{
    \sum_{j=1}^N \pi_j (\tv_j,\tw_j) 
    \;\middle|\;
    N\in\N, \, 
    \pi_j \in C_c^\infty(0,T) \;\text{for}\; j=1,...,N
    \right\}
    \subset C_c^\infty((0,T);\mathbfcal{H}^2_{0,\Div})
\end{align*}
is dense. Consequently, the sextuplet $(\bv,\bw,\phi,\psi,\mu,\theta)$ satisfies the weak formulation \eqref{WF:VW}-\eqref{WF:MT}.

\subsection{Verification of the initial condition for \texorpdfstring{$(\bv,\bw)$}{v,w}}
In this step, we show that the initial conditions for $(\bv,\bw)$ are fulfilled. To this end, we start with the following lemma, which has been adapted from \cite[Section 5.2]{Abels2013} to the current setting.

\begin{lemma}\label{Lemma:IC}
    Let $(\bv,\bw), (\tv,\tw)\in\mathbfcal{H}^1_{0,\Div}$ and $(\ov\rho,\ov\sigma)\in\mathcal{L}^\infty$ with $\ov\rho \geq \ov\rho_0 > 0$ and $\ov\sigma \geq \ov\sigma_0 > 0$ such that
    \begin{align}
        \label{ID:IC}
        \intO \ov\rho\, \bv\cdot\wv\dx + \intG \ov\sigma\, \bw\cdot\ww\dG = \intO \ov\rho\tv\cdot\wv\dx + \intG \ov\sigma\tw\cdot\ww\dG
    \end{align}
    for all $(\wv,\ww)\in\mathbfcal{H}^2_{0,\Div}$. Then it holds $(\bv,\bw) = (\tv,\tw)$ a.e.~in $\Om\times\Ga$.
\end{lemma}

\begin{proof}
    According to Corollary~\ref{COR:DENS}, there exists a sequence $\{(\bv_N,\bw_N)\}_{N\in\N} \subset \mathbfcal{H}^2_{0,\Div}$ such that
    \begin{align*}
        (\bv_N,\bw_N) \to (\bv - \tv,\bw - \tw) 
        \quad\text{strongly in $\mathbfcal{L}^2$}
    \end{align*}
    as $N\to \infty$. We now test the difference of \eqref{ID:IC} written for $(\bv,\bw)$ and \eqref{ID:IC} written for $(\tv,\tw)$ with $(\bv_N,\bw_N)$. Passing to the limit $N\to\infty$, we conclude that   
    \begin{align*}
        \intO \ov\rho\abs{\bv-\tv}^2\dx + \intG \ov\sigma\abs{\bw-\tw}^2\dG = 0.
    \end{align*}
    Since $\ov\rho$ and $\ov\sigma$ are uniformly positive, we infer $\bv = \tv$ in $\mathbf{L}^2(\Om)$ and $\bw = \tw$ in $\mathbf{L}^2(\Ga)$. This proves the claim.
\end{proof}

At first, we show the weak continuity in time for the projection of $(\rho(\phi)\bv,\sigma(\psi)\bw)$ onto the divergence-free vector fields. To this end, we consider $(\bv_m^\star,\bw_m^\star) = \mathbfcal{P}_{\mathbfcal{V}_m}(\rho_m(\phi_m)\bv_m,\sigma_m(\psi_m)\bw_m)$, and deduce from the arguments above that
\begin{align}
    \label{BND:V*W*:1}
    \{(\bv_m^\star,\bw_m^\star)\}_{m\in\N} 
    &\quad\text{is bounded in}\quad W^{1,8/7}(0,T;(\mathbfcal{W}^{1,4}_{0,\Div})^\prime)\emb C([0,T];(\mathbfcal{W}^{1,4}_{0,\Div})^\prime), 
    \\
    \label{BND:V*W*:2}
    \{(\bv_m^\star,\bw_m^\star)\}_{m\in\N} 
    &\quad\text{is bounded in}\quad L^\infty(0,T;\mathbfcal{L}^2).
\end{align}
In view of these bounds, we already showed in \eqref{Conv:PVM:Strong} that $(\bv_m^\star,\bw_m^\star)\rightarrow (\bv^\star,\bw^\star)$ strongly in $L^2(0,T;\mathbfcal{L}^2)$ as $m\rightarrow\infty$.
This implies that
\begin{align}
    \label{CONV:V*W*:1}
    \begin{split}
    (\bv_m^\star,\bw_m^\star) \to (\bv^\star,\bw^\star)
    &\quad\text{weakly in $W^{1,8/7}(0,T;(\mathbfcal{W}^{1,4}_{0,\Div})^\prime)$}
    \\
    &\quad\text{and weakly-star in $L^\infty(0,T;\mathbfcal{L}^2)$}
    \end{split}
\end{align}
as $m\rightarrow\infty$. These convergences are first obtained after subsequence extraction, but as the limit is independent of the extracted subsequence, the convergences hold for the whole sequence.
In particular, using Lemma~\ref{Prelim:Lemma:BC_w}, we infer that $(\bv^\star,\bw^\star)\in C_w([0,T];\mathbfcal{L}^2)$.
Moreover, in view of the continuous embedding stated in \eqref{BND:V*W*:1}, we infer that the evaluation operator
\begin{align*}
    \mathrm{tr_0}: W^{1,8/7}(0,T;(\mathbfcal{W}^{1,4}_{0,\Div})^\prime)
    \to (\mathbfcal{W}^{1,4}_{0,\Div})^\prime,
    \quad
    (\tv,\tw) \mapsto \big(\tv(0),\tw(0)\big)
\end{align*}
is continuous. Consequently, in combination with the weak convergence \eqref{CONV:V*W*:1}, we obtain
\begin{align}
    \label{CONV:V*W*:2}
    \big(\bv_m^\star(0),\bw_m^\star(0)\big) \to \big(\bv^\star(0),\bw^\star(0)\big)
    \quad\text{weakly in $(\mathbfcal{W}^{1,4}_{0,\Div})^\prime$}
\end{align}
as $m\rightarrow\infty$.

Now, for any $t\in [0,T]$, we consider the auxiliary problems
\begin{subequations}
\label{AuxProblem:Bulk}
\begin{align}
    \label{AuxProblem:Bulk:1}
    -\Div\left(\frac{1}{\rho(\phi(t))}\Grad p(t)\right) &= \Div\left(\frac{1}{\rho(\phi(t))}\bv^\star(t)\right) &&\qquad\text{in~}\Om, 
    \\
    \label{AuxProblem:Bulk:2}
    \deln p(t) &= 0 &&\qquad\text{on~}\Ga,
\end{align}
\end{subequations}
and
\begin{align}\label{AuxProblem:Surface}
    -\Divg\left(\frac{1}{\sigma(\psi(t))}\Gradg q(t)\right)  = \Divg\left(\frac{1}{\sigma(\psi(t))}\bw^\star(t)\right) \qquad\text{on~}\Ga.
\end{align}
The corresponding weak formulations read, for all $t\in[0,T]$, as
\begin{align}\label{AuxProblem:Bulk:WF}
    \intO \frac{1}{\rho(\phi(t))}\Grad p(t)\cdot \Grad\zeta\dx = -\intO \frac{1}{\rho(\phi(t))}\bv^\star(t)\cdot\Grad\zeta\dx
\end{align}
for all $\zeta\in H^1(\Om)$ and
\begin{align}\label{AuxProblem:Surface:WF}
    \intG \frac{1}{\sigma(\psi(t))}\Gradg q(t)\cdot\Gradg\xi \dG = - \intG \frac{1}{\sigma(\psi(t))}\bw^\star(t)\cdot\Gradg\xi\dG
\end{align}
for all $\xi\in H^1(\Ga)$, respectively. By the Lax--Milgram theorem, we find unique weak solutions $p\in L^2(0,T;H^1(\Om)\cap L^2_{(0)}(\Om))$ and $q\in L^2(0,T;H^1(\Ga)\cap L^2_{(0)}(\Ga))$, which satisfy
\begin{align}\label{Est:AuxProblem:Bulk}
    \norm{\Grad p(t)}_{\mathbf{L}^2(\Om)}\leq C\norm{\bv^\star(t)}_{\mathbf{L}^2(\Om)}
\end{align}
and
\begin{align}\label{Est:AuxProblem:Surface}
    \norm{\Grad q(t)}_{\mathbf{L}^2(\Ga)} \leq C\norm{\bw^\star(t)}_{\mathbf{L}^2(\Ga)}
\end{align}
with a constant $C > 0$ independent of $t$, respectively. Here, we used the fact that both $\rho\in L^\infty(Q)$ and $\sigma\in L^\infty(\Sigma)$ are bounded from above and from below by positive constants.

We now claim that $(\Grad p,\Gradg q)\in C_w([0,T];\mathbfcal{L}^2)$. As the equations for $p$ and $q$ are completely uncoupled, we only prove that $\Grad p\in C_w([0,T];\mathbf{L}^2(\Om))$, while the other assertion follows by similar arguments. To prove the claim, let $t\in[0,T]$ be arbitrary, and let $\{t_n\}_{n\in\N}$ be an arbitrary sequence with $t_n\rightarrow t$ as $n\rightarrow\infty$. From $\bv^\star\in C_w([0,T];\mathbf{L}^2(\Om))$ we infer in particular that $\{\bv^\star(t_n)\}_{n\in\N}$ is bounded in $\mathbf{L}^2(\Om)$. Thus, due to \eqref{Est:AuxProblem:Bulk}, also $\{\Grad p(t_n)\}_{n\in\N}$ is bounded in $\mathbf{L}^2(\Om)$.
Since $\{\Grad \tilde p : \tilde p \in H^1(\Om)\cap L^2_{(0)}(\Om)\}$ is a closed subspace of $\mathbf{L}^2(\Om)$, we infer that there exists $\tilde{p}\in H^1(\Om)\cap L^2_{(0)}(\Om)$ such that, up to subsequence extraction, 
$\Grad p(t_{n})\rightarrow \Grad \tilde{p}$ weakly in $\mathbf{L}^2(\Om)$ as $n\rightarrow\infty$. 
By passing to the limit in the weak formulation \eqref{Est:AuxProblem:Bulk} written at $t_n$, we notice that $\tilde{p}$ is a weak solution to \eqref{AuxProblem:Bulk:WF}. Because of uniqueness, this readily yields $\Grad \tilde{p} = \Grad p(t)$. In particular, this means that the weak limit is independent of the extracted subsequence. Thus, the convergence $\Grad p(t_{n})\rightarrow \Grad \tilde{p}$ weakly in $\mathbf{L}^2(\Om)$, as $n\rightarrow\infty$, remains true for the whole sequence.
This shows that $ \Grad p\in C_w([0,T];\mathbf{L}^2(\Om))$. The statement $ \Gradg q\in C_w([0,T];\mathbf{L}^2(\Ga))$ can be verified analogously. Thus, the regularity $(\Grad p,\Gradg q)\in C_w([0,T];\mathbfcal{L}^2)$ is established. 

As seen before, we know that $(\bv^\star,\bw^\star) = \mathbfcal{P}_\Div(\rho(\phi)\bv,\sigma(\psi)\bw)$. In particular, it holds
\begin{align}
    \big(\bv^\star(t),\bw^\star(t)\big) 
    = \mathbfcal{P}_\Div\big(\rho(\phi(t))\bv(t),\sigma(\psi(t))\bw(t)\big) \qquad\text{almost everywhere in~}(0,T).
\end{align}
By the characterization of the projection $\mathbfcal{P}_\Div = (\mathbf{P}^\Omega_\Div,\mathbf{P}^\Gamma_\Div): \mathbfcal{L}^2 \to \mathbfcal{L}^2_\Div$, the last identity entails that
\begin{align}\label{Char:vw:P_Div}
    \big(\bv^\star(t),\bw^\star(t)\big) 
    = \big(\rho(\phi(t))\bv(t) - \Grad\hat{p}(t),\sigma(\psi(t))\bw(t) - \Gradg\hat{q}(t)\big)
\end{align}
almost everywhere in $(0,T)$, where $\hat{p}(t)$ can be fixed as the weak solution to the Poisson--Neumann problem
\begin{alignat}{2}
    \Lap\hat{p}(t) &= \Div\big(\rho(\phi(t))\bv(t)\big) &&\qquad\text{in~}\Om, \\
    \deln\hat{p}(t) &= 0 &&\qquad\text{on~}\Ga,
\end{alignat}
whereas $\hat{q}(t)$ can be fixed as the weak solution of the inhomogeneous Laplace--Beltrami equation
\begin{align}
    \Lapg\hat{q}(t) = \Divg(\sigma(\psi(t))\bw(t)) \qquad\text{on~}\Ga.
\end{align}
Dividing the first component and second component in \eqref{Char:vw:P_Div} by $\rho(\phi(t))$ and $\sigma(\psi(t))$, respectively, we infer
\begin{align}
    \bv(t) &= \frac{1}{\rho(\phi(t))}\bv^\star(t) + \frac{1}{\rho(\phi(t))}\Grad\hat{p}(t), \label{Redef:v}\\
    \bw(t) &= \frac{1}{\sigma(\psi(t))}\bw^\star(t) + \frac{1}{\sigma(\psi(t))}\Gradg\hat{q}(t). \label{Redef:w}
\end{align}
Hence, for almost all $t\in(0,T)$, we deduce that
\begin{align*}
    &\intO \frac{1}{\rho(\phi(t))}\Grad\hat{p}(t)\cdot\Grad\zeta\dx = - \intO \frac{1}{\rho(\phi(t))}\bv^\star(t)\cdot\Grad\zeta\dx
\end{align*}
for all $\zeta\in H^1(\Om)$, and 
\begin{align*}
    \intG \frac{1}{\sigma(\psi(t))}\Gradg\hat{q}(t)\cdot\Gradg\xi\dG = - \intG \frac{1}{\sigma(\psi(t))}\bw^\star(t)\cdot\Gradg\xi\dG
\end{align*}
for all $\xi\in H^1(\Ga)$. This means that $\hat p$ and $\hat q$ are weak solutions of \eqref{AuxProblem:Bulk} and \eqref{AuxProblem:Surface}, respectively. Due to the uniqueness of those weak solutions, we infer $\Grad\hat{p}(t) = \Grad p(t)$ as well as $\Gradg\hat{q}(t) = \Gradg q(t)$ for almost every $t\in(0,T)$. Thus, by redefinition of $(\Grad\hat{p}, \Gradg\hat{q})$ on a set of measure zero, we get $(\Grad\hat{p},\Gradg\hat{q})\in C_w([0,T];\mathbfcal{L}^2)$. Again, with a redefinition of $(\bv,\bw)$ on a set of measure zero, we finally conclude from \eqref{Redef:v}-\eqref{Redef:w} that
\begin{align}
    \label{REG:VW:CW}
    (\bv,\bw)\in C_w([0,T];\mathbfcal{L}^2).
\end{align}
Here, we also used the fact that $(\rho(\phi), \sigma(\psi))\in C([0,T];\mathcal{L}^2)$ with $\rho \geq\rho_\ast > 0$ and $\sigma \geq\sigma_\ast > 0$.

It remains to show that the pair $(\bv,\bw)$ fulfills the initial condition 
$\eqref{WF:INI}_1$. To this end, as a direct consequence of Corollary~\ref{Cor:Proj:Stokes} and \eqref{GalerkinApprox:IC}, we first notice that
\begin{align*}
    (\bv_m(0),\bw_m(0)) = \mathbfcal{P}_{\mathbfcal{V}_m}(\bv_0,\bw_0) \rightarrow \mathbfcal{P}_{\Div}(\bv_0,\bw_0) = (\bv_0,\bw_0) \qquad\text{strongly in~}\mathbfcal{L}^2
\end{align*}
as $m\rightarrow\infty$. Then, as $(\phi_m(0),\psi_m(0)) = (\phi_0,\psi_0)$ a.e. in $\Om\times\Ga$, it readily follows that
\begin{align}
    \label{CONV:RVSW}
    (\rho(\phi_m(0))\bv_m(0),\sigma(\psi_m(0))\bw_m(0))\rightarrow (\rho(\phi_0)\bv_0,\sigma(\psi_0)\bw_0) \qquad\text{strongly in~}\mathbfcal{L}^2
\end{align}
as $m\rightarrow\infty$. 
We finally conclude that
\begin{align}
    \label{ID:V0W0}
        &\intO \rho(\phi_0)\bv(0)\cdot\wv\dx + \intG \sigma(\psi_0)\bw(0)\cdot\ww\dG 
        \nonumber\\
        &\quad = \intO \mathbf{P}_\Div^\Om(\rho(\phi_0)\bv(0))\cdot\wv\dx + \intG \mathbf{P}_\Div^\Ga(\sigma(\psi_0)\bw(0))\cdot\ww\dG 
        \nonumber\\
        \begin{split}
        &\quad = \lim_{m\rightarrow\infty} \Big(\intO \mathbf{P}_{\mathbfcal{V}_m}^\Om(\rho(\phi_m(0))\bv_m(0))\cdot\wv\dx + \intG \mathbf{P}_{\mathbfcal{V}_m}^\Ga(\sigma(\psi_m(0))\bw_m(0))\cdot\ww\dG\Big) 
        \\
        &\quad = \lim_{m\rightarrow\infty} \Big(\intO \rho(\phi_m(0))\bv_m(0)\cdot\mathbf{P}_{\mathbfcal{V}_m}^\Om(\wv)\dx 
        + \intG \sigma(\psi_m(0))\bw_m(0)\cdot \mathbf{P}_{\mathbfcal{V}_m}^\Ga(\ww)\dG\Big) 
        \end{split}
        \\
        &\quad = \intO \rho(\phi_0)\bv_0\cdot\mathbf{P}_\Div^\Om(\wv)\dx + \intG \sigma(\psi_0)\bw_0\cdot\mathbf{P}_\Div^\Ga(\ww)\dG 
        \nonumber\\
        &\quad = \intO \rho(\phi_0)\bv_0\cdot\wv\dx + \intG \sigma(\psi_0)\bw_0\cdot\ww\dG
        \nonumber
\end{align}
for all $(\wv,\ww)\in\mathbfcal{H}^2_{0,\Div}$.
Here, the second equality follows from the convergence \eqref{CONV:V*W*:2} and the fourth equality follows by means of \eqref{CONV:RVSW}.
Due to \eqref{ID:V0W0}, we are now in the setting of Lemma~\ref{Lemma:IC} and we finally conclude that $\eqref{WF:INI}_1$ is fulfilled.

\subsection{Completion of the proof}

In summary, we have shown above that the sextuplet $(\bv,\bw,\phi,\psi,\mu,\theta)$ fulfills the conditions \ref{DEF:WF:1}-\ref{DEF:WF:3} of Definition~\ref{Definition:WeakSolution}. 
Moreover, condition \ref{DEF:WF:4} now easily follows by testing the variational formulation \eqref{WF:PP} with $(\zeta,\xi) \equiv (\beta,1)$ if $L\in (0,\infty)$ or with $(\zeta,\xi) \equiv (1,0)$ and $(\zeta,\xi) \equiv (0,1)$ if $L=\infty$.

To verify the energy inequality \eqref{WF:DISS} stated in Definition~\ref{Definition:WeakSolution}\ref{DEF:WF:5}, we first notice that 
\begin{align}
    \label{Conv:E0}
    E_{\mathrm{tot}}(\mathbfcal{P}_{\mathbfcal{V}_m}(\bv_0,\bw_0),\phi_0,\psi_0) \to E_{\mathrm{tot}}(\bv_0,\bw_0,\phi_0,\psi_0),
\end{align}
as $m\to\infty$, which follows from the convergence of the projection $\mathbfcal{P}_{\mathbfcal{V}_m}$ (cf.~Corollary~\ref{Cor:Proj:Stokes}).
Then, applying the limes inferior on both sides of \eqref{EnergyDiss:m}, we use the convergences \eqref{Conv:vw:H^1}-\eqref{Conv:mt:L^2G} and \eqref{Conv:E0} along with standard convergence and weak lower semicontinuity arguments to conclude that $(\bv,\bw,\phi,\psi,\mu,\theta)$ satisfies the energy inequality \eqref{WF:DISS}. For a more detailed proof in a very similar situation, we refer to \cite[Proof of Theorem~3.3, Step~4]{Giorgini2023}.

This means that all conditions \ref{DEF:WF:1}-\ref{DEF:WF:5} from Definition~\ref{Definition:WeakSolution} are verified. Consequently, the sextuplet $(\bv,\bw,\phi,\psi,\mu,\theta)$ is a weak solution in the sense of Definition~\ref{Definition:WeakSolution} and therefore, the proof of Theorem~\ref{Theorem:L>0} is complete.
\end{proof}

\section{Asymptotic limit \texorpdfstring{$L\rightarrow 0$}{L->} and existence of a weak solution for \texorpdfstring{$L=0$}{L=0}}
\label{Section:Proof:L=0}
Lastly, in this section, we prove the existence of a weak solution to system \eqref{eqs:NSCH} in the remaining case $L = 0$.

\begin{theorem}\label{Theorem:L->0}
    Suppose that the assumptions \ref{Assumption:Domain}-\ref{Assumption:Potential} hold. Let $K\in[0,\infty]$,  
    let $(\bv_0,\bw_0)\in\mathbfcal{L}^2_\Div$, and let $(\phi_0,\psi_0)\in\mathcal{H}^1_{K,\alpha}$ satisfy \eqref{Assumption:InitalCondition:Int}-\eqref{Assumption:InitalCondition:L}.
	Moreover, assume that $\beta(\tilde{\sigma}_2 - \tilde{\sigma}_1) = -(\tilde{\rho}_2 - \tilde{\rho}_1)$. For any $L\in(0,\infty)$, let $(\bv_L,\bw_L,\phi_L,\psi_L,\mu_L,\theta_L)$ denote a weak solution to system \eqref{eqs:NSCH} in the sense of Definition~\ref{Definition:WeakSolution} with initial data $(\bv_0,\bw_0,\phi_0,\psi_0)$. Then, there exists a sextuplet $(\bv_\ast,\bw_\ast,\phi_\ast,\psi_\ast,\mu_\ast,\theta_\ast)$ with $\mu_\ast = \beta\theta_\ast$ a.e.~on $\Sigma$ such that
	\begin{alignat*}{2}
		(\bv_L,\bw_L) &\rightarrow (\bv_\ast,\bw_\ast) &&\qquad\text{weakly-star in~} L^\infty(0,T;\mathbfcal{L}^2), \\
		& &&\qquad\text{weakly in~} L^2(0,T;\mathbfcal{H}^1_{0,\Div}), \\
		& &&\qquad\text{strongly in~} L^2(0,T;\mathbfcal{L}^2), \\
		(\delt\phi_L,\delt\psi_L) &\rightarrow (\delt\phi_\ast,\delt\psi_\ast) &&\qquad\text{weakly in~} L^2(0,T;(\mathcal{H}^1_{0,\beta})^\prime), \\
		(\phi_L,\psi_L) &\rightarrow (\phi_\ast,\psi_\ast) &&\qquad\text{weakly-star in~} L^\infty(0,T;\mathcal{H}_{K,\alpha}^1), \\
		& &&\qquad\text{strongly in~} C([0,T];\mathcal{H}^s) \text{~for all~}s\in[0,1), \\
		(\mu_L,\theta_L) &\rightarrow (\mu_\ast,\theta_\ast) &&\qquad\text{weakly in~} L^2(0,T;\mathcal{H}^1)
	\end{alignat*}
	as $L\rightarrow 0$, along a non-relabeled subsequence, with
	\begin{align*}
		\norm{\beta\theta_L - \mu_L}_{L^2(\Sigma)} \leq C\sqrt{L},
	\end{align*}
	and the sextuplet $(\bv_\ast,\bw_\ast,\phi_\ast,\psi_\ast,\mu_\ast,\theta_\ast)$ is a weak solution to system \eqref{eqs:NSCH} in the sense of Definition~\ref{Definition:WeakSolution} for $L = 0$. 
\end{theorem}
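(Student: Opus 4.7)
The strategy is to extract uniform (in $L$) a priori bounds from the energy inequality \eqref{WF:DISS}, pass to weak/strong limits along subsequences, and then verify that the resulting sextuplet satisfies the weak formulation for $L=0$. First, since $E(\bv_0,\bw_0,\phi_0,\psi_0)$ is independent of $L$ and $|\phi_0|,|\psi_0|\leq 1$, the energy inequality provides $L$-uniform bounds on $(\bv_L,\bw_L)$ in $L^\infty(0,T;\mathbfcal{L}^2)\cap L^2(0,T;\mathbfcal{H}^1_{0,\Div})$, on $(\phi_L,\psi_L)$ in $L^\infty(0,T;\mathcal{H}^1_{K,\alpha})$, on $(\mu_L,\theta_L)$ in $L^2(0,T;\mathcal{H}^1)$ (via bulk-surface Poincaré together with the mass conservation law \eqref{WF:MCL}), and on $(F'(\phi_L),G'(\psi_L))$ in $L^2(0,T;\mathcal{L}^2)$ by repeating the Moser-type argument used in Section~\ref{Section:Proof:L>0}. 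Moreover, comparison in \eqref{WF:PP} yields an $L$-uniform bound on $(\delt\phi_L,\delt\psi_L)$ in $L^2(0,T;(\mathcal{H}^1_{0,\beta})^\prime)$, because admissible test functions from the smaller space $\mathcal{H}^1_{0,\beta}\subset\mathcal{H}^1$ kill the $\chi(L)$-term in \eqref{WF:PP}. Banach--Alaoglu together with the Aubin--Lions--Simon lemma then yields the convergences listed in the theorem, with the limit phase fields satisfying $|\phi_\ast|<1$ a.e.\ in $Q$ and $|\psi_\ast|<1$ a.e.\ on $\Sigma$, and with $(F'(\phi_L),G'(\psi_L))\to (F'(\phi_\ast),G'(\psi_\ast))$ weakly in $L^2(0,T;\mathcal{L}^2)$ via a Minty-type identification as in Section~\ref{Section:Proof:L>0}.

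The key additional ingredient in the limit $L\to 0$ is the control $\chi(L)\|\beta\theta_L-\mu_L\|_{L^2(\Sigma)}^2\leq C$, which gives the quantitative estimate $\|\beta\theta_L-\mu_L\|_{L^2(\Sigma)}\leq C\sqrt{L}\to 0$. Consequently $\beta\theta_L-\mu_L\to 0$ strongly in $L^2(\Sigma)$, and since $\beta\theta_L-\mu_L\rightharpoonup\beta\theta_\ast-\mu_\ast$ weakly in $L^2(\Sigma)$, we identify $\mu_\ast=\beta\theta_\ast$ a.e.\ on $\Sigma$, which is precisely the transmission condition required for $L=0$. In the same way, the strong bulk-surface convergence of $(\bv_L,\bw_L)$ is obtained from the uniform estimates by the Aubin--Lions--Simon lemma as in Subsection~\ref{Subsection:Uniform} (the bulk-surface Helmholtz projection argument combined with the $H^1$-regularity of the phase fields).

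Passing to the limit in the Cahn--Hilliard variational formulations \eqref{WF:PP} and \eqref{WF:MT} is straightforward once test functions are restricted to $(\zeta,\xi)\in C_c^\infty(0,T;\mathcal{H}^1_{0,\beta})$: the $\chi(L)$-term vanishes identically because $\beta\xi-\zeta=0$ a.e.\ on $\Gamma$, so only standard weak/strong convergence arguments are required. The critical step, and the main obstacle of the proof, is passing to the limit in the momentum balance \eqref{WF:VW}. Here the problematic term is
\[
\tfrac{1}{2}\chi(L)(\beta\sigma^\prime+\rho^\prime)\bigscp{(\beta\theta_L-\mu_L)\bw_L}{\ww}_{L^2(\Sigma)},
\]
which a priori blows up like $1/L$. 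This is exactly where the compatibility assumption \eqref{Assumption:Densities:0} enters: it enforces $\beta\sigma^\prime+\rho^\prime=0$, so the entire term vanishes uniformly in $L$ and no passage to the limit is needed. The remaining nonlinear terms $(\rho(\phi_L)\bv_L\otimes\bv_L)$, $(\sigma(\psi_L)\bw_L\otimes\bw_L)$, $\mu_L\Grad\phi_L$, $\theta_L\Gradg\psi_L$, as well as the convective fluxes $(\J_L\cdot\Grad)\bv_L$ and $(\K_L\cdot\Gradg)\bw_L$, are handled exactly as in Subsection~\ref{Subsection:Uniform}, using the strong convergence of $(\bv_L,\bw_L)$, of $(\phi_L,\psi_L)$, the weak convergence of $(\Grad\mu_L,\Gradg\theta_L)$, and interpolation between $L^\infty(0,T;\mathbfcal{L}^2)$ and $L^2(0,T;\mathbfcal{L}^6)$.

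Finally, the initial condition for $(\phi_\ast,\psi_\ast)$ follows from the strong convergence in $C([0,T];\mathcal{H}^s)$, while the initial condition for $(\bv_\ast,\bw_\ast)$ is verified by repeating verbatim the weak-continuity and projection argument from Section~\ref{Section:Proof:L>0} (using the bulk-surface Helmholtz projection together with Lemma~\ref{Lemma:IC}). The energy inequality \eqref{WF:DISS} for the limit sextuplet is obtained by passing to the limes inferior in each dissipation term, with the $\chi(L)$-penalty term on the left-hand side dropped (or, more precisely, identified as zero since $\mu_\ast=\beta\theta_\ast$), and the mass conservation law \eqref{WF:MCL} in the form valid for $L\in[0,\infty)$ passes directly through the weak convergence. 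This concludes the construction of a weak solution for $L=0$.
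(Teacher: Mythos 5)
Your proposal is correct and follows essentially the same route as the paper's proof: $L$-uniform bounds from the energy inequality, the quantitative estimate $\|\beta\theta_L-\mu_L\|_{L^2(\Sigma)}\le C\sqrt{L}$ to recover the transmission condition in the limit, restriction of test functions to $\mathcal{H}^1_{0,\beta}$ to kill the $\chi(L)$-term both in the time-derivative comparison and in \eqref{WF:PP}, and the observation that the compatibility condition $\beta\sigma'+\rho'=0$ makes the potentially singular $\chi(L)$-term in \eqref{WF:VW} vanish identically. The remaining steps (Aubin--Lions--Simon, Minty's trick, and the Helmholtz-projection argument for strong velocity convergence and the velocity initial condition) are also the ones the paper invokes from its Theorem~\ref{Theorem:L>0}.
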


\begin{proof}
    In this proof, the letter $C$ will denote generic positive constants, which may depend only on $\Omega$, $T$, the initial data, and the constants introduced in \ref{Assumption:Domain}-\ref{Assumption:Potential} except for $L$. The exact value of $C$ may vary throughout the proof.
    
	We consider an arbitrary sequence $\{L_m\}_{m\in\N}\subset(0,\infty)$ such that $L_m\rightarrow 0$ and a corresponding weak solution $(\bv_{L_m},				\bw_{L_m},\phi_{L_m},\psi_{L_m},\mu_{L_m},\theta_{L_m})$ to the initial data $(\bv_0,\bw_0,\phi_0,\psi_0)$. Let now $m\in\N$ be arbitrary. Using the energy inequality \eqref{WF:DISS}, we conclude that
    \begin{align}
		&\norm{(\bv_{L_m},\bw_{L_m})}_{L^\infty(0,T;\mathbfcal{L}^2)} + \norm{(\bv_{L_m},\bw_{L_m})}_{L^2(0,T;\mathbfcal{H}^1)} \leq C, \label{Est:Uniform:vw:L=0} \\
		&\norm{(\phi_{L_m},\psi_{L_m})}_{L^\infty(0,T;\mathcal{H}^1)} + \chi(K)^{1/2}\norm{\alpha\psi_{L_m} - \phi_{L_m}}_{L^\infty(0,T;L^2(\Ga))} \leq C, \label{Est:Uniform:pp:L=0} \\
		&\norm{(\Grad\mu_{L_m},\Gradg\theta_{L_m})}_{L^2(0,T;\mathbfcal{L}^2)} + L_m^{-1/2}\norm{\beta\theta_{L_m} - \mu_{L_m}}_{L^2(0,T;L^2(\Ga))} \leq C. \label{Est:Uniform:mt:L=0}
	\end{align}
	In particular, it immediately follows that
	\begin{align}
		\norm{\beta\theta_{L_m} - \mu_{L_m}}_{L^2(0,T;L^2(\Ga))} \leq C\sqrt{L_m}.
	\end{align}
	Next, take $(\zeta,\xi)\in\mathcal{H}^1_{0,\beta}$ as a test function in \eqref{WF:PP}. This yields
	\begin{align*}
		\abs{\bigang{(\delt\phi_{L_m},\delt\psi_{L_m})}{(\zeta,\xi)}_{\mathcal{H}^1_{0,\beta}}} &\leq \norm{\bv_{L_m}}_{\mathbf{L}^2(\Om)}\norm{\Grad\zeta}_{\mathbf{L}^2(\Om)} + \norm{\Grad\mu_{L_m}}_{\mathbf{L}^2(\Om)}\norm{\Grad\zeta}_{\mathbf{L}^2(\Om)} \\
		&\quad + \norm{\bw_{L_m}}_{\mathbf{L}^2(\Ga)}\norm{\Gradg\xi}_{\mathbf{L}^2(\Ga)} + \norm{\Gradg\theta_{L_m}}_{\mathbf{L}^2(\Ga)}\norm{\Gradg\xi}_{\mathbf{L}^2(\Ga)}.
	\end{align*}
	Consequently, taking the supremum over all $(\zeta,\xi)\in\mathcal{H}^1_{0,\beta}$ with $\norm{(\zeta,\xi)}_{\mathcal{H}^1}\leq 1$ and taking into account the estimates \eqref{Est:Uniform:vw:L=0}-\eqref{Est:Uniform:mt:L=0}, we readily deduce that
	\begin{align*}
		\norm{(\delt\phi_{L_m},\delt\psi_{L_m})}_{L^2(0,T;(\mathcal{H}^1_{0,\beta})^\prime)}\leq C.
	\end{align*}
	Then, the estimates
	\begin{align*}
		\norm{(F^\prime(\phi_{L_m}),G^\prime(\psi_{L_m}))}_{L^2(0,T;\mathcal{L}^2)} + \norm{(\mu_{L_m},\theta_{L_m})}_{L^2(0,T;\mathcal{L}^2)}\leq C
	\end{align*}
	can be derived as in the proof of Theorem~\ref{Theorem:L>0}. Consequently, by the Banach--Alaoglu theorem and Aubin--Lions--Simon lemma, there exists a sextuplet $(\bv_\ast,\bw_\ast,\phi_\ast,\psi_\ast,\mu_\ast,\theta_\ast)$ such that
		\begin{alignat*}{2}
		(\bv_{L_m},\bw_{L_m}) &\rightarrow (\bv_\ast,\bw_\ast) &&\qquad\text{weakly-star in~} L^\infty(0,T;\mathbfcal{L}^2), \\
		& &&\qquad\text{weakly in~} L^2(0,T;\mathbfcal{H}^1_{0,\Div}), \\
		(\delt\phi_{L_m},\delt\psi_{L_m}) &\rightarrow (\delt\phi_\ast,\delt\psi_\ast) &&\qquad\text{weakly in~} L^2(0,T;(\mathcal{H}^1_{0,\beta})^\prime), \\
		(\phi_{L_m},\psi_{L_m}) &\rightarrow (\phi_\ast,\psi_\ast) &&\qquad\text{weakly-star in~} L^\infty(0,T;\mathcal{H}^1_{K,\alpha}), \\
		& &&\qquad\text{strongly in~} C([0,T];\mathcal{H}^s) \text{~for all~}s\in[0,1), \\
		(\mu_{L_m},\theta_{L_m}) &\rightarrow (\mu_\ast,\theta_\ast) &&\qquad\text{weakly in~} L^2(0,T;\mathcal{H}^1), \\
		\beta\theta_{L_m} - \mu_{L_m} &\rightarrow \beta\theta_\ast - \mu_\ast &&\qquad\text{weakly in~} L^2(0,T;L^2(\Ga)),
	\end{alignat*}
	as $m\rightarrow\infty$, along a non-relabeled subsequence. In addition, we have
	\begin{align*}
		\norm{\beta\theta_{L_m} - \mu_{L_m}}_{L^2(0,T;L^2(\Ga))} \leq C\sqrt{L_m} \rightarrow 0
	\end{align*}
	as $m\rightarrow 0$, which entails $\beta\theta_\ast = \mu_\ast$ a.e.~on $\Sigma$.
    Moreover, using Minty's trick to identify the limit (cf.~\eqref{CONV:F*G*}-\eqref{MINTY:2}), we deduce that
    \begin{align*}
        (F^\prime(\phi_{L_m}), G^\prime(\psi_{L_m})) 
        \rightarrow (F^\prime(\phi_*), G^\prime(\psi_*)) \qquad\text{weakly in~} L^2(0,T;\mathcal{L}^2). 
    \end{align*}
    In particular, this entails that 
    \begin{align*}
        \abs{\phi_*} < 1 \quad\text{a.e.~in~}Q 
        \quad\text{and}\quad 
        \abs{\psi_*} < 1 \quad\text{a.e.~on~}\Sigma.
    \end{align*}
    Using the above convergences, it is straightforward to pass to the limit in the weak formulations \eqref{WF:PP} and \eqref{WF:MT} written for 
    $(\bv_{L_m}, \bw_{L_m},\phi_{L_m},\psi_{L_m},\mu_{L_m},\theta_{L_m})$
    to show that the
    sextuplet $(\bv_\ast,\bw_\ast,\phi_\ast,\psi_\ast,\mu_\ast,\theta_\ast)$
    satisfies \eqref{WF:PP} and \eqref{WF:MT}. 
    
	Next we prove a compactness result for $\{(\bv_{L_m},\bw_{L_m})\}_{m\in\N}$. As by assumption it holds that $\beta(\tilde{\sigma}_2 - \tilde{\sigma}_1) = -(\tilde{\rho}_2 - \tilde{\rho}_1)$, the weak formulation \eqref{WF:VW} written for $(\bv_{L_m}, \bw_{L_m},\phi_{L_m},\psi_{L_m},\mu_{L_m},\theta_{L_m})$ reads as
	\begin{align}
		\begin{split}
			&-\int_0^T\intO \rho(\phi_{L_m})\bv_{L_m}\cdot\delt\wv\dxt - \int_0^T\intG \sigma(\psi_{L_m})\bw_{L_m}\cdot\delt\ww\dGt \\
			&\qquad  - \int_0^T\intO \rho(\phi_{L_m})\bv_{L_m}\otimes\bv_{L_m}:\Grad\wv\dxt - \int_0^T\intG \sigma(\psi_{L_m})\bw_{L_m}\otimes\bw_{L_m}:\Gradg\ww\dGt \\
			&\qquad + \int_0^T\intO 2\nu_\Om(\phi_{L_m})\D\bv_{L_m}:\D\wv\dxt + \int_0^T \intG 2\nu_\Ga(\psi_{L_m})\Dg\bw_{L_m}:\Dg\ww\dGt \\
			&\qquad + \int_0^T\intG \gamma(\bv_{L_m},\bw_{L_m})\bw_{L_m}\cdot\ww\dGt - \int_0^T\intO \bv_{L_m}\otimes\J_{L_m}:\Grad\ww\dxt \\
			&\qquad - \int_0^T\intG \bw_{L_m}\otimes\K_{L_m}:\Gradg\bw_{L_m}\dGt \\
			&\quad = \int_0^T\intO \mu_{L_m}\Grad\phi_{L_m}\cdot\wv\dxt + \int_0^T\intG \theta_{L_m}\Gradg\psi_{L_m}\cdot\ww\dGt
		\end{split}
	\end{align}
	for all $(\wv,\ww)\in C_c^\infty(0,T;\mathbfcal{H}^2_{0,\Div})$, where
    $\J_{L_m} = -\rho^\prime\,\Grad\mu_{L_m}$ and $\K_{L_m} = -\sigma^\prime\,\Gradg\theta_{L_m}$.
	Now, arguing as in the proof of Theorem~\ref{Theorem:L>0}, we can show that	
	\begin{align*}
		\{\delt\mathbfcal{P}_\Div(\rho(\phi_{L_m})\bv_{L_m},\sigma(\psi_{L_m})\bw_{L_m})\}_{m\in\N} \quad \text{is bounded in~} L^{8/7}(0,T;(\mathbfcal{W}^{1,4}_{0,\Div})^\prime).
	\end{align*}
	Thus, as we additionally know that
    \begin{align*}
        \{\mathbfcal{P}_\Div(\rho(\phi_{L_m})\bv_{L_m},\sigma(\psi_{L_m})\bw_{L_m})\}_{m\in\N} \quad \text{is bounded in~}
        L^2(0,T;\mathbfcal{H}^1_{0,\Div}),
    \end{align*}
    it follows by the Aubin--Lions--Simon lemma that, up to subsequence extraction,
	\begin{align*}
		\mathbfcal{P}_\Div(\rho(\phi_{L_m})\bv_{L_m},\sigma(\psi_{L_m})\bw_{L_m}) \rightarrow (\widetilde{\bv},\widetilde{\bw}) \qquad\text{strongly in~} L^2(0,T;\mathbfcal{L}^2)
	\end{align*}
	as $m\rightarrow\infty$ for some $(\widetilde{\bv},\widetilde{\bw})\in L^\infty(0,T;\mathbfcal{H}^1_{0,\Div})$. Then, as in the proof of Theorem~\ref{Theorem:L>0}, the limit can be identified as
	\begin{align*}
		(\widetilde{\bv},\widetilde{\bw}) = \mathbfcal{P}_\Div(\rho(\phi_\ast)\bv_\ast,\sigma(\psi_\ast)\bw_\ast).
	\end{align*}
	Moreover, arguing as in the derivation of \eqref{Conv:v:L^2:strong} and \eqref{Conv:w:L^2:strong}, we establish the strong convergence
	\begin{align*}
		(\bv_{L_m},\bw_{L_m})\rightarrow (\bv_\ast,\bw_\ast) \qquad\text{strongly in~} L^2(0,T;\mathbfcal{L}^2)
	\end{align*}
	as $m\rightarrow\infty$. Finally, following the line of argument in the proof of Theorem~\ref{Theorem:L>0}, we verify that the limit sextuplet $(\bv_\ast,\bw_\ast,\phi_\ast,\psi_\ast,\mu_\ast,\theta_\ast)$ is indeed a weak solution of \eqref{eqs:NSCH} in the sense of Definition~\ref{Definition:WeakSolution} in the case $L = 0$.
\end{proof}


\section*{Acknowledgement}
This research was funded by the Deutsche Forschungsgemeinschaft (DFG, German Research Foundation) - Project 524694286. Moreover, the authors were partially supported by Deutsche Forschungsgemeinschaft (DFG, German Research Foundation) - RTG 2339. The support is gratefully acknowledged.

\section*{Conflict of Interests and Data Availability Statement}

There are no conflicts of interest. 

\noindent
There are no data associated with the manuscript.




\footnotesize

\bibliographystyle{abbrv}
\bibliography{KS_NSCH}

\end{document}